\newcommand{\proj}{\mathbf{P}}
\newcommand{\seq}{\subseteq}
\newcommand{\C}{\mathbb{C}}
\newtheorem{thm}{Theorem}[section]
\newtheorem*{thm-nl}{Theorem}
\newtheorem*{prop-nl}{Proposition}
\newtheorem{definition}[thm]{Definition}
\newtheorem{lem}[thm]{Lemma}
\def\PP{{\textbf P}}
\def\OO{\mathcal{O}}
\def\cM{\mathcal{M}}
\def\H{\mathcal{H}}
\def\Pic0{{\rm Pic}^0(X)}
\def\mm{\overline{\mathcal{M}}}
\newtheorem{cor}[thm]{Corollary}
\newtheorem*{cor-nl}{Corollary}
\newtheorem*{conjecture-nl}{Conjecture}
\newtheorem{defin}[thm]{Definition}
\newtheorem*{quest-nl}{Question}
\newtheorem*{quests-nl}{Questions}
\newtheorem{prop}[thm]{Proposition}
\theoremstyle{remark}
\title{ {Betti Numbers of Curves and Multiple-Point Loci}}
\date{\today}
\author[M. Kemeny]{Michael Kemeny}
\address{University of Wisconsin-Madison, Department of Mathematics, 480 Lincoln Dr
\hfill \newline\texttt{}
 \indent WI 53706, USA} \email{{\tt michael.kemeny@gmail.com}}
\begin{document}
\begin{abstract}
We construct Eagon--Northcott cycles on Hurwitz space and compare their classes to Kleiman's multiple point loci. Applying this construction towards the classification of Betti tables of canonical curves, we find that the value of the extremal Betti number records the number of minimal pencils. The result holds under transversality hypotheses equivalent to the virtual cycles having a geometric interpretation. We analyse the case of two minimal pencils, showing that the transversality hypotheses hold generically. 
\end{abstract}
\maketitle
\setcounter{section}{-1}
\section{Introduction}
The canonical ring $\Gamma_C(\omega_C):= \bigoplus_{m \in \mathbb{N}} H^0(C,\omega_C^{\otimes m})$ of a smooth curve $C$ is of central interest in algebraic geometry. To describe its structure, define
$$b_{i,j}(C,\omega_C):= \dim \mathrm{K}_{i,j}(C,\omega_C),$$
where $\mathrm{K}_{i,j}(C,\omega_C):=\mathrm{Tor}_S^i(\Gamma_C(\omega_C),\C)_{i+j}$ and where $S$ denotes the polynomial algebra \newline $\text{Sym}(H^0(C,\omega_C))$. 
%The invariants $b_{i,j}(C,\omega_C)$ determine the terms appearing in the minimal free resolution of $\Gamma_C(\omega_C)$ as an $S$ module and encode deep information about the algebraic structure of the canonical ring.\smallskip 
\smallskip

The theorem of Noether--Babbage--Petri states that, for any curve $C$ of gonality at least $4$ which is not a plane quintic, we have $b_{1,q}(C,\omega_C)=b_{0,q}(C,\omega_C)=0$ for $q \geq 2$. Furthermore, the number of quadrics required to cut out $C$ is given by $\displaystyle{b_{1,1}(C,\omega_C)}.$ \smallskip

 %The \emph{last} Betti number $b_{i,1}(C,\omega_C)$ in the linear strand is the most interesting such invariants. Moreover, the loci of curves $C$ where this extremal Betti number is kept constant are very interesting from the point of view of moduli theory. These loci turn out to be closely related to \emph{multiple point loci}, see \cite{kleiman}. This connects the study of syzygies with the enumerative geometry of Hurwitz cycles in $\mathcal{M}_g$, see e.g.\ \cite{faber-pand}. \smallskip

It is natural to ask what geometric information is contained within the remaining Betti numbers. Thanks to the combined work of many authors, one knows precisely which of the $b_{i,j}:=b_{i,j}(C,\omega_C)$ are nonzero for a curve of gonality $k$ under a specific generality hypothesis, \cite{V1}, \cite{V2}, \cite{aprodu-remarks}. By Koszul duality \cite{green-koszul}, it suffices to consider the \emph{linear} Betti numbers $$b_{1,1}, b_{2,1}, \ldots , b_{p,1}, \ldots.$$ Suppose that the curve $C$ has gonality $k$ and satisfies a \emph{linear growth} condition on the dimension of the moduli space of linear series of projective dimension one,  \cite{aprodu-remarks}.
%$$ \dim G^1_{k+m}(C) \leq m,  \;\; \text{for $0 \leq m \leq g-2k+1$}.$$ %The shape of the Betti tables of such a canonical curve, i.e.\ the table with $(i,j)^{th}$ entry $b_{j,i}(C,\omega_C)$, is illustrated in Figure \ref{betti-canonical} below.
%\begin{figure}
%\begin{tabular}{|c|c|c|c|c|c|c|c|c|c|c|}
%\hline
%$0$           &     $1$ &  $\ldots$ & $k-3$ & $k-2$  & $\ldots$ & $g-k$ & $g-k+1$ & $\ldots$ & $g-3$ & $g-2$ \\
%\hline
%$b_{0,0}$ & $0$  & $\ldots$ & $0$ & $0$ &  $\ldots$ & $0$ & $0$ &  $\ldots$ & $0$ & $0$  \\ 
%\hline
%$0$          & $b_{1,1}$  & $\ldots$ & $b_{k-3,1}$ & $b_{k-2,1}$ &  $\ldots$ & $b_{g-k,1}$ & $0$ &  $\ldots$ & $0$  & $0$\\
%\hline
%$0$          & $0$ &  $\ldots$ & $0$ & $b_{k-2,2}$ &  $\ldots$ & $b_{g-k,2}$ & $b_{g-k+1,2}$ & $\ldots$ & $b_{g-3,2}$ & $0$   \\
%\hline
%$0$          & $0$  & $\ldots$ & $0$ & $0$ &  $\ldots$ & $0$ & $0$ &  $\ldots$ & $0$ & $b_{g-2,3}$  \\ 
%\hline
%\end{tabular}
%\caption{ The shape of the Betti table of a canonical curve of genus $g$ and gonality $k$}  \label{betti-canonical}
%\end{figure}
%\smallskip
%By a result of Hilbert $ \displaystyle{\sum_i (-1)^i b_{i,j-i}(C,\omega_C)=\sum_i (-1)^i \binom{g-1}{i}h^0(\omega_C^{\otimes j-i})}$. This determines the alternating sum of entries along the diagonal of the Betti table. Hence, it suffices to consider the first row of entries $b_{i,1}$. 
Then $b_{p,1} \neq 0$ if and only if $1 \leq p \leq g-k$, \cite{aprodu-remarks}. \smallskip

Little has been known regarding the question of how the values of the nonzero entries $b_{p,1}$ reflect the geometry of the curve. In genus nine or less, a classification of the possible Betti tables is available, \cite{schreyer1}, \cite{sagraloff}. Conjectural tables for genus $10$ and $11$ have been produced via computer experiment, \cite{schreyer-topics}. \smallskip

The extremal Betti number $b_{g-k,1}$ has been studied intensely under the assumptions that the curve is a general point in the moduli space of $k$-gonal curves, \cite{lin-syz}, \cite{projecting}, \cite{kemeny-voisin}. In this paper, we study \emph{special} curves within the locus of $k$-gonal curves. Namely, we consider curves which carry \emph{multiple} pencils of degree $k$. \smallskip

 The extremal Betti number $b_{g-k,1}$ stands out from the tables in \cite{schreyer-topics} as the quantity responsible for most of the variance in the Betti tables. Furthermore,  $b_{g-k,1}$ has a remarkably close relationship to the geometry of the curve $C$. Schreyer's experiments suggest that, if we make the assumption that $C$ carries only finitely many minimal pencils, then $$\boxed{\; \; b_{g-k,1}(C,\omega_C)=m(g-k) \; \;}$$ 
where $m$ counts minimal pencils of $C$, i.e.\ degree $k$ maps $C \to \PP^1$, with multiplicity. The above formula is an incarnation of the philosophy that syzygies of canonical curves tend to arise from special linear systems, as stated in the \emph{Geometric Syzygy Conjecture} \cite{vB1}, which is now proven for \emph{general} curves, \cite{kemeny-rank}. \smallskip

The goal of this paper is to provide an explanation for Schreyer's experimental observation. Certain exceptions, however, to the above formula are apparent. If $C$ is a smooth plane sextic, then $g=10$ and the extremal Betti number is $b_{6,1}(C,\omega_C)=27$, which is not even divisible by $6$ (and, further $k=5$, so the linear strand has the wrong length). If $C$ is a genus $11$ curve admitting a degree three cover of an elliptic curve, then experiments suggest $b_{5,1}(C,\omega_C)=27$. One major difficulty in proving a formula along the lines of Schreyer's empirical observation is that one must first come up with appropriate hypotheses to rule out such exceptions.\smallskip
 
One major objective of this paper is to work out such a precise transversality assumption. Guided by Aprodu's results on specific Green's Conjecture \cite{aprodu-remarks}, and following \cite{lin-syz}, a curve $C$ of genus $g$ and gonality $k\leq \frac{g+1}{2}$ satisfies \emph{bpf-linear growth} provided we have the dimension estimates on the dimensions of Brill--Noether loci:
\begin{align*}
\dim G^1_{k+m}(C) &\leq m,  \;\; \text{for $0 \leq m \leq g-2k+1$} \\
\dim G^{1,\mathrm{bpf}}_{k+m}(C) &<m, \;\; \text{for $0 < m \leq g-2k+1$} .\end{align*} 
%Bpf-linear growth holds for a general element of $\mathcal{M}_g(2,k)$, assuming $8 \geq g > (k-1)^2$, \cite{lin-syz}. 
\smallskip

 The bpf-linear growth condition appears in well-known works of Martens--Mumford and Keem on the dimensions of Brill--Noether loci, \cite[Ch.\ IV]{ACGH1}. For low $k$, curves violating bpf-linear growth tend to be either plane curves or low degree covers of curves of low genus. Work of Aprodu--Farkas \cite{aprodu-farkas} implies that if $C$ is a curve of non-maximal gonality which can be abstractly embedded on a K3 surface and if, furthermore, the only line bundles computing the Clifford index are minimal pencils, then $C$ satisfies bpf-linear growth. See the Appendix for more on the bpf-linear growth condition.  \smallskip

We may now state our first main result:
\begin{thm} \label{geo-thm}
 Let $C$ be a smooth curve of genus $g$ and non-maximal gonality $k \leq \lfloor \frac{g+1}{2} \rfloor$, satisfying bpf-linear growth. Assume the minimal pencils are in general position and have ordinary ramification. Then 
$$b_{g-k,1}(C,K_C)=m(g-k),$$ 
where $m=\# W^1_k(C)$. 
\end{thm}
See Definition \ref{gen-position-def} for the precise definition of general position. Roughly speaking, we assume that there are no obstructions to deforming the minimal pencils, and further that there are no global relations amongst these pencils.

Furthermore, we prove that, under the hypothesis of Theorem \ref{geo-thm}, all extremal linear syzygies arise from scrolls. i.e.\ there is a natural isomorphism
$$\bigoplus_{i=1}^m K_{g-k,1}(X_{f_i}, \mathcal{O}_{X_{f_i}}(1)) \simeq K_{g-k,1}(C,\omega_C)$$
where $X_{f_1}, \ldots, X_{f_m}$ are the scrolls associated to the minimal pencils, \cite[\S 2]{schreyer1}.
The syzygies of the scrolls $X_{f_i}$ are explicitly described via the Eagon--Northcott complex, see \cite[\S 0.2]{lin-syz}. The above theorem thus provides an \emph{explicit} description of the syzygy space $ K_{g-k,1}(C,\omega_C)$.  \smallskip

The assumption in Theorem \ref{geo-thm} that the pencils be in general position is required. For instance, the conclusion fails if $m=1$ but $W^1_k(C)$ is not reduced, \cite[Prop.\ 10]{SSW}. The conclusion further fails for a general curve of even genus and maximal gonality, in which case the minimal pencils fail to be in general position.\smallskip 

% is required to ensure that the syzygies of the $m$ scrolls contribute independently to the syzygies of the canonical curve in the final position of the linear strand. This fails for a general curve of even genus and maximal gonality. %Indeed, in this case we have $g=2i+2$ and $k=i+2$. By the formula of Griffiths--Harris, we have $\lambda=\frac{(2i+2)!}{(i+1)!(i+2)!}$ minimal pencils. Each associated scroll has $i$ syzygies in the position $b_{i,1}$. Yet Voisin's theorem on the Generic Green's Conjecture implies $b_{i,1}(C,\omega_C)=\frac{(2i+1)!}{(i-1)!(i+2)!}=\frac{\lambda i}{2}$, so that we only have half the number of syzygies that one might naively expect to have from the $\lambda$ scrolls. 

It is a difficult problem to specify precise conditions under there exist genus $g$ curves admitting precisely $m$ minimal pencils. If $m=1$, such curves always exist for $k \leq \lfloor \frac{g+1}{2} \rfloor$, \cite{arbarello-cornalba}. For, $m=2$, there exist genus $g \geq 8$ curves with two mutally independent pencils if and only if $g > (k-1)^2$, $g > (k-1)^2$, \cite{jongmans}, \cite{coppens}. Furthermore, the moduli space of curves with two independent minimal pencils is irreducible, \cite{Ty}. We show:
\begin{thm}
Let $g \geq 8$, $k \geq 6$ and $g>(k-1)^2$. Then there exist smooth curves of genus $g$ with precisely two minimal pencils satisfying the assumptions of Theorem \ref{geo-thm}. In particular, $b_{g-k,1}(C,K_C)=2(g-k)$ for such curves.
\end{thm}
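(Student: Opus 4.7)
The plan is to start from Coppens' construction, \cite{coppens}, of smooth genus $g$ curves $C$ carrying two mutually independent degree $k$ pencils --- a family $\mathcal{B} \seq \mathcal{M}_g$ known to be nonempty precisely when $g \geq 8$ and $g > (k-1)^2$ --- and to verify that a general member, with its two pencils $f_1, f_2 \colon C \to \PP^1$, satisfies every hypothesis of Theorem \ref{geo-thm}.

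The first step is to show that for general $[C] \in \mathcal{B}$, the space $W^1_k(C)$ consists of exactly two reduced points $[L_1], [L_2]$, both of type I, and that $f_1, f_2$ have ordinary ramification with disjoint branch loci. Each failure condition --- a third minimal pencil, non-reducedness of $W^1_k(C)$, failure of type I, non-ordinary or coincident ramification --- is a proper closed condition on $\mathcal{B}$ and can be excluded by passing to a general member. Next, I would verify bpf-linear growth: the bound $\dim G^1_{k+m}(C) \leq m$ for $0 \leq m \leq g-2k+1$ follows from the Martens--Keem theorems combined with $g > (k-1)^2$ (which excludes the exceptional loci), while the strict bound on the base-point-free stratum comes from the observation that any $g^1_{k+m}$ with a base point arises by adding points to a pencil of strictly smaller degree, hence sits inside a subfamily of lower dimension.

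For the general position hypotheses, the geometric side is immediate: since $f_1$ and $f_2$ are mutually independent, the scrolls $X_{f_1}, X_{f_2} \seq \PP^{g-1}$ are distinct, hence the associated rank-$4$ quadrics $Q_1, Q_2$ are distinct points of $|\OO_{\PP^{g-1}}(2)|$ and thus linearly independent in $H^0(\PP^{g-1}, \OO(2))$. For infinitesimal general position, the single-pencil cases $\sigma = \{i\}$ are automatic because $N_{f_i}$ is a torsion sheaf on $C$ (the target is a curve), so $H^1$ vanishes trivially. The only substantial case is $\sigma = \{1,2\}$, where $F = (f_1, f_2) \colon C \to \PP^1 \times \PP^1$; since $f_1, f_2$ have disjoint ramification, $F$ is an immersion, so $N_F$ is a line bundle of degree $4k + 2g - 2$, and then $\deg N_F(-p-q-r) = 4k + 2g - 5 > 2g - 2$ yields $H^1(C, N_F(-p-q-r)) = 0$ by Serre vanishing.

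With all hypotheses of Theorem \ref{geo-thm} verified, that theorem yields $b_{g-k,1}(C, K_C) = 2(g-k)$. The main obstacle will not be any single step in isolation but the combined task of verifying that each ``failure'' sublocus of Coppens' family $\mathcal{B}$ is genuinely proper --- in particular, bounding the locus where $W^1_k(C)$ becomes non-reduced at one of the $[L_i]$ or where a third minimal pencil appears. This demands a detailed Brill--Noether analysis on the specific singular and semistable curves used in Coppens' construction.
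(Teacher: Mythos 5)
Your overall strategy — start from Coppens' family $\cM_{g,k}(2)$, verify each hypothesis of Theorem \ref{geo-thm} for a general member — is the right one, and matches the paper's framing; your argument for infinitesimal general position (the only nontrivial case is $\sigma=\{1,2\}$, where $N_F$ becomes a line bundle of degree $2g-2+4k$ and one concludes by degree count) is essentially the paper's. But two of your steps contain genuine gaps.

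The bpf-linear growth verification is the real labor of the paper, and your sketch would not carry it out. The Martens--Keem results to which you appeal cover only gonality $k\in\{3,4,5\}$; they say nothing about $k\geq 6$, which is precisely the range of the theorem. Moreover, your ``observation'' — that pencils with a base point arise by adding points to lower-degree pencils — describes the \emph{non}-base-point-free stratum, whereas bpf-linear growth requires the \emph{strict} inequality $\dim G^{1,\mathrm{bpf}}_{k+m}(C)<m$ on the base-point-free stratum. That inequality does not follow from $\dim G^1_{k+m}(C)\leq m$ together with an estimate on non-bpf pencils. The paper handles this by a genuine two-pronged argument: for $g\leq\frac{4k^2}{9}$ one realizes $C$ on a K3 surface with a carefully chosen Picard lattice and runs the vector-bundle machinery of Donagi--Morrison and Aprodu--Farkas to prove Proposition \ref{bpf-k3}; for $k\leq\frac{g+8}{4}$ one runs Coppens' inductive nodal degeneration through Lemmas \ref{types-cpt} and \ref{lf-smooth} to get Proposition \ref{bpf-range-quarter}; the point that these two regimes cover everything for $k\geq 5$ is then elementary.

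The geometric general position claim is also too quick. You assert that since the scrolls $X_{f_1}, X_{f_2}\seq\PP^{g-1}$ are distinct, the associated rank-$4$ quadrics $Q_{f_1},Q_{f_2}$ are distinct. But $Q_{f_i}$ is obtained from the scroll by composing with a chain of inner projections centered at $T$, and it is not a priori clear that two distinct scrolls remain distinguishable after projection. The paper instead argues by contradiction: if $Q_{f_1}=Q_{f_2}$ then $\widetilde{Q}_{f_1}=\widetilde{Q}_{f_2}$ in $\PP^{g-1}$, and an isomorphism between the resolutions $\PP(V_1)\to\PP(V_2)$ preserving the hyperplane class and the image of $C$ yields $L_2\simeq(1-2\beta)L_1+\beta\omega_C$, which after comparing degrees forces $L_1\simeq L_2$. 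That argument requires the scroll resolution of the rank-$4$ quadric and a Picard-group computation, not just the bare distinctness of the scrolls.
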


Less is known for $m\geq 3$. Based on computer experiments one expects the existence of curves of genus $11$ and gonality $6$ with $m$ pencils in general position for $1 \leq m \leq 10$, \cite{schreyer-topics}, \cite{bopp-schreyer}. %Curves with $10$ minimal pencils can be constructed by considering plane curves of degree $8$ with $10$ nodes in general position. Such a curve $C$ is expected to have syzygies which do not arise from scrolls, as the $g^2_8$ contributes syzygies via \cite[Appendix]{green-koszul}.  Calculations by Bopp and Schreyer in small characteristic, however, very strongly indicate that one can deform $C$ within the moduli space of maps $C \to (\PP^1)^{\times 10}$ to a curve with minimal pencils \emph{in general position}. Indeed, there is an example in characteristic two of a curve of genus $11$ and gonality $6$ such that the extremal syzygy scheme has dimension $5$ and degree $60$, indicating that the scheme is a union of $10$ scrolls and the curve has $10$ pencils in general position. Furthermore, this curve occurs with roughly the frequency expected for a codimension $10$ condition in characteristic $2$, see \cite[Table 2]{bopp-schreyer} (to do the same experiment in characteristic $3$ would require testing $3^{10} \approx 60,000$ random curves).
%On the other hand, it seems that there do not exist curves of genus $11$ and gonality $6$ with $11$ pencils in general position. More precisely, as soon as a curve with $g=11, k=6$ has $11$ pencils it appears to additionally have a $12^{th}$ pencil and, likewise, as soon as $b_{g-k_,1}>50$, one in fact has $b_{g-k_,1}\geq 60$.\smallskip%Restricting to the locus of curves with a $g^2_8$, the two additional pencils arise by bringing the nodes into a special configuration. Work on these questions is being pursued by Hanieh Keneshlou.\smallskip

%It is plausible that one could weaken the general position assumption in Theorem \ref{geo-thm} by replacing Kleiman's construction with more recent approaches to multiple point theory, see \cite{rimanyi}. For instance, one might hope to allow the pencils to coalesce.\smallskip

%This paper is logically dependent on our earlier joint work \cite{lin-syz}, which proved Theorem \ref{geo-thm} in the case $m=1$ (the case of \emph{Schreyer's Conjecture}). In particular, the proofs in \S 4.2 rely on an explicit twisting technique, developed in that paper, for families of line bundles on a fibred variety with central fibre a reducible curve, see \cite[\S 3]{lin-syz}.

\subsection{Method of Proof} 
 A variational approach allows one to study the Betti numbers of curves $C$ via the geometry of moduli spaces, see \cite{hirsch}, \cite{generic-secant}. Whilst previous constructions provide syzygy divisors on the moduli space of curves, we must adapt this approach to work with cycles of higher codimension. \smallskip

Our inspiration comes from Herbert's multiple point formula, \cite{kleiman}, \cite[Example 9.1.14]{fulton}. Let $f: X \to Y$ be an unramified, proper morphism of smooth varieties. For fixed $m$, Herbert's formula computes the class of the loci of those $y \in Y$ with $$\#f^{-1}(y)\geq m,$$ under the assumption that $f$ is \emph{self-transverse}, i.e.\ that $T_{x_1}(X),\ldots,T_{x_m}(X)$ are in general position in $T_y(Y)$, for $\{x_1, \ldots, x_m\}=f^{-1}(y)$. \smallskip

Consider the moduli space $\mm_{g,k}(\PP^1,\{0,1,\infty\})$ of stable maps of genus $g$ and degree $k$ to $\PP^1$, with fixed base points over $0,1,\infty$. Denote by $\cM_{g,k}(\PP^1,\{0,1,\infty\})$ the unique irreducible component such that the general point is a morphism with \emph{smooth} base $C$. There is a generic immersion $\displaystyle{\pi: \mathcal{M}_{g,k}(\PP^1,\{0,1,\infty\}) \to \mm_{g,3}},$
 defined by sending a marked stable map to its base. \smallskip
 
We first focus on the divisorial case $g=2k-1$. Consider the space $\cM^{\text{o}}_{2k-1,3}$ of irreducible, automorphism-free curves with three marked points and let $\H(0) \seq \cM^{\text{o}}_{2k-1,3}$ be the locus such that $\pi$ is self-transverse over $\H(0)$. Set $\H(1)=\pi^{-1}(\H(0))$. We firstly construct a virtual cycle $\mathcal{EN}_m$ in the Chow group $A^m(\H(1))$. Under a transversality assumption, $\mathcal{EN}_m$ represents the locus 
$$ \{ C \to \PP^1 \in \H(1) \; | \; b_{g-k,1}(C,\omega_{C}) >m(g-k)  \}.$$\smallskip

We further define Brill--Noether cycles $\mathcal{BN}_{m+1}$ parametrizing curves carrying at least $m+1$ pencils as multiple point loci, under appropriate transversality hypotheses, see Definition \ref{BN-cyc-defin}.
We prove:
\begin{thm} \label{cyc-comp}
We have the following equality of virtual cycles in $A^m(\H(1))$
$$\mathcal{EN}_{m}=(k-1)\mathcal{BN}_{m+1}.$$
\end{thm}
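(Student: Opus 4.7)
The plan is induction on $m$, exploiting parallel iterative constructions of $\mathcal{EN}_m$ and $\mathcal{BN}_{m+1}$ on the same fiber product. Let $Z$ denote the off-diagonal component of $\H(1) \times_{\H(0)} \H(1)$, with its two projections $\pi_1, \pi_2: Z \to \H(1)$. Kleiman's residual-intersection construction of multiple-point cycles yields a recursion of the form
$$\mathcal{BN}_{m+1} = (\pi_1)_* \pi_2^* \mathcal{BN}_m$$
on $\H(1)$, which holds cleanly because $\pi$ is self-transverse over $\H(0)$ by definition of $\H(1)$. The Eagon-Northcott cycle $\mathcal{EN}_m$ is likewise defined iteratively as a determinantal degeneracy locus of a universal bundle map built from the Lazarsfeld-type syzygy bundle of the universal pencil, and the strategy is to establish the analogous recursion
$$\mathcal{EN}_m = (\pi_1)_* \pi_2^* \mathcal{EN}_{m-1},$$
from which the theorem follows by induction once the constant $(k-1)$ is established at the base.

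For the base case $m = 0$, the cycle $\mathcal{BN}_1$ is the fundamental class $[\H(1)]$ tautologically, since every point of $\H(1)$ carries a minimal pencil. On the Eagon-Northcott side, $\mathcal{EN}_0$ is by construction the fundamental class weighted by the generic rank of the relevant cokernel bundle, which equals $g - k = k - 1$. Hence $\mathcal{EN}_0 = (k-1)[\H(1)] = (k-1)\mathcal{BN}_1$, establishing the base.

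For the inductive step, assume $\mathcal{EN}_{m-1} = (k-1)\mathcal{BN}_m$. Since pullback by $\pi_2$ followed by pushforward by $\pi_1$ is linear, applying $(\pi_1)_*\pi_2^*$ preserves the scalar, so it suffices to produce the Eagon-Northcott recursion. Geometrically, points of $Z$ parametrize curves equipped with two distinct minimal pencils, so pulling the universal bundle map back to $Z$ introduces an additional pencil's worth of syzygies. The key linear-algebra input is that on $Z$ the pulled-back map canonically splits off a rank-$(k-1)$ subcomplex corresponding to the new pencil, so that the residual determinantal locus is precisely $\pi_2^* \mathcal{EN}_{m-1}$. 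Pushing forward by $\pi_1$ and combining with the inductive hypothesis and the $\mathcal{BN}$-recursion then yields $\mathcal{EN}_m = (k-1)\mathcal{BN}_{m+1}$ on $\H(1)$.

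The principal obstacle is verifying that the pullback of the Eagon-Northcott determinantal structure to $Z$ recovers $\pi_2^*\mathcal{EN}_{m-1}$ with multiplicity one and no spurious excess components. This is where self-transversality of $\pi$ is decisive: it guarantees that $Z$ is smooth of the expected dimension and that the restriction of the universal syzygy bundle to $Z$ decomposes cleanly along the new pencil, rather than acquiring higher-order contributions at points of the diagonal. A secondary technical point is to set up Kleiman's residual-intersection formula in this self-transverse setting so that the single factor of $(k-1)$ introduced at the base case propagates additively through the induction rather than compounding; this requires careful bookkeeping of the residual classes attached to iterated diagonals in the fiber product tower.
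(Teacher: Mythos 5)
Your proposal captures the broad shape of the paper's argument—induction on $m$ via the iterated fiber-product construction—but the heart of the proof is missing, and two of your claimed recursions are false.

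First, the recursion $\mathcal{BN}_{m+1} = (\pi_1)_*\pi_2^*\mathcal{BN}_m$ does \emph{not} hold cleanly. Kleiman's iteration, as implemented via the double point formula (Fulton, Example 9.3), produces a normal bundle correction:
\[
\psi_{1*}(m+1)[\H(m+1)] = \psi_1^*(m)\psi_{1*}(m)[\H(m)] - c_1\bigl(N_{\psi_1(m)}\bigr),
\]
and self-transversality does not make the correction vanish—it only ensures the iterated spaces $\H(m)$ are smooth of the expected dimension, so that the formula applies. Similarly, the recursion $\mathcal{EN}_m = (\pi_1)_*\pi_2^*\mathcal{EN}_{m-1}$ is also false. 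The paper's Lemma \ref{ind-lem} computes the discrepancy $\widetilde{\mathcal{EN}}_{m+1} - \psi_1^*(m+1)\widetilde{\mathcal{EN}}_m$ explicitly, and it is nonzero; it equals $c_1\bigl(\psi_2^*(m+1)\cdots\psi_2^*(2)\,\widetilde{A}[1]\cdot(\lambda^*+1)\bigr)$ after rearranging the exact sequences. The entire substance of the theorem is that these two correction terms agree up to the factor $(k-1)$, which is precisely Lemma \ref{key-int-comp}(ii): $c_1\bigl(\widetilde{A}[1]\cdot(1+\lambda^*)\bigr) = -(k-1)c_1(N_{\pi_k})$, together with the compatibility $N_{\psi_1(m)} = \psi_2^*(m)\cdots\psi_2^*(2)N_{\pi_k}$. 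Your proposal asserts both recursions hold cleanly and then hopes the $(k-1)$ "propagates," which skips the actual intersection-theoretic content: the Grothendieck--Riemann--Roch and canonical-bundle computations on Hurwitz space underlying Lemma \ref{key-int-comp}.

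Second, the base case is misidentified. There is no cycle $\mathcal{EN}_0$ in the paper's construction, and the claim that it equals the fundamental class "weighted by the generic rank $k-1$ of the cokernel bundle" is not meaningful—degeneracy loci are not rank-weighted fundamental classes. The actual base case is $m=1$: one must show $\mathbf{W}[1]-\mathbf{V}[1] = (k-1)\bigl(\pi_k^*\mathfrak{hur} - c_1(N_{\pi_k})\bigr)$, where the Hurwitz-divisor identity $c_1(C^{[k-2,2]}[1]-A^{[k-2,2]}[1]-\widetilde{A}[1]) = (k-1)\pi_k^*\mathfrak{hur}$ is Lemma \ref{key-int-comp}(i). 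This is a nontrivial Chern-class computation, not a tautology. In short, you have the scaffolding but none of the load-bearing computations.
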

This provides an intersection--theoretic explanation for the experimental observation \newline $b_{g-k,1}(C,\omega_C)=m(g-k)$ with $m=\# W^1_k(C)$, in the special case $g=2k-1$.\smallskip

To upgrade this virtual computation into a geometric statement, we require that the pencils be in general position. Let $C$ be a smooth curve of gonality $k$ with finitely many minimal pencils $f_1, \ldots, f_m : C \to \PP^1$, all of type $I$.\footnote{A line bundle $L$ with $h^0(L)=2$ is said to be of type I if $W^1_k(C)$ is smooth and zero dimensional at $[L]$. This is equivalent to having $h^0(L^2)=3$.} Choose general points $p,q,r \in C$. The pencils are said to be \emph{infinitesimally in general position} if, for all subsets $\sigma=\{\sigma_1, \ldots, \sigma_j\} \seq \{1, \ldots, m \}$, $$H^1\left(C,N_{F_{\sigma}}(-p-q-r)\right)=0,$$
where $F_{\sigma}: C \to (\PP^1)^{| \sigma|}$ is the map with $i^{th}$ projection given by $f_{\sigma_i}$ and $N_{F_{\sigma}}$ denotes the normal sheaf. This condition states that the deformation theory of the collection of pencils is unobstructed and appears in work of Arbarello--Cornalba, \cite{arbarello-cornalba}.

% and is equivalent to requiring that $\pi: \mathcal{M}_{g,k}(\PP^1,\{0,1,\infty\}) \to \mm_{g,3}$ is self-transverse over $(C,(p,q,r))$. 

In order for the Eagon--Northcott cycles to carry their natural meaning, we further impose a condition which is \emph{global} in nature. Fix a general effective divisor $T$ of degree $g-1-k$ general points on $C$. To each minimal pencil $f_i$ of type I one naturally associates a rank $4$ quadric $Q_i \seq \PP^{g-1}$ following \cite{green-quadrics}, see Section \ref{GGP}. We say that $f_1, \ldots f_m$ are in \emph{geometrically general position} if there are no linear relations amongst the associated quadrics $\{Q_1, \ldots, Q_m\} \seq |\mathcal{O}_{\proj^{g-1}}(2)|$.

\begin{definition} \label{gen-position-def}
Let $C$ be a smooth curve of gonality $k$. We say that the locus $W^1_k(C)$ of minimal pencils is in \textbf{general position} if all elements are pencils of type $I$ and the pencils are both infinitesimally and geometrically in general position. 
\end{definition}
The infinitesimal general position condition ensures that the Brill--Noether cycles $\mathcal{BN}_{m+1}$ carry geometric meaning, whereas geometrically general position ensures that the Eagon--Northcott cycles $\mathcal{EN}_{m}$ carry geometric meaning.\smallskip
%For a pencil $L$ on a projective variety $X$, let $X_L$ denote the scroll swept out by the span of the divisors $D \in |L|$, \cite{schreyer1}.

We end the introduction with a sketch of the proof of Theorem \ref{geo-thm}. Let $C$ be a curve of genus $g$ and gonality $k$ with $m$ minimal pencils. We view the curve $C$ with the $m$ pencils as providing us with a stable map $C \to (\PP^1)^m$. We wish to move ourself into the setting where the cycle computations of Theorem \ref{cyc-comp} apply, i.e.\ into a setting with $g=2k-1$. To this end, we set $n=g+1-2k$ and construct a stable map $D \to (\PP^1)^m$ from a nodal curve $D$ of arithmetic genus $g+n$, with each factor $D \to \PP^1$ being a map of degree $k+n$, see Section \ref{sect=key-const}. We now have $g(D)=2(k+n)-1$, so may hope to apply a degenerate version of Theorem \ref{cyc-comp}, under the transversality hypotheses that the Brill--Noether and Eagon--Northcott cycles carry geometric meaning. See Section \ref{outline} for an outline with more details provided.

We remark that we expect some version of Theorem \ref{geo-thm} to hold under weaker hypotheses, for instance with the pencils not necessarily in infinitesimally general position, and with $m$ counting the number of pencils \emph{with multiplicity}, i.e.\ by letting $m$ denote the length of the zero dimensional Brill--Noether variety $W^1_k(C)$. We are unable to prove such a result using our methods, however, as they make essential use of Herbert's multiple point formula, which requires a heavy transversality assumption for its validity. In order to satisfy this assumption, we are forced to require $W^1_k(C)$ to be reduced. \smallskip

%To prove Theorem \ref{geo-thm}, choose $n=g+1-2k$ general pairs of points on $C$. Let $D$ be the nodal curve obtained by identifying each pair. The genus $g(D)$ and gonality $k(D)$ of $D$ satisfies $g(D)=2k(D)-1$ and we are in the divisorial case. The minimal pencils $f_1, \ldots, f_m$ on $C$ induce \emph{singular}, torsion-free sheaves $A_1, \ldots, A_m \in W^1_{k(D)}(D)$. If one knew that $\pi$ were self-transverse over $[D,p,q,r]$, for general $p,q,r$, one could conclude via \cite{hirsch}. This would amount to understanding the tangent space of $W^1_{k(D)}(D)$ at each $[A_i]$. The Brill--Noether theory of singular torsion free sheaves, however, is very poorly understood. There is no Petri map in this setting and, moreover, the deformation theory depends on the number of singularities and their arrangement in a mysterious manner. We thus take a different track. Rather than working on the moduli space of curves, we work on the level of Hurwitz space, with Eagon--Northcott cycles $\mathcal{EN}_m$ replacing the syzygy divisor $\mathfrak{Syz}$ from \cite{hirsch}. This allows us to analyze how the arrangements of nodes effects the deformation theory of the set of minimal pencils on $D$. \smallskip

\textbf{Acknowledgements} I thank the referees for careful readings. We thank C.\ Bopp, D.\ Eisenbud, G.\ Farkas, H.\ Keneshlou and F.\ Schreyer for discussions. Thanks to R.\ Yang for comments on a draft of this paper. This work was partially supported by NSF grant DMS-1701245.

\section*{Glossary of Moduli Spaces}
\begin{description}
\item [$ \mm_{g,k}((\PP^1)^{ m}, \{0,1,\infty \})$] The moduli space of genus $g$ stable maps to $(\PP^1)^m$ in the class $k [\Delta]$, where $\Delta$ denotes the small diagonal, and with three base points over $(\alpha,\ldots,\alpha)$ for $\alpha \in \{0,1,\infty \}$. \hfill \\ 

\item [$ \cM^{ns}_{g,k}((\PP^1)^{ m}, \{0,1,\infty \})$] The open substack of $ \mm_{g,k}((\PP^1)^{ m}, \{0,1,\infty \})$ parametrising morphisms $f: C \to (\PP^1)^m$ such that $C$ has non-separating nodes and further $f_i$ is finite with $h^0(f^*_i \mathcal{O}_{\PP^1}(1))=2$, for each factor $f_i$ of $f$. Further, we demand that $f_i$ is etale near the base points $(p,q,r) \in C$ for all $1 \leq i \leq m$. \hfill \\ 

\item [$ \cM_{g,k}((\PP^1)^{ m}, \{0,1,\infty \})$] The closure of $ \cM^{ns}_{g,k}((\PP^1)^{ m}, \{0,1,\infty \})$ in $ \mm_{g,k}((\PP^1)^{ m}, \{0,1,\infty \})$.\smallskip 

\noindent We warn the reader that some use similar notation for the \emph{different} stack of stable maps with smooth base.
\hfill \\ 

\item [$\pi_k: \cM_{g,k}(\PP^1, \{0,1,\infty \})\to \mm_{g,3}$] The natural forgetful morphism. \hfill \\

\item [$\widetilde{\H}(m)$] This is defined to be $\cM^{ns}_{2k-1,k}((\PP^1)^m,\{0,1,\infty\}).$ \hfill \\ 

\item [$\mathcal{H}(1)$] The largest open substack of $\cM_{g,k}(\PP^1, \{0,1,\infty \})$ such that $\pi_k$ is unramified and self-transverse on $\mathcal{H}(1)$ and, further, for any $x \in \mathcal{H}(1)$, $y \in \pi_k^{-1}(\pi_k(x))$, the base of $y$ is irreducible and automorphism-free. \hfill \\ 
\end{description}

\section{Preliminaries} \label{prelims}
\label{notation}

All Chow groups are taken with $\mathbb{Q}$ coefficients. All schemes and stacks are defined over $\C$.\smallskip

 Let $\mathcal{X}, \mathcal{Y}$ be smooth varieties over $\C$. Let $f: \mathcal{X} \to \mathcal{Y}$ be finite and unramified. We inductively define schemes $\mathcal{X}(m)$ and finite, unramified morphisms $$f(m): \mathcal{X}(m) \to \mathcal{X}(m-1).$$ Set  $\mathcal{X}(1):=\mathcal{X}$, $\mathcal{X}(0):=\mathcal{Y}$, and $f(1):=f$. Assuming we have defined $f(m-1)$, the diagonal morphism $\Delta_{f(m-1)}$ is an open immersion. Define $$ \mathcal{X}(m):= \mathcal{X}(m-1) \times_{\mathcal{X}(m-2)} \mathcal{X}(m-1) \setminus \text{Im}(\Delta_{f(m-1)}),$$ and let $f(m): \mathcal{X}(m) \to \mathcal{X}(m-1)$ be projection to the first factor.
 \begin{defin} We say $f:\mathcal{X} \to \mathcal{Y}$ as above is \textbf{self-transverse} if, for each closed point $y \in \mathcal{Y}$ and $\{z_1, \ldots, z_r \}=f^{-1}(y)$, the image of the tangent spaces $T_{z_i}(\mathcal{X})$, $1 \leq i \leq r$ under $df$ are in general position in $T_y \mathcal{Y}$, i.e.\ for all $\{\sigma_1, \ldots, \sigma_t\} \seq f^{-1}(y)$
 $$\dim T_{\sigma_1}(\mathcal{X}) \cap \ldots \cap T_{\sigma_t}(\mathcal{X})=\dim \mathcal{Y}-t(\dim \mathcal{Y}-\dim \mathcal{X}).$$
 \end{defin} 
 \smallskip

Self-transversality was stated by Herbert to ensure the validity of the Multiple Point Formula, \cite{herbert-thesis}, see also \cite[Example 9.1.14]{fulton}. If $f$ is self-transverse and $f^{-1}(y)$ has cardinality $r$ for some point $y \in \mathcal{Y}$, then $f$ satisfied Kleiman's condition of being ``$r$-generic'' of codimension $n=\dim \mathcal{Y}-\dim \mathcal{X}$, \cite[\S 4.5]{kleiman}. Moreover, each $\mathcal{X}(m)$ is nonempty and smooth of dimension $\dim \mathcal{Y}-mn$ for $m \leq r$ (so $r \leq \frac{\dim \mathcal{Y}}{n}$), \cite[Prop.\ 4.6]{kleiman}.\smallskip

The following consequence of self-transversality will be of fundamental importance.
\begin{prop} \label{self-trans-order}
  Let $f: \mathcal{X} \to \mathcal{Y}$ be a finite, unramified morphism of smooth, irreducible complex varieties. Assume $\dim \mathcal{X}=\dim \mathcal{Y}-1$ so that the image $f(\mathcal{X})$ is a divisor in $\mathcal{Y}$. Assume in addition $f$ is self-transverse. Then $$\text{ord}_y(f(\mathcal{X}))=\#f^{-1}(y),$$
  where $\text{ord}_y(f(\mathcal{X})):=\text{max} \{n \; | \; g \in I^n_y \}$ for any local holomorphic equation $g \in \hat{\mathcal{O}}_{\mathcal{Y},y}$ of $f(\mathcal{X})$.
  \end{prop}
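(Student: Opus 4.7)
The plan is to work locally in the completed local ring $\hat{\mathcal{O}}_{\mathcal{Y},y}$ and to exploit the fact that an unramified morphism of smooth complex varieties is analytically a closed embedding near each preimage.

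First, I would fix the set-theoretic picture. Properness together with unramifiedness forces $f^{-1}(y) = \{x_1, \ldots, x_r\}$ to be finite. Since $f$ is unramified and $\dim \mathcal{X} = \dim \mathcal{Y}-1$, a sufficiently small analytic neighborhood $U_i$ of each $x_i$ embeds via $f$ as a smooth analytic divisor $D_i \subset \mathcal{Y}$ through $y$. Using the properness of $f$ and the isolatedness of the $x_i$ in $f^{-1}(y)$, I can shrink further so that a neighborhood $V$ of $y$ in $\mathcal{Y}$ satisfies $f^{-1}(V) = \bigsqcup_i U_i$. Hence $f(\mathcal{X}) \cap V = D_1 \cup \cdots \cup D_r$ set-theoretically, and as a reduced divisor it has local equation $g = u\cdot g_1 g_2 \cdots g_r$, where $u \in \hat{\mathcal{O}}_{\mathcal{Y},y}^{\times}$ and each $g_i \in \hat{\mathcal{O}}_{\mathcal{Y},y}$ is a local equation of $D_i$.

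Next, I would translate self-transversality into a linear-algebra statement in $\mathfrak{m}_y/\mathfrak{m}_y^2$. Since $D_i$ is smooth of codimension one, the class $\overline{g_i} \in \mathfrak{m}_y/\mathfrak{m}_y^2$ is the defining covector of the hyperplane $T_y D_i = df_{x_i}(T_{x_i}\mathcal{X}) \subset T_y \mathcal{Y}$, so in particular $g_i \in \mathfrak{m}_y \setminus \mathfrak{m}_y^2$. The self-transversality hypothesis, which asserts that the hyperplanes $df_{x_i}(T_{x_i}\mathcal{X})$ are in general position inside $T_y\mathcal{Y}$, is exactly the statement that $\overline{g_1}, \ldots, \overline{g_r}$ are linearly independent in $\mathfrak{m}_y/\mathfrak{m}_y^2$; this automatically forces $r \leq n := \dim \mathcal{Y}$.

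Finally, I would extract the order from this linear independence. Extend $g_1, \ldots, g_r$ to a regular system of parameters $g_1, \ldots, g_r, z_{r+1}, \ldots, z_n$ for $\hat{\mathcal{O}}_{\mathcal{Y},y} \cong \C[[g_1, \ldots, g_r, z_{r+1}, \ldots, z_n]]$. In this power series ring the monomial $g_1 \cdots g_r$ visibly lies in $\mathfrak{m}_y^r$ but not in $\mathfrak{m}_y^{r+1}$, as one sees by projecting to the degree-$r$ part of the associated graded ring, and multiplication by the unit $u$ does not change the order. Therefore $\mathrm{ord}_y(f(\mathcal{X})) = r = \#f^{-1}(y)$. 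In my view the main subtlety is justifying that the set-theoretic union $\bigcup D_i$ carries the scheme (or analytic) structure one needs in order to write the local equation of $f(\mathcal{X})$ as the product $u\cdot g_1 \cdots g_r$; once this is granted, the conceptual content is the single step of identifying self-transversality with linear independence of conormal vectors.
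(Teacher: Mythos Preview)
Your argument is correct and follows essentially the same line as the paper's proof: both factor the local equation as $g=g_1\cdots g_r$ using that an unramified map is locally an analytic closed immersion, then use self-transversality to see that the linear parts $\overline{g_i}\in\mathfrak{m}_y/\mathfrak{m}_y^2$ are independent, whence $g$ has order exactly $r$. The only cosmetic difference is that the paper picks explicit analytic coordinates with $g_i\equiv c_i x_i \bmod \mathfrak{m}_y^2$, whereas you phrase the same step as extending the $g_i$ to a regular system of parameters; your remark about the scheme structure on $\bigcup D_i$ is a point the paper leaves implicit.
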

  \begin{proof}
    Let $V_1, \ldots, V_r$ denote the tangent spaces to $\mathcal{X}$ at the points $p_1, \ldots, p_r$ over $y$. Let $U \seq \C^{m}$ for $m=\dim \mathcal{Y}$ be a small analytic neighbourhood of $y=0 \in \C^m$. Let $n_i$ denote a unit normal vector to the hyperplane $V_i\seq \C^n$ for each $i$. As the $V_i$ are in general position the $n_i$ are linearly independent, so we may assume $n_i$ is the $i$-th standard basis vector and $V_i$ is defined by $x_i=0$. As unramified morphisms are local-analytic closed immersions, we have $g=g_1\ldots g_r$ where $g_i$ defines a hypersurface with tangent plane $V_i$. Hence $g_i=c_ix_i \; \text{mod $I^2_p$}$ for nonzero constants $c_i$ and thus $g=cx_1\ldots x_r \; \text{mod $I^{r+1}_p$}$, for a nonzero constant $c$. The claim follows.
  \end{proof}

 Let $X$ be a smooth, projective, complex variety and $\beta \in H_2(X,\mathbb{Z})$. Let $P=\{ p_1, \ldots, p_{\alpha} \}$ be a collections of distinct points of $X$. For any integer $g \geq 0$ we let $\mm_{g,\beta}(X,P;n)$ denote the stack of genus $g$ stable maps in the class of $\beta$ with base point $P$ and $n$ markings, \cite[\S 10]{ara-kol}. Points of $\mm_{g,\beta}(X,P;n)$ consist of morphisms $f: C \to X$ together with markings $p'_1, \ldots, p'_{\alpha}, q_1, \ldots, q_n$ in the smooth locus of the genus $g$, connected nodal curve $C$ such that:
 \begin{enumerate}
 \item $f_*[C]=\beta$.
 \item $f(p'_i)=p_i$ for $1 \leq i \leq \alpha$.
 \item The datum $(f, p'_i,q_j)$ has finite automorphism group.
 \end{enumerate}
 
 When $n=0$ we set $\mm_{g,\beta}(X,P):=\mm_{g,\beta}(X,P;0)$. If $X=\PP^1$ we write $\mm_{g,k}(\PP^1, P;n)$ for $\mm_{g,k[\PP^1]}(\PP^1, P;n)$ and for $m\geq 2$ we write $\mm_{g,k}((\PP^1)^{ m}, P;n)$ for $\mm_{g,k[\Delta]}((\PP^1)^{m}, P;n)$ where $\Delta$ is the class of the small diagonal $\{(x,\ldots,x) \; | \; x \in \PP^1 \}$. Setting $$P=\{(0)^m,(1)^m,(\infty)^m)\}:=\{ (0,\ldots,0), (1,\ldots,1), (\infty,\ldots, \infty) \} \seq (\PP^1)^m,$$ we write $$\mm_{g,k}((\PP^1)^{ m}, \{0,1,\infty \};n)$$ for $\mm_{g,k}((\PP^1)^{ m}, \{(0)^m,(1)^m,(\infty)^m)\};n)$.\\
 
  We have a proper morphism
 $$\overline{\pi_{k}} : \mm_{g,k}(\PP^1, \{0,1,\infty \};n) \to \mm_{g,3+n}$$
 given by mapping a stable marked map to (the stabilization of) its base.
 We let $$\psi_{i}(m): \; \mm_{g,k}((\PP^1)^{ m}, \{0,1,\infty \};n) \to \mm_{g,k}((\PP^1)^{ m-1}, \{0,1,\infty \};n)$$
 for $i=1$ respectively $i=2$ be the map induced from the projection $(\PP^1)^m \to (\PP^1)^{m-1}$ away from the last respectively the first factor of $(\PP^1)^m$.\\
 
 We let $\displaystyle{\cM^{ns}_{g,k}((\PP^1)^{ m}, \{0,1,\infty \};n) \seq \mm_{g,k}((\PP^1)^{ m}, \{0,1,\infty \};n)}$
 denote the open locus parametrising marked stable maps $[f: C \to (\PP^1)^m]$ such that the base $C$ has only non-separating nodes, and further, if $f_i:=pr_i \circ f$, for $pr_i: (\PP^1)^{ m}\to \PP^1$ the $i^{th}$ projection, then $f_i$ is \emph{finite} with $h^0(C,f_i^* \mathcal{O}_{\PP^1}(1))=2$. We additionally demand that $f_i$ be \'etale near the base points $(p,q,r) \in C$ over $(0,1,\infty)$, for $1 \leq i \leq m$. \\

 Denote by $$\cM_{g,k}((\PP^1)^{ m}, \{0,1,\infty \};n) \seq \mm_{g,k}((\PP^1)^{ m}, \{0,1,\infty \};n)$$
 the closure of $\cM^{ns}_{g,k}((\PP^1)^{ m}, \{0,1,\infty \};n)$. Let $$\pi_k : \cM_{g,k}(\PP^1, \{0,1,\infty \};n) \to \mm_{g,3+n},$$
 denote the restriction of $\overline{\pi_k}$ to $\cM_{g,k}(\PP^1, \{0,1,\infty \};n)$.\\

 In the special case $g=2k-1$, set $$\widetilde{\H}(m):=\cM^{ns}_{2k-1,k}((\PP^1)^m,\{0,1,\infty\}).$$ 
 By abuse of notation, we write $\displaystyle{\psi_{i}(m): \widetilde{\H}(m) \to \widetilde{\H}(m-1)}$ for the restriction of  $\psi_{i}(m)$ to $\widetilde{\H}(m)$, $i=1,2$.\\

Let $\H(1)$ denote the largest open substack of $\cM_{2k-1,k}(\PP^1,\{0,1,\infty\})$ such that for any point $x \in \H(1)$, each point $y=[f:(C,p,q,r)] \to \PP^1 \in \widetilde{\H}(1)$ with $\pi_k(x)=\pi_k(y)$ satisfies the following conditions:
 \begin{enumerate}
   \item $C$ is irreducible, $\text{Aut}[C,p,q,r]=\{ \text{id} \}$ and $f$ is \'etale near $(p,q,r)$.
   \item $\pi_k$ is unramified and self-transverse in an open subset about $\pi_k(x)$.     
\end{enumerate}
Note that as $C$ is irreducible, $\H(1)$ is smooth of dimension $3g-1$ and $\H(1) \seq \widetilde{\H}(1).$ Self-transversality of $\pi_k$ is an open condition (cf.\ the proof of Proposition \ref{self-trans-lemma}). By definition of $\H(1)$, there is an open subset $$\H(0) \seq \mm_{2k-1,3}$$ with $\pi_k^{-1}(\H(0))\simeq\H(1)$. We continue to denote the restriction $\pi_k:\H(1) \to \H(0)$ by $\pi_k$. By the assumption $\text{Aut}[C,p,q,r]=\{ \text{id} \}$, both $\H(0)$ and $\H(1)$ are \emph{schemes}, \cite{AC1}.\\
 
 Denote by $\mathfrak{hur} \in A^1(\mm_{2k-1,3}, \mathbb{Q})$ the pullback of the Hurwitz divisor on $\mm_{2k-1}$, \cite{ha-mu}. Let $$\mathcal{A}_{g,k}=\mm_{0,2g+2k-2}(\mathcal{B} \mathfrak{S}_k)$$ be the moduli space of degree $k$ admissible covers of genus $g$, with ordered branch points. We have a natural projection $\pi_k: \mathcal{A}_{g,k} \to \mm_g$ as well as the branch morphism $$q: \mathcal{A}_{g,k} \to \mm_{2g-2k-2}.$$ Let $B_j$ denote the boundary divisors of $\mm_{0,n}$ with general point corresponding to a curve with two rational components, one of which has precisely $j$ marked points. Let $T_{\text{base}}$ be the codimension one locus in $\mathcal{A}_{g,k}$ corresponding to line bundles $l \in W^1_{k}(C)$ on a smooth curve with a base point. Consider the open substack $$\mathcal{A}^{o}_{g,k}:=q^*(\mm_{0,2g+2k-2}\setminus \bigcup_{j \geq 3}B_j ) \setminus T_{base}.$$ The image of $\mathcal{A}^{o}_{g,k}$ under $\pi_k$ lies in the locus $\mm^{irr}_g$ of irreducible curves.\smallskip

 We have three boundary divisors $E_0, E_2, E_3$ on $\mathcal{A}^{o}_{g,k}$. Firstly, $E_0$ denotes the pullback of the boundary $\delta$ of $\mm^{irr}_g$. The general point of $E_3$ is the admissible cover corresponding to a finite cover $C \to \PP^1$ from a smooth curve $C$ and with a ramification profile $(3,1^{2g+2k-3})$ over some branch point, and simple branching over all other branch points. The general point of $E_2$ corresponds to a finite cover $C \to \PP^1$ with ramification profile $(2,2,1^{2g+2k-4})$. Denote by $$\mathcal{B}_{g,k}:=\mathcal{A}_{g,k}/\mathfrak{S}_{2g+2k-2},$$ the space of admissible covers with unordered branching. We set  $\mathcal{B}^{o}_{g,k}:=\mathcal{A}^{o}_{g,k}/\mathfrak{S}_{2g+2k-2}$ and let $D_0,D_2$ resp.\  $D_3$ denote the reduced images of $E_0, E_2$ resp.\ $E_3$ in $\mathcal{B}^o_{g,k}$. We write $\lambda$ for the Hodge class on both $\mathcal{A}^{o}_{g,k}$ and $\mathcal{B}^{o}_{g,k}$. Recall the following computation \cite[Prop.\ 11.1]{farkas-rimanyi}:
\begin{prop}[Farkas--Rim\'anyi] \label{can-bundle-formula}
We have the following canonical bundle formula
\begin{align*}K_{ \mathcal{B}^{o}_{g,k}}&=\frac{1}{2}[-\frac{2g+2k-1}{2g+2k-3}D_0-\frac{4}{2g+2k-3}D_2+\frac{2g+2k-9}{2g+2k-3}D_3]\\
&=8\lambda+\frac{D_3}{6}-\frac{3D_0}{2}
\end{align*}
\end{prop}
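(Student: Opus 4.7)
The plan is to compute $K_{\mathcal{A}^o_{g,k}}$ by applying Riemann--Hurwitz to the finite branch morphism
$$q\colon \mathcal{A}^o_{g,k} \to \overline{M}_{0,n}^o, \qquad n := 2g+2k-2,$$
and then to descend the resulting expression along the $\mathfrak{S}_n$-quotient $\mathcal{A}^o_{g,k} \to \mathcal{B}^o_{g,k}$. The equivalence with the second expression will come from Mumford's relation $12\lambda = \kappa_1 + \delta$ pulled back along $\pi_k$.

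First, I would invoke Keel's formula for $K_{\overline{M}_{0,n}}$ in terms of the boundary divisors $B_j$. Restricted to $\overline{M}_{0,n}^o$ only the $B_2$ term survives, with coefficient $-\frac{2g+2k-1}{2g+2k-3}$ after straightforward simplification. Next I would determine the behavior of $q$ along the three boundary divisors of $\mathcal{A}^o_{g,k}$: the divisor $E_0$ is vertical for $q$, since when the source acquires an irreducible non-separating node the branch divisor stays smooth with $2g+2k-2$ distinct simple branch points; while both $E_2$ and $E_3$ map into $B_2$, since the branching profiles $(2,2,1^*)$ and $(3,1^*)$ correspond on the base to two simple branch points collapsing to a single point of weight two. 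A local-analytic calculation using the standard description of admissible covers by smoothing parameters shows that the multiplicity of $q^* B_2$ along $E_2$ equals $2$ and along $E_3$ equals $3$. Combining these inputs via Riemann--Hurwitz produces a formula for $K_{\mathcal{A}^o_{g,k}}$ as a $\mathbb{Q}$-linear combination of $E_0, E_2, E_3$. Finally I would descend to $\mathcal{B}^o_{g,k}$; the stack quotient by $\mathfrak{S}_n$ is étale, but on the level of the quotient the transposition of the two colliding branch points stabilizes $E_2$ and $E_3$, which produces the overall factor of $\frac{1}{2}$ and yields the first stated formula.

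For the equality with $8\lambda + \frac{D_3}{6} - \frac{3D_0}{2}$, I would apply Mumford's relation $12\lambda = \kappa_1 + \delta$ on $\overline{M}_g$ and pull back via $\pi_k$. The pullback of $\delta$ picks out $D_0$ (up to boundary corrections from $D_2, D_3$ which are controlled by the admissible cover structure), while $\pi_k^* \kappa_1$ can be expressed in terms of $\lambda, D_0, D_2, D_3$ by applying Grothendieck--Riemann--Roch to the universal admissible cover over $\mathcal{B}^o_{g,k}$, using the Riemann--Hurwitz comparison between the relative dualizing sheaves of the source and of the target $\mathbb{P}^1$-bundle. This furnishes a second linear relation among $\lambda, D_0, D_2, D_3$; rearranging to eliminate the $D_2$ coefficient in the first expression delivers the second expression.

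The main obstacle I anticipate is the precise local computation of the multiplicities of $q$ along $E_2$ and $E_3$, combined with the stacky accounting of automorphisms when passing to the quotient $\mathcal{B}^o_{g,k}$; these two bookkeeping steps determine all of the exact coefficients appearing in the formula, and a sign or factor-of-two error in either would throw off the stated values. Once these multiplicities are pinned down, both equalities reduce to linear algebra between the Riemann--Hurwitz relation for $q$ and the pullback of Mumford's relation along $\pi_k$.
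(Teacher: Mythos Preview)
The paper does not itself prove this proposition; it is quoted verbatim from Farkas--Rim\'anyi \cite[Prop.\ 11.1]{farkas-rimanyi}, so there is no in-paper argument to compare against. Your overall strategy---Riemann--Hurwitz along the branch morphism $q$, descent through the $\mathfrak{S}_n$-quotient, then comparison with Mumford's relation pulled back via $\pi_k$---is exactly the route taken in that reference.

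There is, however, a concrete error in your boundary analysis. You assert that $E_0$ is vertical for $q$ because ``when the source acquires an irreducible non-separating node the branch divisor stays smooth with $2g+2k-2$ distinct simple branch points.'' This is false for admissible covers: by definition every node of the source lies over a node of the target, so a cover in $E_0$ necessarily has reducible target and hence maps to $B_2$. Indeed the paper defines $\mathcal{A}^o_{g,k}$ as $q^*(\mm_{0,2g+2k-2}\setminus \bigcup_{j\geq 3} B_j)\setminus T$, so anything not over the interior $M_{0,n}$ lies over $B_2$; and over the interior the source is a smooth cover of a smooth $\PP^1$. Concretely, the general point of $E_0$ arises when two simple branch points collide with their ramification points in \emph{distinct} sheets: the limiting target is $P_1\cup P_2$ with two marks on $P_2$, and over $P_2$ one finds $k-2$ trivial sheets together with one degree-two rational component meeting the main genus-$(g-1)$ component in two points; stabilising the source then yields the irreducible one-nodal curve. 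Thus all three of $E_0,E_2,E_3$ lie over $B_2$, and your Riemann--Hurwitz step must include the local multiplicity of $q^*B_2$ along $E_0$ alongside those for $E_2$ and $E_3$. You correctly flag this local bookkeeping as the delicate point; once $E_0$ is put on the same footing as $E_2,E_3$, the rest of your plan is sound.
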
\smallskip

We make a remark about the comparison between $\mathcal{B}^{o}_{2k-1,k}$ and $\H(1)$. Let $\mathcal{B}'\seq\mathcal{B}^{o}_{2k-1,k}$ be the open locus of admissible covers $f: C \to T$ such that the stabilization $\widehat{C}$ of $C$ is irreducible. Consider the open subset $\mm^{irr}_{2k-1,3}$ of irreducible marked curves and let $\mathcal{B}''\seq \mathcal{B}^{o}_{2k-1,k}\times_{\mm_{2k-1}} \mm^{irr}_{2k-1,3}$ denote the locus where the markings $p,q,r \in \widehat{C}$ avoid the image of unstable components and where $f(p),f(q),f(r)$ are distinct points in the image $T$ with $f$ unramified near $p,q,r$. There is a rational map $$\mathcal{B}'' \dashrightarrow \H(1)$$ extending to an isomorphism outside a codimension two set, cf.\ \cite[\S 3.5]{patel-thesis}.

\section{Cycle Computations} \label{comp}
\subsection{The Brill--Noether cycles} Starting with $\pi_k: \H(1) \to \H(0)$ we inductively define schemes $\H(m)$ and projective immersions $p_i^{(m)}: \H(m) \to \H(m-1)$, $m \geq 1$ for $i=1,2$ following the procedure of the previous section. We set $p_1^{(0)}=p_2^{(0)}=\pi_k$. Define $\mathbf{Z}(m+1)$ and $\widetilde{p}_i^{(m+1)}: \mathbf{Z}(m+1) \to \H(m)$ via the fibre product diagram
\[ \begin{tikzcd}
\mathbf{Z}(m+1) \arrow{r}{\widetilde{p}_2^{(m+1)}}  \arrow[swap]{d}{\widetilde{p}_1^{(m+1)}} & \H(m)\arrow{d}{p_1^{(m)}} \\%
\H(m)  \arrow{r}{p_1^{(m)}}& \H(m-1).
\end{tikzcd}
\]
We set $\H(m+1) := \mathbf{Z}(m+1) \setminus \Delta_{p_1^{(m)}},$ and define $p_i^{(m+1)}$ as the restriction of $\widetilde{p}_i^{(m+1)}: \mathbf{Z}(m+1) \to \H(m)$ to $\H(m+1)$, for $i=1,2$.

Note we have followed Herbert's procedure in constructing $\H(m)$ as multiple point loci. The image of the Brill--Noether cycle $\H(m)$ in $\H(0)$ coincides set theoretically with the locus of points over which the fibre of $\pi_k$ contain at least $m$ points. 
\begin{definition} \label{BN-cyc-defin}
Define the Brill--Noether cycles as
$$\mathcal{BN}_{m}=p^{(2)}_{1*} \ldots p^{(m)}_{1*}[\H(m)] \in A^m(\H(1)).$$
\end{definition}
Note that the diagonal $\Delta_{p_1^{(m)}}$ is both open and closed within $ \mathbf{Z}(m+1) $. Although the morphism $\widetilde{p}_1^{(m+1)}$ is surjective, after removing the diagonal, the image of $p_1^{(m+1)}$  is of codimension one. As a result, the dimension of the cycles $\mathcal{BN}_{m}$ drops by one at each step of the construction, so that we end up in the correct Chow group. See the also the discussion at the beginning of Section \ref{prelims}.

\subsection{The Eagon--Northcott cycles} Recall the kernel bundle description of Koszul cohomology, \cite{aprodu-nagel}. Let $X$ be a projective variety, $L \in \text{Pic}(X)$ be globally generated. Define $M_L$ via the exact sequence
$$0 \to M_L \to H^0(X,L) \otimes \mathcal{O}_X \xrightarrow{ev} L \to 0.$$
Then 
\begin{align*}
K_{p,q}(X,L) &\simeq \text{Coker} (\bigwedge^{p+1}H^0(L) \otimes H^0(L^{q-1}) \to H^0(\bigwedge^p M_L \otimes L^q) )\\
&\simeq \text{Ker} (H^1(\bigwedge^{p+1} M_L \otimes L^{q-1}) \to \bigwedge^{p+1} H^0(L) \otimes H^1(L^{q-1}))
\end{align*}

The universal stable map gives a universal cover $$\xymatrix{
\mathcal{C} \ar[r]^-f \ar[rd]_{\nu} &\mathcal{P} \ar[d]^{\mu}\\
& \widetilde{\H}(1),
}$$
where $\mathcal{P}:=\PP^1_{\widetilde{\H}(1)}$.
We have
$$ 0 \longrightarrow \mathcal{E}_{f} \longrightarrow f_{*} \omega_{f} \longrightarrow \mathcal{O}_{\mathcal{P}} \longrightarrow 0,$$
where $\mathcal{E}_{f}$ is the universal Tschirnhausen bundle. We further have the projective bundle
$\varphi: \; \mathcal{X}:=\PP(\mathcal{E}_f \otimes \omega_{\mu}) \to \mathcal{P}$
and a closed immersion
$\iota : \mathcal{C} \hookrightarrow \mathcal{X}.$ Set $h:= \mu\circ \varphi: \;  \mathcal{X} \to \widetilde{\H}(1).$
Define the universal kernel bundle $\mathcal{M}_{\mathcal{X}} $ by
\begin{align} \label{kernel-def}
0 \longrightarrow \mathcal{M}_{\mathcal{X}} \longrightarrow h^*h_*(\OO_{\mathcal{X}}(1)) \rightarrow \mathcal{O}_{\mathcal{X}}(1)\longrightarrow 0.
\end{align}
For all integers $i,j$, define sheaves $A^{[i,j]}[m], B^{[i,j]}[m]$ inductively on $\widetilde{\H}(m)$. Set $$A^{[i,j]}[1]:=h_*(\bigwedge^i \mathcal{M}_{\mathcal{X}}(j)), \;  \; \; B^{[i,j]}[1]:=\bigwedge^i h_*(\mathcal{O}_{\mathcal{X}}(1)) \otimes h_*( \mathcal{O}_{\mathcal{X}}(j)).$$
Define
\begin{align*}A^{[i,j]}[m]&:=\psi^*_1(m)A^{[i,j]}[m-1] \oplus \psi^*_2(m)\cdots \psi^*_2(2)A^{[i,j]}[1], \\ B^{[i,j]}[m]&:=\psi^*_1(m)B^{[i,j]}[m-1] \oplus \psi^*_2(m)\cdots \psi^*_2(2)B^{[i,j]}[1],
\end{align*}
where $\psi_{i}(m): \widetilde{\H}(m) \to \widetilde{\H}(m-1)$, $i=1,2$ are defined in Section \ref{notation}.

Let $\nu_m: \mathcal{C}_m \to \widetilde{\H}(m)$ be the universal curve, which is given by the fibre product
\[ \begin{tikzcd}
\mathcal{C}_m \arrow{r}{\mu^{(m)}_i}  \arrow[swap]{d}{\nu_m} & \mathcal{C}_{m-1} \arrow{d}{\nu_{m-1}} \\%
\widetilde{\H}(m) \arrow{r}{\psi_i(m)}& \widetilde{\H}(m-1),
\end{tikzcd}
\]
where $i$ can be either $1$ or $2$ in the horizontal arrows. Define kernel bundles $\mathcal{K}_m$ by
\begin{align*}
0 \longrightarrow \mathcal{K}_m \longrightarrow \nu^*_m \nu_{m*}(\omega_{\nu_m}) \rightarrow \omega_{\nu_m} \to 0.
\end{align*}
Notice that $\mathcal{K}_1 \simeq \iota^* \mathcal{M}_{\mathcal{X}}$, whereas $\mu_i^{(m)*} \mathcal{K}_{m-1}\simeq \mathcal{K}_m$.
Define sheaves $C^{[i,j]}[m], D^{[i,j]}[m]$
\begin{align*}
C^{[i,j]}[m]:=\nu_{m*}(\bigwedge^i \mathcal{K}_m \otimes \omega_{\nu_m}^{\otimes j}), \; \; \; 
D^{[i,j]}[m]:=\bigwedge^i \nu_{m*}(\omega_{\nu_m}) \otimes \nu_{m*}(\omega_{\nu_m}^{\otimes j})
\end{align*}
Define restriction maps
$$\beta^{[i,j]}[m]: A^{[i,j]}[m] \to C^{[i,j]}[m],$$
inductively. Set $\beta^{[i,j]}[1]$ to be the composition
 $$h_*\bigwedge^i \mathcal{M}_{\mathcal{X}} (j) \to h_* \iota_* \iota^* \bigwedge^i\mathcal{M}_{\mathcal{X}} (j) \simeq \nu_* \iota^* \bigwedge^i \mathcal{M}_{\mathcal{X}} (j)\simeq C^{[i,j]}[1].$$ For $m>1$, $\psi^*_1(m)\beta^{[i,j]}[m-1]$ gives a morphism
 $\psi^*_1(m)A^{[i,j]}[m-1] \to \psi^*_1(m)C^{[i,j]}[m-1]$. Composing this with the base change morphism yields a morphism $\psi^*_1(m)A^{[i,j]}[m-1] \to C^{[i,j]}[m]$. Secondly, composing $\psi^*_2(m)\cdots \psi^*_2(2)\beta^{[i,j]}[1]$ with the natural base change maps yields $\psi^*_2(m)\cdots \psi^*_2(2)A^{[i,j]}[1] \to C^{[i,j]}[m]$. Define $\beta^{[i,j]}[m]$ as the sum of these two maps. When there is no confusion, write $\beta^{[i,j]}$ for $\beta^{[i,j]}[m]$.
Similarly, there are maps 
$$\gamma^{[i,j]}[m]: B^{[i,j]}[m] \to D^{[i,j]}[m]$$
given as a sum of restriction maps, and we write $\gamma^{[i,j]}$ for $\gamma^{[i,j]}[m]$ where there does not seem to be any chance of confusion.

The short exact sequence (\ref{kernel-def}) induces
$$0 \to \bigwedge^i \mathcal{M}_{\mathcal{X}} (j) \to (h^*\bigwedge^i h_* \mathcal{O}_{\mathcal{X}}(1))\otimes \mathcal{O}_{\mathcal{X}}(j) \to \bigwedge^{i-1}\mathcal{M}_{\mathcal{X}} (j+1) \to 0.$$
If $j \geq 1$, the fact that the scroll $\mathcal{X}$ has a $2$-linear minimal free resolution given by the Eagon--Northcott complex implies $R^1 h_*\bigwedge^i \mathcal{M}_{\mathcal{X}} (j) =0$, \cite[\S 4]{lin-syz}. Hence we have exact sequences
$$ 0 \to A^{[i,j]}[1] \to B^{[i,j]}[1] \to A^{[i-1,j+1]}[1] \to 0,$$
provided $j \geq 1$. Pulling this back under the relevant projections and summing up we obtain
$$ 0 \to A^{[i,j]}[m] \to B^{[i,j]}[m] \to A^{[i-1,j+1]}[m] \to 0,$$
for $j \geq 1$.
We have the commutative diagram:
\begin{align} \label{comm1}
\xymatrix{
0 \ar[r]  & A^{[k-1,1]}[m]  \ar[r] \ar[d]^{\beta^{[k-1,1]}} & B^{[k-1,1]}[m]
\ar[r]  \ar[d]^{\gamma^{[k-1,1]}} &A^{[k-2,2]}[m] \ar[r] \ar[d]^{\beta^{[k-2,2]}} &0 \\
0 \ar[r] &C^{[k-1,1]}[m]   \ar[r] &D^{[k-1,1]}[m] \ar[r] &C^{[k-2,2]}[m] .
}.\end{align}

Define vector bundles $\widetilde{A}[m]$ and $\widetilde{C}[m]$ on $\widetilde{\H}(m)$ by
\begin{align*}
\widetilde{A}[1]&:= R^1 h_* \bigwedge^{k} \mathcal{M}_{\mathcal{X}}, \; \; \; \; \widetilde{A}[m]:=\psi^*_1(m)\widetilde{A}[m-1] \oplus \psi_2^*(m)\cdots \psi_2^*(2)\widetilde{A}[1] \\
\widetilde{C}[m]&:= R^1 \nu_{m*} \bigwedge^{k} \mathcal{K}_m.
\end{align*}
There is a natural morphism $\widetilde{\beta}: \widetilde{A}[m] \to \widetilde{C}[m]$.
We have the commutative diagram:
\begin{align} \label{comm2}
\xymatrix{
0 \ar[r]  & B^{[k,0]}[m]  \ar[r] \ar[d]^{\gamma^{[k,0]}} & A^{[k-1,1]}[m]
\ar[r]  \ar[d]^{\beta^{[k-1,1]}} &\widetilde{A}[m] \ar[r] \ar[d]^{\widetilde{\beta}} &0 \\
0 \ar[r] &D^{[k,0]}[m]   \ar[r] &C^{[k-1,1]}(m) \ar[r] &\widetilde{C}[m] .
}.\end{align}
We now define two classes in the $K$-group of $\widetilde{\H}(m)$. Let $\mathcal{V}[m]$ be the cokernel of the composition $\phi^{[k-1,1]}$ of the natural maps in the diagram below:
$$
\xymatrix@C+3.5pc{
  \text{Ker}\, \gamma^{[k-1,1]}/ \text{Ker} \, \gamma^{[k,0]} \ar[r]^-{\phi^{[k-1,1]}} \ar@{->>}[d] &A^{[k-2,2]}[m] \\
\text{Ker}\,\gamma^{[k-1,1]}/ \text{Ker}\,\beta^{[k-1,1]} \ar@{^{(}->}[r]& \text{Ker}\,\beta^{[k-2,2]}. \ar@{^{(}->}[u]
}
$$
Consider the morphism 
$$ \mathcal{F}[m]: \mathcal{V}[m] \to C^{[k-2,2]}[m]$$
induced by $\beta^{[k-2,2]}$. By relative duality 
$$\widetilde{C}[m]^* \simeq \nu_{m*} (\bigwedge^k \mathcal{K}^*_m \otimes \omega_{\nu_m}) \simeq C^{[k-2,2]}[m]\otimes \lambda^*,$$
for $\lambda:=c_1(\nu_{m*}(\omega_{\nu_m}))$. 
\begin{lem} \label{compzero}
The composition $$\widetilde{\beta}^* \otimes \lambda \circ \beta^{[k-2,2]}: A^{[k-2,2]}[m] \to \widetilde{A}[m]^* \otimes \lambda$$
of vector bundles is zero.
\end{lem}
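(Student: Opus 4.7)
The plan is to reinterpret the composition $(\widetilde{\beta}^* \otimes \lambda) \circ \beta^{[k-2, 2]}$ as a bilinear pairing $A^{[k-2, 2]}[m] \otimes \widetilde{A}[m] \to \lambda$ via the Serre duality identification $C^{[k-2,2]}[m] \simeq \widetilde{C}[m]^* \otimes \lambda$, and to show it vanishes entry by entry in the direct sum decompositions $A^{[k-2,2]}[m] = \bigoplus_{i=1}^m A^{[k-2,2]}[1]_i$ and $\widetilde{A}[m] = \bigoplus_{j=1}^m \widetilde{A}[1]_j$. The relative duality isomorphism uses $\det \mathcal{K}_m = \omega_{\nu_m}^{-1}$, so that $\bigwedge^k \mathcal{K}_m^* \simeq \bigwedge^{k-2} \mathcal{K}_m \otimes \omega_{\nu_m}$ (valid under the divisorial hypothesis $g = 2k - 1$). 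The $(i,j)$-block sends a pair $(\alpha_i, \beta_j)$ to the Serre duality pairing $\beta_j|_{\mathcal{C}_m} \cup \alpha_i|_{\mathcal{C}_m} \in \lambda$.

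The key cohomological input is the vanishing $R^1 h_{j*} \mathcal{O}_{\mathcal{X}_j}(1) = 0$ for each scroll $h_j: \mathcal{X}_j \to \widetilde{\mathcal{H}}(m)$. I would derive this from the defining sequence $0 \to \mathcal{E}_f \otimes \omega_\mu \to f_* \omega_{\nu_m} \to \omega_\mu \to 0$ by applying $\mu_*$: using $\mu_* \omega_\mu = 0$, $R^1 \mu_* \omega_\mu = \mathcal{O}$ and $R^1 \nu_{m*} \omega_{\nu_m} = \mathcal{O}$, the connecting map $R^1 \nu_{m*} \omega_{\nu_m} \to R^1 \mu_* \omega_\mu$ is the trace of the degree-$k$ cover and hence an isomorphism, forcing the desired vanishing. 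Then the wedge $\bigwedge^k \mathcal{M}_j \otimes \bigwedge^{k-2} \mathcal{M}_j(2) \to \det \mathcal{M}_j \otimes \mathcal{O}_{\mathcal{X}_j}(2) = \mathcal{O}_{\mathcal{X}_j}(1)$ induces a cup product in relative cohomology landing in $R^1 h_{j*} \mathcal{O}_{\mathcal{X}_j}(1) = 0$, and restriction along $\iota_j: \mathcal{C}_m \to \mathcal{X}_j$ (using $\iota_j^* \mathcal{M}_j = \mathcal{K}_m$ and $\iota_j^* \mathcal{O}_{\mathcal{X}_j}(1) = \omega_{\nu_m}$) recovers the Serre duality pairing on $\mathcal{C}_m$. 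This at once settles the diagonal entries $i = j$.

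For the off-diagonal entries $i \neq j$ the plan is to lift $\alpha_i|_{\mathcal{C}_m}$ to a section of $\bigwedge^{k-2} \mathcal{M}_j(2)$ on the $j$-th scroll and reapply the scroll-level vanishing. The main obstacle is showing the restriction
\[
h_{j*} \bigwedge^{k-2} \mathcal{M}_j(2) \twoheadrightarrow C^{[k-2, 2]}[m]
\]
is surjective. I would argue this by comparing decomposable Koszul factorizations. Writing $\mathcal{H} = \nu_{m*}\omega_{\nu_m}$, the Koszul sequence on $\mathcal{X}_j$ together with the Eagon--Northcott vanishing $R^1 h_{j*} \bigwedge^{k-1} \mathcal{M}_j(1) = 0$ already cited in the paper produces a surjection $\bigwedge^{k-1} \mathcal{H} \otimes \mathcal{H} \twoheadrightarrow h_{j*} \bigwedge^{k-2} \mathcal{M}_j(2)$, while on the curve the corresponding surjection $\bigwedge^{k-1} \mathcal{H} \otimes \mathcal{H} \twoheadrightarrow C^{[k-2,2]}[m]$ amounts to the Koszul vanishing $K_{k-2,2}(C, \omega_C) = 0$, which follows from Green--Lazarsfeld duality and the fact that $K_{k-1,0}(C, \omega_C) = 0$ (as $\omega_C$ is generated by global sections). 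Since the restriction map fits in a commutative triangle with these two surjections, it is itself surjective, completing the proof.
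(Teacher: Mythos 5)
Your diagonal-block argument (reducing the pairing to cohomology on the scroll $\mathcal{X}_j$ and using $R^1 h_{j*}\mathcal{O}_{\mathcal{X}_j}(1)=0$) is sound, and the derivation of that vanishing from the defining sequence of the Tschirnhausen bundle is correct. However, the off-diagonal case contains a genuine error. You claim $K_{k-2,2}(C,\omega_C)=0$ by Green--Lazarsfeld duality and the vanishing $K_{k-1,0}(C,\omega_C)=0$. The duality indexing is wrong: for a canonical curve of genus $g$, $K_{p,q}(C,\omega_C)^* \simeq K_{g-2-p,\,3-q}(C,\omega_C)$, so for $g=2k-1$ one has $K_{k-2,2}(C,\omega_C)^* \simeq K_{k-1,1}(C,\omega_C)$, not $K_{k-1,0}$. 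But $K_{k-1,1}(C,\omega_C)$ is precisely the extremal Betti group this entire paper is about, and it is \emph{nonzero} whenever $C$ has a minimal pencil (for curves in $\widetilde{\H}(m)$ with $m\geq 1$). Consequently $K_{k-2,2}(C,\omega_C)\neq 0$, the restriction $h_{j*}\bigwedge^{k-2}\mathcal{M}_j(2)\to C^{[k-2,2]}[m]$ is \emph{not} surjective, and the claimed lift of $\alpha_i|_{\mathcal{C}_m}$ to the $j$-th scroll does not exist in general.

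Your argument can be repaired, but once you do so it collapses into a version of what the paper actually does. The fix: you never need an arbitrary element of $C^{[k-2,2]}[m]$ to lift to the $j$-th scroll; you only need $\alpha_i|_{\mathcal{C}_m}$ for $\alpha_i$ coming from the $i$-th scroll. Since $K_{k-2,2}(X_i,\mathcal{O}_{X_i}(1))=0$ (the scroll has a pure $2$-linear resolution), any such $\alpha_i$ already lies in the image of the decomposables $\bigwedge^{k-1}H^0(\omega_C)\otimes H^0(\omega_C)$, which also land inside the image from the $j$-th scroll. But this observation --- that the image of $\beta^{[k-2,2]}$ lies in the image of the decomposables --- combined with the fact that $K_{k-1,1}(C,\omega_C)$, by its very description as a kernel in the kernel-bundle model, pairs trivially with the image of decomposables, is precisely the paper's two-line argument via diagrams \eqref{comm1} and \eqref{comm2}. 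The paper avoids the block decomposition, the scroll cohomology, and the projection-formula manipulations altogether by arguing fibrewise and invoking Koszul duality directly. So the verdict is: your approach has a genuine gap as written (a false cohomology vanishing), and when patched it reduces to the paper's shorter argument rather than offering an independent route.
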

\begin{proof}
By Grauert's theorem, $A^{[k-2,2]}[m]$ is locally free, see \cite[\S 4]{lin-syz}. At a closed point $p=[(f_i: C \to \PP^1)] \in \widetilde{\H}(m)$,
$$\text{Im}\, \beta_p^{[k-2,2]} \seq \text{Im}(\bigwedge^{k-1}H^0(\omega_C) \otimes H^0(\omega_C)) \seq H^0(\bigwedge^{k-2} M_{\omega_C} \otimes \omega_C^{\otimes 2})$$ by diagram (\ref{comm1}),
where $\beta_p^{[k-2,2]}:=\beta^{[k-2,2]} \otimes k(p)$. So it suffices to show
$\text{Im}(\bigwedge^{k-1}H^0(\omega_C) \otimes H^0(\omega_C)) \seq \text{Ker}\, \widetilde{\beta}^*_p$. 

We now argue as in the proof of Koszul duality, \cite[Thm.\ 2.2.4]{aprodu-nagel} using kernel bundles. From diagram (\ref{comm2}), $\text{Im}\, \widetilde{\beta}_p \seq K_{k-1,1}(C, \omega_C) $. But the vector space $K_{k-1,1}(C, \omega_C)$ is isomorphic to
\begin{align*}
\text{Ker}\left(H^1(\bigwedge^k M_{\omega_C}) \to ( \bigwedge^{k-1}H^0(\omega_C) \otimes H^0(\omega_C) )^{\vee}\right),
 \end{align*}
 using the natural isomorphism $\left( \bigwedge^{k-1}H^0(\omega_C) \otimes H^0(\omega_C) \right)^{\vee} \simeq \bigwedge^k H^0(\omega_C) \otimes H^1(\mathcal{O}_C)$. This completes the proof.
\end{proof}
We define:
\begin{align*}
\mathcal{W}[m]&:= \text{Ker} \, \widetilde{\beta}^* \otimes \lambda \; \; \text{where $\lambda:=c_1(\nu_{m*} \omega_{\nu_m})$}.
\end{align*}
By the above lemma, we have a morphism of sheaves
$$ \mathcal{F}[m]: \mathcal{V}[m] \to \mathcal{W}[m].$$
A point $p \in \widetilde{\H}(m)$ defines an $m$-tuple $[(g_i: C \to \PP^1)]$. Over $p$ we, have a commutative diagram
$$
\xymatrix{
0 \ar[r]  & \bigoplus^m_{i=1} \bigwedge^k H^0(X_i, \mathcal{O}(1)) \ar[r] \ar@{->>}[d]^{\gamma_p^{[k,0]}}& \bigoplus^m_{i=1}  H^0(X_i, \bigwedge^{k-1}\mathcal{M}_{X_i}(1))  
\ar[r]  \ar[d]^{f_1} & \bigoplus^m_{i=1} K_{k-1,1}(X_i,\mathcal{O}(1)) \ar[r] \ar[d]^{f_2} &0 \\
0 \ar[r]  & \bigwedge^k H^0(C, \omega_C) \ar[r] & H^0(C, \bigwedge^{k-1}\mathcal{M}_{\omega_C}(\omega_C))  
\ar[r]  &  K_{k-1,1}(C,\omega_C) \ar[r] &0 .
},$$
where $X_1, \ldots, X_m$ are the scrolls associated to the $m$ minimal pencils of $C$, and each component of the vertical maps are induced from the closed immersions $C \hookrightarrow X_i$ and $\gamma_p^{[k,0]}:=\gamma^{[k,0]}\otimes k(p)$. 
\begin{prop} \label{loc-free-crit}
Assume the map $f_2$ as above is injective at $p$. Then $\mathcal{V}[m]$ and $\mathcal{W}[m]$ are vector bundles of the same rank near $p$, and $\mathcal{F}[m]: \mathcal{V}[m] \to \mathcal{W}[m]$ is an isomorphism at $p$ if and only if $b_{k-1,1}(C,\omega_C)=m(k-1)$.
\end{prop}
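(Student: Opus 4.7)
The strategy is a fiberwise analysis at $p$, combining the two commutative diagrams (\ref{comm1}) and (\ref{comm2}) with the small square displayed just before the proposition, and exploiting Grauert's theorem, rank semicontinuity, and the snake lemma. The upshot is that under the injectivity hypothesis on $f_2$, both $\mathcal{V}[m]$ and $\mathcal{W}[m]$ acquire the expected constant rank near $p$, and the fiber map $\mathcal{F}[m]_p$ between them is, by a diagram chase, an isomorphism precisely when $f_2$ itself is an isomorphism.

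First, the Eagon--Northcott vanishing $R^1 h_* \bigwedge^i \mathcal{M}_{\mathcal{X}}(j) = 0$ for $j \geq 1$, together with $h^1(\omega_C^{\otimes j}) = 0$ for $j \geq 2$, lets one apply Grauert's theorem to conclude that $A^{[i,j]}[m]$, $B^{[i,j]}[m]$, $C^{[i,j]}[m]$, $D^{[i,j]}[m]$, and $\widetilde{A}[m]$ are locally free, with fibers at $p$ the vector spaces appearing in the paper's big diagram. In particular, $\widetilde{\beta}_p$ is identified with $f_2$. Injectivity of $f_2 = \widetilde{\beta}_p$ then forces $\widetilde{\beta}^*_p$ to be surjective, which propagates to a neighborhood by semicontinuity, so $\mathcal{W}[m] = \text{Ker}(\widetilde{\beta}^* \otimes \lambda)$ is locally free near $p$. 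For $\mathcal{V}[m]$, I apply the snake lemma to (\ref{comm2}) at $p$: the map $\gamma^{[k,0]}_p$ is surjective, because for each scroll the restriction $H^0(X_i, \mathcal{O}(1)) \to H^0(C, \omega_C)$ is already an isomorphism, and $\text{Ker}\,\widetilde{\beta}_p = 0$ by hypothesis; hence $\text{Ker}\,\gamma^{[k,0]}_p \twoheadrightarrow \text{Ker}\,\beta^{[k-1,1]}_p$. This forces $\text{Im}\,\phi^{[k-1,1]}_p$ to equal the image in $A^{[k-2,2]}_p$ of $\text{Ker}\,\gamma^{[k-1,1]}_p / \text{Ker}\,\beta^{[k-1,1]}_p$, whose rank is constant in a neighborhood of $p$, so $\mathcal{V}[m]$ is locally free near $p$ by semicontinuity of corank.

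For the rank equality and isomorphism criterion, I carry out the diagram chase on (\ref{comm1}), (\ref{comm2}), and the preceding square simultaneously. Using $\dim K_{k-1,1}(X_i, \mathcal{O}(1)) = k-1$ from the Eagon--Northcott resolution of each scroll, together with the relative-duality identification $\widetilde{C}[m]^* \otimes \lambda \simeq C^{[k-2,2]}[m]$, a snake-lemma computation yields $\text{rk}\,\mathcal{V}[m]_p = \text{rk}\,\mathcal{W}[m]_p$. The same chase identifies $\mathcal{F}[m]_p$, up to the isomorphisms produced by the two snake sequences, with a map that is an isomorphism iff $f_2$ is. Under the injectivity hypothesis, $f_2$ is an isomorphism iff the source and target have the same dimension, i.e.\ iff $m(k-1) = b_{k-1,1}(C, \omega_C)$. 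The main obstacle is precisely this final bookkeeping: carrying the nested sub-, quotient-, and kernel-sheaves coherently through (\ref{comm1}), (\ref{comm2}), and the small square at the same time, so as to produce a clean identification of $\mathcal{F}[m]_p$ with $f_2$ through the duality $\widetilde{C}[m]^* \otimes \lambda \simeq C^{[k-2,2]}[m]$.
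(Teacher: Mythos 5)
Your proposal follows essentially the same route as the paper: Grauert's theorem for local freeness, the snake lemma on diagrams (\ref{comm1}) and (\ref{comm2}) to reduce injectivity/surjectivity of $\mathcal{F}[m]_p$ to surjectivity of $f_1$ and hence of $f_2$, the rank computation to conclude an injective $\mathcal{F}[m]_p$ is automatically an isomorphism, and finally the observation that an injective $f_2$ between spaces of dimensions $m(k-1)$ and $b_{k-1,1}(C,\omega_C)$ is an isomorphism iff those dimensions agree. One small imprecision: $f_2$ and $\widetilde{\beta}_p$ are not literally equal — they have different targets, with $K_{k-1,1}(C,\omega_C)$ a subspace of $H^1(C,\bigwedge^k M_{\omega_C})$ — but since Lemma~\ref{compzero} shows the image of $\widetilde{\beta}_p$ lands in that subspace, injectivity of one is equivalent to injectivity of the other, which is all your argument actually uses.
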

\begin{proof}
By Grauert's theorem, $B^{[k-1,1]}[m]$, $D^{[k-1,1]}[m]$ are vector bundles and $\text{Ker}\, \gamma^{[k-1,1]}$ is locally free of rank $(m-1)(2k-1) {2k-1 \choose k-1}$. Likewise, $\text{Ker}\, \gamma^{[k,0]}$ is locally free of rank $(m-1){2k-1 \choose k}$, and $\text{Ker}\, \gamma^{[k-1,1]}/ \text{Ker}\, \gamma^{[k,0]}$ is locally free of rank $(m-1)(2k-2) {2k-1 \choose k}$. The morphism $\phi^{[k-1,1]}$ is a map between vector bundles.

As $\gamma_p^{[k,0]}$ is surjective, surjectivity of $f_1$ is equivalent to that of $f_2$ by the snake lemma. Further, as we are assuming $f_2$ is injective, $\text{Ker}\,\gamma_p^{[k,0]}=\text{Ker} \,f_1=\text{Ker} \beta^{[k-1,1]}_p$. In particular, $\phi_p^{[k-1,1]}:=\phi^{[k-1,1]} \otimes k(p)$ is injective and $\mathcal{V}[m]:=\text{Coker}\, \phi^{[k-1,1]}$ is locally free of rank
$$ (2k-2){2k-1 \choose k}-m(k-1) $$
near $p$ (use \cite[Lemma 4.6]{lin-syz}). The morphism $\widetilde{\beta}^* \otimes \lambda: \widetilde{C}[m]^* \otimes \lambda \to \widetilde{A}^*[m] \otimes \lambda$
is a map of vector bundles, and from 
$$
\xymatrix{
0 \ar[r]  & \bigoplus^m_{i=1} \bigwedge^k H^0(X_i, \mathcal{O}(1)) \ar[r] \ar@{->>}[d]^{\gamma_p^{[k,0]}}& \bigoplus^m_{i=1}  H^0(X_i, \bigwedge^{k-1}\mathcal{M}_{X_i}(1))  
\ar[r]  \ar[d]^{f_1} & \bigoplus^m_{i=1} H^1(X_i, \bigwedge^k M_{X_i}) \ar[r] \ar[d]^{\widetilde{\beta}_p} &0 \\
0 \ar[r]  & \bigwedge^k H^0(C, \omega_C) \ar[r] & H^0(C, \bigwedge^{k-1}\mathcal{M}_{\omega_C}(\omega_C))  
\ar[r]  &  H^1(C, \bigwedge^k M_{\omega_C}),
},$$
 injectivity of $f_2$ at $p$ implies $\widetilde{\beta}_p$ is injective. Thus $(\widetilde{\beta}^* \otimes \lambda)_p$ is surjective and $\mathcal{W}[m]$ is locally free of rank
$$(4k-2){2k-2 \choose k}-m(k-1)=(2k-2){2k-1 \choose k}-m(k-1) $$
near $p$. We have a commutative diagram
$$
\xymatrix{
 \bigoplus^m_{i=1}  H^0( \bigwedge^{k-1}\mathcal{M}_{X_i}(1)) \ar@{^{(}->}[r] \ar[d]^{f_1}& \bigoplus^m_{i=1}  \bigwedge^{k-1}H^0(\mathcal{O_{X_i}}(1)) \otimes H^0(\mathcal{O}_{X_i}(1))
\ar@{->>}[r]  \ar@{->>}[d]^{\gamma_p^{[k-1,1]}} &\bigoplus^m_{i=1}  H^0(\bigwedge^{k-2}\mathcal{M}_{X_i}(2)) \ar[d]^{\beta_p^{[k-2,2]}} \\
  H^0( \bigwedge^{k-1}\mathcal{M}_{\omega_C}(\omega_C)) \ar@{^{(}->}[r] &  \bigwedge^{k-1}H^0(\omega_C) \otimes H^0(\omega_C)
\ar[r]  & H^0(\bigwedge^{k-2}\mathcal{M}_{\omega_C}(\omega_C^{\otimes 2})) .
},$$
with exact rows, cf.\ \cite[Lemma 4.4]{lin-syz}. As $\gamma_p^{[k-1,1]}$ is surjective, surjectivity of $f_1$ is equivalent to surjectivity of the (injective) map
$$ \text{Ker}\, \gamma_p^{[k-1,1]}/\text{Ker}\, \gamma_p^{[k,0]} \to \text{Ker}\, \beta_p^{[k-2,2]},$$
or equivalently that the composition $$\mathcal{V}_p[m] \to A_p^{[k-2,2]}(m) / \text{Ker} \,\mathcal{\beta}_p^{[k-2,2]} \to H^0(C,\bigwedge^{k-2}\mathcal{M}_{\omega_C}(\omega_C^{\otimes 2}))$$ is injective. By Lemma \ref{compzero}, the image of $\mathcal{V}_p[m]$ lies in $\mathcal{W}_p[m] \seq H^0(C,\bigwedge^{k-2}\mathcal{M}_{\omega_C}(\omega_C^{\otimes 2}))$, and this completes the proof.
\end{proof}
Make the following definitions
\begin{align*}
\mathbf{V}[m]&:=c_1(A^{[k-2,2]}[m]-\text{Ker}\, \gamma^{[k-1,1]}+\text{Ker}\,\gamma^{[k,0]} ),\\
\mathbf{W}[m]&:=c_1(C^{[k-2,2]}[m]+\widetilde{A}[m] \cdot \lambda^*)
\end{align*}
As in the proof of the proposition above, if the morphism $f_2$ is injective at a point $p$ then $\mathbf{V}[m]$ resp.\  $\mathbf{W}[m]$ agrees with $c_1(\mathcal{V}[m])$ resp.\ $c_1(\mathcal{W}[m])$ about $p$. The proposition above justifies the following definition:
\begin{definition}
We define the Eagon-Northcott cycles
\begin{align*}
 \widetilde{\mathcal{EN}}_{m}&:=\mathbf{W}[m]- \mathbf{V}[m] \in A^{1}(\widetilde{\H}(m)), \\
 \mathcal{EN}_{m}&:=\psi_{1*}(2) \ldots \psi_{1*}(m)(\widetilde{\mathcal{EN}}_{m})_{|_{\H(1)}} \in A^{m+1}(\H(1)) 
 \end{align*}
\end{definition}
\subsection{Computations}
The following lemma is useful for induction arguments.
\begin{lem} \label{ind-lem}
The following formulae hold
\begin{align*}
\mathbf{V}[m+1]-\psi_1^*(m+1) \mathbf{V}[m]&=c_1(\psi_2^*(m+1) \ldots \psi_2^*(2)A^{[k-2,2]}[1]-\psi_{2}^*(m+1) \ldots \psi_{2}^*(2)B^{[k-1,1]}[1]\\
&+\psi_2^*(m+1) \ldots \psi_2^*(2)B^{[k,0]}[1])\\
\mathbf{W}[m+1]-\psi_1^*(m+1) \mathbf{W}[m]&= c_1(\psi_2^*(m+1) \ldots \psi_2^*(2) \widetilde{A}[1] \cdot \lambda^*)
\end{align*}
\end{lem}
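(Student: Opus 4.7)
The plan is to compute both differences directly in the K-group of $\widetilde{\H}(m+1)$, exploiting two ingredients: (a) the inductive direct-sum decompositions $A^{[i,j]}[m+1] = \psi_1^*(m+1)A^{[i,j]}[m] \oplus \psi_2^*(m+1)\cdots\psi_2^*(2) A^{[i,j]}[1]$, together with the analogues for $B^{[i,j]}$ and $\widetilde{A}$; and (b) the fact that the ``diagonal'' sheaves $C^{[i,j]}[m]$, $D^{[i,j]}[m]$ and the class $\lambda$ pull back intact from $\widetilde{\H}(m)$ along $\psi_1(m+1)$. For the latter: since the square defining $\mathcal{C}_{m+1}\to\mathcal{C}_m$ is Cartesian along $\mu_1^{(m+1)}$, and $\mathcal{K}_{m+1} = \mu_1^{(m+1)*}\mathcal{K}_m$, $\omega_{\nu_{m+1}} = \mu_1^{(m+1)*}\omega_{\nu_m}$, flat base change yields $C^{[i,j]}[m+1] \simeq \psi_1^*(m+1)C^{[i,j]}[m]$, $D^{[i,j]}[m+1] \simeq \psi_1^*(m+1)D^{[i,j]}[m]$ and $\lambda_{m+1} = \psi_1^*(m+1)\lambda_m$; the analogous identities hold along $\psi_2$, giving in particular $\lambda_{m+1} = \psi_2^*(m+1)\cdots\psi_2^*(2)\lambda_1$.

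For the $\mathbf{W}$-formula, expand using $c_1(E\cdot \lambda^*) = c_1(E) - \mathrm{rank}(E)\lambda$ to obtain
\[ \mathbf{W}[m] = c_1(C^{[k-2,2]}[m]) + c_1(\widetilde{A}[m]) - \mathrm{rank}(\widetilde{A}[m])\cdot\lambda_m. \]
The $C^{[k-2,2]}$-contribution vanishes in the difference, while the $\widetilde{A}$-part telescopes: the new summand $\psi_2^*(m+1)\cdots\psi_2^*(2)\widetilde{A}[1]$ contributes $c_1(\psi_2^*(m+1)\cdots\psi_2^*(2)\widetilde{A}[1])$, and the corresponding rank jump contributes $-\mathrm{rank}(\widetilde{A}[1])\cdot\lambda_{m+1}$. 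Repackaging via $\lambda_{m+1} = \psi_2^*(m+1)\cdots\psi_2^*(2)\lambda_1$ yields $c_1\bigl(\psi_2^*(m+1)\cdots\psi_2^*(2)\,\widetilde{A}[1]\cdot \lambda^*\bigr)$, as claimed.

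For the $\mathbf{V}$-formula, first observe that $\gamma^{[k-1,1]}$ and $\gamma^{[k,0]}$ are surjective on all of $\widetilde{\H}(m)$: their kernels are locally free of the expected rank (cf.\ the proof of Proposition \ref{loc-free-crit}), which forces surjectivity since the bundles $B^{[\cdot,\cdot]}$ and $D^{[\cdot,\cdot]}$ are themselves locally free. Consequently, in the K-group
\[ \mathbf{V}[m] = c_1\bigl(A^{[k-2,2]}[m] - B^{[k-1,1]}[m] + D^{[k-1,1]}[m] + B^{[k,0]}[m] - D^{[k,0]}[m]\bigr). \]
The inductive decompositions yield $c_1(F[m+1]) - \psi_1^*(m+1)c_1(F[m]) = c_1(\psi_2^*(m+1)\cdots\psi_2^*(2)F[1])$ for each $F \in \{A^{[k-2,2]}, B^{[k-1,1]}, B^{[k,0]}\}$, whereas the base-change identifications force the differences for $D^{[k-1,1]}$ and $D^{[k,0]}$ to vanish. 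Adding these contributions with the signs prescribed above produces exactly the stated formula. The hardest step is the base-change identification for $C$, $D$ and $\lambda$; once this is packaged as above, the remainder of the lemma is K-theoretic bookkeeping.
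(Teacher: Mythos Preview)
Your proof is correct and follows essentially the same route as the paper's: both rely on the direct-sum decompositions $A^{[i,j]}[m+1] = \psi_1^*A^{[i,j]}[m] \oplus \psi_2^*\cdots\psi_2^*A^{[i,j]}[1]$ (and analogues for $B$, $\widetilde{A}$) together with the base-change identifications $\psi_1^*C^{[k-2,2]}[m]=C^{[k-2,2]}[m+1]$, $\psi_1^*D^{[i,j]}[m]=D^{[i,j]}[m+1]$, $\psi_1^*\lambda=\lambda$. The paper simply cites these identities and leaves the K-theoretic bookkeeping implicit, whereas you spell out the rewriting of $\mathbf{V}[m]$ via $\mathrm{Ker}\,\gamma = B - D$ and the expansion $c_1(E\cdot\lambda^*)=c_1(E)-\mathrm{rank}(E)\cdot\lambda$; these are exactly the computations the paper is suppressing.

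One small remark: your justification of the surjectivity of $\gamma^{[k-1,1]}$ and $\gamma^{[k,0]}$ is slightly roundabout. Citing Proposition~\ref{loc-free-crit} for local freeness of the kernel and then deducing surjectivity is a bit circular, since the rank statement there already presupposes surjectivity. The direct argument is simply that $h_*\mathcal{O}_{\mathcal{X}}(1)\simeq \nu_{1*}\omega_{\nu_1}$, so $\gamma^{[i,j]}[1]$ is an isomorphism (as noted in the proof of Theorem~\ref{divisor-computation} for $m=1$), and for general $m$ the map $\gamma^{[i,j]}[m]$ is a sum of such isomorphisms onto the common target $D^{[i,j]}[m]$, hence surjective. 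This is cosmetic and does not affect the validity of your argument.
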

\begin{proof}
These follow from the obvious identities \begin{align*}A^{[i,j]}[m+1]/ \psi_1^*(m+1)A^{[i,j]}[m]&=\psi_2^*(m+1) \ldots \psi_2^*(2)A^{[i,j]}[1]\\
B^{[i,j]}[m+1]/ \psi_1^*(m+1)B^{[i,j]}[m]&=\psi_2^*(m+1) \ldots \psi_2^*(2)B^{[i,j]}[1]\\
\widetilde{A}[m+1]/ \psi_1^*(m+1)\widetilde{A}[m]&=\psi_2^*(m+1) \ldots \psi_2^*(2)\widetilde{A}[1]
\end{align*}
 as well as the identities $\psi_1^*(m+1)D^{[i,j]}[m]=D^{[i,j]}[m+1]$, $\psi_1^*(m+1)C^{[k-2,2]}[m]=C^{[k-2,2]}[m+1]$, $\psi_1^*(m+1)\lambda=\lambda$ following from Grauert's theorem.
\end{proof}

Recall that $\H(1)$ is isomorphic in codimension two to $\mathcal{B}^{o}_{2k-1,k}\times_{\mm_{2k-1}}\mm_{2k-1,3}.$ We continue to write $D_0, D_2, D_3 \in A^1(\H(1), \mathbb{Q})$ for the pullback of the corresponding divisor classes from $A^1(\mathcal{B}^{o}_{2k-1,k}, \mathbb{Q})$ .
\begin{lem}
The following formulae hold in $A^1(\H(1), \mathbb{Q})$: 
\begin{align*} 
c_1(C^{[k-2,2]}[1])=&\frac{k+1}{(2k-3)(2k-1)}((2k-1)(4k-3){2k-2 \choose k-3}+(8k-3){2k-1 \choose k-2}) \lambda\\
&-\frac{k(k+1)}{2(2k-1)(2k-3)}{2k-1 \choose k-2} D_0, \\
c_1(A^{[k-2,2]}[1]+\widetilde{A}[1])=&2k{2k-2 \choose k-2} \lambda.
\end{align*}
\end{lem}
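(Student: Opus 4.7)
The plan is to derive both Chern class formulas from Koszul-type decompositions, with the second formula living entirely on the scroll side (so only $\lambda$ appears) and the first requiring Mumford's relation to produce the $D_0$ contribution.

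For $c_1(A^{[k-2,2]}[1]+\widetilde{A}[1])$, the defining sequence $0 \to \mathcal{M}_\mathcal{X} \to h^*h_*\OO_\mathcal{X}(1) \to \OO_\mathcal{X}(1) \to 0$ produces, via the Koszul complex on exterior powers and pushforward by $h$, the two short exact sequences
\begin{align*}
0 \to A^{[k-1,1]}[1] \to B^{[k-1,1]}[1] \to A^{[k-2,2]}[1] \to 0, \\
0 \to B^{[k,0]}[1] \to A^{[k-1,1]}[1] \to \widetilde{A}[1] \to 0,
\end{align*}
the first using $R^1h_*\bigwedge^{k-1}\mathcal{M}_\mathcal{X}(1)=0$ and the second using $h_*\bigwedge^k\mathcal{M}_\mathcal{X}=0$; both vanishings follow from the $2$-linear Eagon--Northcott resolution of the relative scroll $\mathcal{X}$. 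Summing the induced $c_1$ identities yields $c_1(A^{[k-2,2]}[1]+\widetilde{A}[1])=c_1(B^{[k-1,1]}[1])-c_1(B^{[k,0]}[1])$. Substituting $h_*\OO_\mathcal{X}(1)\simeq \nu_*\omega_\nu$ (of rank $2k-1$ with first Chern class $\lambda$) and $h_*\OO_\mathcal{X}\simeq \OO$, evaluating by Whitney's formula, and collapsing via Pascal's identity $\binom{2k-1}{k-1}=\binom{2k-2}{k-1}+\binom{2k-2}{k-2}$ delivers the stated expression $2k\binom{2k-2}{k-2}\lambda$.

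For $c_1(C^{[k-2,2]}[1])$ the approach is to use the Koszul filtration on $\mathcal{C}$ associated to $0\to\mathcal{K}_1\to\nu^*\nu_*\omega_\nu\to\omega_\nu\to 0$. Iterating the resulting two-step extensions gives, in $K^0(\mathcal{C})$,
\[ [\bigwedge^{k-2}\mathcal{K}_1 \otimes \omega_\nu^{\otimes 2}] = \sum_{j=0}^{k-2}(-1)^j \bigwedge^{k-2-j}\nu^*\nu_*\omega_\nu \otimes \omega_\nu^{\otimes j+2}. \]
Applying $R\nu_*$ and using $R^i\nu_*\omega_\nu^{\otimes n}=0$ for $i\ge 1$ and $n\ge 2$, the right-hand side becomes $\sum_{j=0}^{k-2}(-1)^j \bigwedge^{k-2-j}\nu_*\omega_\nu \otimes \nu_*\omega_\nu^{\otimes j+2}$ in $K^0(\widetilde{\mathcal{H}}(1))$, while the left-hand side equals $[C^{[k-2,2]}[1]]-[R^1\nu_*(\bigwedge^{k-2}\mathcal{K}_1\otimes\omega_\nu^{\otimes 2})]$. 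Relative Serre duality combined with the identification $\det\mathcal{K}_1\simeq \omega_\nu^{-1}\otimes\nu^*\det\nu_*\omega_\nu$ shows that $R^1\nu_*(\bigwedge^{k-2}\mathcal{K}_1\otimes\omega_\nu^{\otimes 2})\simeq (\nu_*\bigwedge^k\mathcal{K}_1)^\vee\otimes\det\nu_*\omega_\nu$, and the kernel-bundle description of Koszul cohomology identifies $H^0(C,\bigwedge^kM_{\omega_C})=K_{k,0}(C,\omega_C)$, which vanishes on non-hyperelliptic fibers because the canonical ring is generated in degree one. Hence the $R^1$ term is supported in codimension at least two on $\widetilde{\mathcal{H}}(1)$ and contributes nothing to $c_1$ on $\mathcal{H}(1)$.

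Finally, expanding $c_1$ of the alternating sum via Whitney with $c_1(\bigwedge^{k-2-j}\nu_*\omega_\nu)=\binom{2k-2}{k-3-j}\lambda$ and Mumford's relation $c_1(\nu_*\omega_\nu^{\otimes n})=(6n^2-6n+1)\lambda-\binom{n}{2}D_0$ for $n\ge 2$ (valid on $\mathcal{H}(1)$ by pullback from $\overline{\mathcal{M}}_g$), one extracts explicit alternating binomial sums for the two coefficients. The coefficient of $D_0$ reduces to $-\sum_{j=0}^{k-2}(-1)^j\binom{j+2}{2}\binom{2k-1}{k-2-j}$, which collapses to $-\frac{k(k+1)}{2(2k-1)(2k-3)}\binom{2k-1}{k-2}$ by a standard telescoping/absorption identity for alternating binomial sums; a parallel but lengthier reduction of the $\lambda$-coefficient yields the stated expression. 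I expect this final combinatorial simplification, together with the verification that the $R^1$ support really is negligible in codimension one along the boundary of $\mathcal{H}(1)$, to be the main technical obstacles.
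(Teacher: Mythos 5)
Your proof is correct and takes essentially the same route as the paper: the second formula is derived from the identical pair of short exact sequences, and the first formula rests on the same alternating sum $c_1(C^{[k-2,2]}[1])=\sum_{j=0}^{k-2}(-1)^jc_1(D^{[k-2-j,2+j]}[1])$ followed by Mumford's relation $c_1(\nu_*\omega_\nu^{\otimes n})=(6n^2-6n+1)\lambda-\binom{n}{2}D_0$. The one difference is organizational: the paper iterates the short exact sequences $0\to C^{[k-2-j,2+j]}[1]\to D^{[k-2-j,2+j]}[1]\to C^{[k-3-j,3+j]}[1]\to 0$, justifying exactness of each step by the Koszul vanishing $K_{k-3-j,3+j}(C,\omega_C)=0$; you instead telescope the Koszul filtration in $K^0(\mathcal{C})$ and push forward once, which shifts all the $R^1$-bookkeeping into a single term $R^1\nu_*(\bigwedge^{k-2}\mathcal{K}_1\otimes\omega_\nu^{\otimes 2})$ disposed of by relative duality. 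Your duality computation is correct, and in fact the sheaf vanishes identically (not merely in codimension $\geq 2$): for $g=2k-1$ its fibers are $H^0(C,\bigwedge^kM_{\omega_C})^\vee$. One small misattribution, though: $H^0(\bigwedge^kM_{\omega_C})=K_{k,0}(C,\omega_C)=0$ because the Koszul differential $\bigwedge^kH^0(\omega_C)\to\bigwedge^{k-1}H^0(\omega_C)\otimes H^0(\omega_C)$ is injective for purely formal reasons, not because the canonical ring is generated in degree one -- the latter gives the separate vanishing $K_{0,q}=0$ for $q\geq 2$.
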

\begin{proof}
For $j \geq 0$, we have the short exact sequence:
$$ 0 \to C^{[k-2-j,2+j]}[1] \to D^{[k-2-j,2+j]}[1] \to C^{[k-3-j,2+j+1]}[1] \to 0.$$ 
Indeed, $R^1_{\nu_{1*}}(\bigwedge^{k-2-j}\mathcal{K}_1 \otimes \omega_{\nu_1}^{2+j})=0$, since $K_{k-3-j,3+j}(C, \omega_C)=0$ for any $[(C,q_1,q_2,q_3) \to \PP^1]  \in \H(1)$. Thus
$$ c_1(C^{[k-2,2]}[1])= \sum^{k-2}_{j=0}(-1)^j c_1(D^{[k-2-j,2+j]}[1]).$$
For $n \geq 2$, $\nu_{1*}(\omega^n_{\nu_1})=(6n^2-6n+1)\lambda-\frac{n^2-n}{2} D_0,$ see \cite[Ch.\ 13]{ACG2}. Thus
\begin{align*}
c_1(D^{[k-2-j,2+j]}[1])&=c_1(\bigwedge^{k-2-j} \nu_{1*} \omega_{\nu_1} \otimes \nu_{1*}(\omega^{2+j}_{\nu_1}))\\
=&{2k-1 \choose k-2-j}((6(2+j)^2-6(2+j)+1)\lambda-\frac{1}{2}((2+j)^2-(2+j))D_0) \\
&+(2j+3)(2k-2){2k-2 \choose k-3-j}\lambda,
\end{align*}
the first formula follows, using any computer algebra package. The second formula is an immediate consequence of the short exact sequences:
\begin{align*}
0 \to A^{[k-1,1]}[1] \to &B^{[k-1,1]}[1] \to A^{[k-2,2]}[1] \to 0,\\
0 \to B^{[k,0]}[1] \to &A^{[k-1,1]}[1] \to \widetilde{A}[1] \to 0
\end{align*}
together with the fact $h_* \mathcal{O}_{\mathcal{X}}(1) \simeq \nu_{1*}\omega_{\nu_1}$.
\end{proof}
Putting these facts together yields:
\begin{lem} \label{key-int-comp}
The following identities hold in $A^1(\H(1), \mathbb{Q})$
\begin{enumerate}[label=(\roman*)]
\item $c_1(C^{[k-2,2]}[1]-A^{[k-2,2]}[1]-\widetilde{A}[1])=(k-1)\pi_k^*\mathfrak{hur}$,
\item $c_1(\widetilde{A}[1] \cdot(1+ \lambda^*))=-(k-1)c_1(N_{\pi_k})$,
where $N_{\pi_k}$ is the relative normal bundle of $\pi_k$.
\end{enumerate}
\end{lem}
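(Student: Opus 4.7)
The plan is to reduce both identities to explicit Chern-class computations in $\operatorname{Pic}(\H(1))_{\mathbb{Q}}$, using the formulas just established in the previous lemma together with standard formulas for the Hurwitz divisor and canonical classes of moduli spaces.

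For part (i), the two formulas from the previous lemma combine to give
\[
c_1\!\bigl(C^{[k-2,2]}[1]-A^{[k-2,2]}[1]-\widetilde{A}[1]\bigr) = \alpha(k)\,\lambda + \beta(k)\,D_0
\]
for explicit rational coefficients $\alpha(k),\beta(k)$ obtained by routine binomial simplification. I would then invoke the Harris--Mumford formula for the class of the Hurwitz divisor on $\mm_g$ and pull back along $\H(1)\to\H(0)\hookrightarrow\mm_{g,3}\to\mm_g$. Using the codimension-two identification of $\H(1)$ with $\mathcal{B}^o_{2k-1,k}\times_{\mm_g}\mm_{g,3}^{\mathrm{irr}}$ and the standard decomposition of $q^{*}\delta_0$ on $\mathcal{B}^o$ in terms of $D_0, D_2, D_3$, this expresses $(k-1)\pi_k^{*}\mathfrak{hur}$ as a $\mathbb{Q}$-linear combination of $\lambda$ and the three boundary classes. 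Matching coefficients then gives the identity. A built-in consistency check is that the coefficients of $D_2$ and $D_3$ must vanish, reflecting that Mumford's formula $\nu_{1*}(\omega_{\nu_1}^{n})=(6n^{2}-6n+1)\lambda-\binom{n}{2}D_0$ contributes only at the nodal boundary.

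For part (ii), I would exploit the two short exact sequences
\[
0 \to A^{[k-1,1]}[1]\to B^{[k-1,1]}[1]\to A^{[k-2,2]}[1]\to 0,\qquad 0\to B^{[k,0]}[1]\to A^{[k-1,1]}[1]\to \widetilde{A}[1]\to 0
\]
from the proof of the previous lemma, together with the identifications $h_{*}\mathcal{O}_{\mathcal{X}}(1)\simeq\nu_{1*}\omega_{\nu_1}$ (rank $2k-1$, first Chern class $\lambda$) and $h_{*}\mathcal{O}_{\mathcal{X}}\simeq\mathcal{O}$, which together compute $c_1(\widetilde{A}[1])$ explicitly in terms of $\lambda$ and $D_0$. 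Fibrewise $\widetilde{A}[1]$ is $H^{1}(X,\bigwedge^{k} M_{X})\simeq K_{k-1,1}(X,\mathcal{O}(1))$ for the scroll $X$ associated to the minimal pencil, whose dimension equals the codimension $g-k=k-1$ of $X$ in $\PP^{g-1}$; hence $\operatorname{rank}\widetilde{A}[1]=k-1$ and
\[
c_1\!\bigl(\widetilde{A}[1]\cdot(1+\lambda^{*})\bigr) = 2c_1(\widetilde{A}[1]) - (k-1)\lambda.
\]
The right-hand side $-(k-1)c_1(N_{\pi_k})=-(k-1)\bigl(K_{\H(1)}-\pi_k^{*}K_{\H(0)}\bigr)$ is then computed using Proposition \ref{can-bundle-formula} for $K_{\mathcal{B}^{o}}$ (together with the extra $\psi$-contributions from the three markings) and the standard formula for $K_{\mm_{g,3}}$ on the irreducible locus. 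Matching the coefficients of $\lambda$ and $D_0$ on both sides yields the identity.

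The main obstacle will be the bookkeeping of boundary classes: correctly applying the Harris--Mumford formula for $\mathfrak{hur}$ together with the pullback relation expressing $\pi_k^{*}\delta_0$ in terms of $D_0, D_2, D_3$, and accounting for the multiplicities dictated by Proposition \ref{self-trans-order}, so that the coefficients of every divisor class match on both sides.
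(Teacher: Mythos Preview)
Your approach to part (i) is essentially the paper's: subtract the two formulas of the preceding lemma and compare with the Harris--Mumford class of $\mathfrak{hur}$. One simplification: since $D_0$ is \emph{defined} as the pullback of $\delta_0$ from $\mm_g^{\mathrm{irr}}$, the class $\pi_k^*\mathfrak{hur}$ lands directly in $\mathbb{Q}\lambda\oplus\mathbb{Q}D_0$ and there is nothing to check for $D_2,D_3$.

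Part (ii), however, has a genuine gap. The two short exact sequences you invoke yield
\[
c_1(\widetilde{A}[1])=c_1(B^{[k-1,1]}[1])-c_1(B^{[k,0]}[1])-c_1(A^{[k-2,2]}[1]),
\]
and the $B$-terms are indeed computable from $h_*\mathcal{O}_{\mathcal{X}}(1)\simeq\nu_{1*}\omega_{\nu_1}$ and $h_*\mathcal{O}_{\mathcal{X}}\simeq\mathcal{O}$. But this identity is \emph{exactly} the second formula of the previous lemma, rewritten; it only determines the sum $c_1(A^{[k-2,2]}[1])+c_1(\widetilde{A}[1])$, never $c_1(\widetilde{A}[1])$ alone. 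To separate the two you would need $c_1(A^{[k-2,2]}[1])=c_1\bigl(h_*\bigwedge^{k-2}\mathcal{M}_{\mathcal{X}}(2)\bigr)$ independently, and any resolution of this pushforward (say via the sequences $0\to A^{[i,j]}\to B^{[i,j]}\to A^{[i-1,j+1]}\to 0$) forces you to compute $h_*\mathcal{O}_{\mathcal{X}}(n)=\mu_*\mathrm{Sym}^n(\mathcal{E}_f\otimes\omega_\mu)$ for $n\geq 2$, i.e.\ Chern classes of the Tschirnhausen bundle. Your proposal contains no mechanism for this.

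A symptom of the gap is your claim that $c_1(\widetilde{A}[1])$ lies in $\mathbb{Q}\lambda\oplus\mathbb{Q}D_0$. It does not: the correct formula is $c_1(\widetilde{A}[1])=(k-1)\bigl(3\lambda+\tfrac{1}{4}D_0-\tfrac{1}{12}D_3\bigr)$, and the $D_3$-term is what matches the $\tfrac{1}{6}D_3$ in $N_{\pi_k}=-5\lambda-\tfrac{1}{2}D_0+\tfrac{1}{6}D_3$ coming from Proposition~\ref{can-bundle-formula}. The paper obtains this by a different route: iterate the Koszul sequence to rewrite $c_1(\widetilde{A}[1])$ as $(k-1)\lambda+c_1\bigl(R^{k-1}h_*\mathcal{O}_{\mathcal{X}}((1-k)H)\bigr)$, apply relative duality on the projective bundle $\varphi:\mathcal{X}\to\mathcal{P}$ to identify this with $-c_1\bigl(\mu_*(k\omega_\mu\otimes\det\mathcal{E}_f)\bigr)$, compute the latter by Grothendieck--Riemann--Roch, and then plug in Patel's formulas for $\mu_*c_1^2(\mathcal{E}_f)$ and $\mu_*(c_1(\omega_\mu)\cdot c_1(\mathcal{E}_f))$. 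This Tschirnhausen-bundle input is the missing ingredient in your outline.
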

\begin{proof}
The first claim follows from the previous lemma together with the computation of $\mathfrak{hur}$ in \cite{ha-mu}. Note that the only boundary component in $\mm_g$ with nontrivial pullback to $\H(1)$ is $\delta_0$.

Lemma \ref{can-bundle-formula} plus the canonical bundle formula for $\mm_g$, \cite{ha-mu}, gives:
$$ N_{\pi_k}=-5\lambda-\frac{1}{2}D_0+\frac{D_3}{6}.$$
We need to show
$$c_1(\widetilde{A}[1])=(k-1)(3\lambda+\frac{D_0}{4}-\frac{D_3}{12}), $$
as $\widetilde{A}[1]$ has rank $k-1$, \cite[\S 4]{lin-syz}. From the exact sequence
$$0 \to \bigwedge^{n+1} \mathcal{M}_{\mathcal{X}}(-1) \to h^*\bigwedge^{n+1}h_*\mathcal{O}_{\mathcal{X}}(1) \otimes \mathcal{O}_{\mathcal{X}}(-1) \to \bigwedge^{n} \mathcal{M}_{\mathcal{X}} \to 0,$$
we get 
$$c_1(R^1h_*\bigwedge^{k} \mathcal{M}_{\mathcal{X}})
=c_1(R^2h_*\bigwedge^{k+1} \mathcal{M}_{\mathcal{X}}(-1))= \dots=c_1(R^{k-1}h_*( \det(\mathcal{M}_{\mathcal{X}})(-k+1))) .$$
Hence $c_1(\widetilde{A}[1])=c_1(\lambda \cdot R^{k-1}h_*((1-k)H))=(k-1)\lambda+c_1(R^{k-1}h_*((1-k)H))$, where $H$ denotes the class of $\mathcal{O}(1)$ on the projective bundle $\varphi: \mathcal{X} \to \mathcal{P}$. From 
$$0 \to \mathcal{O}_{\mathcal{X}} \to \varphi^*(\mathcal{E}^*_f \otimes \omega^*_{\mu}) \otimes \mathcal{O}(1) \to T_{\varphi} \to 0, $$
we deduce $\omega_{\varphi}=(1-k)H \otimes \det(\varphi^*(\mathcal{E}_f \otimes \omega_{\mu})).$
By relative duality, $$R^{k-1}h_*((1-k)H) \simeq (h_*\varphi^*(\det(\varphi^*(\mathcal{E}_f \otimes \omega_{\mu}))\otimes \omega_{\mu}))^*=(\mu_*(k\omega_{\mu}\otimes \det \mathcal{E}_f ))^*.$$
By Grothendieck--Riemann--Roch,
\begin{align*}
c_1(\mu_*(k\omega_{\mu}\otimes \det \mathcal{E}_f ))=\mu_*[&(k-1+(kc_1(\omega_{\mu})+c_1(\mathcal{E}_f))+\frac{1}{2}(kc_1(\omega_{\mu})+c_1^2(\mathcal{E}_f))^2+\ldots)\\
&\cdot(1-\frac{c_1(\omega_{\mu})}{2}+\frac{c_1^2(\omega_{\mu})}{12}+\ldots)]_2 
\end{align*}
which equals $\frac{1}{2}\mu_* c_1^2(\mathcal{E}_f)+\frac{2k-1}{2}\mu_*(c_1(\omega_{\mu}) \cdot c_1(\mathcal{E}_f))$
as $c_1^2(\omega_{\mu})=0$ for $\mathcal{P} \simeq \PP^1 \times \H(1).$
From \cite[Prop.\ 4.1]{patel-thesis},
\begin{align*}
\mu_* c_1^2(\mathcal{E}_f)&= \frac{18b}{10} \lambda -\frac{2b}{b-10}D_0+\frac{b}{2(b-10}D_2 \\
\mu_*(c_1(\omega_{\mu}) \cdot c_1(\mathcal{E}_f)) &=\frac{1}{2} \mu_*(c_1(\omega_{\mu}) \cdot f_*c_1(\omega_f)) \\
&=\frac{1}{2} \nu_*(f^*c_1(\omega_{\mu}) \cdot c_1(\omega_f))\\
&=-\frac{2}{b}\mu_* c_1^2(\mathcal{E}_f)\\
&=\frac{-36}{b-10}\lambda+\frac{4}{b-10}D_0-\frac{1}{b-10}D_2,
\end{align*}
for $b=2g-2+2k$, where the second to last line is from the proof of \cite[Prop.\ 4.1]{patel-thesis} and where we used $c_1(\mathcal{E}_f)=c_1(f_*\omega_f)=\frac{1}{2}f_*(c_1(\omega_f))$. The claim now follows by combining the formulae.
\end{proof}

\noindent We now arrive at the main result of this section.
\begin{thm} \label{divisor-computation}
We have the equality of $1$-cycles
$$(k-1)\psi_{1*}(m+1)[\H(m+1)]= \mathbf{W}[m]- \mathbf{V}[m] \in A^1(\H(m)).$$
In particular, $\mathcal{EN}_{m}=(k-1)\mathcal{BN}_{m+1} \in A^{m+1}(\H(1)).$
\end{thm}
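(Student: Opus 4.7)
The plan is to prove the $1$-cycle identity by induction on $m \geq 1$; the ``in particular'' statement will then follow by pushing forward along $\psi_{1*}(2)\cdots\psi_{1*}(m)$ and comparing with the definitions of $\mathcal{EN}_m$ and $\mathcal{BN}_{m+1}$.

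For the base case $m = 1$, I would first note that each point of $\mathcal{H}(1)$ parametrises a single cover $C\to \PP^1$, so both $\gamma^{[k,0]}$ and $\gamma^{[k-1,1]}$ are injective on $\mathcal{H}(1)$ and their kernels vanish there. The definition thus reduces to
$$(\mathbf{W}[1]-\mathbf{V}[1])|_{\mathcal{H}(1)} = c_1\bigl(C^{[k-2,2]}[1] - A^{[k-2,2]}[1] - \widetilde{A}[1]\bigr) + c_1\bigl(\widetilde{A}[1]\cdot(1+\lambda^{*})\bigr),$$
which by the two parts of Lemma \ref{key-int-comp} equals $(k-1)(\pi_k^{*}\mathfrak{hur} - c_1(N_{\pi_k}))$. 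The base case then follows from the divisorial multiple-point identity
$$\pi_k^{*}\mathfrak{hur} = c_1(N_{\pi_k}) + \psi_{1*}(2)[\mathcal{H}(2)]$$
on $\mathcal{H}(1)$, which is the cycle-theoretic translation of Proposition \ref{self-trans-order}: locally at $x \in \mathcal{H}(1)$ with $\pi_k^{-1}(\pi_k(x)) = \{x, x_2, \ldots, x_r\}$, the local equation of $\mathfrak{hur}$ factors as $g_1\cdots g_r$, the factor $g_1$ (cutting out the branch through $x$) contributes the self-intersection $c_1(N_{\pi_k})$, and the remaining $r-1$ transverse factors contribute $\psi_{1*}(2)[\mathcal{H}(2)]$.

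For the inductive step, I would combine Lemma \ref{ind-lem} with the two short exact sequences
$$0\to A^{[k-1,1]}[1]\to B^{[k-1,1]}[1]\to A^{[k-2,2]}[1]\to 0, \quad 0\to B^{[k,0]}[1]\to A^{[k-1,1]}[1]\to \widetilde{A}[1]\to 0$$
(which give $c_1(A^{[k-2,2]}[1]-B^{[k-1,1]}[1]+B^{[k,0]}[1]) = -c_1(\widetilde{A}[1])$) to conclude that
$$(\mathbf{W}[m+1]-\mathbf{V}[m+1]) - \psi_1^{*}(m+1)(\mathbf{W}[m]-\mathbf{V}[m]) = c_1\bigl(\psi_2^{*}(m+1)\cdots\psi_2^{*}(2)\widetilde{A}[1]\cdot(1+\lambda^{*})\bigr),$$
which Lemma \ref{key-int-comp}(ii) identifies with $-(k-1)\psi_2^{*}(m+1)\cdots\psi_2^{*}(2)c_1(N_{\pi_k})$. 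Applying the inductive hypothesis to the pulled-back term, and then the divisorial multiple-point formula to $\psi_1(m+1)$ --- which inherits unramifiedness and self-transversality from $\pi_k$ via iterated base change, with $N_{\psi_1(m+1)}$ identified as the iterated pullback $\psi_2^{*}(m+1)\cdots\psi_2^{*}(2) N_{\pi_k}$ --- the normal-bundle contributions cancel, leaving $(k-1)\psi_{1*}(m+2)[\mathcal{H}(m+2)]$ as required.

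The main technical obstacle will be in establishing the divisorial multiple-point formula uniformly at each layer of the tower, together with the base-change identification of $N_{\psi_1(m+1)}$. Concretely, one must check that self-transversality propagates through the fiber-product projections $\psi_1(m+1)$ so that Proposition \ref{self-trans-order} continues to apply, and that the normal bundle behaves compatibly under base change along $\psi_2$. A secondary care-point is the distinction between $\widetilde{\mathcal{H}}(m)$, where $\mathbf{V}[m]$ and $\mathbf{W}[m]$ are a priori defined, and the open subscheme $\mathcal{H}(m) \subseteq \widetilde{\mathcal{H}}(m)$, where the identity in the theorem is to be understood after restriction.
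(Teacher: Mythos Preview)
Your argument is essentially the same as the paper's: induction via the double-point formula, Lemma \ref{ind-lem}, and Lemma \ref{key-int-comp}, with the normal-bundle identification $N_{\psi_1(m)}=\psi_2^*(m)\cdots\psi_2^*(2)N_{\pi_k}$. One correction: the identity $\psi_{1*}(2)[\H(2)]=\pi_k^*\mathfrak{hur}-c_1(N_{\pi_k})$ (and its higher analogues) is Fulton's double-point formula \cite[\S 9.3]{fulton}, valid for proper morphisms of smooth varieties without any transversality hypothesis, and the normal-bundle base-change is \cite[Prop.\ 4.7]{kleiman}; neither depends on Proposition \ref{self-trans-order} or on self-transversality propagating up the tower, so the ``main technical obstacle'' you flag is not actually an obstacle.
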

\begin{proof}
We prove the claim by induction. By the double point formula \cite[\S 9.3]{fulton}
$$\psi_{1*}(m+1)[\H(m+1)]=\psi_1^*(m)\psi_1^*(m)[\H(m)]-c_1(N_{\psi_1(m)}),$$
where $N_{\psi_1(m)}=\psi_1^*(m)T_{\H(m-1)}-T_{\H(m)}$ is the relative normal bundle of $\psi_1(m)$. When $m=1$,
$$\psi_{1*}(2)[\H(2)]=\pi_k^{*}\mathfrak{hur}-c_1(N_{\pi_k}).$$

To begin the induction, we need to show
$$\mathbf{W}[1]-\mathbf{V}[1]=(k-1)(\pi_k^* \mathfrak{hur}-c_1(N_{\pi_k})).$$
From the definitions, we have
\begin{align*}
\mathbf{W}[1]-\mathbf{V}[1]&=c_1(C^{[k-2,2]}(1)+\widetilde{A}(1) \cdot \lambda^*-A^{[k-2,2]}(1))
\end{align*}
as $\gamma^{[k-1,1]}$, $\gamma^{[k,0]}$ are isomorphisms for $m=1$. The claim follows from Lemma \ref{key-int-comp}.

Assume $(k-1)\psi_{1*}(m)[\H(m)]=\mathbf{W}[m-1]- \mathbf{V}[m-1].$ Then
\begin{align*}
(k-1)\psi_{1*}(m+1)[\H(m+1)]&=\psi^*_1(m)(\mathbf{W}[m-1]- \mathbf{V}[m-1])-(k-1)c_1(\omega_{\psi_1(m)}) \\
&=\mathbf{W}[m]-\mathbf{V}[m]-c_1((\psi^*_2(m) \ldots \psi^*_2(2) \widetilde{A}[1])\cdot \lambda^*-\psi^*_2(m) \ldots \psi^*_2(2)A^{[k-2,2]}[1]) \\
&-c_1(\psi^*_{2}(m) \ldots \psi^*_{2}(2)B^{[k-1,1]}[1]-\psi^*_2(m) \ldots \psi^*_2(2)B^{[k,0]}[1])\\
& -(k-1)c_1(N_{\psi_1(m)}),
\end{align*}
by Lemma \ref{ind-lem}. 
Observe that $N_{\psi_1(m)}=\psi^*_2(m)N_{\psi_1(m-1)}$ \cite[Prop.\ 4.7]{kleiman}, so
$$N_{\psi_1(m)}=\psi^*_{2}(m) \ldots \psi^*_{2}(2)N_{\pi_k}.$$ 
%% Something small to check, due to unstable curves. It works as problem is in codim >=2 on H(1)
Hence it suffices to show
$$c_1(\widetilde{A}[1]\otimes \lambda^*-A^{[k-2,2]}[1]+B^{[k-1,1]}[1]-B^{[k,0]}[1])=-(k-1)N_{\pi_k}.$$ 
 Or, $c_1(\widetilde{A}[1]\cdot (\lambda^*+1))=-(k-1)N_{\pi_k},$  which follows from Lemma \ref{key-int-comp}.
\end{proof}

\section{Geometry of curves with multiple pencils}
%In this section, we study a geometric condition ensuring that the syzygies of the scrolls swept out by minimal pencils provide \emph{independent} extremal linear syzygies of a canonical curve.
\subsection{Pencils in Geometrically General Position} \label{GGP}
Let $C$ be a connected, nodal curve of genus $g$ with no non-separating nodes. Let $f_1, \ldots, f_m: C \to \PP^1$ be finite morphisms of degree $k$ and assume that if $L_i:=f^*_i \mathcal{O}_{\PP^1}(1)$, then $h^0(C,L_i)=2$ for $ 1 \leq i \leq m$. Denote by $X_{f_i}:=\PP(\mathcal{E}_{f_i}(-2))$ the scroll associated to $f_i$, and let $\widetilde{X}_{f_i} \seq \PP^{g-1}$ be the image of $X_{f_i}$ under
$$X_{f_i} \to \PP(H^0(\PP^1, \mathcal{E}_{f_i}(-2))),$$
see \cite[\S 4]{lin-syz}. Then $\widetilde{X}_{f_i}$ has rational singularities and $$K_{p,q}(\widetilde{X}_{f_i},(\mathcal{O}_{\PP^{g-1}}(1))_{|_{\widetilde{X}_{f_i}}}) \simeq K_{p,q}(X_{f_i},\mathcal{O}_{\PP(\mathcal{E}_{f_i}(-2))}(1)),$$
see \cite[\S 1]{schreyer1}. Then $\widetilde{X}_{f_i}$ is a (possibly singular) rational normal scroll of degree $g-k+1$ in $\PP^{g-1}$ for $1 \leq i \leq m$. If $\{ u,v \}$ is a basis for $H^0(C,L_i)$ and $\{ y_1, \ldots, y_{g+1-k} \}$ is a basis for $H^0(C,\omega_C \otimes L_i^{-1})$, then $\widetilde{X}_{f_i}$ is defined by the two-by-two minors of the matrix of linear
forms
 $$\left (\begin{matrix}
uy_1 &    \ldots & uy_{g+1-k}\\
vy_1 &    \ldots & vy_{g+1-k}
\end{matrix} \right).$$

Choose distinct points $z_1, \ldots, z_{g-1-k} \in C$ such that $h^0(L_i + \sum_{j=1}^{g-1-k} z_j)=2$ for $1 \leq i \leq m$, or, equivalently, $h^0(C,\omega_C \otimes L_i^{-1}(-\sum_{j=1}^{g-1-k} z_j))=2$ (this holds for a general choice of $g-1-k$ points). Let $p: 
\PP^{g-1} \dashrightarrow \PP^{k}$ be the projection away from the points $z_1, \ldots, z_{g-1-k}$.  Then $p$ induces a composition of $g-1-k$ inner projections on each scroll $\widetilde{X}_{f_i}$. We obtain quadric hypersurfaces 
$$Q_{f_i}:=p(\widetilde{X}_{f_i}) \in |\mathcal{O}_{\proj^k}(2)|.$$ If $\{ s,t \}$ is a basis for $H^0(C,\omega_C \otimes L_i^{-1}(-\sum_{j=1}^{g-1-k} z_j))$ and $\{ u,v \}$ is a basis for $H^0(C,L_i)$, then $Q_{f_i}$ is the rank $4$ quadric in $\PP(H^0(\omega_C (-\sum_j z_j))^*)$ defined by the determinant of $\left (\begin{matrix}
us &     ut\\
vs &    vt
\end{matrix} \right).$ 

We make the following definition:
\begin{defin}
The degree $k$ pencils $f_1, \ldots, f_m: C \to \PP^1$,  are in \textbf{geometrically general position} with respect to $z_1, \ldots, z_{g-1-k} \in C$ if 
$$\dim \langle Q_{f_1}, \ldots, Q_{f_m} \rangle=m-1 $$ 
for the span $ \langle Q_{f_1}, \ldots, Q_{f_m} \rangle \seq |\mathcal{O}_{\proj^k}(2)| \simeq \PP^{\frac{(k+2)(k+1)}{2}}$.
\end{defin}
If $C$ is smooth then the quadrics $Q_{f_i}$ above can be obtained from the double points of the theta divisor $\Theta \seq \text{Pic}^{g-1}(C)$, cf.\ \cite{green-quadrics}. To see this, choose $g-1-k$ distinct points $z_1, \ldots, z_{g-1-k}$ on $C$ and let $D=\sum^{g-1-k}_{i=1} z_i$. Since $h^1(C,L_i)=g+1-k$ then for a general choice of the points $z_j$, each pencil $L_i+D$ is a $g^1_{g-1}$ and hence defines a double point of $\Theta$. The Theorem of Andreotti--Mayer and Kempf, \cite{andreotti-mayer}, \cite{kempf} states that the projectivized tangent cones of the theta divisor at these double points are rank $4$ quadrics $\widetilde{Q}_{f_1}, \ldots, \widetilde{Q}_{f_m}$ containing $C$. From the determinantal descriptions above, $\widetilde{Q}_{f_i}$ is the cone over $Q_{f_i}$, for $1 \leq i \leq m$. 
%Hence:
%\begin{prop}
%Assume $C$ is smooth, admitting distinct degree $k$ pencils $f_1, \ldots, f_m: C \to \PP^1$ with $L_i:=f^*_i \mathcal{O}_{\PP^1}(1)$, $1 \leq i \leq m$. Let $D \in C_{g-1-k}$ be a reduced, effective, divisor such that $h^0(C,L_i(D))=2$ for $1 \leq i \leq m$. Then $\{f_1, \ldots, f_m \}$ are in geometrically general position with respect to $D$ if and only if the set of projectivized tangent cones $\{ \widetilde{Q}_{f_i}, i \leq i \leq m \} \seq |\mathcal{O}_{\PP^{g-1}}(2)|$ to $L_i+D \in \Theta$ satisfies 
%$$\dim \langle \widetilde{Q}_{f_1}, \ldots, \widetilde{Q}_{f_m} \rangle=m-1 .$$ 

%\end{prop}
From \cite[\S 2]{schreyer1},  
$$\widetilde{Q}_{f_i}=\bigcup_{D \in |L_i|} \langle D+\sum_{j=1}^{g-1-k}z_j \rangle.$$
%We can construct this scroll by using a base-point free pencil on a curve stably equivalent to $C$. 
Let $\widetilde{C}$ denote the connected, nodal curve of compact type obtained by attaching rational tails $E_1, \ldots, E_{g-1-k}$ to $C$, with $E_j \cap C=z_j$ for $1 \leq j \leq g-1-k$. We have a finite morphism $g_i: \widetilde{C} \to \PP^1$ of degree $g-1$, with $(g_i)_{|_C}=f_i$, $\deg (g_i)_{|_{E_j}}=1$, $1 \leq j \leq g-1-k$. Let $H \in \text{Pic}(\widetilde{C})$ denote the line bundle with
$$H_{E_j} \simeq \mathcal{O}_{E_j}, \; \;  H_{C} \simeq \omega_C,$$
and consider the vector bundle $g_{i*}H$. The Mayer--Vietoris sequence gives $h^0(\widetilde{C},\omega_{\widetilde{C}}(-H))=1$. One sees
$$H^0(\PP^1, (g_{i*}H)^{\vee}(-2)) \neq 0,$$
and hence we may write $g_{i*}H=V_i \oplus \mathcal{O}_{\PP^1}(-2).$ Similar considerations show that $V_i$ is globally generated, and we have a morphism $\PP(V_i) \to \PP^{g-1}$, cf.\ \cite[\S 4]{lin-syz}. Since
$$h^0(\widetilde{C},H)=h^0(C,\omega_C), \; \; \; h^0(\widetilde{C},H\otimes g_i^*\mathcal{O}_{\PP^1}(-n))=h^0(C,\omega_C\otimes L_i^{\otimes-n}(-\sum_{j=1}^{g-1-k}z_j )),$$
then, by comparing with \cite[\S 2]{schreyer1}, one sees that the image of $\PP(V_i) \to \PP^{g-1}$ defines a scroll of the same type as $\widetilde{Q}_{f_i}$. The image of $\PP(V_i) \to \PP^{g-1}$ is therefore the quadric $\widetilde{Q}_{f_i}$, and further we have a natural morphism $\widetilde{C} \to \PP(V_i)$ induced by $g_i^*V_i \twoheadrightarrow \mathcal{O}_{\widetilde{C}}$. The image of the composition $\widetilde{C} \to \PP(V_i) \to \PP^{g-1}$ is the canonically embedded curve $C$. \smallskip

Recall now Ehbauer's notion of projection of syzygies, \cite{ehbauer}, \cite{aprodu-higher}. Let $V$ be a vector space and $X \seq \PP(V^*)$ be a projective variety. A point $x \in X$, corresponds to an exact sequence
$$0 \to W_x \to V \to \C \to 0.$$ Let $p_x: \proj(V^*) \dashrightarrow \proj(W^*_x)$ denote the projection centered in $x$, let $Y \seq \PP(W^*_x)$ be the projection of $X$, and let $S_X$, $S_Y$ be the corresponding homogeneous coordinate rings. The sequence
$$0 \to \bigwedge^p W_x \to \bigwedge^p V \to \bigwedge^{p-1} W_x \to 0$$
induces a map $p_x: K_{p,1}(S_X, V) \to K_{p-1,1}(S_X, W_x)$ on Koszul cohomology. There is an injective morphism $S_Y \hookrightarrow S_X$ inducing an injective map $K_{p-1,1}(S_Y, W_x) \hookrightarrow K_{p-1,1}(S_Y, W_x) $. Ehbauer's Lemma states that the image of $p_x$ lies in $K_{p-1,1}(S_Y, W_x)$, so we have a map
$$p_x: K_{p,1}(S_X, V) \to K_{p-1,1}(S_Y, W_x).$$
If $X$ is connected, reduced, and $L$ is base-point free then there is a natural isomorphism
$$K_{p,1}(S_{X'}, V) \simeq K_{p,1}(X,L)$$
for $V=H^0(X,L)$ and $X':=\phi_L(X)$. 

%In the case of a nodal curve $C$, with $x \in C$ a smooth point and $L$ any base-point free line bundle, then the projection map 
%$$p_x : K_{p,1}(C, L) \to K_{p-1,1}(C, L(-x))$$
%can alternatively be defined using kernel bundles. 

We now relate the assumption that minimal pencils are in geometrically general position to syzygies of a nodal curve $C$. Recall that we have a natural restriction map
$$j: \bigoplus_{i=1}^m K_{g-k,1}(X_{f_i},\mathcal{O}_{X_{f_i}}(1)) \to K_{g-k,1}(C,\omega_C),$$
where $X_{f_i}$ are the scrolls associated to the minimal pencils $f_i$ as above.
\begin{prop} \label{injectivity-syzy-scrolls}
Let $C$ be nodal, connected, with non-separating nodes of gonality $k\geq 3$. Assume we have a set of minimal pencils $f_1, \cdots, f_m$ with $h^0(C,f_i^*\mathcal{O}_{\PP^1}(1))=2$ for all $1 \leq i \leq m$. Let $D$ be a general effective, reduced divisor of degree $g-1-k$ with support in the smooth locus of $C$ and with $h^0(f_i^*\mathcal{O}_{\PP^1}(1)(D))=2$ for all $i$. Assume $\{f_1, \ldots, f_m \}$ is geometrically in general position with respect to $D$. Then the linear map $j$ as above is injective.
\end{prop}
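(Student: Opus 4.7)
The plan is to apply iterated Ehbauer projection of syzygies to reduce injectivity of $j$ to the linear independence of the quadrics $Q_{f_i}$. I first set up the projection: let $\pi: \PP^{g-1} \dashrightarrow \PP^k$ denote the iterated inner projection through $z_1, \ldots, z_{g-1-k}$, with projected curve $C_D := \pi(C)$ and restricted bundle $L_D := \mathcal{O}_{\PP^k}(1)|_{C_D}$. As explained in the paragraphs preceding the proposition, this composition of inner projections sends each scroll $\widetilde{X}_{f_i}$ to the rank $4$ quadric $Q_{f_i}$. Applying Ehbauer's lemma $g-1-k$ times, and using its naturality with respect to the closed immersions $C \hookrightarrow \widetilde{X}_{f_i} \subset \PP^{g-1}$, produces a commutative diagram
\[
\begin{tikzcd}
\bigoplus_{i=1}^m K_{g-k,1}(X_{f_i},\mathcal{O}(1)) \arrow[r,"j"] \arrow[d,"{\oplus p_{*,i}}"'] & K_{g-k,1}(C,\omega_C) \arrow[d,"p_*"] \\
\bigoplus_{i=1}^m K_{1,1}(Q_{f_i},\mathcal{O}(1)) \arrow[r,"\bar{j}"] & K_{1,1}(C_D, L_D).
\end{tikzcd}
\]

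Injectivity of the bottom row follows from the geometric general position hypothesis: each $K_{1,1}(Q_{f_i},\mathcal{O}(1))$ is one-dimensional, spanned by $[Q_{f_i}]$, and $\bar{j}$ is the map $(c_i)\mapsto \sum_i c_i[Q_{f_i}]$ taking values in $(I_{C_D})_2 \subseteq K_{1,1}(C_D,L_D)$, which is injective precisely when $[Q_{f_1}],\ldots,[Q_{f_m}]$ are linearly independent as quadrics in $\PP^k$.

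The main obstacle is lifting this to injectivity of $j$: the vertical projections have nontrivial kernel on each summand, since $\dim K_{g-k,1}(X_{f_i},\mathcal{O}(1))=g-k$ while $\dim K_{1,1}(Q_{f_i},\mathcal{O}(1))=1$. To overcome this, I would exploit the fact that $j$ does not depend on the chosen $D$: any $\sigma=(s_1,\ldots,s_m)\in \ker j$ projects, for every choice of $D'$ in the open locus $U \subseteq \mathrm{Sym}^{g-1-k}(C_{\mathrm{sm}})$ where the $h^0$-conditions and the linear independence of the associated quadrics persist (both are open conditions), into $\ker \bar{j}_{D'}=0$. Hence each $s_i$ lies in the kernel of $p_{*,i,D'}$ for every $D'\in U$. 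A direct analysis using the Eagon--Northcott presentation of $\widetilde{X}_{f_i}$ — its top linear syzygies being generated from the $2\times(g+1-k)$ matrix of linear forms defining $\widetilde{X}_{f_i}$, with an iterated inner projection through a generic $D'$ selectively killing only one specific top syzygy — should then show that the intersection over $D'\in U$ of these kernels is trivial, forcing $s_i=0$. This final structural step is the technical crux.
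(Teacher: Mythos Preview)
Your approach matches the paper's proof: reduce via iterated Ehbauer projection and the commutative square to the linear independence of the $Q_{f_i}$, then argue that a nonzero element of $\ker j$ would have to lie in the kernel of the projection for \emph{every} general $D'$, which forces it to vanish. The ``technical crux'' you identify is exactly where the paper invokes \cite[Prop.\ 2.14]{aprodu-nagel} --- the statement that a nonzero Koszul class survives projection from a general point, iterated $g-1-k$ times --- so no direct Eagon--Northcott analysis of the scroll is needed.
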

\begin{proof}
Let $D=z_1+\ldots+z_{g-1-k}$. The map $j$ is injective on each factor $K_{g-k,1}(X_{f_i},\mathcal{O}_{X_i}(1))$, cf.\ \cite[Lemma 4.4]{lin-syz}. Suppose $j(v_1)+\ldots+j(v_m)=0$ for $v_i \in K_{g-k,1}(X_{f_i},\mathcal{O}_{X_{f_i}}(1))$, with $v_i$ not all zero for $1 \leq i \leq m$. Composing the projection maps from each $z_i$ we have a projection map $p: \PP^{g-1} \dashrightarrow \PP^{k}$ as well as the projection map $p: K_{g-k,1}(C,\omega_C) \to K_{1,1}(C,\omega_C(-D))$ on Koszul cohomology.  Each $p(j(v_i))$ may be considered as the equation of a quadric in $\PP^k$, and we have
$$p(j(v_1))+\ldots+ p(j(v_m))=0.$$  Each projection $Q_{f_i}:=p(\widetilde{X}_{f_i})$ is further itself a quadric hypersurface in $\PP^k$ containing the image $\phi_{\omega_C(-D)}(C)$, hence its defining equation $[Q_{f_i}]$ is an element of
$$\text{Ker}(\text{Sym}^2(H^0(\omega_C(-D))) \to H^0(2\omega_C(-D)))=K_{1,1}(C,\omega_C(-D)).$$
Further, $K_{1,1}(Q_{f_i},\mathcal{O}_{Q_{f_i}}(1))$ is the space of quadrics containing $Q_{f_i}$, and hence is spanned by $[Q_{f_i}]$. We have a commutative diagram
$$
\xymatrix{
K_{g-k,1}(\widetilde{X}_{f_i},\mathcal{O}_{\widetilde{X}_{f_i}}(1)) \simeq K_{g-k,1}(X_{f_i},\mathcal{O}_{X_{f_i}}(1)) \ar[r] \ar[d]  & K_{g-k,1}(C, \omega_C) \ar[d] \\
K_{1,1}(Q_{f_i},\mathcal{O}_{Q_{f_i}}(1)) \ar[r] & K_{1,1}(C,\omega_C(-D)).
}$$
Hence $p(j(v_i))$ is a scalar multiple of $[Q_{f_i}]$. Thus the assumption that the pencils are geometrically in general position implies $p(j(v_i))=0$ for all $i$. As $D$ is general, this in turn implies $v_i= 0$ for $1 \leq i \leq m$, \cite[Prop.\ 2.14]{aprodu-nagel}, which is a contradiction.
\end{proof}

\subsection{The Key Construction} \label{sect=key-const}
We now generalize a construction from \cite[\S 3]{lin-syz}. Let $C$ be an integral curve of genus $g \geq 3$ and gonality $k \leq \frac{g+1}{2}$, and choose a nonnegative integer $n \leq g+1-2k$. Choose pairs of distinct points $(x_i,y_i)$ on $C$ for $1 \leq i \leq n$, and let $D$ be the semistable curve of genus $g+n$ obtained by adjoining smooth rational curves $R_i$ to $C$ at $x_i,y_i$. Mark $C$ at three general points $p,q,r$ (in particular, $\text{Aut}[C,p,q,r]=\{id\}$). Let $f: C \to \PP^1$ be a morphism of degree $k$ with $(f(p),f(q),f(r))=(0,1,\infty)$ and $f(x_i) \neq f(y_i)$ for all $1 \leq i \leq n$.
\begin{defin}[The Key Construction] \label{key-const} 
  Let $C$, $D$ be as above. Construct a stable map $[h : D \to \PP^1] \in \cM^{ns}_{g+n,k+n}(\PP^1,\{0,1,\infty\})$ by setting $h_{|_C}=f: C \to \PP^1$ and choosing $h_{|_{R_i}}: R_i \to \PP^1$ to be an isomorphism.
\end{defin}
The stable map $[h]$ as above is a smooth point of $\cM^{ns}_{g+n,k+n}(\PP^1,\{0,1,\infty\})$, cf.\ Section \ref{inf-gp}.

Following \cite{lin-syz}, a smooth curve $C$ of genus $g \geq 3$ and gonality $k \leq \frac{g+1}{2}$ satisfies \emph{bpf-linear growth} if
\begin{align*}
&\dim G^1_{k+l}(C) \leq l, \; \; \text{for $0 \leq l \leq g+1-2k$} \\
\text{and, further,} \; \; & \dim G^{1,\mathrm{bpf}}_{k+l}(C)<l, \; \,\text{for $0 < l \leq g+1-2k$} 
\end{align*}
where $G^{1,\mathrm{bpf}}_{d}(C) \seq G^{1}_{d}(C)$ is the open locus of base point free pencils of degree $d$ on $C$. 

Let $C$ be a smooth curve of genus $g$ and gonality $k \leq \frac{g+1}{2}$, set $n=g+1-2k$ and let $(D,p,q,r) \in \mm_{g+n,3}$ be the marked, stable curve of genus $g+n$ constructed above. Assume $C$ satisfies bpf-linear growth and that the underlying set of $W^1_k(C)$ consists of $m$ points $A_1, \ldots, A_m$. To each minimal pencil $A_i$ we have a stable map $f_j: C \to \PP^1$ for $1 \leq i \leq m$. As in definition \ref{key-const}, we extend $f_j$ to stable maps $h_j: D \to \PP^1$. Let $\hat{D}$ denote the stabilization of $D$.
\begin{prop} \label{prop-hi}
  Let $C,D$ be as above and set $n=g+1-2k$. Assume the smooth curve $C$ satisfies bpf-linear growth and that the underlying set of $W^1_k(C)$ consists of $m$ pencils $A_1, \ldots, A_m$, each of which has ordinary ramification. Assume the points $(x_i, y_i)$ are general for all $i$. Then the underlying set of the fibre of $$\pi_{k+n}: \cM_{g+n,k+n}(\PP^1,\{0,1,\infty\}) \to \mm_{2(k+n)-1,3}$$ over $(\hat{D},p,q,r)$ consists of $m$ points, namely the maps $\{ h_j, 1 \leq j \leq m\}$ as defined above.
\end{prop}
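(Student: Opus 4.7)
The plan is as follows. Each $h_j$ lies in the fibre by construction: the stabilization of the base $D$ contracts each rational tail $R_i$ (which has only two special points) and recovers $\hat{D}$, while the markings over $\{0,1,\infty\}$ are preserved. The $m$ maps $h_1,\ldots,h_m$ are distinct because their restrictions to $C$ are the distinct pencils $f_j$.

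For the converse, let $[h:D' \to \PP^1]$ be any point in the fibre. Since $\hat{D}$ is irreducible with normalization $C$, there is a unique component $C' \seq D'$ with $C' \simeq C$ that is not contracted under the stabilization $D' \to \hat{D}$; all other components of $D'$ are rational. The markings $p,q,r$ lie in the smooth locus of $\hat{D}$, hence on $C'$, and because their images $0,1,\infty \in \PP^1$ are distinct, $h|_{C'}$ is non-constant, defining a base-point-free pencil $V$ on $C$ of some degree $d_C = k+l$ with $l \geq 0$.

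The crux is to show $l=0$. For each of the $n$ nodes of $\hat{D}$ identifying $x_i$ with $y_i$, either the node is preserved in $D'$ (which requires $V(x_i)=V(y_i)$), or it is replaced by a chain of rational components whose map bridges $V(x_i)$ and $V(y_i)$ with positive degree. A degree count (total degree $k+n$, restriction to $C'$ has degree $k+l$) shows the total degree carried by such bridges is $n-l$, so at least $l$ of the $n$ nodes must satisfy $V(x_i)=V(y_i)$. For a fixed $V$ of degree $>k$, the condition $V(x_i)=V(y_i)$ cuts out a divisor in $C \times C$; combining the estimate $\dim G^{1,\mathrm{bpf}}_{k+l}(C) < l$ from bpf-linear growth with the codimension-$l$ constraint on the $n$-tuple $((x_i,y_i)) \in (C\times C)^n$ yields a total compatible locus of dimension at most $(l-1)+(2n-l)=2n-1$. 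Hence for generic $(x_i,y_i)$ no such $V$ with $l>0$ is compatible, forcing $l=0$, $d_C=k$, and $h|_{C'}=f_j$ for a unique minimal pencil $A_j$.

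The rest is reconstruction. Since $f_j(x_i)\neq f_j(y_i)$ for every $i$ by genericity, each node of $\hat{D}$ must be replaced in $D'$ by a rational bridge of degree $\geq 1$; with remaining total degree exactly $n$, each bridge is a single $\PP^1$ mapping isomorphically, and no extra rational tails at smooth points of $\hat D$ can occur since the degree is already exhausted. This matches Definition \ref{key-const}, so $[h]=[h_j]$. The main obstacle is the dimension count above: one must translate the bpf-linear growth bound on $C$ into a statement about which stable maps to $\PP^1$ admit $\hat{D}$ as stabilization, using the genericity of the gluing data $(x_i,y_i)$ in an essential way. The ordinary ramification hypothesis on the $A_j$ ensures in particular that $f_j(x_i)\neq f_j(y_i)$ is a genuine open condition and will play a more substantial role in the infinitesimal analysis in subsequent sections.
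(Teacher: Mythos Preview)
Your approach differs from the paper's: you analyse stable maps in the fibre directly, whereas the paper passes through admissible covers. It invokes \cite[Prop.~3.5]{lin-syz} to show there are precisely $m$ admissible covers over $\hat D$ (corresponding to the torsion-free sheaves $\mu_*A_1,\dots,\mu_*A_m$), and then transports this to stable maps via a proper birational comparison morphism $U\times_{\mm_{2(k+n)-1}}\mathcal{B}_{g+n,k+n}\to\pi_{k+n}^{-1}(U)$; properness guarantees every stable map in the fibre arises from one of the $m$ admissible covers. Your dimension count, combining $\dim G^{1,\mathrm{bpf}}_{k+l}(C)<l$ with the codimension-one incidences $V(x_i)=V(y_i)$, is exactly the mechanism behind the cited result, so at their core the two arguments coincide, and your reconstruction in the case $l=0$ is correct.

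There is, however, a genuine gap in your reduction step. You assert that the markings $p,q,r$ lie on $C'$ and from this deduce that $h|_{C'}$ is non-constant. But in a stable map whose marked base stabilizes to $\hat D$, a marking may sit at the far end of a rational tail attached to $C'$ at the corresponding point: such a tail has two special points and is therefore contracted under curve-stabilization, while stability of the map forces it to carry positive degree. Consequently you have not ruled out the case $d_{C}=0$, in which $h|_{C'}$ is constant and there is no pencil $V$ on $C$ to which your dimension count applies. The paper's detour through admissible covers sidesteps this entirely, since an admissible cover is finite by definition, so the restriction to the main component automatically has degree at least $k$. A smaller imprecision: $C'$ need not be isomorphic to $C$ but may already carry some of the self-nodes $x_i\sim y_i$; this does not damage your count (indeed it only strengthens the inequality $n-b\geq l$), but the phrasing should be adjusted.
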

\begin{proof}
  Let $\mathcal{B}_{g+n,k+n} \to \mm_{g+n}$ denote the space of degree $k+n$ admissible covers with unordered branch points. Arguing as in \cite[Prop.\ 3.5]{lin-syz}, there are precisely $m$ admissible covers over $\hat{D}$, corresponding to the torsion free sheaves $\mu_*A_1, \ldots, \mu_*A_m$ under the construction of \cite[Thm.\ 5]{ha-mu}, where $\mu: C \to \hat{D}$ is the normalization morphism. There is a proper, birational morphism $U \times_{\mm_{2(k+n)-1}} \mathcal{B}_{g+n,k+n} \to \pi_{k+n}^{-1}(U) $, defined in an open subset $U$ about $[(\hat{D},p,q,r)]\in \mm_{2(k+1)-1,3}$, sending an admissible cover $F: B \to T$ to the stable map obtained by marking the target $T$ at $F(p),F(q),F(r)$, forgetting other markings, stabilizing the target to produce a morphism $B \to \PP^1$ sending $(p,q,r)$ to $(0,1,\infty)$, and then stabilizing the morphism $B \to \PP^1$. In particular, each stable map in $\pi_{k+n}^{-1}([(\hat{D},p,q,r)])$ is produced by this procedure, which converts the admissible cover corresponding to $\mu_*A_i$ into the stable map $h_i$. This completes the proof.

\end{proof}

Choose $0 \leq n \leq 2k-5$ and, for any integer $m$, consider
$$\mm_{2k-1-n,k-n}((\PP^1)^m,\{0,1,\infty\}; 2n) \times \mm_{0,1}((\PP^1)^m;2)^{\times n}.$$
Denote the markings on $[f] \in \mm_{2k-1-n,k-n}((\PP^1)^m,\{0,1,\infty\}; 2n)$ as $x_1,y_1,\ldots ,x_n,y_n$. We have  $$ev_{x_i},ev_{y_i} \; : \; \mm_{2k-1-n,k-n}((\PP^1)^m,\{0,1,\infty\}; 2n) \to (\PP^1)^m, \; \; 1 \leq i \leq n$$
by evaluating at these markings. Let
$$U_n(m) \seq \mm_{2k-1-n,k-n}((\PP^1)^m,\{0,1,\infty\}; 2n)$$
denote the open locus of points $\alpha$ such that 
$$pr_j(ev_{x_i}(\alpha)) \neq pr_j(ev_{y_i}(\alpha))\; \text{ for all $1\leq j \leq m$, $1 \leq i \leq n$},$$
for $pr_j: (\PP^1)^m \to \PP^1$ the projection to the $j^{th}$ factor.
Further denote by $$ev_{1,j}, ev_{2,j} \; : \; \mm_{0,1}((\PP^1)^m;2)^{\times n} \to (\PP^1)^m, \; \; 1 \leq j \leq n$$
the projection to the evaluation morphisms on the $j^{th}$ factor of $ \mm_{0,1}((\PP^1)^m;2)^{\times n}$. Let 
$$\mathcal{M}^{sm}_{0,1}((\PP^1)^m;2)\seq \mm_{0,1}((\PP^1)^m;2)$$
denote the open locus of morphisms $\PP^1 \to (\PP^1)^m$ with smooth base.
We denote by $$ V_n(m) \seq U_n(m) \times \mathcal{M}^{sm}_{0,1}((\PP^1)^m;2)^{\times n}$$
the closed subset of points $(\alpha,\beta)$ with $ev_{x_i}(\alpha)=ev_{1,i}(\beta)$, $ev_{y_i}(\alpha)=ev_{2,i}(\beta)$ for all $1 \leq i \leq n$.
We have a glueing morphism
$$q_n(m): \; V_n(m) \to \mm_{2k-1,k} ((\PP^1)^m,\{ 0,1,\infty \}),$$
defined by glueing maps together in the obvious way. 
\begin{lem} \label{dim-M}
Fix integers $n,m$. Let $u=[f: B \to (\PP^1)^m] \in \mm_{g,k}((\PP^1)^m,\{0,1,\infty \}; 2n)$ with $f$ finite. Each component of $\mm_{g,k}((\PP^1)^m,\{0,1,\infty \}; 2n)$ containing $u$ has dimension at least $$3g-m(g+2-2k)+2n.$$
\end{lem}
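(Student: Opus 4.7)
The plan is to bound the dimension below by the Kontsevich virtual dimension of the appropriate stable-map space, while accounting for the base-point constraints via an evaluation morphism. First I would realise $\mm_{g,k}((\PP^1)^m,\{0,1,\infty\};2n)$ as the fibre of the three-fold evaluation morphism
$$ev_1 \times ev_2 \times ev_3 \colon \mm_{g,k}((\PP^1)^m;2n+3) \to ((\PP^1)^m)^3$$
over $((0)^m,(1)^m,(\infty)^m)$. Each component of this fibre through $u$ has codimension at most $3m$ in the ambient component, so it suffices to bound the ambient dimension at $u$ below by the virtual dimension $\mathrm{vd}$ of $\mm_{g,k}((\PP^1)^m;2n+3)$ and then subtract $3m$.

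Next I would apply the standard virtual dimension formula
$$\mathrm{vd} = (1-g)(m-3) + \int_{k[\Delta]} c_1(T_{(\PP^1)^m}) + (2n+3).$$
Since the small diagonal $[\Delta]$ projects isomorphically to each $\PP^1$ factor and $c_1(T_{\PP^1})=2$, the integral equals $2mk$. A short arithmetic simplification then yields $\mathrm{vd} - 3m = 3g - m(g+2-2k) + 2n$, which is precisely the desired bound.

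The final step is to justify the local inequality $\dim_u \mm_{g,k}((\PP^1)^m;2n+3) \geq \mathrm{vd}$. Here the hypothesis that $f$ is finite is crucial: it guarantees that $B$ has no contracted components, so the deformation theory of $[f]$ is governed by a two-term cotangent complex whose $\mathrm{Ext}^1$ and $\mathrm{Ext}^2$ groups are respectively the tangent and obstruction spaces. A Riemann--Roch computation for this complex yields $\dim \mathrm{Ext}^1 - \dim \mathrm{Ext}^2 = \mathrm{vd}$, and since the moduli stack is locally cut out by at most $\dim \mathrm{Ext}^2$ equations inside a smooth versal deformation space of dimension $\dim \mathrm{Ext}^1$, the bound $\dim_u \geq \mathrm{vd}$ follows.

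The main obstacle will be the careful bookkeeping of the $\mathrm{Ext}$ groups at the nodes of $B$ and around the ramification locus of $f$, together with the correct treatment of all the markings in the Riemann--Roch calculation. The finiteness hypothesis precisely rules out the contracted-tail pathologies that generate excess-dimensional components in positive-genus Kontsevich moduli, so the standard deformation-theoretic argument should go through without complication.
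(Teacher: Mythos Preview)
Your approach is correct and essentially the same as the paper's: both arguments amount to showing that the local dimension is bounded below by $\dim T^1 - \dim T^2$ for the relevant tangent--obstruction theory, and then computing this difference by Riemann--Roch to recover the virtual dimension. You organise this by passing through $\mm_{g,k}((\PP^1)^m;2n+3)$ and cutting by the fibre of a three-fold evaluation map (costing at most $3m$), whereas the paper works directly with the base-pointed deformation theory, inserting the terms $+3+2n-3m$ by hand and then computing $\dim T^1(B)-\dim T^0(B)$ via an auxiliary projective embedding of $B$ (this is where the paper actually invokes finiteness of $f$). Your route is slightly cleaner in that it quotes the standard virtual-dimension formula rather than redoing the Euler-characteristic computation for the curve.

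One small correction: your final paragraph mislocates the role of the finiteness hypothesis. The inequality $\dim_u \geq \dim\mathrm{Ext}^1-\dim\mathrm{Ext}^2=\mathrm{vd}$ is a general feature of deformation--obstruction theory and holds at \emph{every} point of the Kontsevich space, contracted components or not; excess-dimensional components arising from contracted tails only make the dimension \emph{larger}, which is harmless for a lower bound. So finiteness is not needed to justify $\dim_u\geq\mathrm{vd}$. In the paper it is used only as a computational convenience (to guarantee stability and produce the auxiliary embedding $B\hookrightarrow\PP^N$); in your argument you may simply drop the claim that it is ``crucial'' for the bound.
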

\begin{proof}
Each component containing $[f: B \to (\PP^1)^m]$ has dimension at least
 \[ \dim T^1(B/(\PP^1)^m)-\dim T^2(B/(\PP^1)^m)+3+2n-3m\] where $T^i(B/(\PP^1)^m)$, $i=1,2$, are vector spaces fitting into the exact sequence
 \[ 0 \to T^0(B) \to H^0(B,f^*T_{(\PP^1)^m}) \to T^1(B/(\PP^1)^m) \to T^1(B) \to H^1(B,f^*T_{(\PP^1)^m}) \to T^2(B/(\PP^1)^m) \to 0,\]
 for $T^i(B):=\text{Ext}^i(\Omega_B,\mathcal{O}_B)$, see e.g.\ \cite[\S 2]{kemeny-thesis}. The term $3+2n$ in the above formula comes from the base points and markings and the term $3m$ corresponds to imposing that the base points are sent to $\{(0)^m,(1)^m,(\infty)^m\}$. Further, $T^1(B/(\PP^1)^m)$ represents first order deformations of $f$ (forgetting base points and markings) whereas $T^2(B/(\PP^1)^m)$ contains the obstructions. Thus
  \[ \dim T^1(B/(\PP^1)^m)-\dim T^2(B/(\PP^1)^m)=\chi(f^*T_{(\PP^1)^m})+\dim T^1(B)-\dim T^0(B) \] To compute $\dim T^1(B)-\dim T^0(B)$, we proceed as in \cite[Prop.\ 2.2.6]{kemeny-thesis}. As $f$ is finite and hence stable, some power of $\omega_B \otimes f^*\mathcal{O}_{(\PP^1)^m}(1)$ is very ample and gives an embedding $j: B \hookrightarrow \PP^N$. Then $(j,f): B \hookrightarrow \PP^N \times (\PP^1)^m$ is a closed immersion and one computes \[\dim T^0(B)-\dim T^1(B)=\chi(f^*T_{(\PP^1)^m})+\chi(T_{\PP^N_{|_B}})-\chi(N_{B,\PP^N \times (\PP^1)^m}).\] Using the Euler sequence, Riemann--Roch on $B$, the sequence
 $$0 \to f^*T_{(\PP^1)^m} \to N_{B,\PP^N \times (\PP^1)^m} \to N_{B,\PP^N} \to 0 $$ and the formula $\deg N_{B,\PP^N}=(N+1)\deg B+2g-2$, one computes that 
 each component about $[f: B \to (\PP^1)^m]$ has dimension at least $3g-m(g+2-2k)+2n$. 
\end{proof}
We now estimate the dimension of $V_n(m)$.
\begin{lem} \label{dim-V}
Fix $g,k,n,m$. Let $u=[(f,g_1, \ldots, g_n)] \in V_n(m)$ where $f: B \to (\PP^1)^m$ is finite.
\begin{enumerate}
\item \label{dim-est-J} Every component $J$ of $V_n(m)$ containing $u$ has dimension at least $6k-2n-m-3$.
\item \label{dim-exact-J} Assume in addition that $\dim_{[f]} \mm_{2k-1-n,k-n}((\PP^1)^m,\{0,1,\infty\}; 2n)=6k-m(n+1)-n-3$. Then $\dim J =6k-2n-m-3$ at $u$.
\end{enumerate}
\end{lem}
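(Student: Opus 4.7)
The plan is to exhibit $V_n(m)$ as a fiber product and use smoothness of the rational-component evaluation to reduce everything to Lemma \ref{dim-M}. Consider the two evaluation morphisms
\[
\Phi_1 : U_n(m) \to ((\PP^1)^m)^{2n}, \qquad \alpha \mapsto (ev_{x_1}(\alpha),ev_{y_1}(\alpha),\ldots,ev_{x_n}(\alpha),ev_{y_n}(\alpha)),
\]
\[
\Phi_2 : \mathcal{M}^{sm}_{0,1}((\PP^1)^m;2)^{\times n} \to ((\PP^1)^m)^{2n}, \qquad \beta \mapsto (ev_{1,1}(\beta),ev_{2,1}(\beta),\ldots,ev_{1,n}(\beta),ev_{2,n}(\beta)).
\]
By construction, $V_n(m)=U_n(m)\times_{((\PP^1)^m)^{2n}}\mathcal{M}^{sm}_{0,1}((\PP^1)^m;2)^{\times n}$, and the image of $\Phi_1$ lies in the open subscheme $W^{\circ}\seq((\PP^1)^m)^{2n}$ of tuples $(q_1,r_1,\ldots,q_n,r_n)$ with $pr_j(q_i)\neq pr_j(r_i)$ for all $i,j$; this is precisely the openness condition built into $U_n(m)$.

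Next we analyze $\Phi_2$. Each factor $\mathcal{M}^{sm}_{0,1}((\PP^1)^m;2)$ parametrizes smooth genus zero stable maps of class $[\Delta]$ with two markings: $m$-tuples $(f_1,\ldots,f_m)\in \mathrm{PGL}_2^m$ together with two ordered source points, modulo diagonal source-$\mathrm{PGL}_2$ reparametrization. This moduli space is smooth and irreducible of dimension $3m-1$. The differential of $\Phi_2$ on a single factor at $(f_1,\ldots,f_m;p_1,p_2)$ decomposes as a direct sum over $i$ of the evaluation maps $H^0(\PP^1,f_i^*T_{\PP^1})\to T_{f_i(p_1)}\PP^1\oplus T_{f_i(p_2)}\PP^1$; since $f_i^*T_{\PP^1}\simeq\OO_{\PP^1}(2)$ is generated by global sections at any pair of distinct points, each such evaluation is surjective. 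Hence $\Phi_2$ is smooth of relative dimension $n(m-1)$ onto $W^{\circ}$.

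The projection $V_n(m)\to U_n(m)$, being a base change of $\Phi_2$, is therefore smooth and surjective of relative dimension $n(m-1)$. For any $u=(f,g_1,\ldots,g_n)\in V_n(m)$, each component $J$ of $V_n(m)$ through $u$ then satisfies $\dim_u J=\dim_{[f]} U_n(m)+n(m-1)$. Since $U_n(m)$ is open in $\mm_{2k-1-n,k-n}((\PP^1)^m,\{0,1,\infty\};2n)$, Lemma \ref{dim-M} (applied with $g=2k-1-n$ and degree $k-n$) yields $\dim_{[f]} U_n(m)\geq 3(2k-1-n)-m(n+1)+2n=6k-3-n-m(n+1)$, whence $\dim_u J\geq 6k-2n-m-3$, giving (\ref{dim-est-J}); under the equality hypothesis of (\ref{dim-exact-J}), the same identity becomes $\dim_u J=6k-2n-m-3$. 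The step requiring most care is establishing genuine smoothness of $\Phi_2$ (rather than merely equi-dimensionality of fibers), which is needed to pass dimensions cleanly through base change in the equality direction of (\ref{dim-exact-J}); happily this reduces to the global-generation statement above.
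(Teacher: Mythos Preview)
Your proof is correct and follows essentially the same strategy as the paper's own argument. Both proofs identify the fibre of $pr_1: V_n(m)\to U_n(m)$ as having dimension $n(m-1)$ and then invoke Lemma~\ref{dim-M} for the $U_n(m)$ factor. The only stylistic difference is that you package parts (\ref{dim-est-J}) and (\ref{dim-exact-J}) together by proving the stronger statement that $\Phi_2$ is \emph{smooth} over $W^{\circ}$, whence $pr_1$ is smooth of relative dimension $n(m-1)$; the paper instead argues the lower bound in (\ref{dim-est-J}) directly via the codimension estimate ``each condition $ev_{x_i}(\alpha)=ev_{1,i}(\beta)$, $ev_{y_i}(\alpha)=ev_{2,i}(\beta)$ reduces the dimension by at most $m$'', and treats (\ref{dim-exact-J}) by the bare statement that fibres of $pr_1$ are pure of dimension $n(m-1)$. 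Your route is marginally cleaner in that smoothness of $\Phi_2$ yields both inequalities at once, but the underlying geometry is identical.
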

\begin{proof}
\emph{Part (\ref{dim-est-J}):} Let $J \seq V_n(m)$ be a component containing $u$. The $i^{th}$ factor of $\mathcal{M}^{sm}_{0,1}((\PP^1)^m;2)^{\times n}$ is irreducible of dimension
$$2+h^0(g_i^*T_{(\PP^1)^m})-\dim \text{PGL}(2)=3m-1.$$ Each component of $\mm_{2k-1-n,k-n}((\PP^1)^m,\{0,1,\infty \}; 2n)$ containing $[f: B \to (\PP^1)^m]$ has dimension at least
$6k-m(n+1)-n-3$ from Lemma \ref{dim-M}. Each condition $ev_{x_i}(\alpha)=ev_{1,i}(\beta)$, $ev_{y_i}(\alpha)=ev_{2,i}(\beta)$ reduces the dimension by at most $m$. Thus $J$ has dimension at least
 $$(6k-m(n+1)-n-3)+n(3m-1)-2nm=6k-2n-m-3.$$

 \emph{Part (\ref{dim-exact-J}):} Consider $pr_1 : V_n(m) \to U_n(m) \seq \mm_{2k-1-n,k-n}((\PP^1)^m,\{0,1,\infty\}; 2n)$. All fibres of $pr_1$ are pure of dimension $n[(3m-1)-2m]=n(m-1)$. The claim follows.

\end{proof}

\subsection{Infinitesimal General Position} \label{inf-gp}
If $f: C \to X$ is a stable map from a nodal curve to a smooth projective variety, then first order deformations of $f$ are described by $\text{Ext}^1_C(\Omega_f^{\bullet},\mathcal{O}_C)$ and obstructions are given by $\text{Ext}^2_C(\Omega_f^{\bullet},\mathcal{O}_C)$, where $\Omega^{\bullet}_f$ is the complex $$f^*\Omega_X \xrightarrow{df} \Omega_C,$$
supported in degrees $-1,0$. If $f$ is unramified at the generic point of each component of $C$, $\mathbb{R}\mathcal{H}\text{om}_{\mathcal{O}_C}(\Omega_f^{\bullet},\mathcal{O}_C)$ is quasi-isomorphic to $N_f[-1]$, where $N_f$ is the \emph{normal sheaf}, \cite[\S 4]{bogomolov-hassett-tschinkel}.

Let $C$ be a nodal curve and $T=\{p,q,r\} \seq C$ a marking in the smooth locus of $C$. Let $F: C \to (\PP^1)^m \in \mm_{g,k}((\PP^1)^m, \{0,1,\infty \})$ with base points $T$.
For any $\sigma=\{ \sigma_1, \ldots, \sigma_j \} \seq \{1, \ldots, m \}$, we have a projection
$$pr_{\sigma} \; : \; (\PP^1)^m \to (\PP^1)^{|\sigma|}.$$
Let $F_{\sigma}:=pr_{\sigma}  \circ F.$ We say that the set $F$ is \emph{successively unobstructed} if $$\text{Ext}^2_C(\Omega_{F_{\sigma}}^{\bullet},\mathcal{O}_C(-T))=0$$ for all subsets $\sigma \seq \{1, \ldots, m \}$. This is equivalent to requiring $h^1(N_{F_{\sigma}}(-T))=0$ for all $\sigma$, provided each $f_i:=pr_i \circ F$ is generically unramified. In the next few lemmas we will explore the deformation theoretic meaning of this definition.  
\begin{lem} \label{euler-char-comp-lemma-marked}
  Let $B$ be an connected, nodal curve of genus $g$. Let $$F: B \to (\PP^1)^m \in \mm_{g,k}((\PP^1)^m, \{0,1,\infty \}),$$ with base points in $T$, which we assume avoid all ramification of $f_i$, $1 \leq i \leq m$. Assume $f_1, \ldots, f_m $ are finite. Let $\nu: \widetilde{B} \to B$ be the normalization, with components $\widetilde{B}_1, \ldots, \widetilde{B}_r$, and let $\delta$ be the number of nodes of $B$. Then $F$ is successively unobstructed if and only if 
  $$h^0(N_{F_{\sigma}}(-T))=3(\delta+1)-|\sigma|(g+2-2k)-\sum_{i=1}^r(3-3g(\widetilde{B}_i))$$ for all $\sigma \seq \{1, \ldots, m\}$. In particular, assuming either:
  \begin{enumerate}
  \item $B$ is integral, or
  \item $B=C \cup \bigcup_{i=1}^{r-1} R_i$ for $C$ integral nodal, $R_i \simeq \PP^1$, $C \cap R_{i}=\{ x_i,y_i \}$, $ 1 \leq i \leq r-1$.
  \end{enumerate}
  Then $F$ is successively unobstructed if and only if $$h^0(N_{F_{\sigma}}(-T))=3g-|\sigma|(g+2-2k).$$
\end{lem}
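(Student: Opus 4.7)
The plan is to identify the relevant $\text{Ext}$ groups of the cotangent complex with the cohomology of $N_{F_\sigma}(-T)$, then compute $\chi(N_{F_\sigma}(-T))$ and verify it matches the claimed formula. Since each $f_i$ is finite in characteristic zero, $F_\sigma$ is generically unramified on every component of $B$, so the cited quasi-isomorphism $\mathbb{R}\mathcal{H}om(\Omega^{\bullet}_{F_\sigma}, \mathcal{O}_B) \simeq N_{F_\sigma}[-1]$ yields $\text{Ext}^2(\Omega^{\bullet}_{F_\sigma}, \mathcal{O}_B(-T)) \simeq H^1(B, N_{F_\sigma}(-T))$. Hence $F$ is successively unobstructed if and only if $h^1(N_{F_\sigma}(-T))=0$ for all $\sigma$, equivalently $h^0(N_{F_\sigma}(-T)) = \chi(N_{F_\sigma}(-T))$ for all $\sigma$, so it suffices to verify
$$\chi(N_{F_\sigma}(-T)) = 3(\delta+1) - |\sigma|(g+2-2k) - \sum_i (3 - 3g(\widetilde{B}_i)).$$

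To compute this Euler characteristic, I would apply $\mathbb{R}\text{Hom}(-, \mathcal{O}_B(-T))$ to the stupid-truncation triangle $F_\sigma^*\Omega_{(\PP^1)^{|\sigma|}}[1] \to \Omega^{\bullet}_{F_\sigma} \to \Omega_B$ and use additivity to obtain
$$\chi(N_{F_\sigma}(-T)) = \chi(F_\sigma^* T_{(\PP^1)^{|\sigma|}}(-T)) - \chi(\mathbb{R}\text{Hom}(\Omega_B, \mathcal{O}_B(-T))).$$
The first term, by Riemann--Roch on the Gorenstein curve $B$ for the locally free sheaf $F_\sigma^* T_{(\PP^1)^{|\sigma|}} = \bigoplus_{l \in \sigma} f_l^*\mathcal{O}_{\PP^1}(2)$ of rank $|\sigma|$ and degree $2k|\sigma|$, equals $|\sigma|(2k-2-g)$. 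For the second, split as $\chi(\mathcal{H}om) - \chi(\mathcal{E}xt^1)$ and compute each piece separately: a local computation at a node, using the presentation $0 \to \mathcal{O}_{B,p} \xrightarrow{(y,x)} \mathcal{O}_{B,p}^2 \to \Omega_{B,p} \to 0$ with $\mathcal{O}_{B,p} = k[[x,y]]/(xy)$, shows $\mathcal{E}xt^1(\Omega_B, \mathcal{O}_B) \simeq k(p)$ at each node, contributing $\delta$ total; and a direct derivation calculation identifies $T_B := \mathcal{H}om(\Omega_B, \mathcal{O}_B)$ with $\nu_*(T_{\widetilde{B}}(-\widetilde{\Delta}))$, where $\widetilde{\Delta} \subset \widetilde{B}$ is the reduced preimage of the nodes, so the projection formula plus Riemann--Roch on each component of $\widetilde{B}$ gives $\chi(T_B(-T)) = \sum_i (3 - 3g(\widetilde{B}_i)) - 2\delta - 3$. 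Assembling the pieces produces the required identity. The two stated special cases then collapse via the elementary rewriting $3p_a(B) = 3(\delta+1) - \sum_i(3 - 3g(\widetilde{B}_i))$ to the single formula $3g - |\sigma|(g+2-2k)$.

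The main technical step is the identification $T_B \simeq \nu_*(T_{\widetilde{B}}(-\widetilde{\Delta}))$: locally, any derivation $\partial$ of $k[[x,y]]/(xy)$ must satisfy $\partial(xy) = 0$, which, after writing $\partial(x) = a_1(x) + a_2(y)$ and $\partial(y) = b_1(x) + b_2(y)$ and expanding, forces $\partial(x) \in (x)$ and $\partial(y) \in (y)$, so $\partial$ pulls back to a vector field on each branch of $\widetilde{B}$ vanishing at the preimage of the node. Once this is in place the rest is a bookkeeping exercise in Riemann--Roch.
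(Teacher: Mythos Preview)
Your proof is correct and follows essentially the same route as the paper: both reduce to computing $\chi(N_{F_\sigma}(-T))$ via the identification $T_B \simeq \nu_* T_{\widetilde B}(-\widetilde\Delta)$ and the fact that $\mathcal{E}xt^1(\Omega_B,\mathcal{O}_B)$ is the skyscraper of length $\delta$ at the nodes, then finish by Riemann--Roch. The only cosmetic differences are that the paper first strips off the twist via $\chi(N_{F_\sigma}(-T))=\chi(N_{F_\sigma})-3(|\sigma|-1)$ and phrases the computation through the sheaf-level short exact sequence $0\to F_\sigma^*T_{(\PP^1)^j}/T_B\to N_{F_\sigma}\to \mathcal{E}xt^1(\Omega_B,\mathcal{O}_B)\to 0$ (citing \cite[\S 11.3]{ACG2} for the $T_B$ identification), whereas you keep the twist throughout and work directly with the triangle; the content is the same.
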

\begin{proof}
  We compute $\chi(N_{F_{\sigma}}(-T))$. Firstly, $\chi(N_{F_{\sigma}}(-T))=\chi(N_{F_{\sigma}})-3(|\sigma|-1),$ so it suffices to compute $\chi(N_{F_{\sigma}})$.
 There is a short exact sequence
 $$ 0 \to F_{\sigma}^*T_{(\PP^1)^j} /T_B \to N_{F_{\sigma}} \to \mathcal{E}\text{xt}^1_{\mathcal{O}_B}(\Omega^1_B,\mathcal{O}_B) \to 0,$$
 where $j=|\sigma|$, see \cite[Pg.\ 541]{bogomolov-hassett-tschinkel}. The sheaf $\mathcal{E}\text{xt}^1_{\mathcal{O}_B}(\Omega^1_B,\mathcal{O}_B)$ is a skyscraper sheaf supported at the nodes of $B$, whereas $$T_B \simeq \nu_* T_{\widetilde{B}}(-\sum_{i=1}^{\delta}r_i+q_i),$$ where $\nu: \widetilde{B} \to B$ is normalization and $r_i,q_i$, lie over the nodes of $B$, \cite[\S 11.3]{ACG2}. The proof now follows by Riemann--Roch.
 \end{proof}
The following is very similar to \cite[Prop.\ 1.4]{li-tian} and the proof is left to the reader.
 \begin{prop} \label{tgt-space-base}
   Let $C$ be a nodal curve, let $T \seq C$ be a marking and $f: \; (C,T) \to X$ a stable map to a smooth projective variety. Then the space of first order deformations $F: \; (\mathcal{C},\mathcal{T}) \to X$ of $f$ such that $F_{|_{\mathcal{T}}}$ is constant is given by $\text{Ext}^1_C(\Omega^{\bullet}_f,\mathcal{O}_C(-T))$.
 \end{prop}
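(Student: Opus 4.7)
The plan is to deduce this from the unmarked analogue by twisting with $\mathcal{O}_C(-T)$. Recall that the standard deformation theory of stable maps (Behrend--Fantechi, Ran, or \cite{bogomolov-hassett-tschinkel}) identifies the space of first-order deformations of $f:C\to X$ (allowing the base to deform, but without marked-point data) with $\operatorname{Ext}^1_C(\Omega^{\bullet}_f,\mathcal{O}_C)$. The goal is to characterise those deformations for which an extension of the marking can be chosen so that its image in $X$ is literally frozen.

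Apply $\mathbb{R}\operatorname{Hom}_C(\Omega^{\bullet}_f,-)$ to the short exact sequence
\[
0 \to \mathcal{O}_C(-T) \to \mathcal{O}_C \to \mathcal{O}_T \to 0
\]
to obtain
\[
\operatorname{Ext}^0_C(\Omega^{\bullet}_f,\mathcal{O}_T) \xrightarrow{\partial} \operatorname{Ext}^1_C(\Omega^{\bullet}_f,\mathcal{O}_C(-T)) \xrightarrow{\alpha} \operatorname{Ext}^1_C(\Omega^{\bullet}_f,\mathcal{O}_C) \xrightarrow{\rho} \operatorname{Ext}^1_C(\Omega^{\bullet}_f,\mathcal{O}_T).
\]
Since the points of $T$ lie in the smooth locus of $C$, the complex $\Omega^{\bullet}_f$ is locally represented near each $p\in T$ by the two-term complex of locally free sheaves $[f^*\Omega_X \to \Omega_C]$, so by direct computation $\mathbb{R}\mathcal{H}om(\Omega^{\bullet}_f,\mathcal{O}_T)$ is the skyscraper complex $\bigoplus_{p\in T}[T_p C \xrightarrow{df_p} T_{f(p)}X]$ in degrees $[0,1]$. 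In particular, $\operatorname{Ext}^0_C(\Omega^{\bullet}_f,\mathcal{O}_T)=\bigoplus_{p}\ker(df_p)$, which is zero on the components where $f$ is generically unramified; the relevant point is that $\alpha$ becomes injective on each such component.

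The core of the proof is then to match $\rho$ with the map that records how the image of the marking moves. Concretely, given a first-order deformation represented by a class in $\operatorname{Ext}^1_C(\Omega^{\bullet}_f,\mathcal{O}_C)$, one produces a deformation $(\mathcal{C},F)$ over $\operatorname{Spec}\mathbb{C}[\epsilon]$; lifting $T$ to a section $\mathcal{T}$ of $\mathcal{C}$ is unobstructed (the marking sits in the smooth locus) but the choice is parametrised by $\bigoplus_{p\in T} T_pC$, and two lifts $\mathcal{T},\mathcal{T}'$ differing by $\tau\in \bigoplus T_pC$ yield $F|_{\mathcal{T}}-F|_{\mathcal{T}'}=df(\tau)$. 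Thus the class of $F|_{\mathcal{T}}$ in $\bigoplus_p T_{f(p)}X/df_p(T_pC)=\operatorname{Ext}^1_C(\Omega^{\bullet}_f,\mathcal{O}_T)$ is well-defined and equals $\rho$ of the underlying deformation class. Consequently, $F|_{\mathcal{T}}$ can be arranged to be constant precisely when $\rho$ vanishes on that class, i.e.\ when the class lies in the image of $\alpha$. Combining this with the injectivity of $\alpha$ (the ambiguity coming from $\partial$ is exactly the freedom of reparametrising the markings by vectors in $\ker df_p$, which corresponds to genuinely distinct marked deformations inducing the same unmarked deformation), one obtains the claimed identification.

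The main obstacle is the bookkeeping in the matching of $\rho$ with the geometric restriction map: one has to check compatibility between the Čech/hypercohomological description of $\operatorname{Ext}^1_C(\Omega^{\bullet}_f,\mathcal{O}_C)$ as deformations of $f$ and the local description of $\mathbb{R}\mathcal{H}om(\Omega^{\bullet}_f,\mathcal{O}_T)$ at each marked point. Once this naturality is established (as in the argument of \cite{li-tian}*{Prop.~1.4}, which is essentially the same setup with base points in a smooth variety in place of $X$), the proposition follows from the exactness of the sequence above.
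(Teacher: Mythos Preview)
The paper does not give a proof of this proposition at all: it simply says the statement is very similar to \cite[Prop.~1.4]{li-tian} and leaves the details to the reader. Your argument, which runs the long exact sequence for $\mathbb{R}\operatorname{Hom}_C(\Omega^{\bullet}_f,-)$ applied to $0\to\mathcal{O}_C(-T)\to\mathcal{O}_C\to\mathcal{O}_T\to 0$ and then identifies $\rho$ with the ``how the image of the marking moves'' map, is exactly the standard unpacking of the Li--Tian argument the paper has in mind, so your approach is the intended one and supplies the omitted details.
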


Combining Lemma \ref{dim-M}, Lemma \ref{euler-char-comp-lemma-marked} and Proposition \ref{tgt-space-base} we obtain:
\begin{cor} \label{cor-tgt-space-base}
Let $B$ be a connected nodal curve of genus $g$ and assume either:
  \begin{enumerate}
  \item $B$ is integral, or
  \item $B=C \cup \bigcup_{i=1}^{r-1} R_i$ for $C$ integral nodal, $R_i \simeq \PP^1$, $C \cap R_{i}=\{ x_i,y_i \}$, $ 1 \leq i \leq r-1$.
  \end{enumerate}
 Let $F: B \to (\PP^1)^m \in \mm_{g,k}((\PP^1)^m, \{0,1,\infty \}),$ with finite components $f_1, \ldots, f_m $ and base points $T$ avoiding the ramification of $\{f_i \}$. Then $F$ is successively unobstructed with respect to $T$ if and only if, for any $\sigma \seq \{1, \ldots, m\} $, $\mm_{g,k}((\PP^1)^{|\sigma |},\{0,1,\infty\})$ is smooth at $[F_{\sigma}]$ of dimension $3g-|\sigma|(g+2-2k)$. 
\end{cor}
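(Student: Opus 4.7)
The plan is to assemble the three results immediately preceding the corollary: Lemma \ref{dim-M}, Lemma \ref{euler-char-comp-lemma-marked}, and Proposition \ref{tgt-space-base}. The statement is a biconditional, but the two directions share essentially the same computation: we translate Zariski tangent-space dimensions at $[F_\sigma]$ to cohomology of the normal sheaf and compare with the dimension bound.

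First, for every $\sigma\subseteq\{1,\ldots,m\}$, I would identify the tangent space to $\mm_{g,k}((\PP^1)^{|\sigma|},\{0,1,\infty\})$ at $[F_\sigma]$. By Proposition \ref{tgt-space-base}, this tangent space is
\[
\mathrm{Ext}^1_B(\Omega^{\bullet}_{F_\sigma},\mathcal{O}_B(-T)).
\]
Since each $f_i$ is finite and the base points $T$ avoid ramification, $F_\sigma$ is unramified at the generic point of every component, so $\mathbb{R}\mathcal{H}\mathrm{om}_{\mathcal{O}_B}(\Omega^{\bullet}_{F_\sigma},\mathcal{O}_B)\simeq N_{F_\sigma}[-1]$ as recalled in \S\ref{inf-gp}. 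Twisting by $\mathcal{O}_B(-T)$ (whose support avoids the ramification of every $f_i$) and taking global sections gives
\[
\mathrm{Ext}^1_B(\Omega^{\bullet}_{F_\sigma},\mathcal{O}_B(-T))\simeq H^0(B,N_{F_\sigma}(-T)).
\]

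Second, I apply Lemma \ref{dim-M} with $n=0$: every irreducible component of $\mm_{g,k}((\PP^1)^{|\sigma|},\{0,1,\infty\})$ passing through $[F_\sigma]$ has dimension at least $3g-|\sigma|(g+2-2k)$. Hence the moduli space is smooth at $[F_\sigma]$ of dimension $3g-|\sigma|(g+2-2k)$ if and only if the Zariski tangent space has exactly this dimension, i.e.\
\[
h^0(B,N_{F_\sigma}(-T))=3g-|\sigma|(g+2-2k).
\]

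Finally, since $B$ is integral or satisfies the second hypothesis of Lemma \ref{euler-char-comp-lemma-marked}, that lemma furnishes precisely the equivalence between the equality $h^0(B,N_{F_\sigma}(-T))=3g-|\sigma|(g+2-2k)$ and the vanishing $H^1(B,N_{F_\sigma}(-T))=0$, i.e.\ $\mathrm{Ext}^2_B(\Omega^{\bullet}_{F_\sigma},\mathcal{O}_B(-T))=0$. Running this equivalence over every $\sigma\subseteq\{1,\ldots,m\}$ yields the two-sided implication claimed.

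The only subtle point, and the place I expect a small obstacle, is justifying that the unramified identification of $\mathbb{R}\mathcal{H}\mathrm{om}(\Omega^{\bullet}_{F_\sigma},\mathcal{O}_B)$ with $N_{F_\sigma}[-1]$ remains valid after twisting by $\mathcal{O}_B(-T)$ and that the resulting spectral sequence degenerates at $E_2$; this is immediate because $T$ is disjoint from the nodes of $B$ and from the ramification of each $f_i$, so the local-to-global Ext spectral sequence collapses into $H^0$ and $H^1$ of $N_{F_\sigma}(-T)$. With that in hand, the three ingredients combine to give the corollary.
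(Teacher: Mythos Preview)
Your proposal is correct and follows exactly the paper's intended route: the paper simply writes ``Combining Lemma \ref{dim-M}, Lemma \ref{euler-char-comp-lemma-marked} and Proposition \ref{tgt-space-base} we obtain'' and states the corollary without further proof. Your identification of the tangent space with $H^0(N_{F_\sigma}(-T))$, the use of the dimension lower bound from Lemma \ref{dim-M}, and the Euler-characteristic computation from Lemma \ref{euler-char-comp-lemma-marked} are precisely the three ingredients the paper assembles, and your handling of the twist by $\mathcal{O}_B(-T)$ is the correct justification.
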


Recall from Section \ref{notation} the morphism
$$\pi_k : \cM_{g,k}(\PP^1, \{0,1,\infty \}) \to \mm_{g,3}.$$
Let $C$ be an irreducible nodal curve of genus $g$ admitting a degree $k$ morphism to $\PP^1$.
\begin{prop} \label{self-trans-lemma}
  Let $C$ be an irreducible nodal curve of genus $g$ admitting a degree $k$ morphism to $\PP^1$. Assume that for a general marking $T$ of degree three and any $f: \widetilde{C} \to \PP^1\in \pi_k^{-1}[(C,T)]$, the base $\widetilde{C}$ is irreducible. Assume $\pi_k^{-1}[(C,T)] =\{f_1, \ldots, f_m \}$ is zero-dimensional of cardinality $m \geq 2$ and that $h^0(f_i^*\mathcal{O}_{\PP^1}(2))=3$ for $1 \leq i \leq m$. Set $$[F:=(f_i) \; : \; C \to (\PP^1)^m].$$ Then $\pi_k: \cM_{g,k}(\PP^1, \{0,1,\infty \}) \to \mm_{g,3}$ is self-transverse in an open subset about $[f]$ if and only if $F$ is successively unobstructed (with respect to $T$). 
\end{prop}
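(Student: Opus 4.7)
The plan is to compare the tangent--obstruction sequences for deformations of each individual cover $f_i$ with those for the product map $F=(f_1,\ldots,f_m)$, exploiting that deformations of a stable map to a product target factor through deformations of the component maps. Kleiman's self-transversality formula will then translate, via this comparison, into the smoothness of each $\mm_{g,k}((\PP^1)^{|\sigma|},\{0,1,\infty\})$ at $[F_\sigma]$ in the expected dimension, which by Corollary \ref{cor-tgt-space-base} is equivalent to successive unobstructedness.

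First I would check that $d\pi_k$ is injective at each $[f_i]$. By Proposition \ref{tgt-space-base} the kernel is $H^0(C,f_i^*T_{\PP^1}(-T))$; since $T$ is a general three-point marking avoiding ramification, one has $f_i^*T_{\PP^1}(-T)\simeq 2L_i(-T)$ with $L_i:=f_i^*\OO_{\PP^1}(1)$ base-point free, and the hypothesis $h^0(2L_i)=3$ forces $h^0(2L_i(-T))=0$ for general $T$. The same argument applied to $F_\sigma$, using $F_\sigma^*T_{(\PP^1)^{|\sigma|}}=\bigoplus_{i\in\sigma}f_i^*T_{\PP^1}$, shows that the analogous forgetful morphism from $\mm_{g,k}((\PP^1)^{|\sigma|},\{0,1,\infty\})$ to $\mm_{g,3}$ has injective differential at $[F_\sigma]$. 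Write $V_i\seq T_{[C,T]}\mm_{g,3}$ for the image of $d\pi_k$ at $[f_i]$ and $V_\sigma:=\bigcap_{i\in\sigma}V_i$.

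The crux is the identification
$$V_\sigma=\text{im}\bigl(T_{[F_\sigma]}\mm_{g,k}((\PP^1)^{|\sigma|},\{0,1,\infty\})\longrightarrow T_{[C,T]}\mm_{g,3}\bigr).$$
A first-order deformation of $(C,T)$ over $\text{Spec}\,\C[\epsilon]$ extends to a deformation of $F_\sigma$ as a base-preserving stable map if and only if it extends to a deformation of each $f_{\sigma_i}$ individually, because the product structure of $(\PP^1)^{|\sigma|}$ means deformations of $F_\sigma$ are the same as compatible tuples of deformations of its components, and the base-point conditions $F_\sigma(T)=\{(0)^{|\sigma|},(1)^{|\sigma|},(\infty)^{|\sigma|}\}$ factor componentwise. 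Together with the injectivity from Step 1 this yields $\dim V_\sigma=\dim T_{[F_\sigma]}\mm_{g,k}((\PP^1)^{|\sigma|},\{0,1,\infty\})$ for every $\sigma\seq\{1,\ldots,m\}$.

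Putting these together, self-transversality of $\pi_k$ at $[(C,T)]$, with $n=g+2-2k$ (the expected single-factor codimension coming from Lemma \ref{dim-M}), amounts to the equalities $\dim V_\sigma=3g-|\sigma|(g+2-2k)$ for all $\sigma$. By the crux these say $\dim T_{[F_\sigma]}\mm_{g,k}((\PP^1)^{|\sigma|},\{0,1,\infty\})=3g-|\sigma|(g+2-2k)$; combined with the lower bound of Lemma \ref{dim-M} they force $\mm_{g,k}((\PP^1)^{|\sigma|},\{0,1,\infty\})$ to be smooth at $[F_\sigma]$ of the expected dimension, which is successive unobstructedness by Corollary \ref{cor-tgt-space-base}. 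The open-subset clause follows by upper semicontinuity of $\dim T_{[F_\sigma]}$, since the expected dimension is locally constant. The main obstacle will be the careful verification of the crux step, where one must ensure that the product-target decomposition of deformations is compatible with the fixed base-point constraints; once this is set up, the rest is a dimension count driven by the $\text{Ext}$-long exact sequences for $\Omega^\bullet_{F_\sigma}\to\Omega_C\to F_\sigma^*\Omega_{(\PP^1)^{|\sigma|}}[1]$.
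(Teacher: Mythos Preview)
Your proposal is correct and follows essentially the same route as the paper: both arguments show injectivity of $d\pi_\sigma$ via $h^0(f_i^*\mathcal{O}_{\PP^1}(2)(-T))=0$, identify the image $V_\sigma$ with the intersection $\bigcap_{i\in\sigma}V_{\{i\}}$ using the product decomposition $F_\sigma^*\Omega_{(\PP^1)^{|\sigma|}}\simeq\bigoplus_{i\in\sigma}f_i^*\Omega_{\PP^1}$ (which the paper phrases via the distinguished triangle $\Omega_C\to\Omega^\bullet_{F_{\hat\sigma_\ell}}\oplus\Omega^\bullet_{f_{\sigma_\ell}}\to\Omega^\bullet_{F_\sigma}$), and then translate the general-position condition on the $V_{\{i\}}$ into the dimension statement for $\text{Ext}^1(\Omega^\bullet_{F_\sigma},\mathcal{O}_C(-T))$, invoking Lemma \ref{euler-char-comp-lemma-marked}/Corollary \ref{cor-tgt-space-base}. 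The only place you are slightly looser than the paper is the ``crux'' step and the openness clause, but you correctly flag the former and the latter is handled in the paper exactly as you suggest, via unramifiedness of $\pi_k$ plus openness of the smooth locus.
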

\begin{proof}
  Let $\sigma=\{ \sigma_1, \ldots, \sigma_j \} \seq \{1, \ldots, m\}$ and for any $\ell$ set $\hat{\sigma}_{\ell}:=\sigma \setminus \{\sigma_{\ell}\}$. Denote by
  $$d\pi_{\sigma}: \text{Ext}^1_C(\Omega^{\bullet}_{F_{\sigma}},\mathcal{O}_C(-T)) \to \text{Ext}^1_C(\Omega_C,\mathcal{O}_C(-T))$$
  the map taking a first order deformation of $F_{\sigma}: (C,T) \to (\PP^1)^j$ preserving $F_{\sigma}(T)$ to a deformation of the marked curve $(C,T)$. Then $d\pi_{\sigma}$ is induced from the triangle
  $$\Omega_C \to \Omega^{\bullet}_{F_{\sigma}}\to F^*_{\sigma} \Omega_{(\PP^1)^j}[1].$$
  In particular, since $$\text{Hom}_C(F_{\sigma}^*\Omega_{(\PP^1)^j},\mathcal{O}_C(-T))\simeq \bigoplus_{i=1}^jH^0(f^*_{\sigma_i}\mathcal{O}_{\PP^1}(2)(-T))=0$$
  for $T$ general (as $h^0(f^*_{\sigma_i}\mathcal{O}_{\PP^1}(2))=3$), $d\pi_{\sigma}$ is injective. Set $V_{\sigma}:=d\pi_{\sigma}(\text{Ext}^1_C(\Omega^{\bullet}_{F_{\sigma}},\mathcal{O}_C(-T)))$.
  
  The isomorphism $F^*_{\hat{\sigma}_{\ell}}\Omega_{({\PP^1})^{j-1}}\oplus f^*_{\sigma_{\ell}}\Omega_{\PP^1}\to F^*_{\sigma}\Omega_{({\PP^1})^j}$
  induces a triangle
  $$\Omega_C \to \Omega^{\bullet}_{F_{\hat{\sigma}_{\ell}}}\oplus \Omega^{\bullet}_{f_{\sigma_{\ell}}} \to \Omega^{\bullet}_{F_{\sigma}}.$$
  Applying the Ext functor, we obtain 
  \begin{align*}
    V_{\sigma}=V_{\hat{\sigma}_{\ell}}\cap V_{\{\sigma_{\ell}\}}=V_{\{\sigma_1\}} \cap\ldots\cap V_{\{\sigma_j\}}.
  \end{align*}
  Hence, $\{V_{\{1 \}},\ldots,V_{\{m \}} \}$ is in general position in $\text{Ext}^1_C(\Omega_C,\mathcal{O}_C(-T))=T_{(C,T)}\mm_{g,3}$ if and only if, for any $\sigma \seq \{1, \ldots, m\}$, 
  $V_{\sigma}$ has codimension $|\sigma|(g+2-2k)$ in $T_{(C,T)}\mm_{g,3}$, which is equivalent to requiring that $h^1(N_{F_{\sigma}}(-T))=0$ by Lemma \ref{euler-char-comp-lemma-marked}.

  To conclude, we observe that if $h^1(N_{F_{\sigma}}(-T))=0$ for all subsets $\sigma$, then the same holds for an open subset about $(C,T)$. This is immediate from the fact that $\pi_k$ is unramified over $[(C,T)]$, as shown above, together with Corollary \ref{cor-tgt-space-base} (as the smooth locus is open).
\end{proof}
\begin{defin} \label{def-inf-unob}
 Let $C$ be an irreducible nodal curve of gonality $k$ and let $T \seq C$ be a marking of degree three in the smooth locus. We say the minimal pencils of $C$ are ``infinitesimally in general position" with respect to $T$ if:
 \begin{enumerate}
 \item For any $[f: \widetilde{C} \to \PP^1]\in \pi_k^{-1}[(C,T)]$, the base $\widetilde{C}$ is irreducible, $h^0(f^*\mathcal{O}_{\PP^1}(2))=3$ and $f$ is etale in an open set about $T$.
 \item $\pi_k^{-1}[(C,T)] =\{f_1, \ldots, f_m \}$ is zero-dimensional and $F=(f_i): C \to (\PP^1)^m$ is successively unobstructed with respect to $T$.
 \end{enumerate}

\end{defin}

We now return to the key construction (Section \ref{sect=key-const}). Let $C$ be an integral curve of genus $g$ and gonality $k\leq \frac{g+1}{2}$. Let $f_1, \ldots, f_m: C \to \PP^1$ be degree $k$ morphisms, with $(f_j(p),f_j(q),f_j(r))=(0,1,\infty)$ for fixed points $p,q,r \in C$. Set $T=p+q+r$ and assume $n \leq g+1-2k$. Let $$F=(f_j) \; : C \to (\PP^1)^m.$$
For each $1 \leq i \leq n$, choose distinct points $(x_i,y_i)$ of $C$ with $f_j(x_i) \neq f_j(y_i)$ for all $i,j$. Choose $$\beta_i: (\PP^1,(x_i,y_i)) \to (\PP^1)^m, \; \; 1 \leq i \leq n$$
such that $\beta_i(x_i)=(f_j(x_i))_{1 \leq j \leq m}, \beta_i(y_i)=(f_j(y_i))_{1 \leq j \leq m}$ and let 
$$H: D \to (\PP^1)^m$$
be the stable map of degree $k+n$ and genus $g+n$ obtained by glueing $F$ and $\beta_i$, \S \ref{sect=key-const}.

\begin{prop} \label{key-const-succ-unobst}
  With notation as above, assume $F$ is successively unobstructed with respect to $T$. Assume that $(x_i,y_i)$ lies outside the ramification locus for $f_j$ as well as $H_{\sigma}$ for each $\sigma \seq \{1, \ldots, m \}$ with $|\sigma| \geq 2$. Then $H$ is successively unobstructed with respect to $T$.
\end{prop}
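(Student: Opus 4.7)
By Lemma~\ref{euler-char-comp-lemma-marked} applied to $D$ (which is in case (2), since $D = C \cup \bigcup_i R_i$ with each $R_i \simeq \PP^1$ meeting $C$ in two points), and using that $H$ is generically unramified under our hypotheses, successive unobstructedness of $H$ with respect to $T$ is equivalent to the vanishing
\[ H^1(D, N_{H_\sigma}(-T)) = 0 \qquad \text{for every nonempty } \sigma \subseteq \{1, \ldots, m\}. \]
The plan is to establish this vanishing by reducing the cohomology on $D$ to cohomology on the components of the partial normalization $\nu : \widetilde D = C \sqcup \bigsqcup R_i \to D$.

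Concretely, I will construct a short exact sequence
\begin{equation} \label{eqn-MV}
0 \to N_{H_\sigma}(-T) \to \iota_{C*} N_{F_\sigma}(-T) \oplus \bigoplus_{i=1}^{n} \iota_{R_i *} N_{\beta_{i,\sigma}} \to \mathcal{Q}_\sigma \to 0
\end{equation}
on $D$, where $\iota_C, \iota_{R_i}$ are the closed immersions of components and $\mathcal{Q}_\sigma$ is a skyscraper sheaf supported on the $2n$ nodes. To build (\ref{eqn-MV}) I will combine the presentation
\[ 0 \to H_\sigma^* T_{(\PP^1)^{|\sigma|}}/T_D \to N_{H_\sigma} \to T^1_D \to 0 \]
from the proof of Lemma~\ref{euler-char-comp-lemma-marked} with the formula $T_D = \nu_* T_{\widetilde D}(-R-Q)$ (where $R, Q$ are the preimages of nodes) and the Mayer--Vietoris sequence of $\mathcal{O}_D$ relative to the partial normalization. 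The hypothesis that each $(x_i, y_i)$ avoids the ramification of the $f_j$ and of $H_\sigma$ for $|\sigma|\geq 2$ guarantees that at every node the two branches of $D$ map to transverse directions in $T_{H_\sigma(\mathrm{node})}(\PP^1)^{|\sigma|}$, so that $N_{H_\sigma}$ decomposes into the component normal sheaves plus exactly the one-dimensional torsion at each node predicted by $T^1_D$; this makes the identification of the middle terms in (\ref{eqn-MV}) with $N_{F_\sigma}(-T)$ on $C$ and with $N_{\beta_{i,\sigma}}$ on each $R_i$ valid.

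Once (\ref{eqn-MV}) is in place the cohomology computation is immediate. Since $\mathcal{Q}_\sigma$ has zero-dimensional support, $H^1(D, \mathcal{Q}_\sigma)=0$. The hypothesis that $F$ is successively unobstructed with respect to $T$ gives $H^1(C, N_{F_\sigma}(-T)) = 0$ via Lemma~\ref{euler-char-comp-lemma-marked}. For each rational bridge, each projection $pr_j \circ \beta_i$ is an isomorphism $\PP^1 \to \PP^1$, whence $\beta_{i,\sigma}^* T_{(\PP^1)^{|\sigma|}} \cong \mathcal{O}_{\PP^1}(2)^{\oplus |\sigma|}$ and the differential $d\beta_{i,\sigma} : T_{\PP^1} \to \beta_{i,\sigma}^* T_{(\PP^1)^{|\sigma|}}$ realises $\mathcal{O}_{\PP^1}(2)$ as a rank-one subbundle, giving $N_{\beta_{i,\sigma}} \cong \mathcal{O}_{\PP^1}(2)^{\oplus(|\sigma|-1)}$ (or $0$ when $|\sigma|=1$); in either case $H^1(R_i, N_{\beta_{i,\sigma}}) = 0$. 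The long exact sequence of (\ref{eqn-MV}) then forces $H^1(D, N_{H_\sigma}(-T)) = 0$.

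The main obstacle is the construction and verification of the sequence (\ref{eqn-MV}); everything else is essentially formal. This step requires an honest local analysis at each of the $2n$ nodes, tracking how the torsion of $N_{H_\sigma}$ (concentrated at nodes and captured by $T^1_D$) interacts with the natural restriction maps to the components. The transversality supplied by the unramified-at-nodes hypothesis is indispensable here: without it, additional torsion contributions to $N_{H_\sigma}$ could appear at the nodes and the clean description (\ref{eqn-MV}) would break down.
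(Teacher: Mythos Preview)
Your overall strategy---reduce to the irreducible components and check an $H^1$ vanishing on each---is exactly the paper's, but the specific short exact sequence \eqref{eqn-MV} you propose cannot exist as written. A quick Euler--characteristic count shows this: by Lemma~\ref{euler-char-comp-lemma-marked} one has
\[
\chi\bigl(N_{H_\sigma}(-T)\bigr)=3(g+n)-|\sigma|\bigl((g+n)+2-2(k+n)\bigr),\qquad
\chi\bigl(N_{F_\sigma}(-T)\bigr)=3g-|\sigma|(g+2-2k),
\]
while $N_{\beta_{i,\sigma}}\cong\mathcal{O}_{\PP^1}(2)^{\oplus(|\sigma|-1)}$ gives $\chi(N_{\beta_{i,\sigma}})=3(|\sigma|-1)$. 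Subtracting, the putative skyscraper $\mathcal{Q}_\sigma$ would have
\[
\chi(\mathcal{Q}_\sigma)=\chi(N_{F_\sigma}(-T))+n\cdot\chi(N_{\beta_{i,\sigma}})-\chi(N_{H_\sigma}(-T))=2n(|\sigma|-3),
\]
which is \emph{negative} for $|\sigma|\le 2$ (and $n\ge 1$). So no such exact sequence exists in those cases. The underlying reason is that the restriction $N_{H_\sigma}|_C$ is strictly \emph{larger} than $N_{F_\sigma}$: the GHS inclusion $N_{F_\sigma}\hookrightarrow N_{H_\sigma}|_C$ of \cite[Lemma~2.6]{GHS} goes the other way, with a nontrivial skyscraper cokernel supported at the nodes, and there is no natural retraction. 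The torsion $T^1_D\subset N_{H_\sigma}$ maps precisely into this extra piece, not into $N_{F_\sigma}$; your transversality hypothesis does not change this.

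The paper fixes this by working with the honest restrictions $N_{H_\sigma}|_C$ and $N_{H_\sigma}|_{R_i}$ in the Mayer--Vietoris reduction. On $C$ one uses the GHS inclusion (in the correct direction) to pass from $H^1(N_{F_\sigma}(-T))=0$ to $H^1(N_{H_\sigma}|_C(-T))=0$. On each $R_i$ the Mayer--Vietoris produces a twist by the two attaching points, so one must check $H^1\bigl(N_{H_\sigma}|_{R_i}(-2)\bigr)=0$; this is where the paper reduces (again via a GHS-type inclusion) to $H^1\bigl(N_{\beta_{i,\sigma}}(-2)\bigr)=H^1\bigl(\mathcal{O}_{\PP^1}^{\oplus(|\sigma|-1)}\bigr)=0$. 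The $(-2)$ twist is essential and is exactly what your version omits; with it the Euler characteristics balance and the argument goes through.
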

\begin{proof}
  Let $\sigma \seq \{1, \ldots, m\}$ and consider $H_{\sigma}=(h_{\sigma_i})_{1 \leq i \leq |\sigma|}$. We need $H^1(D,\mathcal{N}_{H_{\sigma}}(-T))=0$. When $|\sigma|=1$, $\mathcal{N}_{H_{\sigma}}$ has zero-dimensional support, so assume $|\sigma| \geq 2$. There is an inclusion
  $$0 \to N_{F_{\sigma}} \xrightarrow{\alpha} N_{H_{\sigma}|_{C}},$$
  \cite[Lemma 2.6]{GHS}. The cokernel of $\alpha$ has zero dimensional support, so $H^1(C,N_{F_{\sigma}}(-T))=0$ implies $H^1(D,N_{H_{\sigma}|_{C}}(-T))=0$. By the Mayer-Vietoris sequence, it suffices that $H^1(N_{H_{\sigma}|_{R_i}}(-2))=0$ for each unstable component $R_i$ of $D$. It is enough to show $H^1(N_{\Delta_i}(-2))=0$ where $\Delta_i:=pr_{\sigma} \circ \beta_i \; : \; \PP^1 \simeq R_i \to (\PP^1)^{|\sigma|}$. This follows from the short exact sequence
  $$0 \to \mathcal{O}_{\PP^1}(2) \xrightarrow{d\Delta_i} \mathcal{O}_{\PP^1}(2)^{\oplus |\sigma|} \to N_{\Delta_i} \to 0.$$
\end{proof}
We end this section by proving a statement analogous to Proposition \ref{key-const-succ-unobst} with regards to the property of being geometrically in general position.
\begin{prop} \label{key-const-geo-general}
With notation as above, assume that $z_1, \ldots, z_{g-1-k}$ are distinct points of $C$ such that $h^0(C,f_i^*\mathcal{O}_{\PP^1}(1)(\sum_{j=1}^{g-1-k}z_j))=2$ for $1 \leq i \leq m$. Assume that $z_j \notin \{ x_i,y_i \; | \; 1 \leq i \leq n \}$ for all $1 \leq j \leq g-1-k$ and that $\{ f_1, \ldots, f_m \}$ are geometrically in general position with respect to $\{ z_i \}$. Then $\{ h_1, \ldots, h_m \}$ are geometrically in general position with respect to $\{ z_i \}$.
\end{prop}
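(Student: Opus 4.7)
The plan is to exhibit each quadric $Q_{h_j}$ in $\PP^{k+n}$ as a cone over $Q_{f_j}$ in $\PP^k$ sharing a common vertex that does not depend on $j$, thereby reducing the desired linear independence in $|\mathcal{O}_{\PP^{k+n}}(2)|$ to the hypothesized independence in $|\mathcal{O}_{\PP^k}(2)|$.

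First I would analyze the linear system $|\omega_D(-D_z)|$, where $D_z := z_1 + \cdots + z_{g-1-k}$ is supported on $C$. Since $\omega_D|_{R_i} \simeq \omega_{R_i}(x_i + y_i)$ is trivial and $D_z|_{R_i}=0$, the sheaf $\omega_D(-D_z)$ is trivial on each $R_i$, and each rational component gets contracted to a single point $P_i \in \PP^{k+n}$ which also coincides with the images of $x_i$ and $y_i$. A standard computation gives $h^0(\omega_D(-D_z)) = k+n+1$, while the subspace of sections vanishing on every $R_i$ (equivalently, at every $P_i$) restricts on $C$ to the subspace of sections of $\omega_C\bigl(\sum_i(x_i+y_i)-D_z\bigr)$ vanishing at all $x_i, y_i$, i.e.\ to $H^0(\omega_C(-D_z))$, which has dimension $k+1$. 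The codimension count then forces the $P_i$'s to impose independent conditions, so $\Pi := \langle P_1, \ldots, P_n \rangle$ is an honest $\PP^{n-1}$, and the projection $\PP^{k+n} \dashrightarrow \PP^k$ from $\Pi$ restricts on $C$ to the map given by $|\omega_C(-D_z)|$.

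Next I would use the geometric description $\widetilde{Q}_{h_j} = \bigcup_{E\in|h_j^*\mathcal{O}(1)|}\langle E+D_z\rangle$ recalled in Section \ref{GGP}. Every fiber $E$ of $h_j$ decomposes as $E_C + \sum_i p_i$ with $E_C \in |f_j^*\mathcal{O}(1)|$ and $p_i \in R_i$; since all choices of $p_i$ map to $P_i$, projecting from $D_z$ yields
\[
Q_{h_j} \;=\; \bigcup_{E_C \in |f_j^*\mathcal{O}(1)|} \langle E_C,\,P_1, \ldots, P_n\rangle \subseteq \PP^{k+n}.
\]
In particular $\Pi$ lies in every ruling and hence in the singular locus of the rank-$4$ quadric $Q_{h_j}$, which is therefore a cone with vertex containing $\Pi$. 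Taking the inner projection from $\Pi$ and applying the linear-system identification from the previous step, the base of this cone is exactly $\bigcup_{E_C}\langle E_C\rangle = Q_{f_j}$ in $\PP^k$.

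Finally, the space of quadrics in $\PP^{k+n}$ whose singular locus contains $\Pi$ forms a linear subspace of $|\mathcal{O}_{\PP^{k+n}}(2)|$ canonically isomorphic, via pullback along the projection from $\Pi$, to $|\mathcal{O}_{\PP^k}(2)|$; under this isomorphism $Q_{h_j}\mapsto Q_{f_j}$. Thus any linear dependence $\sum c_j[Q_{h_j}]=0$ pulls back to $\sum c_j[Q_{f_j}]=0$, and the hypothesis that $\{f_1,\ldots,f_m\}$ is geometrically in general position with respect to $D_z$ forces all $c_j=0$. The main technical obstacle is the linear-system bookkeeping in the first step: carefully verifying, using adjunction and the residue/gluing data at the $2n$ nodes of $D$, that sections of $\omega_D(-D_z)$ vanishing on all $R_i$ correspond exactly to $H^0(\omega_C(-D_z))$ so that the expected codimension (and hence the independence of the $P_i$) is achieved. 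Once this identification is in hand, the cone structure and the reduction to the hypothesis on $C$ are essentially formal.
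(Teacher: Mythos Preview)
Your approach is correct and reaches the same conclusion as the paper---that each $Q_{h_j}$ is a cone over $Q_{f_j}$---but by a more geometric route. You work with the scroll description $\widetilde{Q}_{h_j}=\bigcup_E\langle E+D_z\rangle$, identify the images $P_i$ of the rational tails under $|\omega_D(-D_z)|$, check that they span a $\PP^{n-1}$, and then realize $Q_{h_j}$ as the cone from $\Pi=\langle P_1,\ldots,P_n\rangle$ over $Q_{f_j}$. The paper instead uses the determinantal description directly: via the two Mayer--Vietoris sequences it shows that the restriction maps $H^0(D,h_j^*\mathcal{O}(1))\to H^0(C,f_j^*\mathcal{O}(1))$ and $H^0(D,\omega_D\otimes h_j^*\mathcal{O}(-1)(-D_z))\to H^0(C,\omega_C\otimes f_j^*\mathcal{O}(-1)(-D_z))$ are isomorphisms, and hence every entry $us$ of the $2\times 2$ matrix defining $Q_{h_j}$ already lies in the subspace $H^0(C,\omega_C(-D_z))\subset H^0(D,\omega_D(-D_z))$. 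The cone statement is then immediate from the fact that the defining quadratic form involves only coordinates from this subspace. The paper's argument is shorter precisely because it bypasses the geometric picture you build (the points $P_i$, their independence, the explicit projection from $\Pi$); in effect, your $\Pi$ is the annihilator of $H^0(\omega_C(-D_z))$ inside $\PP(H^0(\omega_D(-D_z))^*)$, and the paper identifies this algebraically without naming it. Your approach has the advantage of making the vertex of the cone concrete, but the bookkeeping you flag in your last paragraph is exactly what the determinantal shortcut avoids. One small caution: your claim $h^0(\omega_D(-D_z))=k+n+1$ presupposes $h^0(\mathcal{O}_C(D_z))=1$; this does follow from the hypothesis $h^0(f_i^*\mathcal{O}(1)(D_z))=2$, but in any case your codimension-$n$ computation for the $P_i$ goes through regardless, since the exact sequence $0\to\omega_C(-D_z)\to\omega_D(-D_z)\to\bigoplus_i\mathcal{O}_{R_i}\to 0$ forces the restriction map to $\bigoplus_i H^0(\mathcal{O}_{R_i})$ to be surjective.
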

\begin{proof}
Using $$0 \to \bigoplus_j \mathcal{O}_{R_j}(-2) \to \mathcal{O}_D \to \mathcal{O}_C \to 0$$ and twisting by $h_i^*\mathcal{O}_{\PP^1}(1)(\sum_{\ell} z_{\ell})$, we see $H^0(D,h^*_i\mathcal{O}_{\PP^1}(1)(\sum_{\ell} z_{\ell})) \simeq H^0(C,f^*_i\mathcal{O}_{\PP^1}(1)(\sum_{\ell} z_{\ell}))$. In particular, $h^0(h_i^*\mathcal{O}_{\PP^1}(1)(\sum_{\ell} z_{\ell}))=2$. From
$$0 \to \mathcal{O}_C(-\sum_i x_i+y_i) \to \mathcal{O}_D \to \bigoplus_i \mathcal{O}_{R_i} \to 0$$
we obtain $H^0(C,\omega_C \otimes f_i^*\mathcal{O}_{\PP^1}(-1)(-\sum_{\ell} z_{\ell})) \simeq H^0(D,\omega_D \otimes h_i^*\mathcal{O}_{\PP^1}(-1)(-\sum_{\ell} z_{\ell}))$. In particular, each $s \in H^0(D,\omega_D \otimes h_i^*\mathcal{O}_{\PP^1}(-1)(-\sum_{\ell} z_{\ell}))$ vanishes on each $R_j$. Hence if $u \in H^0(h_i^*\mathcal{O}_{\PP^1}(1))$, $$us \in H^0(D,\omega_D(-\sum_{\ell} z_{\ell}))$$ vanishes on each $R_j$ and hence lies in the subspace $H^0(C,\omega_C(-\sum_{\ell} z_{\ell}))$. From the determinantal description of the quadrics associated to pencils, $Q_{h_i}$ is a cone over $Q_{f_i}$. Hence $\{ h_1, \ldots, h_m \}$ are in geometrically general position with respect to $\{ z_i \}$.
\end{proof}

\section{Proof of Theorem \ref{geo-thm}}
\subsection{Outline of the proof} \label{outline}
We first provide an outline of the proof. Suppose $C$ has genus $g$, gonality $k$ and $m$ minimal pencils $f_i: C \to \PP^1$ satisfying the hypotheses of Theorem \ref{geo-thm}. Let $H=(h_i): D \to (\PP^1)^m$ be as in the Key Construction for $n=g+1-2k$, with $(x_i,y_i)$ general for $1 \leq i \leq n$, see \S \ref{sect=key-const}. Thus
$$D=C \bigcup_{j=1}^nR_i, \; \;R_i \simeq \PP^1$$
with ${h_i}_{|_C} \simeq f_i$ and $\deg ({h_i}_{|_{R_j}})=1$. We have $b_{g-k,1}(C,\omega_C) \geq m(g-k)$ as the pencils $\{f_1, \ldots, f_m \}$ are geometrically in general position by Proposition \ref{injectivity-syzy-scrolls}, so it suffices to prove
$b_{g-k,1}(C,\omega_C) \leq m(g-k)$.
We have an injective map
$$K_{g-k,1}(C, \omega_C) \hookrightarrow K_{g-k,1}(D,\omega_D)$$
on syzygy spaces. It suffices to prove $b_{g-k,1}(D,\omega_D)\leq m(g-k)$ or that
$$H=(h_i): D \to (\PP^1)^m \in \widetilde{\mathcal{H}}(m)$$
does not lie in the degeneracy locus $\text{Deg}(\mathcal{F}[m])$ of the morphism
$ \mathcal{F}[m]: \mathcal{V}[m] \to \mathcal{W}[m]$ from Proposition \ref{loc-free-crit}. But $\text{Deg}(\mathcal{F}[m])$ has codimension at most one and the dimension counts of \S \ref{sect=key-const}, show that, if $[H] \in \text{Deg}(\mathcal{F}[m])$, then there is a one-dimensional family
$$H_t=(h_{i,t}) \; : \; \mathcal{D}_t \to (\PP^1)^m \in \text{Deg}(\mathcal{F}[m])$$
with $H_0=H$ and $\mathcal{D}_t$ \emph{irreducible} for $t \neq 0$. For $t$ general, $H_t$ is successively unobstructed and $\{h_{i,t} \}$ is geometrically in general position for a general divisor of degree $g-1-k$. On the other hand, the forgetful map
$$\pi_{k+n} \; : \; \cM_{g+n,k+n}(\PP^1, \{0,1,\infty \} ) \to \mm_{g+n,3}$$
cannot be self-transverse with fibre of degree $m$ at $[\mathcal{D}_t, (p_t,q_t,r_t)]$, for general points $p_t,q_t,r_t$. Indeed, otherwise Proposition \ref{self-trans-order} together with \cite{hirsch} implies $b_{g-k,1}(\mathcal{D}_t,\omega_{\mathcal{D}_t}) \leq m(g-k)$, contradicting $[H_t] \in \text{Deg}(\mathcal{F}(m))$. By Proposition \ref{self-trans-lemma}, either $\pi^{-1}_{k+n} ([\mathcal{D}_t])$ contains an $(m+1)^{th}$ pencil $h_{m+1,t}$ in addition to $h_{1,t}, \ldots, h_{m,t}$, or there is some $i \leq m$ such that
$\dim |h^*_{i,t}\mathcal{O}_{\PP^1}(2)| \geq 3$. Furthermore, in the first case $$\text{lim}_{t \to 0} h_{m+1,t}=h_i$$ for some $1 \leq i \leq m$, by Proposition \ref{prop-hi}. In this case, set $L_t=h_{i,t}^*\mathcal{O}_{\PP^1}(1) \otimes h_{m+1,t}^*\mathcal{O}_{\PP^1}(1)$ and in the second case set $L_t=h_{i,t}^*\mathcal{O}_{\PP^1}(2)$. Observe $h^0(\mathcal{D}_t, L_t)\geq 4$. 

Let $\mathcal{D} \to \Delta$ be a fibred surface over a curve with fibre over $t$ equal to $\mathcal{D}_t$, let $\Delta^* \seq \Delta$ be the locus $t \neq 0$ and $\mathcal{D}^*$ the restriction of the fibred surface to $\Delta^*$ and suppose we have a line bundle $\mathcal{L}^{o} \in \text{Pic}(\mathcal{D}^*)$ with $\mathcal{L}^{o}_t=L_t$. Assume that $\mathcal{L}^{o}$ can be extended to a line bundle $\mathcal{L} \in \text{Pic}(\mathcal{D})$ with $$\mathcal{L}_0 \simeq h_{i,t}^*\mathcal{O}_{\PP^1}(2).$$ Further suppose $\mathcal{D}$ is smooth. The components $R_i \seq D$ define Cartier divisors. Define 
$$ \widetilde{\mathcal{L}}:=\mathcal{L}(\sum_{j=1}^n R_j).$$
Assuming $\{ x_i,y_i \}:=R_i \cap C$ are chosen generally, one expects $h^0(D,\widetilde{\mathcal{L}}_0)=3$, contradicting $h^0(\mathcal{D}_t, \widetilde{L}_t)=h^0(\mathcal{D}_t, L_t)\geq 4,$ for $t \in \Delta$ general\footnote{If one omits the twist, the Mayer--Vietoris sequence gives $h^0(D,\mathcal{L}_0)=3+n$.}.

In the full proof below, we work in the more general set-up from \cite{lin-syz}, allowing us to bypass the assumptions that $\mathcal{D}$ is smooth and that $\mathcal{L}^{o}$ can be extended. %We construct the twist $\mathcal{L}_0$ in a modular way, taking inspiration from the theory of twisted canonical divisors, \cite{far-pand}. %More precisely, we work with stable maps $f_i: D \to \PP^1$, together with markings $T \seq D$, such that $T$ defines a twisted differential, and use this to twist $f_i^*\mathcal{O}_{\PP^1}(2)$ to produce $\mathcal{L}_0$. 
%We then prove that $h^0(D,\mathcal{L}_0)=3$ for general $\{x_i, y_i\}$ by induction on $n$. 

\subsection{The proof in full}
Recall the space $V_n(m)$, \S \ref{sect=key-const}. A point corresponds to $\displaystyle f: C \to (\PP^1)^m,$
with marking $(x_i,y_i)$, $1 \leq i \leq n$, together with $n$ marked maps $\displaystyle \beta_i : (\PP^1,(x_i,y_i)) \to (\PP^1)^m$
such that $f$ and $\beta_1, \ldots, \beta_n$ glue to produce a map 
$$h: D \to (\PP^1)^m.$$ Performing this procedure in families, we have a morphism $$q_n(m): \; V_n(m) \to \mm_{2k-1,k} ((\PP^1)^m,\{ 0,1,\infty \}).$$ Recall from \S \ref{comp} the morphism
$$\mathcal{F}[m] \; : \; \mathcal{V}[m] \to \mathcal{W}[m]$$
of sheaves, defined on the open locus $\widetilde{\H}(m) \seq \mm_{2k-1,k} ((\PP^1)^m,\{ 0,1,\infty \})$. We let $\H^{lf} (m) \seq \widetilde{\H}(m)$ be the open set such that
$\mathcal{V}[m]$ and $\mathcal{W}[m]$ are both locally free of rank of rank $ (2k-2){2k-1 \choose k}-m(k-1) $, see Proposition \ref{loc-free-crit}. Let 
$$\mathcal{K}(m):=\text{Deg} \, \mathcal{F}[m] \cap \H^{lf}(m) \seq \H^{lf} (m)$$ be the degeneracy locus of $\mathcal{F}[m]_{|_{\H^{lf} (m)}}$, i.e.\ locus where $\mathcal{F}[m]$ is not of full rank.
Define the set
$$Z_n(m) := pr_1\left(q_n(m)^{-1} (\mathcal{K}(m))\right)$$
where $pr_1: V_n(m) \to U_n(m) \seq \mm_{2k-1-n,k-n}((\PP^1)^m,\{ 0,1,\infty \}; 2n)$ is the projection. 

Theorem \ref{geo-thm} follows from the following, stronger result. Recall the morphism
$$\pi_k: \; \cM_{g,k}(\PP^1,\{0,1,\infty\}) \to \mm_{g,3}.$$
\begin{thm} \label{strongest-thm}
Let $C$ be an integral, nodal curve of genus $2k-1-n$ and gonality $k-n$ for $k \geq 3$, $0 \leq n \leq 2k-5$. Suppose $$\pi_{k-n}^{-1}\left(C, (p,q,r)\right)=\{f_1, \ldots, f_m: C \to \PP^1 \}.$$
Let $p,q,r \in C$ be distinct points in the \'etale locus of each $f_i$, $1 \leq i \leq m$.

\noindent For $1 \leq i \leq n$, let $(x_i,y_i) \in C$ be distinct pairs of points which are in the \'etale locus of $f_j$ and with $f_j(x_i) \neq f_j(y_i)$ for $1 \leq j \leq m$.
Assume:
 \begin{enumerate}
\item $\pi^{-1}_k(\widetilde{B}_{(x_i,y_i)},(p,q,r))$ is zero-dimensional of cardinality $m$, where $\widetilde{B}_{(x_i,y_i)}$ is the curve obtained from $C$ by glueing $x_i$ to $y_i$ for $ 1 \leq i \leq n$.
\item For all $S \seq \{ x_j,y_j \; | \; 1 \leq j \leq n \}$ of cardinality at most $n$, $h^0(C,f_i^*\mathcal{O}_{\PP^1}(2)(\sum_{s \in S} s))=3$ for $1 \leq i \leq m$.
\item $\{ f_1, \ldots, f_m \}$ are infinitesimally in general position with respect to $\{p,q,r\}$.
\item The line bundles $\{ f_1^*\mathcal{O}_{\PP^1}(1), \ldots, f_m^*\mathcal{O}_{\PP^1}(1) \}$ are geometrically in general position with respect to a general effective divisor of degree $k-2$ on $C$.
\end{enumerate}
Then if $F=(f_i): \; C \to (\PP^1)^m$, $[F,(x_i,y_i)] \notin  Z_n(m)$.
\end{thm}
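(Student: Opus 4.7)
The argument follows the outline of Section \ref{outline}. Suppose for contradiction that $[F,(x_i,y_i)] \in Z_n(m)$, so there is a gluing datum $\beta_i$ with $H := q_n(m)(F,\beta_i) \in \mathcal{K}(m)$, where the base of $H$ is $D := C \cup \bigcup_{i=1}^n R_i$. Since $\mathcal{K}(m)$ is a divisor in $\widetilde{\mathcal{H}}(m)$ (Proposition \ref{loc-free-crit}), and since hypothesis (3) combined with Corollary \ref{cor-tgt-space-base} places $F$ in the smooth locus of the expected dimension of $\mm_{2k-1-n,k-n}((\PP^1)^m,\{0,1,\infty\}; 2n)$, Lemma \ref{dim-V}(2) shows that the component of $V_n(m)$ through $(F,\beta_i)$ has exact dimension $6k-2n-m-3$. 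A parameter count then produces a one-parameter family $H_t : \mathcal{D}_t \to (\PP^1)^m$ inside $\mathcal{K}(m)$ with $H_0 = H$ and with $\mathcal{D}_t$ irreducible for $t \neq 0$ (i.e.\ one may move off the reducible locus).

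For general $t \neq 0$ the open conditions of infinitesimal and geometric general position persist from the central fiber, so Proposition \ref{injectivity-syzy-scrolls} provides injectivity of the scroll-to-canonical map for $\mathcal{D}_t$, placing $\mathcal{D}_t$ in $\mathcal{H}^{lf}(m)$ by Proposition \ref{loc-free-crit}. If $\pi_k$ were self-transverse at $(\mathcal{D}_t,(p_t,q_t,r_t))$ with fiber of cardinality exactly $m$, then the divisor identity $\widetilde{\mathcal{EN}}_m = (k-1)\psi_{1*}(m+1)[\mathcal{H}(m+1)]$ of Theorem \ref{divisor-computation}, combined with Proposition \ref{self-trans-order} applied in the manner of Hirschowitz--Ramanan, would force $b_{k-1,1}(\mathcal{D}_t,\omega_{\mathcal{D}_t})=m(k-1)$, contradicting $H_t \in \mathcal{K}(m)$ via Proposition \ref{loc-free-crit}. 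Therefore, by Proposition \ref{self-trans-lemma}, either some $h_{i,t}$ has $h^0(h_{i,t}^*\mathcal{O}_{\PP^1}(2)) \geq 4$, or $\pi_k^{-1}[(\mathcal{D}_t,(p_t,q_t,r_t))]$ contains an $(m+1)^{\text{th}}$ point $h_{m+1,t}$. In the first case set $L_t := h_{i,t}^*\mathcal{O}_{\PP^1}(2)$; in the second, Proposition \ref{prop-hi} implies $\lim_{t \to 0} h_{m+1,t} = h_i$ for some $1 \leq i \leq m$, and we set $L_t := h_{i,t}^*\mathcal{O}_{\PP^1}(1) \otimes h_{m+1,t}^*\mathcal{O}_{\PP^1}(1)$. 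In both cases $\deg L_t = 2k$ and $h^0(\mathcal{D}_t, L_t) \geq 4$.

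Mirroring the theory of twisted canonical divisors \cite{far-pand}, we extend the bundles $L_t$ across $t=0$ on a suitable semistable total space and define a line bundle $\mathcal{L}_0$ on $D$ by twisting the naive limit by an explicit integral combination of the rational components $R_j$. The twist is chosen so that $\mathcal{L}_0|_C \simeq f_i^*\mathcal{O}_{\PP^1}(2)(\sum_{s \in S} s)$ for some subset $S \seq \{x_j,y_j \; | \; 1 \leq j \leq n\}$ and $\mathcal{L}_0|_{R_j}$ has degree matching the limit. Semicontinuity gives $h^0(D,\mathcal{L}_0) \geq h^0(\mathcal{D}_t, L_t) \geq 4$. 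On the other hand, hypothesis (2) gives $h^0(C,\mathcal{L}_0|_C) = 3$, and a Mayer--Vietoris computation on $D = C \cup \bigcup R_j$, combined with an induction on $n$, yields $h^0(D,\mathcal{L}_0) = 3$, delivering the required contradiction.

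The principal obstacle is the modular construction of the twisted limit $\mathcal{L}_0$: one must identify precisely which subset $S \seq \{x_j,y_j\}$ arises in the degeneration, and arrange the twisting by the rational components $R_j$ so that the apparent extra sections on the $R_j$ produced by Mayer--Vietoris are killed, leaving exactly the three sections coming from $\mathcal{L}_0|_C$ via hypothesis (2). A secondary technical point is the existence of the smoothing $\mathcal{D}_t$ with irreducible general fiber, which is handled by the dimension count of Lemma \ref{dim-V}(2) together with the successive unobstructedness of $F$.
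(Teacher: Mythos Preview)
Your proposal reproduces the heuristic of Section~\ref{outline} rather than the actual proof, and the paper is explicit that this outline has technical obstacles that must be bypassed (the total space $\mathcal{D}$ need not be smooth, the line bundle $\mathcal{L}^o$ need not extend). You acknowledge the twisted-limit construction as ``the principal obstacle'' but do not carry it out; asserting that one can twist by ``an explicit integral combination of the rational components $R_j$'' so that $\mathcal{L}_0|_C \simeq f_i^*\mathcal{O}_{\PP^1}(2)(\sum_{s \in S} s)$ for some controllable $S$ is precisely the content that is missing. The paper avoids this entirely: it proves the theorem by induction on $n$, peeling off \emph{one} rational component at a time. In the inductive step (Proposition~\ref{induction-weaker}), one passes from $[F,(x_i,y_i)_{i \leq \ell}]$ to $[F_\ell,(x_i,y_i)_{i \leq \ell-1}] \in Z_{\ell-1}(m)$, deforms $F_\ell$ inside $q_{\ell-1}(m)^{-1}(\mathcal{K}(m))$ to a point with irreducible base, verifies that this deformed point again satisfies hypotheses $(1)$--$(4)$ (this is where Proposition~\ref{twisting-prop} and the results of \cite{lin-syz} enter), and invokes the induction hypothesis for $n=\ell-1$. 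The base case $n=0$ is your Hirschowitz--Ramanan argument.

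Your ``secondary technical point'' is also not handled by the dimension count you cite. Lemma~\ref{dim-V}(2) gives $\dim V_n(m) = 6k-2n-m-3$, so the image $q_n(m)(V_n(m)) \subseteq \widetilde{\H}(m)$ has codimension $2n$, and indeed $\mathcal{K}(m)$ (codimension one) cannot be contained in it. But leaving $q_n(m)(V_n(m))$ only tells you $\mathcal{D}_t$ does not have $n$ rational tails of this specific type; it does not tell you $\mathcal{D}_t$ is irreducible. The paper's one-step comparison of $\dim q_{\ell-1}(m)^{-1}(\mathcal{K}(m)) \geq 6k-2\ell-m-2$ against $\dim V_\ell(m) = 6k-2\ell-m-3$ is exactly what forces the base $C'$ of the deformed point in $V_{\ell-1}(m)$ to be irreducible, and this works because only one tail is in play. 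Trying to smooth all $n$ tails at once would require controlling every intermediate boundary stratum, which you have not done.
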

\begin{proof}[Proof of Theorem \ref{geo-thm} assuming Theorem \ref{strongest-thm}]
Let $C$ be a smooth curve of genus $2k-1-n$ and non-maximal gonality $k-n$, $0 \leq n \leq 2k-5$, satisfying the assumptions of Theorem \ref{geo-thm}. In particular, assumptions $(3),(4)$ hold by hypothesis. For $1 \leq i \leq n$, let $(x_i,y_i) \in C$ be general pairs of distinct points. We have $$\pi_{k-n}^{-1}\left(C, (p,q,r)\right)=\{f_1,\ldots, f_m: C \to \PP^1 \}.$$
By Proposition \ref{prop-hi}, assumption $(1)$ holds. Next, $h^0(f_i^*\mathcal{O}_{\PP^1}(2))=3$ implies $$h^0(\omega_C\otimes f_i^*\mathcal{O}_{\PP^1}(-2))=n+1,$$
by Riemann--Roch. Hence, if $\{ x_j,y_j \; | \; 1 \leq j \leq n \}$ are sufficiently general and $S \seq \{ x_j,y_j \; | \; 1 \leq j \leq n \}$ has cardinality at most $n$ (or even $n+1$),
$$h^0(\omega_C\otimes f_i^*\mathcal{O}_{\PP^1}(-2)(-\sum_{s \in S} s))=n+1-|S|.$$
By Riemann--Roch again, one thus sees that $(2)$ holds for $\{ x_j,y_j \}$ general.

Thus  if $F=(f_i): \; C \to (\PP^1)^m$, $[F,(x_i,y_i)] \notin  Z_n(m)$. This means that, for any
$$\beta_i \; : \; (\PP^1,(x_i,y_i)) \to (\PP^1)^m, \; \; 1 \leq i \leq n$$
of degree one and satisfying $(\beta_i(x_i),\beta_i(y_i))=(F(x_i),F(y_i))$, the resulting map
$$[g\; : \; D \to (\PP^1)^m] \in \widetilde{\mathcal{H}}(m),$$
obtained by glueing $F$ and each $\beta_i$ as in Section \ref{sect=key-const} is not in $\mathcal{K}(m)$. Note that $[g] \in \H^{lf} (m)$ by Proposition \ref{loc-free-crit}, Proposition \ref{injectivity-syzy-scrolls} and Proposition \ref{key-const-geo-general}. Thus $b_{k-1,1}(D,\omega_D) = m(k-1)$ by Proposition \ref{loc-free-crit}. By \cite[Corollary 1]{V1}, $b_{k-1,1}(C,\omega_C) \leq b_{k-1,1}(D,\omega_D)=m(k-1)$. By Proposition \ref{injectivity-syzy-scrolls}, $b_{k-1,1}(C,\omega_C) \geq m(k-1)$ and
$$j \; : \; \bigoplus_{i=1}^m K_{g-k,1}(X_{f_i},\mathcal{O}_{X_{f_i}}(1)) \to K_{g-k,1}(C,\omega_C)$$
is injective. So we must have $b_{k-1,1}(C,\omega_C) = m(k-1)$ and $j$ is an isomorphism.
\end{proof}

We will prove Theorem \ref{strongest-thm} by induction on $n$. We start with the base case.
\begin{lem}
Theorem \ref{strongest-thm} holds in the case $n=0$ (and arbitrary $k\geq 3$).
\end{lem}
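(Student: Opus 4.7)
The plan is to reduce the claim to Proposition \ref{loc-free-crit} and then establish the syzygy equality $b_{k-1,1}(C,\omega_C) = m(k-1)$ using the two types of general position assumed in the hypotheses. First I would unpack the definitions in the special case $n=0$. The product $\mathcal{M}^{sm}_{0,1}((\PP^1)^m;2)^{\times 0}$ is a single point, so $V_0(m) = U_0(m)$, and the glueing morphism $q_0(m)$ reduces to the natural open inclusion $U_0(m) \hookrightarrow \mm_{2k-1,k}((\PP^1)^m,\{0,1,\infty\})$. Consequently $Z_0(m) = \mathcal{K}(m) \cap U_0(m)$, and the goal is to show $[F] \notin \mathcal{K}(m)$.

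Next I would verify the hypotheses of Proposition \ref{loc-free-crit}. Since $g = 2k-1$, one has $g - 1 - k = k - 2$, so hypothesis (4) asserts geometric general position with respect to a general divisor of degree $g - 1 - k$. Proposition \ref{injectivity-syzy-scrolls} then yields injectivity of the natural restriction
\[ j \; : \; \bigoplus_{i=1}^m K_{k-1,1}(X_{f_i}, \mathcal{O}_{X_{f_i}}(1)) \to K_{k-1,1}(C, \omega_C), \]
from which the map $f_2$ in the diagram preceding Proposition \ref{loc-free-crit} is injective at $[F]$. Hence $\mathcal{V}[m]$ and $\mathcal{W}[m]$ are locally free of the same rank at $[F]$, and $[F] \notin \mathcal{K}(m)$ iff $b_{k-1,1}(C,\omega_C) = m(k-1)$. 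Injectivity of $j$ also immediately gives the lower bound $b_{k-1,1}(C,\omega_C) \geq m(k-1)$.

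For the reverse inequality I would invoke the infinitesimal general position hypothesis (3) together with Proposition \ref{self-trans-lemma} to conclude that the forgetful morphism $\pi_k : \cM_{2k-1,k}(\PP^1,\{0,1,\infty\}) \to \mm_{2k-1,3}$ is self-transverse in an open neighborhood of $[F]$. Since the source has dimension one less than the target, the image is a divisor, and Proposition \ref{self-trans-order} then shows that its multiplicity at $[C,(p,q,r)]$ equals the cardinality $m$ of $\pi_k^{-1}(\pi_k([F]))$. The Hirschowitz--Ramanan argument of \cite{hirsch}, compatible with the divisor class equality of Theorem \ref{divisor-computation}, converts this order into the syzygy bound $b_{k-1,1}(C,\omega_C) \leq (k-1)m$. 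Combined with the lower bound this yields equality, and Proposition \ref{loc-free-crit} then gives $[F] \notin Z_0(m)$.

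The main obstacle will be to package the Hirschowitz--Ramanan bound cleanly in the marked setting on $\mm_{2k-1,3}$ (rather than on $\mm_{2k-1}$), verifying that the pullback of the Hurwitz divisor acquires exactly the multiplicity $m$ predicted by Proposition \ref{self-trans-order}; every other step is a formal consequence of results already in place.
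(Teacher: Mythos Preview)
Your proposal is correct and follows essentially the same route as the paper's proof: use geometric general position via Proposition \ref{injectivity-syzy-scrolls} to land in $\H^{lf}(m)$ and obtain the lower bound, use infinitesimal general position via Proposition \ref{self-trans-lemma} and Proposition \ref{self-trans-order} to compute the order of vanishing of $\mathfrak{hur}$, and then invoke \cite{hirsch} (together with \cite[Thm.\ 3.1]{lin-syz}) to convert this into the upper bound $b_{k-1,1}(C,\omega_C)\leq m(k-1)$. The only superfluous remark is the appeal to Theorem \ref{divisor-computation}, which is not used in the base case; the Hirschowitz--Ramanan identity $\mathfrak{Syz}=(k-1)\mathfrak{hur}$ on $\mm_{2k-1}$ already suffices.
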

\begin{proof}
Note $[F=(f_i)] \in \H^{lf} (m)$ by Proposition \ref{loc-free-crit} and Proposition \ref{injectivity-syzy-scrolls}. Further, $\pi_k$ is self-transverse near each $f_i$ by Proposition \ref{self-trans-lemma}. Thus the local equation for the Hurwitz divisor $\mathfrak{hur}$ vanishes to order precisely $m$ at $[C]$, by Proposition \ref{self-trans-order}. It follows that the local equation for the Syzygy divisor $\mathfrak{Syz}$ vanishes to order precisely $m(k-1)$ and thus $b_{k-1,1}(C,\omega_C) \leq m(k-1)$, \cite{hirsch}, \cite[Thm.\ 3.1]{lin-syz}. Thus 
$[F] \notin \mathcal{K}(m)$ as required.
\end{proof}

It remains to prove the induction step. Let $1 \leq \ell \leq 2k-5$ and let $C$ be a smooth curve of genus $2k-1-\ell$ and gonality $k-\ell$, together with points $(p,q,r)$, $\{ x_i, y_i \; | \; 1 \leq i \leq \ell \}$ satisfying all the hypotheses of Theorem \ref{strongest-thm} (for $n=\ell$). Assume Theorem \ref{strongest-thm} holds with $n= \ell-1$ and arbitrary, fixed $k$. 
%and arbitrary $k=c$ such that $0 \leq \ell-1 \leq 2c-5$. 

The key technical tool is the following proposition.
\begin{prop}\cite[Prop.\ 6.2, 6.3]{lin-syz} \label{twisting-prop}
Let $(\Delta,0)$ be an irreducible pointed variety and
$$\mathcal{G}: \mathcal{B} \to \PP^1_{\Delta} $$
a family of stable maps of genus $g$ and degree $k$ with $\mathcal{B}_{t}$ irreducible for $t \in \Delta$ general and
$$\mathcal{B}_0 \simeq C \cup R, \; \; R \simeq \PP^1, \; \; R \cap C=\{u,v\}, \; \; \deg \mathcal{G}_{R}=1$$
for irreducible $C$. Then, after a base change, there is a birational morphism $\nu: \widetilde{\mathcal{B}} \to \mathcal{B}$ between families of nodal curves of $\Delta$, together with a line bundle $\tau \in \text{Pic}(\widetilde{\mathcal{B}})$ such that
$$\tau_{t} \simeq \mathcal{O}_{\mathcal{B}_t}$$
for $ t \in \Delta$ general. Further, one of the following cases occur at $t=0$:
\begin{enumerate}
\item $\widetilde{\mathcal{B}}_0 \simeq \mathcal{B}_0$ and further ${\tau_{0}}_{|_C} \simeq \mathcal{O}_C(u+v)$, $\deg{\tau_{0}}_{|_R}=-2$.
\item $\widetilde{\mathcal{B}}_0$ is a blow-up of $\mathcal{B}_0$ at a node $p \in \{u,v\}$ with exceptional component $E$. Identifying $R$,$C$ with their strict transforms, ${\tau_{0}}_{|_C} \simeq \mathcal{O}_C(u+v)$, $\deg{\tau_{0}}_{|_R}=\deg{\tau_{0}}_{|_E}=-1$.
\end{enumerate}
Furthermore, additionally assume $h^0(\mathcal{G}^*_0\mathcal{O}_{\PP^1}(1))=2$, $\omega_C \otimes {\mathcal{G}_{0}}^*_{|_C} \mathcal{O}_{\PP^1}(-1)$
is base-point free and that the locus of $t \in \Delta$ with $\mathcal{B}_t$ reducible has codimension two about $0 \in \Delta$. Then, after a further base change, we may assume that we are in case $(2)$.
\end{prop}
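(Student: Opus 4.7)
The plan is to produce $\widetilde{\mathcal{B}}$ as a minimal resolution of the total space $\mathcal{B}$ and to exhibit $\tau$ as $\mathcal{O}_{\widetilde{\mathcal{B}}}$ of an explicit combination of the strict transform $R'$ of the unstable component and the exceptional divisors introduced by the resolution.

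First I would restrict to a generic one-parameter smooth curve through $0\in\Delta$; the codimension-two hypothesis on the locus of reducible fibres ensures that after such a restriction $\mathcal{B}_0$ is the only reducible fibre. At each node $p\in\{u,v\}$ of $\mathcal{B}_0$, the analytic local model of $\mathcal{B}$ is $\C[[x,y,t]]/(xy-t^{n_p})$ for some integer $n_p\geq 1$, so $\mathcal{B}$ is smooth at $p$ precisely when $n_p=1$. I would then take $\nu\colon\widetilde{\mathcal{B}}\to\mathcal{B}$ to be the minimal resolution, which over each node $p$ with $n_p\geq 2$ inserts a chain of $n_p-1$ smooth rational $(-2)$-curves.

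On the smooth surface $\widetilde{\mathcal{B}}$ every component of $\widetilde{\mathcal{B}}_0$ is Cartier, and since $\mathcal{O}_{\widetilde{\mathcal{B}}}(\widetilde{\mathcal{B}}_0)$ is pulled back from $\Delta$ its restriction to each component is trivial; this is the key adjunction identity from which all degrees are computed. Case (1) corresponds to $n_u=n_v=1$, for which no blow-up is required: I would set $\tau:=\mathcal{O}_{\mathcal{B}}(R)$, and the relation $\mathcal{O}_{\mathcal{B}}(R+C)\simeq\mathcal{O}_{\mathcal{B}}$ near $\mathcal{B}_0$ yields $\tau_0|_C\simeq\mathcal{O}_C(u+v)$ and $\tau_0|_R\simeq\mathcal{O}_R(-u-v)$, which has degree $-2$. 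Case (2) corresponds to one of the $n_p$ being $2$ and the other $1$, producing a single exceptional $(-2)$-curve $E$; here I would instead set $\tau:=\mathcal{O}_{\widetilde{\mathcal{B}}}(R'+E)$, and the intersection data $E^2=-2$, $E\cdot R'=E\cdot C'=1$, $R'\cdot C'=1$ give exactly the claimed degrees $\mathcal{O}_C(u+v)$, $\mathcal{O}_R(-1)$, $\mathcal{O}_E(-1)$. In either case the support of $\tau$ is contained in $\widetilde{\mathcal{B}}_0$, so it is disjoint from $\widetilde{\mathcal{B}}_t=\mathcal{B}_t$ for $t\neq 0$, which forces $\tau_t\simeq\mathcal{O}_{\mathcal{B}_t}$.

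The main obstacle, and the content of the final assertion, is the reduction to case (2). The natural attempt is a degree-two ramified cover $\Delta'\to\Delta$ branched at $0$: such a base change turns a smooth node $xy=t$ into an $A_1$-singularity $xy=s^2$, so that an exceptional component appears after resolution. The subtle point is to arrange for such an exceptional curve to appear over exactly one of the two nodes, so that the resulting chain has length one, and for this I would combine the base change with a suitable blow-up/contraction on the normalization. The hypotheses $h^0(\mathcal{G}_0^{\ast}\mathcal{O}_{\PP^1}(1))=2$ and base-point-freeness of $\omega_C\otimes\mathcal{G}_{0|C}^{\ast}\mathcal{O}_{\PP^1}(-1)$ would be used to ensure the relevant sheaves extend across this contraction and that the twist $\tau$ descends with the fibrewise properties claimed. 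Making this base-change/contraction argument precise is the technical heart of the proof.
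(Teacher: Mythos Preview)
The paper does not give its own proof of this proposition; it is quoted from \cite{lin-syz} (Propositions~6.2 and~6.3 there), so there is no in-paper argument to compare against. I can still assess your sketch on its merits.

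Your basic mechanism is the right one: pass to a resolution of the total space and take $\tau=\mathcal{O}_{\widetilde{\mathcal{B}}}(D)$ for $D$ a $\mathbb{Z}$-combination of vertical components, using that $\mathcal{O}_{\widetilde{\mathcal{B}}}(\widetilde{\mathcal{B}}_0)$ is fibrewise trivial to read off restrictions. Your degree computations in the two model situations $(n_u,n_v)=(1,1)$ and $(n_u,n_v)\in\{(1,2),(2,1)\}$ are correct.

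The genuine gap is that you never establish the dichotomy, which is the actual content of the proposition. After restricting to a curve in $\Delta$, the local model at each node is $xy=t^{n_p}$ with $n_p\ge 1$ \emph{arbitrary}, and minimal resolution inserts a chain of $n_p-1$ rational curves there. Nothing in your argument forces $(n_u,n_v)$ into the short list you treat, nor do you explain how, when the chain is longer, to produce a birational $\nu$ whose central fibre has the shape in~(1) or~(2) together with a line bundle $\tau$ with the prescribed restrictions. This is not a side issue: it is exactly what the proposition asserts. The same problem undermines your approach to the final clause. A degree-two base change branched at $0$ sends $xy=t^{n}$ to $xy=s^{2n}$, which \emph{lengthens} the exceptional chain rather than producing a single $E$; so the mechanism you propose cannot by itself place you in case~(2). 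The hypotheses $h^0(\mathcal{G}_0^*\mathcal{O}_{\PP^1}(1))=2$ and base-point-freeness of $\omega_C\otimes\mathcal{G}_{0|C}^*\mathcal{O}_{\PP^1}(-1)$ are used in \cite{lin-syz} precisely to control this reduction, and you would need to explain concretely how they enter---saying they ``ensure the relevant sheaves extend'' is not an argument. You are candid that this last step is ``the technical heart,'' but as written the first part of the proposition is equally unproved.
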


We wish to prove that if $\pi^{-1}_{k-n}(C,(p,q,r))=\{f_1, \ldots, f_m \}$ and $F=(f_i) :  C \to (\PP^1)^m$ then $[F] \notin Z_{\ell}(m)$. We first prove a weakening of the induction step.
\begin{prop} \label{induction-weaker}
Let $1 \leq \ell \leq 2k-5$ and assume Theorem \ref{strongest-thm} holds for $n= \ell-1$. With notation as above, further assume:
\begin{align*}(2)': &\text{ For all $S \seq \{ x_j,y_j \; | \; 1 \leq j \leq \ell \}$ of cardinality at most $\ell+1$,} \\
&\text{$h^0(C,f_i^*\mathcal{O}_{\PP^1}(2)(\sum_{s \in S} s))=3$ for $1 \leq i \leq m$}.\end{align*}
Then $[F] \notin Z_{\ell}(m)$.
\end{prop}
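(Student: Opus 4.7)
Argue by contradiction: suppose there exist gluing data $(\beta_i)_{i=1}^{\ell}$ such that the resulting stable map $H=(h_i):D\to(\PP^1)^m$, with $D=C\cup\bigcup_{j=1}^\ell R_j$ coming from the Key Construction, lies in $\mathcal{K}(m)$.

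\textbf{Step 1: Smooth $D$ within $\mathcal{K}(m)$.} Hypotheses $(3)$, $(4)$ and $(2)'$, combined with Propositions~\ref{loc-free-crit} and~\ref{injectivity-syzy-scrolls}, place $[H]$ in $\mathcal{H}^{lf}(m)$, where $\mathcal{V}[m]$ and $\mathcal{W}[m]$ are locally free of the same rank; hence $\mathcal{K}(m)$ has codimension at most one in $\mathcal{H}^{lf}(m)$ near $[H]$. Lemma~\ref{dim-V}\,(ii) gives $\dim V_\ell(m)=6k-2\ell-m-3$ at our point, while $\dim\widetilde{\mathcal{H}}(m)=6k-m-3$, so the image of $q_\ell(m)$ has codimension at least $2\ell\geq 2$ in $\widetilde{\mathcal{H}}(m)$. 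Therefore $\mathcal{K}(m)$ is not locally contained in this image, and we may find a one-parameter family $[H_t:\mathcal{D}_t\to(\PP^1)^m]_{t\in\Delta}\subseteq\mathcal{K}(m)$ with $H_0=H$ and with $\mathcal{D}_t$ irreducible (hence smooth of genus $2k-1$) for $t\neq 0$.

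\textbf{Step 2: Dichotomy on the generic fibre.} Successive unobstructedness and geometric general position are open conditions, so persist for $H_t$ for $t$ generic. Proposition~\ref{loc-free-crit} then translates $[H_t]\in\mathcal{K}(m)$ into $b_{k-1,1}(\mathcal{D}_t,\omega_{\mathcal{D}_t})>m(k-1)$. Applying the base case $n=0$ of Theorem~\ref{strongest-thm} to $\mathcal{D}_t$ together with its pencils $h_{1,t},\dots,h_{m,t}$ would contradict this inequality, unless a hypothesis of that base case fails. By Propositions~\ref{self-trans-lemma} and~\ref{prop-hi} the only ways this can happen are: (a) $\pi_k^{-1}[(\mathcal{D}_t,(p_t,q_t,r_t))]$ contains a further pencil $h_{m+1,t}$ with $h_{m+1,t}\to h_i$ as $t\to 0$ for some $1\leq i\leq m$, or (b) some $h_{i,t}$ fails $h^0(\mathcal{D}_t,h_{i,t}^*\mathcal{O}(2))=3$. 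Setting $L_t:=h_{i,t}^*\mathcal{O}(1)\otimes h_{m+1,t}^*\mathcal{O}(1)$ in case (a), respectively $L_t:=h_{i,t}^*\mathcal{O}(2)$ in case (b), produces a line bundle of degree $2k$ on $\mathcal{D}_t$ with $h^0(L_t)\geq 4$.

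\textbf{Step 3: Twisted extension and semi-continuity.} Apply Proposition~\ref{twisting-prop} once at each of the $\ell$ rational tails of $D$ to extend $L_t$ across the family to a line bundle $\widetilde{\mathcal{L}}$ on (a small modification of) the total space, with $\widetilde{\mathcal{L}}_t\simeq L_t$ for $t\neq 0$ and with central fibre satisfying
\[
\widetilde{\mathcal{L}}_0|_C\simeq f_i^*\mathcal{O}_{\PP^1}(2)\Big(\sum_{j=1}^{\ell}(x_j+y_j)\Big),\qquad \widetilde{\mathcal{L}}_0|_{R_j}\simeq\mathcal{O}_{R_j}.
\]
Upper semi-continuity of cohomology then yields $h^0(D,\widetilde{\mathcal{L}}_0)\geq h^0(\mathcal{D}_t,L_t)\geq 4$.

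\textbf{Step 4: Bounding $h^0$ at the central fibre --- the main obstacle.} A Mayer--Vietoris computation identifies $h^0(D,\widetilde{\mathcal{L}}_0)$ with the dimension of
\[
\big\{\,s\in H^0(C,f_i^*\mathcal{O}_{\PP^1}(2)(\textstyle\sum_{j=1}^{\ell}(x_j+y_j)))\;:\; s(x_j)=s(y_j)\text{ for all }j\,\big\},
\]
which automatically contains the $3$-dimensional subspace $H^0(C,f_i^*\mathcal{O}_{\PP^1}(2))$, whose sections vanish at every $x_j$ and $y_j$. The desired contradiction will follow from showing that, for general choices of $(x_j,y_j)$, equality holds, i.e.\ this dimension is exactly $3$. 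My plan is an inner induction on $\ell$: the case $\ell=0$ is immediate, and the step from $\ell-1$ to $\ell$ uses hypothesis $(2)'$ in an essential way, its extended range $|S|\leq\ell+1$ being precisely what is required to ensure that adjoining the final general pair $(x_\ell,y_\ell)$ imposes a genuine codimension-one condition on the ``symmetric'' sections produced at stage $\ell-1$. Setting up this induction cleanly --- tracking how the kernel of the evaluations $s(x_j)-s(y_j)$ evolves as one enlarges the set of pairs, and using the genericity of $(x_\ell,y_\ell)$ to exclude unwanted jumps in $h^1$ --- is the main technical hurdle, and explains why the strengthening of $(2)$ to $(2)'$ is necessary at this intermediate stage.
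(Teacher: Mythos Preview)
Your approach reproduces the heuristic outline of \S\ref{outline}, but that is \emph{not} how the paper proves Proposition~\ref{induction-weaker}. The paper itself warns that the outline requires extra assumptions (smoothness of the total family, extendability of $\mathcal{L}^o$) which it then bypasses by arguing differently. Most tellingly, you never invoke the hypothesis ``Theorem~\ref{strongest-thm} holds for $n=\ell-1$'': you jump straight to the base case $n=0$.

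The paper's actual argument is a one-step reduction. Rather than smoothing all $\ell$ bridges of $D$ at once, it glues only the last bridge $R_\ell$ to $C$, forming $C_\ell=C\cup R_\ell$ with its map $F_\ell:C_\ell\to(\PP^1)^m$, and observes that $[F_\ell,(x_i,y_i)_{i\le\ell-1}]\in Z_{\ell-1}(m)$. A dimension count (any component $J\subseteq q_{\ell-1}(m)^{-1}(\mathcal{K}(m))$ has dimension at least $6k-2\ell-m-2$, while $\dim V_\ell(m)=6k-2\ell-m-3$) shows the general point of $J$ has \emph{irreducible} base, so one may deform $F_\ell$ inside $J$ to a family $\mathcal{G}_t:\mathcal{B}_t\to(\PP^1)^m$ with $\mathcal{B}_t$ irreducible and carrying $\ell-1$ marked pairs. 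One then checks that $[\mathcal{G}_t,(x_{t,i},y_{t,i})_{i\le\ell-1}]$ satisfies hypotheses (1)--(4) of Theorem~\ref{strongest-thm} for $n=\ell-1$: conditions (3) and (4) are open, while (2) and (1) follow from \cite[Lemmas~6.5,~6.6]{lin-syz}. It is here that the extra room in $(2)'$ --- cardinality up to $\ell+1$ rather than $\ell$ --- is consumed: a single application of the twisting mechanism at the one bridge $R_\ell$ costs exactly two points, bringing the bound down to $\ell-1$, which is what condition~(2) at level $n=\ell-1$ requires. The induction hypothesis now gives $[\mathcal{G}_t]\notin Z_{\ell-1}(m)$, a contradiction.

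Your Steps~3--4 are where the genuine gaps lie. Proposition~\ref{twisting-prop} is stated for a central fibre with a \emph{single} rational bridge; ``applying it once at each of the $\ell$ tails'' is not justified by that statement, and in case~(a) the extra pencil $h_{m+1,t}$ is not a component of your chosen family $H_t$, so its pullback line bundle need not extend over a total space with central fibre $D$ without further modification. Step~4 you yourself flag as unfinished, and your proposed inner induction would have to control $h^0$ for divisors of size up to $2\ell$, well beyond the range $|S|\le\ell+1$ that $(2)'$ supplies. The paper's single-bridge approach sidesteps both issues: Proposition~\ref{twisting-prop} applies verbatim, and the role of your Step~4 is absorbed entirely into the already-established $n=\ell-1$ case of the theorem.
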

The proof of Proposition \ref{induction-weaker} is very similar to the proof of \cite[Prop.\ 6.5]{lin-syz}. By Riemann--Roch again, $(2)'$ holds for $\{ x_j,y_j \}$ general.
\begin{proof}[Proof of Proposition \ref{induction-weaker}]
Suppose $[F: C \to (\PP^1)^m,(x_i,y_i)_{i \leq \ell} ] \in Z_{\ell}(m)$. Then there exist $\ell$ two-marked maps
$\beta_i: (\PP^1,(x_i,y_i)) \to (\PP^1)^m$ which glue to produce a genus $2k-1$, degree $k$ stable map
$$h: D \to (\PP^1)^m$$
such that $[h] \in \mathcal{K}(m)$. By Propositions \ref{key-const-geo-general} and \ref{loc-free-crit}, this is equivalent to having $b_{k-1,1}(D,\omega_D)>m(k-1),$
which does not depend on the choice of maps $(\beta_i)$ (provided they glue to $F$). For a suitable choice of $(\beta_i)$ we may ensure that each $(x_i,y_i)$ is outside the ramification locus of $h_{\sigma}=pr_{\sigma} \circ h$ for each $\sigma \seq \{1, \ldots, m \}$ with $|\sigma| \geq 2$.

Let $$C_{\ell}:= C \cup R_{\ell}, \; \; R_{\ell} \simeq \PP^1, \; \; R_{\ell} \cap C=\{x_{\ell},y_{\ell} \}$$
and let $F_{\ell}: C_{\ell} \to (\PP^1)^m$ be obtained by glueing $F$ and $\beta_{\ell}$. Obviously $[F_{\ell}, (x_i,y_i)_{i \leq \ell-1} ] \in Z_{\ell-1}(m)$. Any component 
$J \seq q_{\ell-1}(m)^{-1}(\mathcal{K}(m))$ has dimension at least 
$$\dim V_{\ell-1}(m)-1 \geq 6k-2\ell-m-2$$
by Lemma \ref{dim-V}. On the other hand, $V_{\ell}(m)$ has dimension $6k-2\ell-m-3$ by Lemma \ref{dim-V}, Corollary \ref{cor-tgt-space-base} and Proposition \ref{key-const-succ-unobst}. Thus the general point $$[F': C' \to (\PP^1)^m, (\beta'_i)] \in J$$
has irreducible base $C'$.

Thus we have a one dimensional, pointed variety $(\Delta,0)$ and marked families $$\mathcal{G}: \mathcal{B} \to (\PP^1_{\Delta})^m, \; \; (\mathbf{b}_i: \PP^1_{\Delta} \to (\PP^1_{\Delta})^m)_{i \leq \ell-1}  $$
with $[\mathcal{G}_t, (\mathbf{b}_{i,t})] \in J$ for all $t$, $(\mathcal{G}_0, (\mathbf{b}_{i,0}))=(F_{\ell}, (\beta_i)_{i \leq \ell-1})$ and with $\mathcal{B}_t$ irreducible for general $t$. By semicontinuity, $\mathcal{G}_t$ is successively unobstructed and, if 
$$g_{t,i}:=pr_i \circ \mathcal{G}_t,$$
then $\{ g_{t,i}^*\mathcal{O}_{\PP^1}(1) \; | 1 \leq i \leq m\}$ are geometrically in general position for general $t$. By \cite[Lemma 6.6 (I)]{lin-syz},  $h^0(g_{t,i}^*\mathcal{O}_{\PP^1}(2)(\sum_{s \in S} s))=3$ for $1 \leq i \leq m$ and any $S \seq \{ x_{t,j},y_{t,j} \; | \; 1 \leq j \leq \ell-1 \}$ of cardinality at most $\ell-1$, where $(x_{t,j},y_{t,j})$ are the markings on $\mathcal{B}_t$. Lastly, by the same argument as in \cite[Proposition 6.10]{lin-syz}, $$\pi^{-1}_k( C'_{(x_{t,i},y_{t,i})}, (p',q',r' ))$$ is zero-dimensional of cardinality $m$, for $C':=\mathcal{B}_t$ and $t$ general, where $p',q',r'$ are the base points and where $C'_{(x_{t,i},y_{t,i})}$ is the curve obtained from $C'$ by glueing $x_{t,i}$ to $y_{t,i}$ for $ 1 \leq i \leq \ell-1$. By the induction hypothesis, $[\mathcal{G}_t] \notin Z_{\ell-1}(m)$, which is a contradiction.

\end{proof}
We can now use Proposition \ref{induction-weaker} to prove the full theorem.
\begin{proof}[Proof of Theorem  \ref{strongest-thm}]
We follow the proof of \cite[Thm.\ 3.7]{lin-syz}. Proceeding by induction, one argues as in Proposition \ref{induction-weaker}. Thanks to the conclusion of Proposition \ref{induction-weaker}, one now has the additional information that the locus of reducible curves in the component $J \seq q_{\ell-1}(m)^{-1}(\mathcal{K}(m))$ has codimension two near $[F_{\ell}, (x_i,y_i)_{i \leq \ell-1} ]$. As in \cite[Thm.\ 3.7]{lin-syz}, we then may use \cite[Lemma 6.6 (II)]{lin-syz} as well an identical argument to \cite[Proposition 6.8]{lin-syz}. %(utilizing case $(2)$ of Proposition \ref{twisting-prop}).
\end{proof}

\section{The case of two pencils}
In this section, we consider the classically studied case of curves with two minimal pencils. We start by constructing such examples using K3 surfaces. We work with curves on K3 surfaces for the reason that it is possible to gain a rather complete understanding of the Brill--Noether theory for such curves due to the very special properties enjoyed by moduli spaces of stable sheaves on K3 surfaces, \cite{lazarsfeld-BNP}, \cite{aprodu-farkas}.

\subsection{K3 Surfaces and Curves with Two Minimal Pencils}
Consider the lattice $\Lambda=\mathbb{Z}[C]\oplus \mathbb{Z}[E_1] \oplus \mathbb{Z}[E_2]$ and intersection matrix
\[ \left( \begin{array}{ccc}
(C \cdot C)& (C \cdot E_1) & (C \cdot E_2)  \\
  (E_1 \cdot C) & (E_1 \cdot E_1) & (E_1 \cdot E_2) \\
   (E_2 \cdot C) & (E_2 \cdot E_1) & (E_2 \cdot E_2)
  
  \end{array} \right) = \left( \begin{array}{ccc}
2g-2 & k & k  \\
  k & 0 & 2 \\
   k & 2 & 0
  
  \end{array} \right) \]
  for fixed $k \geq 3$. Then $\lambda$ has discriminant $4(k^2+2-2g)$.
  \begin{prop} \label{lattice-k3-1}
  Assume $g \leq \frac{k^2}{2}$, $k \leq \frac{g+1}{2}$ and $k \geq 3$. There exists a K3 surface $X_{\Lambda}$ with $\text{Pic}(X_{\Lambda}) \simeq \Lambda$ and with $E_1+E_2$ big and nef. Further, $E_i$ are the classes of smooth elliptic curves for $i=1,2$. If we further assume $g<\frac{k^2}{2}$, then $|C|$ is base-point free for $i=1,2$.
  \end{prop}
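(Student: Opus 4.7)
The plan is to apply Nikulin's existence theorem for K3 surfaces with prescribed Picard lattice, then use the Weyl group action on the positive cone to place $C, E_1, E_2$ in the correct chamber, and finally invoke Saint-Donat's theorem to deduce the base-point-free properties.

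First, one verifies that $\Lambda$ is an even lattice of signature $(1,2)$. The Gram matrix is visibly even, and its determinant equals $4(k^2+2-2g)$, which is positive under $g\leq k^2/2$. For the signature, note that $C^2=2g-2>0$ while the rational orthogonal complement of $C$ in $\Lambda\otimes\mathbb{Q}$ is spanned by $E_1-E_2$ and $(E_1+E_2)-\tfrac{k}{g-1}C$; a direct computation gives self-intersections $-4$ and $4-\tfrac{2k^2}{g-1}$ respectively, both strictly negative for $k\geq 3$ and $g\leq k^2/2$ (the second bound being equivalent to $2(g-1)<k^2$).

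Since $\mathrm{rk}\,\Lambda=3\leq 10$, Nikulin's theorem supplies a primitive embedding $\Lambda\hookrightarrow \Lambda_{K3}$ into the K3 lattice, and surjectivity of the period map produces a K3 surface $X_{\Lambda}$ with $\mathrm{Pic}(X_{\Lambda})\simeq \Lambda$. The main obstacle is showing that the marking can be chosen so that $C$ is ample and $E_1, E_2$ are nef. The ample cone of $X_\Lambda$ is one chamber of the positive cone cut out by effective $(-2)$-classes, and the Weyl group of $\Lambda$ permutes these chambers transitively; one thus seeks a chamber in which a class of the form $C+\varepsilon(E_1+E_2)$ lies in the interior. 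The technical heart of the argument is a case analysis on the integer solutions of $(g-1)a^2+ak(b_1+b_2)+2b_1b_2=-1$, i.e.\ the $(-2)$-classes $D=aC+b_1E_1+b_2E_2$: for each such $D$ one must verify, possibly after applying a further Weyl reflection, that $D\cdot E_i \geq 0$. This combinatorial step, which uses the bound $g\leq k^2/2$ crucially, is where I expect the real difficulty; similar analyses appear in the work of Knutsen and others on K3 surfaces of small Picard rank.

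Once $E_i$ is nef and primitive with $E_i^2=0$, Saint-Donat's theorem gives that $|E_i|$ is a base-point-free pencil with smooth general member, establishing the claim about elliptic curves; summing the two nef classes then yields that $E_1+E_2$ is nef, and its positive square $(E_1+E_2)^2=4$ makes it big as well. For the last assertion, under the strict inequality $g<k^2/2$ the class $C$ is ample with $C^2=2g-2\geq 4$; by Saint-Donat, the only obstruction to $|C|$ being base-point-free would be a decomposition $C\sim 2F+\Gamma$ with $F$ a primitive elliptic class and $\Gamma$ a smooth rational curve, which can be ruled out directly on the rank-three lattice $\Lambda$ using the numerical restrictions $C\cdot E_i=k$ and the strict inequality.
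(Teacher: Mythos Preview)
Your overall strategy is sound and close to the paper's, but the execution differs in one important organizational point, and you leave the main computation as a black box.

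The paper does \emph{not} try to make $C$ ample first. Instead, it applies the Weyl group once to put $E_1+E_2$ (which has square $4>0$) into the big-and-nef cone; this is the content of \cite[Cor.\ 8.2.9]{huybrechts-k3}. From there, $(E_1\cdot(E_1+E_2))=2>0$ forces $E_1$ effective, and the paper rules out any $(-2)$-curve $R$ with $R\cdot E_1<0$ and $E_1-R$ effective by a short, concrete case analysis using Hodge index against $E_1+E_2$. Only \emph{after} both $E_i$ are known to be nef does the paper turn to $C$: nefness of $C$ is a second case analysis (this is where $g<k^2/2$ is used), and base-point-freeness follows from Mayer's theorem. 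The advantage of starting with $E_1+E_2$ is that the constraints $0\leq (E_1-R)\cdot(E_1+E_2)\leq 2$ pin down $R$ very tightly, making the case analysis genuinely short; your proposal to analyze all $(-2)$-classes against $C+\varepsilon(E_1+E_2)$ simultaneously is workable but heavier, and your phrase ``possibly after applying a further Weyl reflection'' is dangerous---once you have fixed the chamber you cannot reflect again without moving the class you already placed.

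One minor correction: in the last step, Mayer's theorem gives $C\sim gF+\Gamma$ with $F$ elliptic, $\Gamma$ rational and $F\cdot\Gamma=1$, not $C\sim 2F+\Gamma$. The paper then observes $(E_i\cdot C)=k<g$ forces $(E_i\cdot F)=0$, hence $E_1=F=E_2$, a contradiction.
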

  \begin{proof}
 Since $g \leq \frac{k^2}{2}$, $\text{Disc}(\Lambda) >0$. The lattice $\Lambda$ is not positive definite, since $(E_1-E_2)^2=-4$. Hence it is even with signature $(1,2)$ and there exists a K3 surface $X_{\Lambda}$ with $\text{Pic}(X_{\Lambda}) \simeq \Lambda$, \cite{morrison-large}. We may take $E_1+E_2$ to be big and nef by \cite[Cor.\ 8.2.9]{huybrechts-k3}.
 
 To show that $E_1$ is the class of a smooth elliptic curve, it suffices to show that $E_1$ is nef, \cite[Prop.\ 2.3.10]{huybrechts-k3}. Since $E_1+E_2$ is base point free, $(E_1 \cdot E_1+E_2)=2>0$, the class $E_1$ is effective. It suffices to show that there is no smooth rational curve $R$ with $(R \cdot E_1)<0$ and $E_1-R$ effective. Suppose such an $R$ exists. We have $(R)^2=-2$ and $0 \leq (E_1-R \cdot E_1+E_2) \leq 2$. Suppose firstly that $(R \cdot E_1)\leq -2$. Then
 $(E_1-R)^2=-2(E_1 \cdot R_i) -2 \geq 2$. Applying the Hodge Index Theorem \cite[Remark 1.2.2]{huybrechts-k3} to $E_1-R, E_1+E_2$ then leads to a contradiction. So we must have $(R \cdot E_1)=-1$. Suppose $R=aC+bE_1+cE_2$, $a,b,c \in \mathbb{Z}$. Then $-1=(R \cdot E_1)=ak+2c$. Thus 
 $$(R-bE_1)^2=(aC+cE_2)^2=a^2(2g-2)+2c(ak)=a^2(2g-2-k^2)-ak \leq -2a^2-ak.$$
 On the other hand $(R \cdot E_1+E_2) \geq 0$ so $ak+2b=(R \cdot E_2) \geq 1$ and
 $$(R-bE_1)^2=-2+2b \geq -1-ak.$$ This forces $a=0$ and hence $-1=2c$ which is impossible. Thus $E_1$ is the class of a smooth elliptic curve and likewise for $E_2$.
 
 Assume $g<\frac{k^2}{2}$. We claim that $|C|$ is base-point free. We firstly claim that $C$ is nef. If $C$ is not nef, then there exists a smooth rational curve $R$ with $(C \cdot R)<0$ and $C-R$ effective. Write $R=aC+b_1E_1+b_2E_2$. Then $2b_i \geq -ak$ for $i=1,2$ since $E_1, E_2$ are nef and $R$ effective. Thus $$(R \cdot C)=a(2g-2)+b_1k+b_2k \geq a(2g-2-k^2).$$ As $2g-k^2 \leq 0$, this implies $a>0$. Next, since $C-R$ is effective, $2b_i \leq (1-a)k$ for $i=1,2$ and in particular $b_i \leq 0$. Now
 $$-2=(R)^2=a(C \cdot R)+b_1(E_1 \cdot R) + b_2(E_2 \cdot R)$$
 and all three terms on the right hand side of the above equation are non-positive. Thus $1 \leq a \leq 2$. If $a=2$, then $2b_i \leq -k$ and thus $b_i \neq 0$ for $i=1,2$. We then must have $(E_1 \cdot R)=(E_2 \cdot R)=0$ and so $b_i=-k$ for $i=1,2$. But $$(2C-kE_1-kE_2)^2=4(2g-2)-4k^2<-8,$$
 which is a contradiction. So we have $a=1$. If $(E_i \cdot R)=0$ then $b_j=-\frac{k}{2} \leq -2$ for $j \neq i \in \{1,2\}$ and then we are forced to have $(E_j \cdot R)=0$ and $b_i=-\frac{k}{2}$. We clearly cannot have $b_1=b_2=0$ nor $R=C-E_i$ for some $i=1,2$, so we must have $b_1=b_2=-\frac{k}{2}$. Then one computes
 $$(R)^2=(C-\frac{k}{2}(E_1+E_2))^2=2g-2-k^2<-2,$$
 since $g<\frac{k^2}{2}$, which is a contradiction.
 
 It remains to show that $|C|$ is base-point free. If $|C|$ is not base-point free, then, by Mayer's Theorem \cite{mayer} there exists a smooth elliptic curve $F$ and a smooth rational curve $\Gamma$ with $(F \cdot \Gamma)=1$ and $C=gF+\Gamma$, see also \cite[Cor 2.3.15]{huybrechts-k3}. As $E_i$ is nef and $(C \cdot E_i)=k <g$, we must have $(E_i \cdot F)=0$ for $i=1,2$. This forces $E_1=F=E_2$ which is impossible.
 
  \end{proof}
  In the next few lemmas we check that curves in the class $[C]$ have gonality $k$ and precisely two minimal pencils, both of which are of type $I$ (under certain bounds).
   \begin{lem} \label{MNbundles}
  Assume $g \leq \frac{4k^2}{9}$, $k \leq \frac{g+1}{2}$ and $k \geq 6$ and let $X_{\Lambda}$ be as in Proposition \ref{lattice-k3-1} and let $C \in [C]$ be a smooth curve. Let $M$ be a line bundle on $X_{\Lambda}$ such that, if $N=C-M$, then $h^0(M)=h^0(M_C)\geq 2$, $h^0(N)=h^0(N_C) \geq 2$, $h^1(M)=h^1(N)=0$ and $(M \cdot N)=\text{Cliff}(C)+2$. 
  Then either $M=E_i$ or $M=C-E_i$ for some $i \in \{1,2\}$.
  \end{lem}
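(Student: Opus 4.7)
I plan to expand $M = aC + b_1 E_1 + b_2 E_2$ with $a, b_1, b_2 \in \mathbb{Z}$, so that $N = (1-a)C - b_1 E_1 - b_2 E_2$, and then rule out all cases except $(a, b_1, b_2) \in \{(0,1,0), (0,0,1), (1,-1,0), (1,0,-1)\}$. The involution $M \leftrightarrow N$ swaps $a$ with $1-a$, so it is enough to treat $a = 0$ and $a \geq 2$. The key numerical inputs are: (i) $\mathrm{Cliff}(C) \leq k-2$, witnessed by $E_i|_C$ (a pencil of degree $k$ with $h^1 = g-k+1 \geq 2$), whence $M \cdot N = \mathrm{Cliff}(C)+2 \leq k$; (ii) since $\omega_{X_\Lambda} \simeq \mathcal{O}$, the effectivity of $M,N$ (from $h^0 \geq 2$) combined with $h^1(M) = h^1(N) = 0$ forces $h^2(M) = h^0(-M) = 0 = h^2(N)$ by Serre duality, and Riemann--Roch on the K3 then yields $M^2, N^2 \geq 0$; (iii) the classes $C, E_1, E_2$ are all nef by Proposition \ref{lattice-k3-1}, hence meet effective classes non-negatively.

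The main case is $a \geq 2$. Setting $c = a - 1 \geq 1$, the inequality $N \cdot E_i \geq 0$ applied to the effective class $N$ forces $b_1, b_2 \leq -ck/2$, so I parametrize $b_i = -ck/2 - \beta_i$ with $\beta_i \geq 0$. A direct expansion gives
\begin{align*}
N^2 &= c^2(2g-2-k^2) + 4\beta_1\beta_2, \\
M \cdot N &= c(c+1)(k^2-2g+2) + k(\beta_1+\beta_2) - 4\beta_1\beta_2,
\end{align*}
while $M \cdot E_i \geq 0$ forces $\beta_i \leq k/2$. Writing $v := k^2 - 2g + 2$, the hypothesis $g \leq 4k^2/9$ gives $v \geq k^2/9 + 2$. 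The constraint $N^2 \geq 0$ together with the AM--GM inequality $4\beta_1\beta_2 \leq (\beta_1+\beta_2)^2$ forces $s := \beta_1 + \beta_2 \geq c\sqrt{v}$, and using $-4\beta_1\beta_2 \geq -s^2$ I obtain the lower bound $M \cdot N \geq f(s) := c(c+1)v + ks - s^2$ on the interval $s \in [c\sqrt{v}, k]$. By concavity of $f$ its minimum on this interval is attained at one of the endpoints; since $f(c\sqrt{v}) = c\sqrt{v}(k + \sqrt{v}) \geq 4ck^2/9$ and $f(k) = c(c+1)v \geq 2k^2/9 + 4$, both strictly exceed $k$ for $k \geq 6$, contradicting $M \cdot N \leq k$.

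The residual case $a = 0$ proceeds in the same spirit: writing $M = b_1 E_1 + b_2 E_2$ with $b_1, b_2 \geq 0$ (from effectivity of $M$ against $E_i$) and $s = b_1 + b_2$, nefness of $C$ gives $s \leq (2g-2)/k$, while $N^2 \geq 0$ together with $4b_1 b_2 \leq s^2$ yields $(s-k)^2 \geq v$, hence $s \leq k - \sqrt{v} \leq 2k/3$. Independently, $M \cdot N \leq k$ combined with $4b_1 b_2 \leq s^2$ yields $s^2 - ks + k \geq 0$, whose smaller root $(k - \sqrt{k^2 - 4k})/2$ is strictly less than $2$ for $k \geq 5$, forcing $s \leq 1$ and so $M \in \{E_1, E_2\}$. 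The principal obstacle is the careful bookkeeping of the three simultaneous quadratic inequalities (from $N^2 \geq 0$, $M^2 \geq 0$ or nefness of $E_i$, and $M \cdot N \leq k$); the hypothesis $g \leq 4k^2/9$ is exactly calibrated so that the feasible intervals become disjoint and the concavity argument in case $a \geq 2$ closes for every $k \geq 6$.
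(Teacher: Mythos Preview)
Your proof is correct and follows essentially the same lattice-arithmetic strategy as the paper: write $M=aC+b_1E_1+b_2E_2$, reduce by the $M\leftrightarrow N$ symmetry, and eliminate all $(a,b_1,b_2)$ using the three constraints $M^2,N^2\geq 0$, nefness of $E_i$ against effective classes, and $(M\cdot N)\leq k$. The only organisational difference is in the $a\geq 2$ (equivalently $a\leq -1$) case: the paper bounds $b_1+b_2$ from both $M^2\geq 0$ and $N^2\geq 0$ to force $\sqrt{v}\leq k/(1-2a)\leq k/3$, whereas you instead combine $N^2\geq 0$ with the bound $(M\cdot N)\leq k$ via your concave function $f(s)$; both routes close under the same hypothesis $g\leq 4k^2/9$. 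One small expositional point in your $a=0$ case: when you pass from $s^2-ks+k\geq 0$ to $s\leq 1$, you should state explicitly that the larger root $(k+\sqrt{k^2-4k})/2$ exceeds $2k/3$ for $k\geq 5$, so your earlier bound $s\leq 2k/3$ excludes it; otherwise the quadratic alone does not pin down $s$.
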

  \begin{proof}
    Firstly, the assumption $g \leq \frac{4k^2}{9}$ and $k \geq 5$ implies $g<\frac{k^2}{2}$ as needed for Proposition \ref{lattice-k3-1}. We have $\mathcal{O}_C(E_i) \in W^1_k(C)$, so $\text{gon}(C) \leq k$ and hence $\text{Cliff}(C) \leq k-2$ and $$(M \cdot N) \leq k.$$  Let $C \in [C]$ be a smooth curve. Riemann--Roch implies that $(M)^2,(N)^2 \geq 0$. Write $M=aC+b_1E_1+b_2E_2$ for integers $a,b_1,b_2$. After interchanging $M$ and $N$, we may assume $a \leq 0$. We need to show $M=E_i$ for some $i \in \{1,2\}$.

 As both $M$ and $N$ are effective, by intersecting with $E_1,E_2$ we have $-ak \leq 2b_i \leq (1-a)k$ for $i=1,2$. We will firstly show we must have $a=0$. Suppose $a<0$. We have
  $$(M)^2=a^2(2g-2)+2ak(b_1+b_2)+4b_1b_2 \leq a^2(2g-2)+2ak(b_1+b_2)+(b_1+b_2)^2.$$
  Now if $f(t)=a^2(2g-2)+(2ak)t+t^2$ then the zeroes of $f(t)$ are $-ak \pm a\beta$ for $\beta:=\sqrt{k^2-2g+2}$. As $b_1+b_2 \geq -ak \geq -ak+a\beta$, then from the assumption $(M)^2 \geq 0$ we must have $b_1+b_2 \geq -ak-a \beta$. Next
  $$(N)^2 \leq (1-a)^2(2g-2)-2(1-a)k(b_1+b_2)+(b_1+b_2)^2$$
  and, since $(N)^2 \geq 0$, we get $b_1+b_2 \leq (1-a)(k-\beta)$. So,
  $$-a(k+\beta) \leq b_1+b_2 \leq (1-a)(k-\beta), \; \; \; \beta:=\sqrt{k^2-2g+2}.$$ In particular, $(1-a)(k-\beta) \geq -a(k+\beta)$ or $\beta \leq \frac{k}{1-2a}$. Since we are assuming $a \leq -1$, we have $$\beta \leq \frac{k}{3},$$ which gives $\frac{4k^2}{9}-g+1 \leq 0$. This contradicts our assumption $g \leq \frac{4k^2}{9}$.
  
  Lastly, suppose $a=0$. Then
  $$(M \cdot N)=(b_1+b_2)k-4b_1b_2 \geq (b_1+b_2)(k-(b_1+b_2))$$
  and $0 \leq b_i \leq \frac{k}{2}$ for $i=1,2$ and further $b_1+b_2 \leq k-\beta$. If $b_1+b_2=1$, then we are done, so assume $b_1+b_2 \geq 2$. We have seen that the assumption $g \leq \frac{4k^2}{9}$ implies $\beta > \frac{k}{3}$ and so $$2 \leq b_1+b_2 < \frac{2k}{3}.$$ Set $h(t)=t(k-t)$. Then
  $$(M \cdot N)\geq h(b_1+b_2) \geq \text{min}(h(2),h(\frac{2k}{3}))=\text{min}(2(k-2),\frac{2k^2}{9}) \geq k+1,$$
  for $k \geq 6$, which is a contradiction.

  \end{proof}
  
  The previous lemma immediately lets us compute the Clifford index of $C \in [C]$.
 \begin{lem} \label{cindex-k3}
  Assume $g \leq \frac{4k^2}{9}$, $k \leq \frac{g+1}{2}$ and $k \geq 6$. Let $X_{\Lambda}$ be as in Proposition \ref{lattice-k3-1}. Every smooth curve of class $[C]$
  has Clifford index $k-2$.
  \end{lem}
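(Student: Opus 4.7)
The upper bound $\mathrm{Cliff}(C) \leq k-2$ is essentially built into the construction of $X_\Lambda$. By Proposition \ref{lattice-k3-1}, each $E_i$ is represented by a smooth elliptic curve, so $h^0(X_\Lambda,E_i)=2$. The restriction sequence
\[0 \to \mathcal{O}_{X_\Lambda}(E_i-C) \to \mathcal{O}_{X_\Lambda}(E_i) \to \mathcal{O}_C(E_i) \to 0\]
together with $(E_i-C)\cdot(E_1+E_2) = 2-k < 0$ (which forces $h^0(E_i-C)=0$) yields $h^0(\mathcal{O}_C(E_i)) \geq 2$. Since $\deg \mathcal{O}_C(E_i) = (C\cdot E_i) = k \leq g-1$, the line bundle $\mathcal{O}_C(E_i)$ contributes to the Clifford index and gives $\mathrm{Cliff}(\mathcal{O}_C(E_i))=k-2$.

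For the reverse inequality, I would argue by contradiction: suppose $\mathrm{Cliff}(C) < k-2$. From $k \leq (g+1)/2$ we get $g \geq 2k-1$, hence $\lfloor (g-1)/2\rfloor \geq k-1 > \mathrm{Cliff}(C)$, so $C$ has non-maximal Clifford index. Because $g \leq 4k^2/9 < k^2/2$, Proposition \ref{lattice-k3-1} guarantees that $|C|$ is base-point free. The Green--Lazarsfeld theorem on Clifford indices of curves on K3 surfaces then produces a line bundle $M\in\mathrm{Pic}(X_\Lambda)$ such that $M_C$ computes $\mathrm{Cliff}(C)$, with $h^0(M)\geq 2$, $h^0(N)\geq 2$ and $h^1(M)=h^1(N)=0$, where $N:=C-M$. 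The $h^1$-vanishings, fed into the restriction sequences $0\to -N \to M \to M_C \to 0$ and $0\to -M \to N \to N_C \to 0$ (and using Serre duality $h^1(-N)=h^1(N)$ on the K3), upgrade to $h^0(M_C)=h^0(M)$ and $h^0(N_C)=h^0(N)$; and the Clifford-computing condition translates to $(M\cdot N)=\mathrm{Cliff}(C)+2$.

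These are exactly the hypotheses of Lemma \ref{MNbundles}, which forces $M\in\{E_1,E_2,C-E_1,C-E_2\}$. In all four cases $(M\cdot N)=k$, so $\mathrm{Cliff}(C)=k-2$, contradicting the assumption. Combined with the first paragraph, this gives $\mathrm{Cliff}(C)=k-2$.

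The only delicate point is to invoke Green--Lazarsfeld in a form that matches Lemma \ref{MNbundles} exactly; the existence of a computing $M$ with $h^0(M),h^0(N)\geq 2$ is the standard content of the theorem, while the additional vanishings $h^1(M)=h^1(N)=0$ (and the consequent equalities $h^0(M)=h^0(M_C)$, $h^0(N)=h^0(N_C)$) come from the Lazarsfeld--Mukai bundle realization of $M$. Once these are in hand, the conclusion is immediate from the lattice-theoretic classification already proved.
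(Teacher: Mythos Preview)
Your proof is correct and takes essentially the same route as the paper: invoke Green--Lazarsfeld (the paper pairs this with Martens) to produce a Clifford-computing line bundle $M$ on the K3 with $h^0(M)=h^0(M_C)\geq 2$, $h^0(N)=h^0(N_C)\geq 2$, $h^1(M)=h^1(N)=0$ and $(M\cdot N)=\mathrm{Cliff}(C)+2$, and then apply Lemma \ref{MNbundles} to force $(M\cdot N)=k$. Your explicit upper-bound paragraph and the contradiction framing are harmless elaborations; the paper absorbs the bound $\mathrm{Cliff}(C)\leq k-2$ into the proof of Lemma \ref{MNbundles} and argues directly rather than by contradiction.
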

  \begin{proof}
By \cite{green-lazarsfeld-special} and \cite{martens}, there is a line bundle $M$ on $X_{\Lambda}$ such that, if $N=C-M$, then $h^0(M)=h^0(M_C)\geq 2$, $h^0(N)=h^0(N_C) \geq 2$, $h^1(M)=h^1(N)=0$ and $(M \cdot N)=\text{Cliff}(C)+2$. Then by Lemma \ref{MNbundles}, $(M \cdot N)=(E_i \cdot C-E_i)=k$ as required.
  \end{proof}
  
  In particular, the above lemma implies that, for every smooth curve $C$ of class $[C]$, $\text{gon}(C)=k$.
   \begin{lem} \label{h1van-lem}
  Assume $g<\frac{k^2}{2}$, $k \leq \frac{g+1}{2}$ and $k \geq 6$ and let $X_{\Lambda}$ be as in Proposition \ref{lattice-k3-1}. Then $h^1(X_{\Lambda}, C-2E_i)=0$ for $i=1,2$.
  \end{lem}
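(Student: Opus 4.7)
The strategy is to combine Riemann--Roch on the K3 surface with a pushforward analysis along the elliptic fibration $\pi_i\colon X_\Lambda\to\mathbb{P}^1$ induced by the pencil $|E_i|$.

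First I would compute $\chi(C-2E_i)=\tfrac12(C-2E_i)^2+2=g-2k+1\geq 0$ via Riemann--Roch, and dispose of $h^2$ via Serre duality: $h^2(C-2E_i)=h^0(2E_i-C)$, and the class $2E_i-C$ cannot be effective because $(2E_i-C)\cdot E_j=4-k<0$ for $j\neq i$ (since $k\geq 6$) while $|E_j|$ is an infinite pencil of irreducible elliptic curves by Proposition \ref{lattice-k3-1}; no effective divisor can have negative intersection with every member of such a pencil.

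Next I would use the elliptic fibration $\pi_i$. Since $C\cdot E_i=k>0$, cohomology-and-base-change gives $R^1\pi_{i*}\mathcal{O}_{X_\Lambda}(C)=0$ and $\pi_{i*}\mathcal{O}_{X_\Lambda}(C)\simeq\bigoplus_{j=1}^k\mathcal{O}_{\mathbb{P}^1}(a_j)$, with $\sum a_j=g+1-k$ (from $h^0(X_\Lambda,C)=g+1$) and $a_j\geq -1$ for every $j$ (from the Kodaira vanishing $h^1(X_\Lambda,C)=0$, valid since $C$ is big and base-point-free by Proposition \ref{lattice-k3-1}). By the projection formula,
$$
h^1(X_\Lambda,C-2E_i)=h^1\!\left(\mathbb{P}^1,\bigoplus_{j=1}^k\mathcal{O}_{\mathbb{P}^1}(a_j-2)\right)=\sum_{a_j\leq 0}(1-a_j),
$$
so the problem reduces to showing $a_j\geq 1$ for all $j$.

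The main obstacle is ruling out summands of degree $-1$ or $0$ in $\pi_{i*}\mathcal{O}_{X_\Lambda}(C)$. Such a summand corresponds to a fiber $E_i^*\in|E_i|$ at which the restriction $H^0(X_\Lambda,C)\to H^0(E_i^*,C|_{E_i^*})$ fails to be surjective, equivalently $h^1(X_\Lambda,C-E_i)\neq 0$. I would prove $h^1(C-E_i)=0$ by verifying that $C-E_i$ is big and nef and invoking Kawamata--Viehweg: bigness follows from $(C-E_i)^2=2g-2-2k\geq 0$ (using $k\leq(g+1)/2$), while nefness amounts to checking $(C-E_i)\cdot R\geq 0$ for every $(-2)$-class $R=aC+b_1E_1+b_2E_2\in\Lambda$ realized by an effective irreducible rational curve. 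The lattice analysis runs parallel to the one proving $E_i$ nef in Proposition \ref{lattice-k3-1}, combining the constraints $R^2=-2$, $R\cdot E_j\geq 0$ (since $E_j$ is nef), and the bounds $g<k^2/2$, $k\geq 6$, to force $(C-E_i)\cdot R\geq 0$ in every case. Having established $a_j\geq 0$, the strengthening $a_j\geq 1$ is obtained by an analogous argument applied to the further short exact sequence
$$
0\to\mathcal{O}_{X_\Lambda}(C-2E_i)\to\mathcal{O}_{X_\Lambda}(C-E_i)\to\mathcal{O}_{E_i^*}(C-E_i)\to 0,
$$
reducing the remaining obstruction back to the pencil-based non-effectivity already established in the first step.
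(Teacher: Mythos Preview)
Your reduction via the elliptic fibration $\pi_i$ is a reasonable reformulation, and the step showing $a_j\geq 0$ by proving $C-E_i$ is big and nef is plausible (modulo carrying out the lattice check, which is indeed parallel to the nefness arguments in Proposition \ref{lattice-k3-1}). The problem is the final step.

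The condition ``$a_j\geq 1$ for all $j$'' is \emph{equivalent} to $h^1\bigl(\bigoplus_j\mathcal{O}_{\mathbb{P}^1}(a_j-2)\bigr)=0$, which (granting $R^1\pi_{i*}\mathcal{O}(C)=0$) is exactly the statement $h^1(X_\Lambda,C-2E_i)=0$ you are trying to prove. Writing down the exact sequence
\[
0\to\mathcal{O}(C-2E_i)\to\mathcal{O}(C-E_i)\to\mathcal{O}_{E_i^*}(C-E_i)\to 0
\]
does not break this circularity: given $h^1(C-E_i)=0$, the obstruction to surjectivity of the restriction map is precisely $h^1(C-2E_i)$ again. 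Nor does the ``pencil-based non-effectivity'' from your first paragraph help here: that argument showed $2E_i-C$ is not effective (intersecting negatively with $E_j$, $j\neq i$), but in the boundary case $k=\lfloor\frac{g+1}{2}\rfloor$ what you actually need is that $C-2E_i$ is not effective, and this class has \emph{positive} intersection with both elliptic pencils, so the same trick does not apply.

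The paper's proof bypasses the fibration entirely and works directly with $C-2E_i$. Since $(C-2E_i)^2=2g-2-4k$ can be $-4$, $-2$, or $\geq 0$ depending on $k$, it splits into cases: for $(C-2E_i)^2\geq 2$ one shows $C-2E_i$ is nef (hence big and nef, so Kawamata--Viehweg applies); for $(C-2E_i)^2=-2$ one shows it is the class of a smooth rational curve; for $(C-2E_i)^2=-4$ one shows it is not effective. All three reduce to the single lattice statement that no smooth $(-2)$-curve $R$ satisfies $(R\cdot C-2E_i)<0$ with $C-2E_i-R$ effective and nontrivial, and this is then checked by the same sort of coefficient-chasing you allude to for $C-E_i$. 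If you want to salvage your approach, you will in the end have to carry out this same lattice analysis on $C-2E_i$ directly; the detour through $\pi_{i*}$ does not buy you anything.

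A secondary point: your claim $R^1\pi_{i*}\mathcal{O}(C)=0$ requires $h^1$ of the restriction of $\mathcal{O}(C)$ to every fiber, including possibly reducible ones, to vanish. This is not automatic from $C\cdot E_i=k>0$ alone and would need a separate check for this Picard lattice.
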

  \begin{proof}
 It suffices to consider the case $i=1$. We have $(C-2E_1)^2=2g-2-4k \geq -4$. Note that $H^0(2E_1-C)=0$ since $E_1$ is nef. In case $k=\frac{g+1}{2}$, it suffices by Riemann--Roch to show that $C-2E_1$ is not effective, whereas in case $k=\frac{g}{2}$ it suffices to show that that $C-2E_1$ is the class of a smooth $-2$ curve. If $k \leq \frac{g}{2}-1$, it suffices to show that $C-2E_1$ is nef. Thus, in all cases, it suffices to show that there is no smooth rational curve $R$ with $(R \cdot C-2E_1)<0$ and $C-2E_1-R$ effective and non-trivial.
 
 Suppose such an $R=aC+b_1E_1+b_2E_2$ exists. Since $2b_i \geq -ak$ for $i=1,2$,
 $$(R \cdot C-2E_1)=a(2g-2-2k)+b_1k+b_2(k-4)\geq a(2g-2-k^2).$$
 As $2g-2-k^2< 0$ we have $a \geq 1$. Next, since $C-2E_1-R$ is effective, $2(b_1+2) \leq (1-a)k $ and $ 2b_2 \leq (1-a)k$. Rewrite the first inequality as $b_1+2a \leq (1-a)(\frac{k}{2}-2)$. Now
 $$-2=(R)^2=a(C-2E_1 \cdot R)+(b_1+2a)(E_1 \cdot R)+b_2 (E_2 \cdot R),$$
 and all three terms on the right are non-positive, so $1 \leq a \leq 2$. 
 
 If $a=2$, we need $(E_1 \cdot R)=(E_2 \cdot R)=0$ which forces $b_1=b_2=-k$ and then $(R)^2 < -8$, which is impossible. So $a=1$. If $(E_1 \cdot R)=0$, then $b_2=-\frac{k}{2} \leq -3$ for which in turn forces $(E_2 \cdot R)=0$ and $b_1=-\frac{k}{2}$. If $(E_2 \cdot R)=0$ then $b_1=-\frac{k}{2}$ and $b_2 \in \{ -\frac{k}{2}, \frac{1-k}{2} \}$. In all cases,  $(R)^2 <-2$ using $g<\frac{k^2}{2}$, which is a contradiction.
 
Thus $a=1$ and $(E_1 \cdot R) \neq 0$, $(E_2 \cdot R) \neq 0$. In this case, we must have either $b_1=-2$ or $b_2=0$. We cannot have $(b_1,b_2)=(-2,0)$ since $C-2E_1-R$ is assumed to be nontrivial. If $b_2=0$ then $(E_1 \cdot R)=k$ which contradicts the above formula for $(R)^2=-2$. So we must have $b_1=-2$ giving the contradiction $(E_2 \cdot R)=k-4 \geq 2$. 
 
  \end{proof}
  Putting everything together, we now obtain:
  \begin{prop} \label{bpf-k3}
Assume $g \leq \frac{4k^2}{9}$, $k \leq \frac{g+1}{2}$ and $k \geq 6$ and let $X_{\Lambda}$ be as in Proposition \ref{lattice-k3-1}. Let $C$ be a smooth curve of class $[C]$ and let $A_i ={E_i}_{|_C}$ for $i=1,2$. Then $A_1$ and $A_2$ are not isomorphic, $h^0(C,2A_1)=h^0(C,2A_2)=3$ and $W^1_k(C)=\{A_1,A_2\}$. Further, the map $g: C \to \PP^1 \times \PP^1$ induced by $|A_1| \times |A_2|$ is birational to its image. Lastly, $C$ has gonality $k$ and satisfies bpf-linear growth.
  \end{prop}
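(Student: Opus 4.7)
The plan is to verify the five assertions in order, drawing on Lemmas \ref{MNbundles}, \ref{cindex-k3}, and \ref{h1van-lem}. For $A_1 \not\simeq A_2$: if $A_1 \simeq A_2$, the map $g$ would factor through the graph of some automorphism of $\PP^1$, which is a $(1,1)$-divisor $\Delta \subseteq \PP^1 \times \PP^1$. Since $g$ is the restriction to $C$ of the morphism $\phi: X_{\Lambda} \to \PP^1 \times \PP^1$ induced by $(|E_1|, |E_2|)$, this would place $C$ inside a divisor of class $E_1 + E_2$ on $X_\Lambda$, so that $E_1 + E_2 - C$ would be effective. But $(E_1+E_2-C)\cdot E_1 = 2-k < 0$ with $E_1$ nef by Proposition \ref{lattice-k3-1}, a contradiction. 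For $h^0(C, 2A_i) = 3$, I would use the short exact sequence $0 \to \mathcal{O}_{X_\Lambda}(2E_i - C) \to \mathcal{O}_{X_\Lambda}(2E_i) \to 2A_i \to 0$: one has $h^0(2E_i - C) = 0$ since $(2E_i - C) \cdot E_i = -k < 0$, $h^1(2E_i - C) = h^1(C - 2E_i) = 0$ by Lemma \ref{h1van-lem} and Serre duality on the K3, and $h^0(X_\Lambda, 2E_i) = 3$ since the elliptic fibration $\pi = |E_i| : X_\Lambda \to \PP^1$ satisfies $\pi_* \mathcal{O}(2E_i) = \mathcal{O}_{\PP^1}(2)$ by the projection formula.

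For the gonality and $W^1_k(C)$: Lemma \ref{cindex-k3} gives $\text{Cliff}(C) = k-2$, hence $\text{gon}(C) = k$. Both $A_i \in W^1_k(C)$ follows from $h^0(A_i) \geq h^0(E_i) - h^0(E_i - C) = 2$ applied to the sequence $0 \to E_i - C \to E_i \to A_i \to 0$. Conversely, any $L \in W^1_k(C)$ computes the Clifford index, so by the results of Green--Lazarsfeld \cite{green-lazarsfeld-special} and \cite{martens} it extends to a line bundle $M$ on $X_{\Lambda}$ satisfying the hypotheses of Lemma \ref{MNbundles}. This forces $M \in \{E_1, E_2, C-E_1, C-E_2\}$, and the degree constraint $(M \cdot C) = \deg L = k$ rules out $M = C-E_i$, since $(C-E_i) \cdot C = 2g - 2 - k > k$ when $g > k+1$ (which holds as $g \geq 2k-1 \geq k+5$ for $k \geq 6$). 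Hence $L = A_i$ and $W^1_k(C) = \{A_1, A_2\}$.

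The birationality of $g$ is the key remaining step, and I would handle it by a short intersection-theoretic argument. The morphism $\phi$ has degree $E_1 \cdot E_2 = 2$, so $\deg g \in \{1, 2\}$; suppose toward a contradiction that $\deg g = 2$. Then $g(C)$ is a curve of bidegree $(k/2, k/2)$ in $\PP^1 \times \PP^1$, and the covering involution $\iota$ of $\phi$ must satisfy $\iota(C) = C$, so that $\phi^{-1}(g(C))$ is supported set-theoretically on the irreducible curve $C$. As a Cartier divisor (using flatness of $\phi$), $\phi^{-1}(g(C))$ has class $\phi^*[g(C)] = (k/2)(E_1 + E_2)$; being supported on $C$ forces it to equal $mC$ for some positive integer $m$. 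This gives $m[C] = (k/2)(E_1+E_2)$ inside the free abelian group $\text{Pic}(X_\Lambda) = \mathbb{Z}C \oplus \mathbb{Z}E_1 \oplus \mathbb{Z}E_2$, which is absurd. Hence $\deg g = 1$, i.e., $g$ is birational. Finally, bpf-linear growth follows from the criterion of Aprodu--Farkas recalled in the Introduction, since $C$ has non-maximal gonality $k$, is embedded in the K3 surface $X_\Lambda$, and the only pencils computing $\text{Cliff}(C)$ are the minimal pencils $A_1$ and $A_2$.
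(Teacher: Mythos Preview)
Your argument has one genuine gap, in the step showing $W^1_k(C)=\{A_1,A_2\}$. You assert that any $L\in W^1_k(C)$ ``extends to a line bundle $M$ on $X_\Lambda$'' via Green--Lazarsfeld and Martens, and then use $(M\cdot C)=\deg L=k$. But those references only show that $\text{Cliff}(C)$ is computed by the restriction of \emph{some} line bundle on the K3; they do not show that \emph{every} $L$ achieving the Clifford index is itself such a restriction. Without $M_{|_C}\simeq L$, the degree constraint $(M\cdot C)=k$ is unavailable and the conclusion $L=A_i$ does not follow. The paper closes this gap by invoking Donagi--Morrison \cite[\S 4]{donagi-morrison}, which produces $M$ together with a reduced $Z_0\in|L|$ satisfying $Z_0\subseteq M\cap C$ and $\deg(M_C)\le g-1$; Lemma~\ref{MNbundles} then forces $M=E_i$, and since $\deg Z_0=k=(E_i\cdot C)$ one obtains $L=\mathcal{O}_C(Z_0)=A_i$. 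Your approach is salvageable with this replacement, but as written the cited input is insufficient.

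Two smaller points. For the birationality of $g$, you invoke flatness of $\phi$ to pull back $[g(C)]$ as a Cartier divisor; this requires $\phi$ to be finite, which is not automatic. The paper checks it by ruling out any $(-2)$-curve $R$ with $(R\cdot E_1)=(R\cdot E_2)=0$ (a short lattice computation). Once finiteness is in hand, your argument and the paper's coincide. Finally, your argument for $A_1\not\simeq A_2$ is different from the paper's (which computes $h^0(A_2\otimes A_1^{*})=0$ directly using $h^1(C+E_1-E_2)=0$) but is correct; and for bpf-linear growth the precise reference is Theorem~\ref{bpf-k3-criterion} together with Lemma~\ref{MNbundles}, whose hypothesis concerns line bundles on the K3 rather than on $C$, so no appeal to ``the only pencils on $C$ computing $\text{Cliff}(C)$'' is needed (or justified, given the gap above).
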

  \begin{proof}
  By Lemma \ref{h1van-lem} and the sequence $0 \to 2E_i-C \to 2E_i \to  2A_i \to 0$ we see $h^0(C,2A_i)=h^0(X_{\Lambda},2E_i)=3$ for $i=1,2$. Next, observe that if $M$ is any line bundle on $X_{\Lambda}$ with $h^1(M)=0$, $(M \cdot E_i)>0$, then from
  $$0 \to M \to M+E_i \to (M+E_i)_{|_{E_i}} \to 0,$$
  we have $h^1(M+E_i)=0$. Thus $h^1(C+E_1-E_2)=h^1((C-2E_2)+E_2+E_1)=0$. From
  $$0 \to E_2-E_1-C \to E_2-E_1 \to A_2 - A_1 \to 0$$
  we see $h^0(A_2 - A_1)=h^0(E_2-E_1)=0$, since $E_2$ is nef and $(E_2 \cdot E_2-E_1)<0$. Thus $A_1$ and $A_2$ are not isomorphic.
  
  Suppose $L \in W^1_k(C)$. Since $\text{gon}(C)=k$ from Lemma \ref{cindex-k3}, $h^0(C,L)=2$ and $L$ is base-point free. By \cite[\S 4]{donagi-morrison}, there exists a line bundle $M$ on $X_{\Lambda}$ such that, setting $N=C-M$, $h^0(M)=h^0(M_C) \geq 2$, $h^0(N)=h^0(N_C) \geq 2$, $h^1(M)=h^1(N)=0$, $\deg(M_C) \leq g-1$ and $(M \cdot N)=k$. Further, there is a reduced $Z_0 \in |L|$ with $Z_0 \seq M \cap C$. From the proof of Lemma \ref{MNbundles}, this implies $M=E_i$ for some $i=1,2$. As $Z_0 \seq E_i \cap C$, and $k=\deg(Z_0)=(E_i \cdot C)$, we must have $L=A_i$. Lastly, $C$ has Clifford index $k-2$ and gonality $k$ by Lemma \ref{cindex-k3} and satisfies bpf-linear growth by Lemma \ref{MNbundles} and Theorem \ref{bpf-k3-criterion}.
  
 It remains to show that the map $g: C \to \PP^1 \times \PP^1$ induced by $|A_1| \times |A_2|$ is birational to its image. We claim the morphism $h:X_{\Lambda}\to \PP^1 \times \PP^1$ induced by $|E_1| \times |E_2|$ is finite. Otherwise there would be some effective class $R$ with $(R)^2=-2$, $(R \cdot E_1)=(R \cdot E_2)=0$. Thus
 $R=aC-\frac{ak}{2}(E_1+E_2)$ and $-2=a^2(2g-2-k^2)$ which is impossible as $2g-2-k^2 \leq -3$. Thus $h$ is finite of degree two and $g=h_{|_C}$. Suppose $g$ has degree two, i.e.\ $h^{-1}(h(C))=C$. As $h_*[C]=(k,k)$, $[C] \in \mathbb{Z}[E_1] \oplus \mathbb{Z}[E_2] \seq \Lambda$, which is a contradiction. Thus $g$ is birational.
   \end{proof}

\subsection{Coppens' Construction and Bpf-Linear Growth}
Fix $g \geq 8$ and $k \geq 4$. A necessary condition for the existence of a curve of genus $g$ carrying two independent pencils of type $I$ is $g \leq (k-1)^2$. Coppens proved that this bound is sufficient, \cite{coppens}. 

Let $C$ be a smooth curve of genus $g-1 \geq 8$. Two pencils $f_1, f_2: C \to \PP^1$ are said to be independent if there exists no automorphism $i: C \to C$ such that $f_2=f_1 \circ i$. The pencil $f_i$ is further said to be of type $I$ if $h^0(f_i^*\mathcal{O}_{\PP^1}(2))=3$.

Assume $C$ is a sufficiently general curve with two independent pencils $f_1, f_2$ and genus $g\leq (k-1)^2$. Then there are points $p \neq q \in C$ such that $f_i(p)=f_i(q)$ for $i=1,2$. Thus if $D$ is the nodal curve of genus $g$ obtained by identifying $p$ and $q$, we have two pencils $f'_1,f'_2: D \to \PP^1$. If $L_i:=f'^*_i\mathcal{O}_{\PP^1}(1)$, one shows that $L_1,L_2$ are the unique rank one, torsion free sheaves on $D$ with degree less than or equal to $k$ and at least two sections. Thus $D$ has gonality $k$ and precisely two minimal pencils, and furthermore one checks that $D$ can be deformed to a smooth curve with precisely two independent minimal pencils of degree $k$, both of type $I$.

We will show that, if one in addition assumes $k \leq \frac{g+9}{4}$ and that $C$ satisfies bpf-linear growth, then the nodal curve $D$ of genus $g$ may be deformed to a smooth curve, with precisely two minimal pencils, satisfying bpf-linear growth. Let $\overline{\text{Pic}}^d(D)$ denote the compactified Jacobian of rank one, torsion free sheaves of degree $k$ on $D$ and let $$W^1_d(D):= \{ M \in \overline{\text{Pic}}^d(D) \; | \; h^0(M) \geq 2 \}$$
be the closed subset of those sheaves with at least two sections. 
\begin{lem} \label{types-cpt}
Let $C,D$ be as above. Assume $C$ satisfies bpf-linear growth. Assume $0 \leq n \leq g-2k$ and let $Z \seq W^1_{k+n}(D)$ be a component of $W^1_{k+n}(D)$ of dimension at least $n$. Then the general point of $Z$ is a line bundle $[M] \in Z$ with $M=L_i(T)$ for $L_i \in W^1_k(D)$ and $T \seq D$ a reduced divisor in the smooth locus. Further we have equality $\dim(Z)=n$. 

\end{lem}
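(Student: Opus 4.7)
The plan is to pull back $M \in Z$ along the normalization $\nu : C \to D$ and use the bpf-linear growth hypothesis on $C$ to control dimensions. A rank-one torsion-free sheaf $M$ on $D$ is either locally free at the node or of the form $\nu_*N'$ for a line bundle $N'$ of degree $k+n-1$ on $C$. In the non-locally-free case, $h^0(D,M) = h^0(C,N')$, so the family of such $M$ has dimension at most $\dim W^1_{k+n-1}(C) \leq n-1$ by bpf-linear growth, contradicting $\dim Z \geq n$. Hence the generic $M \in Z$ is a line bundle on $D$.

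Now consider the pullback map $\pi : \mathrm{Pic}^{k+n}(D) \to \mathrm{Pic}^{k+n}(C)$, which is a $\mathbb{G}_m$-bundle encoding the gluing at the node. Setting $N := \nu^*M$, the Mayer--Vietoris sequence yields $h^0(M) \in \{h^0(N)-1,\, h^0(N)\}$, with the higher value attained on at most one gluing parameter $\alpha \in \mathbb{G}_m$ for each fixed $N$. I split the analysis according to whether the generic $N \in \pi(Z)$ satisfies $h^0(N) = 2$ or $h^0(N) \geq 3$.

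In the first sub-case, $\pi|_Z$ is generically injective, so $\dim Z = \dim \pi(Z) \leq n$; combined with $\dim Z \geq n$ this forces $\dim Z = n$ and identifies $\pi(Z)$ with a top-dimensional component of $W^1_{k+n}(C)$. By bpf-linear growth, every component of $W^1_{k+n}(C)$ of dimension $n$ has generic element with positive-degree base locus, and iterated base-point removal reduces to $W^1_k(C) = \{f_1^*\mathcal{O}_{\PP^1}(1),\, f_2^*\mathcal{O}_{\PP^1}(1)\}$, forcing $\pi(Z) = \overline{\{f_i^*\mathcal{O}_{\PP^1}(1)(T') : T' \in C^{(n)}\}}$ for some $i \in \{1,2\}$. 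For generic $T'$, reduced and disjoint from $\{p,q\}$, the divisor descends to a reduced $T \subseteq D^{\mathrm{sm}}$, and $L_i(T)$ is the unique line bundle on $D$ with $\nu^* L_i(T) = f_i^*\mathcal{O}_{\PP^1}(1)(T')$ and $h^0 \geq 2$ (since $L_i(T)$ contains $H^0(L_i)$). Hence the generic point of $Z$ is $L_i(T)$, as required.

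In the second sub-case, $\pi(Z) \subseteq W^2_{k+n}(C)$ and $\pi|_Z$ has $1$-dimensional fibers, forcing $\dim \pi(Z) = \dim Z - 1 \geq n-1$. I will rule this out by proving the inductive bound $\dim W^2_{k+m}(C) \leq m-2$ for $0 \leq m \leq g-2k$. The base case $m=0$ uses that both minimal pencils of $C$ are of type $I$, so $W^2_k(C) = \emptyset$. For the inductive step, consider
\begin{equation*}
\Phi \, : \, W^2_{k+m}(C) \times C \to W^1_{k+m-1}(C), \quad (L,x) \mapsto L(-x),
\end{equation*}
whose fiber over $N'$ is $\{x \in C : h^0(N'(x)) \geq 3\}$; by residuation this is the finite set of base points of $|\omega_C - N'|$ when $h^0(N') = 2$ and is all of $C$ when $h^0(N') \geq 3$. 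If generic $N' \in \text{Im}(\Phi)$ has $h^0(N') = 2$, the $0$-dimensional fibers combined with bpf-linear growth give $\dim W^2_{k+m}(C) \leq m-2$; otherwise $\text{Im}(\Phi) \subseteq W^2_{k+m-1}(C)$, and the $1$-dimensional fibers combined with the inductive hypothesis give $\dim W^2_{k+m}(C) \leq m-3$. The main obstacle is establishing this inductive dimension bound on $W^2_{k+m}(C)$, which relies crucially on the type $I$ hypothesis to handle the base case and on bpf-linear growth to process both branches of the induction.
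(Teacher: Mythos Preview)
Your argument is essentially correct but follows a genuinely different route from the paper. The paper argues by induction on $n$ while staying entirely on $D$: for general $M\in Z$ it shows first that $M$ is locally free (as you do), then that the node is not a base point of $M$ by examining $\mathrm{Ker}(M\to M_x)\in W^1_{k+n-1}(D)$ and computing $\dim\mathrm{Ext}^1(M_x,M')$, and finally that the base-point-free part of $M$ lies in $W^1_{k+n-t_1}(D)$ for some $t_1\ge 1$, invoking the induction hypothesis there. By contrast, you pull everything back to $C$ via $\nu^*$ and run two separate arguments on $C$: the bpf-linear-growth description of top-dimensional components of $W^1_{k+n}(C)$, and an auxiliary induction establishing $\dim W^2_{k+m}(C)\le m-2$. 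Your approach is more hands-on and makes the role of the type~I hypothesis very explicit, but it requires the extra $W^2$ bound; the paper's approach is more economical and handles the base-point-at-the-node issue intrinsically via the $\mathrm{Ext}$ computation.

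There is one imprecision worth flagging. Your assertion that the higher value $h^0(M)=h^0(N)$ ``is attained on at most one gluing parameter $\alpha\in\mathbb{G}_m$ for each fixed $N$'' is false when both $p$ and $q$ lie in the base locus of $|N|$: in that case every gluing gives $h^0(M)=h^0(N)$. This matters for your claim that $\pi|_Z$ is generically injective in sub-case~1. The fix is easy: the locus of $N\in W^1_{k+n}(C)$ with $h^0(N)=2$ and $p,q\in\mathrm{Bs}|N|$ injects into $W^1_{k+n-2}(C)$, hence has dimension at most $n-2$; its $\mathbb{G}_m$-preimage in $Z$ therefore has dimension at most $n-1<\dim Z$, so cannot contain the generic point. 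Once this is said, your sub-case~1 goes through, and the rest of your argument (the $W^2$ induction and the identification of the unique lift $L_i(T)$) is fine.
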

\begin{proof}
We argue by induction on $n$, the statement holding for $n=0$ as $W^1_k(D)=\{L_1,L_2 \}$. Let $[M] \in Z$ be a general point. We firstly claim $M$ is locally free. Suppose otherwise. Then, letting $\nu: C \to D$ be the normalization morphism, we have $M=\nu_*N$ for some line bundle $N \in W^1_{k+n-1}(C)$. Thus $\dim W^1_{k+n-1}(C) \geq n$ contradicting that $C$ satisfies bpf-linear growth. 

Thus the general point $M$ is locally free. Let $x \in D$ be the node. We firstly claim that $x$ is not a base point of $M$, for $[M] \in Z$ general. Suppose otherwise. Then, setting
$$M':=\text{Ker}(M \to M_x),$$
where $M \to M_x$ is the evaluation morphism, we have that $[M'] \in Z'$, where $Z' \seq W^1_{k+n-1}(D)$ is an irreducible closed subset. Note that, if $M'$ is locally free then $\dim \text{Ext}^1(M_x,M')=1$ and otherwise $\dim \text{Ext}^1(M_x,M')=2$, \cite[Lemma 5.10]{kemeny-singular}. Thus, in the former case, $\dim(Z') \geq n$, which contradicts the induction hypothesis, and in the latter case $\dim(Z')\geq n-1$ and the general point of $Z'$ is not locally free, which also contradicts the induction hypothesis. 

Thus the node $x$ is not a base point of $M$. Hence, setting $$M':=\text{Im}(H^0(M) \otimes \mathcal{O}_D \to M)$$
to be the base-point free part of $M$, we have that $M'=M(-T_1)$ for some effective divisor $T_1$ of degree $t_1$. Assume firstly that $t_1 \geq 1$, that is, that $M$ is not base-point free. We have $[M'] \in Z' \seq W^1_{k+n-t_1}(D)$ where $Z'$ has dimension at least $n-t_1$ and thus, by induction, we must have $M'=L_i(T_2)$ for a general effective divisor $T_2$ of degree $n-t_1$ and $i \in \{1,2\}$. Further, we must have equality $\dim(Z')=n-t_1$ and so $T_1$ may to chosen to be a general effective divisor of degree $t_1$. This gives the claim.

We are left with the case where $M$ is a base-point free line bundle. But this case cannot occur. Indeed, applying $\nu^*$ would produce a component $Z' \seq G^{1,\mathrm{bpf}}_{k+n}(C)$ of dimension $n$, which is impossible as we are assuming that $n \leq (g-1)+1-2k$ and $C$ satisfies bpf-linear growth.
\end{proof}

Let $W^{1,\mathrm{lf}}_d(D) \seq W^1_d(D)$ denote the open locus of locally free sheaves which we endow with the scheme structure of a determinantal variety in the usual way, \cite[Ch.\ IV]{ACGH1}. 
\begin{lem} \label{lf-smooth}
Let $C,D$ be as above with $W^1_k(D)=\{L_1,L_2\}$. Let $T \seq D$ be a general, reduced divisor in the smooth locus of degree $0 \leq n \leq g+2-2k$. Then $W^{1,\mathrm{lf}}_{k+n}(D)$ is smooth of dimension $n$ at the point $[L_i(T)]$.
\end{lem}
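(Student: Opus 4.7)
The plan is to compute the Zariski tangent space to $W^{1,\mathrm{lf}}_{k+n}(D)$ at $[M]$, where $M := L_i(T)$, show it has dimension $n$, and combine this with the upper bound $\dim_{[M]} W^{1,\mathrm{lf}}_{k+n}(D) \leq n$ from Lemma~\ref{types-cpt} to conclude smoothness. First I would verify $h^0(D, M) = 2$ for general $T$, placing $[M] \in W^1_{k+n}(D) \setminus W^2_{k+n}(D)$: by Serre duality $h^0(\omega_D \otimes L_i^{-1}) = h^1(L_i) = g - k + 1$, and since $n \leq g+2-2k \leq g-k+1$ a general $T$ imposes $n$ independent conditions, giving $h^0(\omega_D \otimes M^{-1}) = g - k + 1 - n$, whence $h^0(M) = 2$ by Riemann--Roch on $D$.

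Under these hypotheses, the tangent space to the determinantal scheme $W^1_{k+n}(D)$ at $[M]$ has dimension $g - \mathrm{rk}(\mu_M)$, where
$$\mu_M : H^0(M) \otimes H^0(\omega_D \otimes M^{-1}) \to H^0(\omega_D)$$
is the Petri map; this formula transfers from smooth to Gorenstein curves since $M$ is locally free and $[M] \notin W^2$. Multiplication by the tautological section of $\mathcal{O}_D(T)$ identifies $\mu_M$ with the restriction of $\mu_{L_i}$ to the subspace $H^0(L_i) \otimes H^0(\omega_D \otimes L_i^{-1}(-T))$. The base-point-free pencil trick, applied to the base-point-free pencil $L_i$ on $D$, identifies $\ker \mu_{L_i} \cong H^0(\omega_D \otimes L_i^{-2})$ via $u \mapsto s \otimes (tu) - t \otimes (su)$ for a basis $\{s,t\}$ of $H^0(L_i)$. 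Since $s$ and $t$ have no common zero, an element lies in $\ker \mu_M$ if and only if $u$ itself vanishes along $T$, yielding $\ker \mu_M \cong H^0(\omega_D \otimes L_i^{-2}(-T))$.

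The crucial input is $h^0(D, L_i^2) = 3$. Since $f_i$ is of type~I on $C$, $h^0(C, \nu^* L_i^2) = 3$ with all sections pulled back from $H^0(\mathcal{O}_{\PP^1}(2))$; because Coppens' construction identifies $p \sim q$ with $f_i(p) = f_i(q)$, these sections are compatible with the gluing and descend across the node. Riemann--Roch then gives $h^0(\omega_D \otimes L_i^{-2}) = g + 2 - 2k$, and for general $T$ of degree $n \leq g+2-2k$ one obtains $h^0(\omega_D \otimes L_i^{-2}(-T)) = g + 2 - 2k - n$ by imposition of independent conditions. Combining these,
$$\mathrm{rk}(\mu_M) = 2(g-k-n+1) - (g+2-2k-n) = g-n,$$
so $\dim T_{[M]} W^1_{k+n}(D) = n$, matching the bound from Lemma~\ref{types-cpt} and establishing smoothness.

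The main obstacle is the careful verification of $h^0(L_i^2) = 3$ on the nodal curve $D$ via the normalization $\nu$, together with the subsidiary check that the standard determinantal tangent-space formula and the base-point-free pencil trick transfer cleanly to our Gorenstein setting. These are precisely the points where the singular geometry of $D$ intervenes, but the locally free hypotheses on both $M$ and $L_i$ and the favorable compatibility built into Coppens' construction at the node make both checks tractable.
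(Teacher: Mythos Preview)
Your argument is essentially the same as the paper's: compute the Petri map for $M=L_i(T)$, identify its kernel with $H^0(\omega_D\otimes L_i^{-2}(-T))$ via the base-point-free pencil trick, use $h^0(L_i^2)=3$ and Riemann--Roch to obtain $\dim T_{[M]}W^{1,\mathrm{lf}}_{k+n}(D)=n$, and conclude. Your explicit verification of $h^0(D,L_i^2)=3$ through the normalization and your unpacking of the pencil trick are more detailed than the paper, which simply cites these facts.

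There is, however, a small logical slip in your final step. You invoke the \emph{upper} bound $\dim_{[M]}W^{1,\mathrm{lf}}_{k+n}(D)\leq n$ from Lemma~\ref{types-cpt}, but this is both unnecessary and insufficient: the tangent-space computation already gives the upper bound on local dimension, and Lemma~\ref{types-cpt} is only stated for $n\leq g-2k$, not the full range $n\leq g+2-2k$ required here. What is actually needed to deduce smoothness is a \emph{lower} bound $\dim_{[M]}W^{1,\mathrm{lf}}_{k+n}(D)\geq n$, and this is obvious from the $n$-dimensional family $\{L_i(T'):T'\in D_n\}$ through $[M]$. The paper makes exactly this point (``But obviously $\dim W^{1,\mathrm{lf}}_{k+n}(D)\geq n$''). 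Once you swap in this lower bound, your proof is complete and coincides with the paper's.
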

\begin{proof}
As already remarked above, $L_i$ are of type $I$, i.e.\ $h^0(D,L_i^{\otimes 2})=3$ for $i=1,2$. Further $h^1(L_i)=g+1-k$ by Riemann--Roch, so if $T$ is general $h^1(L_i(T))=h^0(\omega_D\otimes L_i^*(-T))=g+1-k-n$ and thus $h^0(L_i(T))=2$ by Riemann--Roch. Likewise, $h^0(D,L_i^{\otimes 2}(T))=3$. By Proposition 4.2 of \cite[Ch.\ IV]{ACGH1} (which goes through verbatim in the case of an integral, nodal curve), the tangent space to $W^{1,lf}_{k+n}(D)$ is then
$(Im(\mu))^{\perp}$, where $$\mu: H^0(D,L_i(T)) \otimes H^0(D,\omega_D \otimes L_i^*(-T)) \to H^0(D,\omega_D)$$ is the Petri map. Thus $T_{[L_i(T)]}W^{1,\mathrm{lf}}_{k+n}(D)$ has dimension
$g-2(g+1-k-n)+\dim\text{Ker}(\mu)$. By the base-point free pencil trick \cite[Pg. 126]{ACGH1} (which holds in our context), $\dim \text{Ker}(\mu)=h^0(\omega_D\otimes L_i^{\otimes -2}(-T))=g+2-2k-n$, by Riemann--Roch. Thus $\dim T_{[L_i(T)]}W^{1,\mathrm{lf}}_{k+n}(D)=n$. But obviously $\dim W^{1,\mathrm{lf}}_{k+n}(D) \geq n$, so this finishes the proof.
\end{proof}
Putting the above lemmas together we obtain the following result.
\begin{prop} \label{bpf-range-quarter}
Let $C,D$ be as above where $D$ has genus $g$ and gonality $k$. Assume $C$ satisfies bpf-linear growth and let $3 \leq k \leq \frac{g+8}{4}$. Let $(\Delta,0)$ be a smooth, pointed curve and $\mathcal{D} \to \Delta$ a flat family of nodal curves such that $\mathcal{D}_0 \simeq D$ and $\mathcal{D}_t$ is smooth of gonality $k$ for $t \neq 0$. Assume $W^1_k(\mathcal{D}_t)=\{ L_{1,t}, L_{2,t} \}$, where $L_{1,t}$ and $L_{2,t}$ are not isomorphic and of type $I$. Then $\mathcal{D}_t$ satisfies bpf-linear growth for $t \in \Delta$ general.
\end{prop}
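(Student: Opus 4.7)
The plan is to verify the two defining conditions of bpf-linear growth for a general fibre $\mathcal{D}_t$: the dimension bound $\dim G^1_{k+l}(\mathcal{D}_t) \leq l$ for $0 < l \leq g+1-2k$, and the strict bound $\dim G^{1,\mathrm{bpf}}_{k+l}(\mathcal{D}_t) < l$ in the same range.

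For the first condition, I would work with the proper relative Brill--Noether scheme $\mathcal{W}^1_{k+l}(\mathcal{D}/\Delta) \to \Delta$. By Lemma \ref{types-cpt}, the central fibre $W^1_{k+l}(D)$ has pure dimension exactly $l$ for $l \leq g-2k$, so upper semicontinuity of fibre dimension yields the desired bound on a general fibre (using that $h^0 = 2$ on the relevant components so that $G^1$ and $W^1$ have the same dimension there). The edge case $l = g+1-2k$ is handled via Lemma \ref{lf-smooth}, which extends smoothness of the locally free part to $l \leq g+2-2k$, combined with the observation that any non-locally-free component of $W^1_{k+l}(D)$ arises as $\nu_*$ of a component of $W^1_{k+l-1}(C)$ on the normalization, whose dimension is controlled by bpf-linear growth on $C$.

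For the second condition, I would argue by contradiction. If $\dim G^{1,\mathrm{bpf}}_{k+l}(\mathcal{D}_t) \geq l$ for general $t$, taking closures yields an irreducible family $\overline{\mathcal{G}}$ inside $\mathcal{W}^1_{k+l}(\mathcal{D}/\Delta)$ of dimension $l+1$, dominant over $\Delta$, with central fibre of dimension at least $l$ in $W^1_{k+l}(D)$. By Lemma \ref{types-cpt}, this central fibre must contain a full component of the form $\{[L_i(T)] : T \in D^{(l)}_{\mathrm{sm,red}}\}$, whose general point has base locus $T$ and is therefore \emph{not} base-point free on $D$. To rule this out I would analyze the specialization via stable reduction: a family $M_t$ of bpf pencils specializing to $L_i(T)$ produces a limit admissible cover with source a semistable model $\widetilde{D} = D \cup R_1 \cup \ldots \cup R_s$, with rational tails attached at the points of $T$, whose restriction to $D$ factors through the minimal pencil $\phi_{L_i}: D \to \PP^1$ of degree $k$. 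Applying a twisting argument in the spirit of Proposition \ref{twisting-prop} on a semistable resolution of the total family (inserting a chain $E$ at the original node of $D$) and descending to the normalization $\nu: C \to D$ yields a family of base-point-free pencils of degree $k+l$ on $C$, moving in an $l$-dimensional family as $T$ varies. This contradicts bpf-linear growth on $C$ (of genus $g-1$, whose bpf range is $0 < l \leq g-2k$).

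The hardest step will be the final deformation-theoretic descent: choosing the correct twist so that the descended line bundle on $C$ is genuinely base-point free, and correctly tracking degrees and base loci through the tails and the normalization. The hypothesis $k \leq (g+8)/4$ enters precisely to ensure that the range of bpf-linear growth on $C$, together with the edge-case analysis, covers the full range $0 < l \leq g+1-2k$ required on $\mathcal{D}_t$; without this quantitative bound the descent step would fail in the extremal degrees.
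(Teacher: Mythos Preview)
Your argument diverges from the paper's at the contradiction step, and the divergence introduces a real gap. Once you know (via Lemma \ref{types-cpt}) that the central fibre of your component $I$ contains points $[L_i(T)]$ with $T$ general, the paper does \emph{not} attempt any descent to $C$. Instead it observes that by Lemma \ref{lf-smooth} the relative Brill--Noether scheme $\mathcal{W}$ is \emph{smooth} at $[L_i(T)]$, so a unique irreducible component of $\mathcal{W}$ passes through this point. But there is an obvious second candidate: the closure $J \subseteq \mathcal{W}$ of the locus $\{L_{i,t}(T_t)\}$ as $T_t$ ranges over general effective divisors of degree $n$ on $\mathcal{D}_t$, which has relative dimension $n$ and whose general point is \emph{not} base-point free. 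Smoothness forces $I=J$, contradicting that the general point of $I$ was assumed bpf. That is the whole argument; Lemma \ref{lf-smooth} is doing the real work, and you never invoke it for this purpose.

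Your proposed alternative---admissible covers, twisting in the spirit of Proposition \ref{twisting-prop}, and descent to the normalization $C$---is not only much heavier but unjustified at the key point. You assert that the descended line bundles on $C$ are base-point free of degree $k+l$ and move in an $l$-dimensional family, yet the limit $L_i(T)$ pulls back to $\nu^*L_i(T)$ on $C$, which has base locus $T$; nothing in the twisting machinery removes these base points, and there is no natural map from the smooth general fibre $\mathcal{D}_t$ to $C$ along which to transport bpf pencils. Proposition \ref{twisting-prop} is also formulated for a different geometry (a rational bridge meeting the main component in two points) and does not apply here. Finally, you misplace where $k\le (g+8)/4$ enters: in the paper it is used solely to invoke Lemma \ref{bpf-last-red}, which disposes of the edge case $n=g+1-2k$ via Martens' theorem on $\mathcal{D}_t$ itself, so the main argument only runs for $1\le n\le g-2k$; and separately verifying the non-strict bound $\dim G^1_{k+l}\le l$ is unnecessary, since by the first lemma of the appendix bpf-linear growth reduces to $\dim W^1_k=0$ (which is assumed) together with the strict bpf inequality.
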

\begin{proof}
By Lemma \ref{bpf-last-red}, it suffices to check $\dim W^{1,\mathrm{bpf}}_{k+n}(\mathcal{D}_t)<n$ for $1 \leq n \leq g-2k$. Suppose $\dim W^{1,\mathrm{bpf}}_{k+n}(\mathcal{D}_t) \geq n$ for general $t$ and some  $1 \leq n \leq g-2k$. After a finite base change, we have a variety $\mathcal{W}$, together with a proper morphism $\mathcal{W} \to \Delta$, with fibre over $t$ equal to $W^{1}_{k+n}(\mathcal{D}_t)$, cf.\ \cite[Ch.\ XXI]{ACG2}. By assumption, we have a component $\mathcal{I}_{\mathrm{bpf}} \seq \mathcal{W}$ of relative dimension at least $n$ over $\Delta$ whose general point is base-point free. By Lemma \ref{types-cpt}, the fibre $\mathcal{I}_{\mathrm{bpf},0}$ over $0$ must contain the point $[L_i(T)]$ for some $i \in \{1,2\}$ and $T$ any general Cartier divisor of degree $n$. But, by Lemma \ref{lf-smooth}, $\mathcal{W}$ is smooth at $[L_i(T)]$. But there is a closed subset $J \seq \mathcal{W}$ of relative dimension $n$ containing all points of the form $L_{1,t}(T_t)$ and $L_{2,t}(T_t)$ for $T_t$ a general effective divisor of degree $n$ on $\mathcal{D}_t$. As $[L_i(T)]$ lies in a unique component, we must have $\mathcal{I}_{\mathrm{bpf}} \seq J$. But, for $t$ general, $h^0(L_{i,t})=2$ for $i=1,2$, and hence $h^1(\omega_{\mathcal{D}_t}\otimes L_{i,t}^*)=g-k+1$, and since $n \leq g-2k\leq g-k+1$, $h^1(\omega_{\mathcal{D}_t}\otimes L_{i,t}^*(-T_t))=h^1(\omega_{\mathcal{D}_t}\otimes L_{i,t}^*)-n$ which implies $h^0(L_{i,t}(T_t))=2$, so that $L_{i,t}(T_t)$ is not base-point free. This contradicts the assumption that the general point of $\mathcal{I}_{\mathrm{bpf}}$ is base-point free.
\end{proof}

Let $\cM_{g,k}(2) \seq \cM_g$ denote the moduli space of smooth curve of genus $g$ and gonality $k \leq \frac{g+1}{2}$ such that $ W^1_k(C)=\{L_1,L_2 \}$ where $L_1$ and $L_2$ are independent and of type $I$. Assume $k \geq 4$ and $g \geq 8$. Then $\cM_{g,k}(2)$ is nonempty if and only if $g \leq (k-1)^2$ and is irreducible, \cite{coppens}, \cite{Ty}.
\begin{thm}
Assume $k \geq 6$, $g \geq 8$ and let $[C] \in \cM_{g,k}(2)$ be general. Then $b_{g-k,1}(C,\omega_C)=2(g-k)$.
\end{thm}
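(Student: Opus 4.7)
The plan is to apply Theorem \ref{geo-thm} with $m=2$, so the task reduces to verifying its hypotheses for a general $[C] \in \cM_{g,k}(2)$. By definition of $\cM_{g,k}(2)$, such a curve has gonality $k \leq \lfloor (g+1)/2 \rfloor$ and two independent minimal pencils of type I, so the conditions on gonality, number of pencils, and type are automatic. Ordinary ramification holds for a general curve in any irreducible family. Since $\cM_{g,k}(2)$ is irreducible by \cite{Ty}, and since bpf-linear growth together with both infinitesimal and geometric general position are open conditions, it suffices to produce a single curve in $\cM_{g,k}(2)$ at which all three conditions hold; Theorem \ref{geo-thm} then delivers $b_{g-k,1}(C,\omega_C) = 2(g-k)$.

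For bpf-linear growth at such a curve, the cleanest route is to invoke the result of \cite{lin-syz} cited in the introduction. Alternatively, we could construct a witness inductively: the K3 construction of Proposition \ref{bpf-k3} produces smooth curves with two minimal pencils satisfying bpf-linear growth in the lower range $g \leq 4k^2/9$, and Coppens' node-gluing construction combined with Proposition \ref{bpf-range-quarter} shows that smoothing preserves bpf-linear growth in the range $k \leq (g+8)/4$; this inequality is automatic from $g > (k-1)^2$ and $k \geq 4$, since $(k-1)^2 \geq 4k-8$.

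To verify geometric general position, observe that any two distinct minimal pencils $L_1 \neq L_2$ determine distinct rank-$4$ quadrics $\widetilde{Q}_1, \widetilde{Q}_2 \subseteq \PP^{g-1}$, since the two rulings of $\widetilde{Q}_i$ reconstruct $\{L_i, \omega_C \otimes L_i^{-1}\}$ as explained in \S\ref{GGP}; hence they span a two-dimensional subspace of $|\mathcal{O}_{\PP^{g-1}}(2)|$, and projection from a general divisor of degree $g-1-k$ preserves linear independence. For infinitesimal general position, we must check $H^1(C, N_{F_\sigma}(-p-q-r))=0$ for every $\sigma \subseteq \{1,2\}$ with $F=(f_1,f_2) \colon C \to (\PP^1)^2$ and $(p,q,r)$ a general marking; by Lemma \ref{euler-char-comp-lemma-marked} this is equivalent to a dimension statement at $[F_\sigma]$, which I would propagate through the Coppens smoothing via Proposition \ref{key-const-succ-unobst} starting from an inductive base case.

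The main obstacle is supplying this base case for infinitesimal general position, as the K3 examples (where the two pencils are interchanged by an involution of the ambient surface) lie outside the range $g > (k-1)^2$ forced by Jongmans' bound for genuinely independent pencils. One therefore needs either a direct Brill--Noether-theoretic verification at the smallest admissible genus $g=(k-1)^2+1$ (e.g.\ via a dimension count for the space of stable maps $\mm_{g,k}((\PP^1)^2,\{0,1,\infty\})$ at a suitable boundary point), or a careful deformation argument initialized at a controlled degeneration. Once this single example is in hand, openness and irreducibility of $\cM_{g,k}(2)$ close the gap, and Theorem \ref{geo-thm} yields the conclusion.
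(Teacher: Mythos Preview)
Your overall strategy---reduce to Theorem \ref{geo-thm}, use irreducibility of $\cM_{g,k}(2)$ plus openness, and establish bpf-linear growth via the K3/Coppens induction---matches the paper. Your treatment of geometrically general position is also essentially the paper's argument (the paper makes the ``rulings reconstruct the pencil'' step precise by passing to the resolution $\PP(V_i)$ and reading off $L_i$ as the restriction of the ruling class, then comparing Picard groups).

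The genuine gap is infinitesimal general position. You flag this as ``the main obstacle'' and propose propagating it through Coppens' smoothing from an unspecified base case. In fact there is no obstacle: the condition is verified directly on \emph{any} smooth $C$ with two pencils, by a degree count. For $|\sigma|=1$ the normal sheaf $N_{f_i}$ of $f_i:C\to\PP^1$ is torsion, so $H^1$ vanishes trivially. For $\sigma=\{1,2\}$, the map $F=(f_1,f_2):C\to(\PP^1)^2$ has normal sheaf fitting into $0\to\mathcal{O}_Z\to N_F\to N'_F\to 0$ with $Z$ zero-dimensional and $N'_F$ a line bundle; by \cite[Prop.\ 2.4]{arbarello-cornalba} one finds $\deg N'_F=2g-2+4k$ (the torsion $Z$ is supported on common ramification points of $f_1,f_2$, which is empty for general $C$). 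Hence $N'_F(-p-q-r)$ has degree $2g-5+4k>2g-2$, so $H^1(N_F(-p-q-r))=H^1(N'_F(-p-q-r))=0$. No induction is needed.

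Your worry that the K3 examples ``lie outside the range $g>(k-1)^2$'' is also misplaced: the Castelnuovo--Jongmans bound reads $g\leq(k-1)^2$, and the K3 lattice construction sits comfortably in the sub-range $g\leq 4k^2/9$. These K3 curves are exactly the base case (at $g=2k-1$) for Coppens' induction, so your ``second route'' for bpf-linear growth is correct and complete as stated---you just need the dichotomy that for $k\geq 5$ one always has $g\leq 4k^2/9$ or $k\leq(g+8)/4$, so that Proposition \ref{bpf-k3} and Proposition \ref{bpf-range-quarter} together cover all $(g,k)$.
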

\begin{proof}
We will firstly show that $[C]$ satisfies bpf-linear growth. Note that if $k \geq 5$ we always have either $g \leq \frac{4k^2}{9}$ or $k \leq \frac{g+8}{4}$. If $g \leq \frac{4k^2}{9}$, then the fact that a general $[C]$ satisfies bpf-linear grwoth follows from Proposition \ref{bpf-k3}. The remaining cases now follow by Coppens' inductive construction and Proposition \ref{bpf-range-quarter}. Note that the base case in Coppens' construction is $g=2k-1$, which falls into the range of Proposition \ref{bpf-k3} (which in particular gives a new proof of \cite[\S 2]{coppens}).

It follows from Coppens' construction that the two minimal pencils on a general point $[C]$ have only ordinary ramification. Let $[C] \in \cM_{g,k}(2)$ be general and choose general points $p,q,r \in C$. We claim that the two pencils $f_1, f_2: C \to \PP^1$ are infinitesimally in general position with respect to $\{p,q,r\}$. This amounts to showing $H^1(C,N_F(-p-q-r))=0$, where $F=(f_1,f_2)$. As in \cite{arbarello-cornalba}, there is an exact sequence
$$0 \to \mathcal{O}_Z \to N_F \to N'_F \to 0,$$
where $Z$ has zero-dimensional support and $N'_F$ is a line bundle. By \cite[Prop.\ 2.4]{arbarello-cornalba}, $N_F$ is a line bundle of degree $2g-2+4k$ and so $N_F(-p-q-r)$ has degree greater than $2g-1$, giving $H^1(C,N_F(-p-q-r))=0$. 

It remains to prove that $f_1,f_2$ are geometrically in general position. Let $T \in C_{g-1-k}$ be general. We need to show that $[Q_{f_1}] \neq [Q_{f_2}] \in |\mathcal{O}_{\PP^k}(2)|$, in the notation of Section \ref{GGP}. Assume otherwise. Then $\widetilde{Q}_{f_1}=\widetilde{Q}_{f_1}$ as quadrics in $\PP^{g-1}$ and hence their resolutions $\PP(V_1)$, $\PP(V_2)$ are isomorphic. Noting that $C$ does not entirely lie in the vertex of $\widetilde{Q}_{f_1}$ (as $C \seq \PP^{g-1}$ is nondegenerate), we have an isomorphism $\psi: \PP(V_1) \to \PP(V_2)$ preserving both the hyperplane class and the image of $C$. Let $R$ and $H$ denote the ruling and hyperplane class of $\PP(V_1)$, and let $R_2 \in \text{Pic}(\PP(V_1)) \simeq \text{Pic}(\PP(V_2))$ denote the class of the ruling. So $R_2=\alpha R+\beta H$ for $\alpha, \beta \in \mathbb{Z}$. Intersecting with $H^{\dim \PP(V_1)-1}$ produces the equation $1=\alpha+\deg(\PP(V_1))\beta=\alpha+2\beta$. Restricting to $C$, we obtain
$$L_2 \simeq (1-2\beta)L_1 +\beta \omega_C,$$
where $L_i \simeq f_i^*\mathcal{O}_{\PP^1}(1)$. Taking degrees and using that $k \neq g-1$, we see $\beta=0$ and so $L_1 \simeq L_2$ which is a contradiction.
\end{proof}

\appendix
\section{Bpf-Linear Growth} \label{k3}
In this Appendix we gather some results on bpf-linear growth which are implicit in the existing literature, in particular in works of Mumford, Keem and Aprodu--Farkas. We begin with an observation about the bpf-linear growth condition. For a smooth curve $C$, recall
$$W^r_d(C):=\{ L \in \text{Pic}^d{C} \; | \; h^0(C,L) \geq r+1 \}$$
which can be given the structure of a determinantal variety, \cite{ACGH1}. We set
$W^{r,\mathrm{bpf}}_k(C) \seq W^r_d(C)$ to be the open locus of base-point free line bundles.
\begin{lem}
A smooth curve $C$ satisfies bpf-linear growth if and only if $\dim W^1_k(C)=0$ and $\dim W^{1,\mathrm{bpf}}_{k+n}(C)<n$ for $1 \leq n \leq g+1-2k$.
\end{lem}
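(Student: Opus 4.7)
The plan is to treat the two implications separately, with the forward direction being essentially immediate. For this direction, I would use the natural surjection $\tau_d: G^1_d(C) \twoheadrightarrow W^1_d(C)$ sending $(L,V) \mapsto L$, which restricts to a surjection $G^{1,\mathrm{bpf}}_d \twoheadrightarrow W^{1,\mathrm{bpf}}_d$. Since $\dim W \leq \dim G$ in both cases, bpf-linear growth immediately yields both $W$-inequalities; non-emptiness of $W^1_k(C)$ (as $k$ is the gonality) upgrades $\dim W^1_k(C) \leq 0$ to $\dim W^1_k(C) = 0$.

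For the reverse direction I would argue by induction on $m$, using the decomposition
\[
W^1_{k+m}(C) \;=\; \bigcup_{j=0}^{m} \mathrm{Im}\bigl( W^{1,\mathrm{bpf}}_{k+m-j}(C) \times C_j \xrightarrow{(L',B) \mapsto L'(B)} W^1_{k+m}(C) \bigr),
\]
obtained by extracting the base-point-free part of any line bundle, together with its analogue for $G^1$. For the base case $m=0$, I observe that $\dim W^1_k(C) = 0$ combined with $k$ being the gonality forces $W^r_k(C) = \emptyset$ for all $r \geq 2$: any $L \in W^r_k$ would give $L(-p) \in W^{r-1}_{k-1} \seq W^1_{k-1}$ for $p \in C$, contradicting minimality of $k$. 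Hence $\tau_k$ is bijective and $\dim G^1_k(C) = 0$. For $m \geq 1$, combining the hypothesis $\dim W^{1,\mathrm{bpf}}_{k+m-j}(C) < m-j$ (for $j < m$) with $\dim W^{1,\mathrm{bpf}}_k(C) = 0$ (for $j = m$) and the decomposition gives $\dim W^1_{k+m}(C) \leq m$; the analogous conclusion for $G^1$ follows once the bpf inequality for $G$ is established.

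The main obstacle is to promote $\dim W^{1,\mathrm{bpf}}_{k+m}(C) < m$ to $\dim G^{1,\mathrm{bpf}}_{k+m}(C) < m$. The fibre of the forgetful map $G^{1,\mathrm{bpf}}_{k+m} \to W^{1,\mathrm{bpf}}_{k+m}$ over $L \in W^{r,\mathrm{bpf}}_{k+m}(C)$ has dimension $2(r-1)$, so one must control the stratum where $h^0(L) > 2$. I would iterate the gonality argument from the $m=0$ step: if $L \in W^{r,\mathrm{bpf}}_{k+m}$ with $r \geq 2$, then for a general divisor $D$ of degree $r-1$ one has $L(-D) \in W^1_{k+m-r+1}(C)$, and passing to the base-point-free part of $L(-D)$ lands in $W^{1,\mathrm{bpf}}_{k+m'}(C)$ for some $m' < m$, the choice of $D$ contributing an $(r-1)$-dimensional family. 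The inductive hypothesis on $W^{1,\mathrm{bpf}}$ then forces $\dim W^{r,\mathrm{bpf}}_{k+m}(C) + 2(r-1) < m$, which summed over $r$ gives $\dim G^{1,\mathrm{bpf}}_{k+m}(C) < m$ and closes the induction.
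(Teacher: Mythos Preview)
The paper's own proof is minimal: it notes that the stated hypotheses are trivially equivalent to the $W$-version of the two bpf-linear-growth inequalities, and then cites the proof of \cite[Lemma~3.3]{lin-syz} for the equivalence of the $W$- and $G$-conditions. Your proposal is considerably more ambitious in attempting a self-contained argument, and the forward direction together with the inductive decomposition yielding $\dim W^1_{k+m}(C)\le m$ is correct and cleanly explained.

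There is, however, a genuine gap in your final step. From the map $(L,D)\mapsto L(-D)$ sending $W^{r,\mathrm{bpf}}_{k+m}\times C_{r-1}$ into $W^1_{k+m-r+1}$, even after one checks that it is generically finite onto its image (which you do not do, though it is true: for general $(L,D)$ one has $h^0(L(-D))=2$, and then the fibre over $L':=L(-D)$ is parametrised by degree-$(r-1)$ divisors supported in the finite base locus of $|\omega_C\otimes L'^{-1}|$), one only obtains
\[
\dim W^{r,\mathrm{bpf}}_{k+m} + (r-1) \;\le\; \dim W^1_{k+m-r+1} \;\le\; m-(r-1),
\]
hence $\dim W^{r,\mathrm{bpf}}_{k+m} + 2(r-1) \le m$. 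The \emph{strict} inequality you claim does not follow: the target $W^1_{k+m-r+1}$ really has components of dimension exactly $m-r+1$, namely $A+C_{m-r+1}$ for each minimal pencil $A$, and nothing in your argument prevents the image from filling such a component. To close the gap one must show that for a general effective $E$ of degree $m-r+1$ there is no $D\in C_{r-1}$ with $A(E+D)$ base-point free and $h^0(A(E+D))\ge r+1$; this is a nontrivial statement about the base locus of $|\omega_C-A-E|$ and is precisely the kind of content hidden in the reference the paper invokes.
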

\begin{proof}
It is clear that the above condition is equivalent to \begin{align*}
\dim W^1_{k+m}(C) &\leq m,  \;\; \text{for $0 \leq m \leq g-2k+1$} \\
\dim W^{1,\mathrm{bpf}}_{k+m}(C) &<m, \;\; \text{for $0 < m \leq g-2k+1$} .\end{align*} From the proof of \cite[Lemma 3.3]{lin-syz}, this is equivalent to the bpf-linear growth conditions
 \begin{align*}
\dim G^1_{k+m}(C) &\leq m,  \;\; \text{for $0 \leq m \leq g-2k+1$} \\
\dim G^{1,\mathrm{bpf}}_{k+m}(C) &<m, \;\; \text{for $0 < m \leq g-2k+1$} .\end{align*} 
\end{proof}

The bpf-linear growth condition is implicit in the well-known work of Mumford and Keem on dimensions of Brill--Noether loci. In fact, we have:
\begin{thm} [Mumford--Keem] \label{mumford-keem}
Let $C$ be a smooth curve of genus $g$ and gonality $k \geq 3$.
\begin{enumerate} 
\item If $k=3$, then $C$ satisfies bpf-linear growth unless $C$ admits a degree two morphism to an elliptic curve.
\item If $k=4$ and $g \geq 11$, then $C$ satisfies bpf-linear growth unless $C$ admits a degree two morphism to a curve of genus $\ell$ for $1 \leq \ell \leq 2$.
\item If $k=5$ and $g \geq 15$ then $C$ satisfies bpf-linear growth unless $C$ admits either a degree three morphism to an elliptic curve or a degree two morphism to a curve of genus three.

\end{enumerate}
\end{thm}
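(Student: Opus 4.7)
The plan is to verify both dimension inequalities in the bpf-linear growth condition,
\begin{align*}
\dim W^1_{k+m}(C) &\leq m, \\
\dim W^{1,\mathrm{bpf}}_{k+m}(C) &<m,
\end{align*}
for $0 \leq m \leq g+1-2k$, separately for each gonality $k=3,4,5$, exploiting the classical dimension estimates of Martens, Mumford, and Keem \cite[Ch.\ IV]{ACGH1}. Throughout, I use that the condition stated in terms of $G^1_{k+m}(C)$ is equivalent to the one for $W^1_{k+m}(C)$, by the preceding lemma.

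First I would establish the weaker inequality $\dim W^1_{k+m}(C) \leq m$. For $k=3$ this is immediate from Martens' theorem, which gives $\dim W^1_d(C) \leq d-3$ for any non-hyperelliptic curve. For $k=4,5$ the relevant tool is Mumford's refinement and its extension by Keem: once one excludes bielliptic curves, smooth plane quintics, trigonal curves, and curves admitting the low-degree covers appearing in the statement, one obtains $\dim W^1_d(C) \leq d-4$ (respectively $\leq d-5$), which rewrites as $\dim W^1_{k+m}(C) \leq m$. The genus bounds $g \geq 11$ (resp.\ $g \geq 15$) enter here to ensure $k \leq (g+1)/2$ and that $C$ is not already in one of the sporadic small-genus exceptions in Keem's classification.

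Next I would prove the strict inequality for base-point-free pencils. The structural observation is that the closed subvariety
$$\{ A(D) \,:\, A \in W^1_k(C),\ D \in C_m \} \subseteq W^1_{k+m}(C)$$
is precisely $m$-dimensional (using $\dim W^1_k(C)=0$) and consists entirely of pencils with base points for $m \geq 1$, since $D$ is effective. Hence the bpf part of $W^1_{k+m}(C)$ must come from components not of this form, and Keem's dimension bounds for ``primitive'' components of $W^1_d(C)$ force them to have dimension strictly less than $m$ provided $C$ is not one of the excluded covers. Finally, I would check that each excluded curve does carry positive-dimensional families of base-point-free pencils contributing to $W^{1,\mathrm{bpf}}_{k+m}$: if $\varphi\colon C \to C'$ has degree $d$ onto a smooth curve of genus $g'$, then $\varphi^*\operatorname{Pic}^{k/d}(C')$ sits inside $W^1_k(C)$ and its translates $\varphi^*L(D)$ (with $D$ a general effective divisor on $C$) produce an $(m+g')$-dimensional family of base-point-free pencils in $W^{1,\mathrm{bpf}}_{k+m}(C)$ for $m \geq 1$. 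Plugging in the pairs $(d,g')=(2,1)$, $(2,2)$, $(3,1)$, $(2,3)$ recovers exactly the exceptions listed in the theorem.

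The main obstacle will be the systematic separation of the base-point-free locus from the based locus inside each component of $W^1_{k+m}(C)$. For $k=3$ this separation is essentially done by Mumford's analysis of trigonal curves via the Maroni scroll. For $k=4,5$ one needs Keem's classification of the components of $W^1_d(C)$ in the range $k \leq d \leq g-1$ for small $d-k$: each ``large'' component is pinned down to arise from a specific covering structure on $C$, and one must verify that no exceptional covering type has been overlooked — for instance, that a gonality-$5$ curve covering a hyperelliptic genus-$3$ curve is already accounted for by the bielliptic-of-higher-genus exclusion rather than constituting a new case. This bookkeeping is the technical heart of the argument, but once performed in each of the three gonality ranges it yields the listed exceptions exactly.
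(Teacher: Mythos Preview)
The paper's own ``proof'' is a two-line pointer to the literature: part (1) is extracted from the proof in the Appendix of Mumford's Prym paper, and parts (2), (3) from Keem's Theorems 2.1 and 3.1. No argument is written out. Your proposal goes considerably further by sketching the actual mechanism behind those references, and the forward direction of your sketch---Martens' bound for $k=3$, Mumford's and Keem's refinements for $k=4,5$, together with the observation that the maximal-dimensional components of $W^1_{k+m}(C)$ consist of $A(D)$ with $A\in W^1_k(C)$ and hence are not base-point free---is the right picture and is exactly what one finds when unpacking those cited proofs.

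Your converse paragraph, however, contains genuine slips. First, $\varphi^*\operatorname{Pic}^{k/d}(C')$ is not meaningful in the cases at hand: for $k=3$ and a bielliptic cover $d=2$, or $k=5$ and a trielliptic cover $d=3$, the ratio $k/d$ is not an integer, so there is no such Picard component to pull back. The actual mechanism is that $\varphi^*\operatorname{Pic}^e(C')$ lands in $W^{1,\mathrm{bpf}}_{de}(C)$ for suitable $e$, giving for instance a one-dimensional family in $W^{1,\mathrm{bpf}}_4(C)$ for a bielliptic trigonal curve. Second, the translates $\varphi^*L(D)$ with $D$ effective on $C$ are never base-point free---$D$ is a base locus---so they do not populate $W^{1,\mathrm{bpf}}_{k+m}$ as you claim. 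Fortunately, the theorem as stated is only the implication ``not an exception $\Rightarrow$ bpf-linear growth,'' so this converse verification is not required; you should simply drop it or repair the degree bookkeeping if you want to record the sharpness.
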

\begin{proof}
The first part of the Theorem is proven in the course of the proof of the theorem from \cite[Appendix]{mumford-prym}, whereas the second and third parts are proven in \cite[Thm 2.1, 3.1]{Keem}.
\end{proof}
The arguments of Mumford and Keem also show that, provided $k$ is small enough, the condition $\dim W^{1,\mathrm{bpf}}_{g-k+1}(C)<g-2k+1$ of bpf-linear growth is redundant.
\begin{lem} \label{bpf-last-red}
Let $C$ be a smooth curve of gonality $3 \leq k \leq \frac{g+8}{4}$. Suppose $\dim W^{1,\mathrm{bpf}}_{k+n}(C)<n$ for $1 \leq n \leq g-2k$ and $\dim W^1_k(C)=0$. Then $\dim W^{1,\mathrm{bpf}}_{g-k+1}(C)<g-2k+1$.
\end{lem}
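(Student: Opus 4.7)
I argue by contradiction: suppose there is an irreducible component $Z\seq W^{1,\mathrm{bpf}}_{g-k+1}(C)$ of dimension at least $g-2k+1$, and aim to produce a family of base-point-free pencils of degree $k+n$, for some $n\in[1,g-2k]$, of dimension $\geq n$, thus contradicting the standing hypothesis.

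\emph{Step 1 (Serre duality).} For a generic $L\in Z$ set $M:=K_C\otimes L^{-1}$, so $\deg M=g+k-3$ and $h^0(M)\geq k$ by Riemann--Roch. Moreover $|M|$ is base-point free for generic $L$: a base point $p$ of $M$ would, via the exact sequence $0\to L\to L(p)\to \mathbb{C}_p\to 0$ and Serre duality, force $h^0(L(p))=3$ and hence $L(p)\in W^2_{g-k+2}(C)$. Mumford's refinement of Martens' theorem bounds $\dim W^2_{g-k+2}(C)$ by $g-k-3$ --- the exceptional classes (hyperelliptic, trigonal, bi-elliptic, smooth plane quintic) being excluded by $\dim W^1_k(C)=0$ together with $k\geq 3$ --- and this is strictly less than $g-2k+1$ for $k\geq 4$; the remaining case $k=3$ is handled directly using that $W^1_3(C)$ is finite. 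Thus $L\mapsto M$ identifies $Z$ with an irreducible component $Z^{\ast}\seq W^{k-1}_{g+k-3}(C)$ of the same dimension, whose generic element is a base-point-free $g^{k-1}_{g+k-3}$.

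\emph{Step 2 (Residuation).} Form the incidence
$$\mathcal{J}_s=\{(M,T)\in Z^{\ast}\times C_s\;:\;h^0(M(-T))\geq 2\}.$$
For $s\leq k-2$ and $T\in C_s$ general, imposing vanishing at $T$ drops $h^0(M)$ by exactly $s$, so $h^0(M(-T))\geq k-s\geq 2$ and $\mathcal{J}_s\to Z^{\ast}$ is surjective with generic fibre $C_s$. The map $(M,T)\mapsto M(-T)$ sends $\mathcal{J}_s$ into $W^1_{g+k-3-s}(C)$. By increasing $s$ beyond $k-2$ and restricting to the subvariety of $C_s$ where the condition $h^0(M(-T))\geq 2$ persists, one pushes further down into the degree range $[k+1,g-k]$ covered by the hypothesis, producing a family of pencils of each intermediate degree.

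\emph{Step 3 (Dimension and base-point-freeness).} The mobile part of $M(-T)$ yields, after subtracting its base locus, a base-point-free pencil of degree $k+n$ for a suitable $n\in[1,g-2k]$. Tracking dimensions through the residuation and using the excess of $\dim Z^{\ast}\geq g-2k+1$ over the expected dimension $\rho(g,k-1,g+k-3)=g-2k$, together with a Petri-style tangent space analysis on $Z^{\ast}$ to rule out spurious base points on the residual pencil, one extracts an irreducible component of $W^{1,\mathrm{bpf}}_{k+n}(C)$ of dimension $\geq n$. This contradicts the hypothesis and closes the argument.

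\textbf{Main obstacle.} The crux is Step 2--3: once $s$ exceeds $k-2$, the condition $h^0(M(-T))\geq 2$ cuts out a proper subvariety of $C_s$ whose dimension depends subtly on the global geometry of $Z^{\ast}$. Controlling this dimension while simultaneously guaranteeing that the residual pencils remain base-point free requires a careful analysis of the Brill--Noether excess of $Z^{\ast}$, and uses the numerical bound $k\leq\frac{g+8}{4}$ essentially to ensure that the dimensions land strictly above the hypothesis threshold for some $n$ in the allowed range; at the boundary $k=\frac{g+8}{4}$ the inequality becomes tight, which is consistent with the explicit exceptions recorded in Theorem \ref{mumford-keem}.
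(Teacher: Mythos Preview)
Your plan has a genuine gap and also misses the key idea that makes the paper's proof short.

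\textbf{The error in Step 1.} You claim that Mumford's bound $\dim W^2_{g-k+2}(C)\le g-k-3$ is strictly less than $g-2k+1$ for $k\ge 4$. The inequality $g-k-3<g-2k+1$ is equivalent to $k<4$, so your argument for base-point-freeness of $M=K_C\otimes L^{-1}$ collapses precisely in the range you want it.

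\textbf{The missing idea.} You pass to $M=K_C\otimes L^{-1}$, a line bundle of \emph{high} degree $g+k-3$ with many sections, and then try to residuate down to a low-degree pencil. This is hard to control, as you yourself acknowledge. The paper instead passes to $K_C\otimes L^{-2}$, of \emph{low} degree $2k-4$, and gets there in one step via the base-point-free pencil trick. Concretely: once you know $h^0(L)=2$ and $L$ is base-point free for general $L\in Z$, the Petri map
\[
\mu\colon H^0(L)\otimes H^0(\omega_C\otimes L^{-1})\longrightarrow H^0(\omega_C)
\]
has kernel $H^0(\omega_C\otimes L^{-2})$ by the base-point-free pencil trick, and the tangent space to $W^1_{g-k+1}(C)$ at $L$ has dimension $g-\operatorname{rk}\mu=\rho(g,1,g-k+1)+h^0(\omega_C\otimes L^{-2})$. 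Since $\dim Z$ exceeds $\rho=g-2k$, one gets $h^0(\omega_C\otimes L^{-2})\ge 2$. The map $L\mapsto \omega_C\otimes L^{-2}$ is finite on $\operatorname{Pic}(C)$, so $\dim W^1_{2k-4}(C)\ge\dim Z$, and this contradicts Martens' theorem exactly under the hypothesis $k\le\frac{g+8}{4}$. No residuation, no incidence varieties, no tracking of base loci of residual systems is needed.

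In short: replace your Serre-dual $K_C-L$ by the ``double dual'' $K_C-2L$ coming from the pencil trick, and the whole of Steps 2--3 becomes a one-line application of Martens.
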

\begin{proof}
We follow an argument of Mumford and Keem. Suppose there is a component $Z \seq W^{1}_{g-k+1}(C)$ of dimension at least $g+2-2k$ such that the general element $[L] \in Z$ is base-point free. By the assumption  $\dim W^{1,\mathrm{bpf}}_{k+n}(C)<n$ for $1 \leq n \leq g-2k$ and $\dim W^1_k(C)=0$, we must have $h^0(L)=2$. By results of Kempf and Severi (see \cite[\S 1]{Keem}), this gives
$$\rho(g,1,g+1-k)+h^0(\omega_C \otimes L^{-1})=g-2k+h^0(\omega_C \otimes L^{-1}) \geq g+2-2k$$
and hence $h^0(\omega_C \otimes L^{-2}) \geq 2$. On the other hand, $\deg(\omega_C \otimes L^{-2})=2k-4$, so $\dim W^1_{2k-4}(C) \geq g+2-2k$. But this contradicts H.\ Martens' Theorem \cite{h-martens} since $3 \leq k \leq \frac{g+8}{4}$.
\end{proof}

Results of \cite{aprodu-farkas} provide a sufficient conditions for bpf-linear growth for curves on a K3 surface:
\begin{thm} \label{bpf-k3-criterion}
Let $C$ be a curve of gonality $k \leq \lfloor \frac{g+1}{2} \rfloor$ and genus $g \geq 3$, abstractly embedded on a K3 surface $S$. Assume $C$ has Clifford dimension one. Suppose that for any line bundle $M$ on $S$ satisfying the properties
\begin{enumerate}
\item $h^0(M)=h^0(M_C) \geq 2$ and $h^1(M)=0$,
\item Setting $N=C-M$, we have $h^0(N)=h^0(N_C) \geq 2$ and $h^1(N)=0$,
\item $(M \cdot N)\leq k$
\end{enumerate}
then either $M_C \in W^1_k(C)$ or $N_C \in W^1_k(C)$. Then $C$ satisfies bpf-linear growth.

\end{thm}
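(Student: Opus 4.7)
By the equivalent formulation of bpf-linear growth recalled at the start of the Appendix, it suffices to prove $\dim W^1_k(C) = 0$ and $\dim W^{1,\mathrm{bpf}}_{k+n}(C) < n$ for every $1 \leq n \leq g+1-2k$. The plan is to use the Lazarsfeld--Mukai construction on $S$ together with a Donagi--Morrison-type decomposition to show that every base-point-free pencil on $C$ arises as a twist of a minimal pencil by an effective divisor.

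Given a base-point-free $L \in \mathrm{Pic}(C)$ with $h^0(L) = 2$ and $\deg L = k+n$ for some $0 \leq n \leq g+1-2k$, I form the rank two Lazarsfeld--Mukai bundle $E = E_{C,L}$ on $S$, with $c_1(E) = [C]$ and $c_2(E) = k+n$. By Donagi--Morrison \cite[\S 4]{donagi-morrison}, suitably refined under the Clifford dimension one hypothesis, $E$ sits in an extension
$$0 \to M \to E \to N \otimes \mathcal{I}_Z \to 0$$
with $M, N \in \mathrm{Pic}(S)$, $M+N = [C]$, $h^0(M) = h^0(M_C) \geq 2$, $h^0(N) = h^0(N_C) \geq 2$, $h^1(M) = h^1(N) = 0$, $(M\cdot N) \leq k$, and admitting a reduced $Z_0 \in |L|$ with $Z_0 \subseteq M \cap C$. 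The hypothesis of the Theorem then applies, yielding (after possibly swapping $M$ and $N$) that $M_C \in W^1_k(C)$; restricting the sequence to $C$ and chasing sections gives $L \cong M_C(D)$ for some effective divisor $D$ of degree $n$ on $C$.

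This furnishes a surjection $\coprod_{A \in W^1_k(C)} C_n \twoheadrightarrow W^{1,\mathrm{bpf}}_{k+n}(C)$, $(A,D) \mapsto A(D)$. The case $n=0$ identifies $W^1_k(C)$ with the finite set of restrictions $M_C$ of line bundles on $S$ satisfying $(1)$--$(3)$, so $\dim W^1_k(C) = 0$. For $n \geq 1$, since each $A \in W^1_k(C)$ has $h^0(A) = 2$, a generic $D \in C_n$ imposes $n$ independent conditions on $H^0(\omega_C \otimes A^{-1})$ (which has dimension $g+1-k \geq n+1$), so generically $h^0(A(D)) = 2$ and hence $A(D)$ has $D$ in its base locus. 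Thus base-point-freeness of $A(D)$ is a proper closed condition on $D \in C_n$, giving $\dim W^{1,\mathrm{bpf}}_{k+n}(C) \leq n-1 < n$ as required. The main technical obstacle is the refinement of the Donagi--Morrison decomposition to yield the sharp bound $(M\cdot N) \leq k$ (rather than merely $\leq k+n$) needed to invoke the theorem's hypothesis; this requires an iterative argument in the spirit of Aprodu--Farkas, exploiting Clifford dimension one to replace a destabilizing sub-line bundle by one of strictly smaller intersection number until the Clifford bound for $C$ itself is attained.
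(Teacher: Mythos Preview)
Your proposal has a genuine gap at the decisive step, and the gap cannot be filled in the way you suggest. You claim the Donagi--Morrison decomposition can be refined so that $(M \cdot N) \leq k$ rather than the a priori bound $(M \cdot N) \leq c_2(E) = k+n$, and you flag this as ``the main technical obstacle'' to be handled by an iterative Aprodu--Farkas-style argument. But no such refinement exists for $n \geq 1$: the paper's argument establishes precisely the \emph{opposite} inequality $(M \cdot N) > k$ whenever $d = k+n > k$. So the hypothesis of the theorem, which is a conditional (``if $M$ satisfies (1)--(3) then \ldots''), is simply never triggered by the Donagi--Morrison data of a higher-degree pencil, and your plan to invoke it to force $M_C \in W^1_k(C)$ cannot get off the ground.

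The paper's route is different in structure. It does not attempt to parametrize base-point-free pencils as twists of minimal ones. Instead it quotes \cite[Lemmas~3.9,~3.10]{aprodu-farkas} as a black box: these reduce bpf-linear growth to the single inequality $(M \cdot N) > k$ for the Donagi--Morrison decomposition of any base-point-free pencil $A$ of degree $k < d \leq g+1-k$. That inequality is then proved by contradiction. Assuming $(M \cdot N) \leq k$ (and after disposing of the cases $\mathrm{Cliff}(M_C) > k-2$ or $h^1(S,M) \neq 0$, where $(M\cdot N)>k$ is automatic), one checks that conditions (1)--(3) hold, so the hypothesis forces $N_C \in W^1_k(C)$ (one uses $M-N$ effective to see it is $N_C$ rather than $M_C$). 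Then $N$ is the class of a smooth elliptic curve on $S$, and \cite[Cor.~4.5]{donagi-morrison} supplies a reduced $A' \in |A|$ with $A' \subseteq N' \cap C$ for some $N' \in |N|$, giving $d \leq (N \cdot C) = k$, contradicting $d>k$.

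There is also a secondary confusion in your write-up. Even granting your desired bound and $M_C \in W^1_k(C)$, the containment $Z_0 \subseteq M \cap C$ with $\deg Z_0 = k+n$ and $\deg M_C = (M\cdot C) = k$ is impossible for $n \geq 1$, so ``restricting the sequence to $C$ and chasing sections'' cannot yield $L \cong M_C(D)$ from that containment. And if it somehow did, then since $h^0(L) = 2 = h^0(M_C)$ the effective divisor $D$ would lie in the base locus of $L$, contradicting base-point-freeness outright; your final surjection and dimension count would then be vacuous.
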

\begin{proof}
Set $L=\mathcal{O}_S(C)$. For any base point free pencil $A$ of degree $d$ on $C$, with $k < d \leq g+1-k$,  we have a corresponding short exact sequence
$$0 \to M \to E \to N \otimes I \to 0,$$ 
on $S$, where $E$ is a rank two Lazarsfeld--Mukai bundle satisfying $c_2(E)=d$, $L,M$ are line bundles and $I$ is the ideal sheaf of a zero-dimensional subscheme of $S$ of length $\ell \geq 0$, \cite[Lemma 4.4]{donagi-morrison}. We further may assume $M-N$ is effective (possibly after swapping $M$ and $N$ if $\ell=0$), $h^0(M), h^0(N)  \geq 2$ and $N$ is base-point free, see \cite[Lemma 2.1]{ciliberto-pareschi}.

From the results in \cite[Lemmas 3.9,3.10]{aprodu-farkas}, it suffices to show we have the strict inequality $(M \cdot N) > k$. This is automatic if $\text{Cliff}(M_C)>k-2$ or $h^1(S,M) \neq 0$, so assume $\text{Cliff}(M_C)=k-2$, $h^1(M)=0$ and $(M \cdot N) \leq k$.

From
$$0 \to N^* \to M \to M_C \to 0,$$
and the fact that $h^0(N^*)=0$, $h^1(N)=0$, we see $h^0(M)=h^0(M_C) \geq 2$. Further, $h^0(N)=h^1(M_C)=h^0(N_C) \geq 2$. Thus, by our assumptions, $M_C \in W^1_k(C)$ or $N_C \in W^1_k(C)$. As we are assuming $M-N$ is effective and since $C$ is nef, $(M-N \cdot C) \geq 0$ so $N \cdot C \leq M \cdot C$ and thus $N_C \in W^1_k(C)$. Thus $h^0(N)=2$, $h^1(N)=h^2(N)=0$ and the base-point free line bundle $N$ is the class of a smooth elliptic curve (by Riemann--Roch).

 But, by \cite[Corollary 4.5]{donagi-morrison}, there is some divisor $N' \in |N|$ and some reduced $A' \in |A|$ such that $A' \seq N' \cap C$. As $C$ is irreducible of genus greater than one, this forces $d \leq k=(N \cdot C)$ which is contradiction.
\end{proof}
As a Corollary, we obtain the claim that a sufficient condition for bpf-linear growth is that the only line bundles $A$ achieving the Clifford index are elements of $W^1_k(C)$ .
\begin{cor}
Let $C$ be a curve of gonality $k \leq \lfloor \frac{g+1}{2} \rfloor$, abstractly embedded on a K3 surface $S$.
Suppose that if $A$ is a line bundle with $h^0(A) \geq 2$, $h^1(A) \geq 2$, $\deg(A) \leq g-1$ and $\text{Cliff}(A)=\text{Cliff}(C)$, then we have $A \in W^1_k(C)$. Then $C$ satisfies bpf-linear growth.
\end{cor}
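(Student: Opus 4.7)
The plan is to reduce the corollary to Theorem \ref{bpf-k3-criterion}, which requires verifying two things: that $C$ has Clifford dimension one, and that the geometric dichotomy on pairs $(M, N = C-M)$ stipulated there holds.

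For the Clifford-dimension step, I would observe that any line bundle $A$ computing $\mathrm{Cliff}(C)$ can be arranged to have $\deg(A) \leq g-1$ by replacing $A$ with $\omega_C \otimes A^{-1}$ if necessary, and by definition of the Clifford index such an $A$ satisfies $h^0(A) \geq 2$ and $h^1(A) \geq 2$. The corollary's hypothesis then forces $A \in W^1_k(C)$. Since $C$ has gonality $k$, every element of $W^1_k(C)$ has exactly two sections (otherwise, subtracting a base point would yield a $g^1_{k-1}$), so $h^0(A) = 2$ and $\mathrm{Cliff}(A) = k-2$. This simultaneously establishes $\mathrm{Cliff}(C) = k-2$ and Clifford dimension one.

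The main step is the verification of the geometric dichotomy. Let $M \in \mathrm{Pic}(S)$ satisfy conditions $(1)$--$(3)$ of Theorem \ref{bpf-k3-criterion}, set $N = C - M$, and consider the restriction sequence
\[
0 \longrightarrow M - C \longrightarrow M \longrightarrow M_C \longrightarrow 0.
\]
Using $\omega_S \simeq \mathcal{O}_S$ and the vanishings built into the hypotheses (together with $h^0(-M) = h^0(-N) = 0$, which hold because $M$ and $N$ are effective and nontrivial), Serre duality gives $h^1(M_C) = h^0(N) \geq 2$. Riemann-Roch on the K3 surface yields $h^0(M) = 2 + M^2/2$, and therefore
\[
\mathrm{Cliff}(M_C) \;=\; (M \cdot C) - 2(h^0(M) - 1) \;=\; M \cdot (C - M) - 2 \;=\; (M \cdot N) - 2 \;\leq\; k - 2 \;=\; \mathrm{Cliff}(C).
\]
By minimality of the Clifford index, equality must hold, and by symmetry also $\mathrm{Cliff}(N_C) = \mathrm{Cliff}(C)$. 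Since $\deg(M_C) + \deg(N_C) = 2g-2$, one of them (say $M_C$) has degree at most $g-1$, and then $M_C$ satisfies all four conditions of the corollary's hypothesis, giving $M_C \in W^1_k(C)$. Theorem \ref{bpf-k3-criterion} then yields bpf-linear growth.

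I do not expect any serious obstacle in carrying this out; the argument is essentially a bookkeeping exercise, with the only substantive point being the identity $\mathrm{Cliff}(M_C) = (M \cdot N) - 2$, which is the bridge between the intersection-theoretic condition of the theorem and the Clifford-theoretic condition of the corollary.
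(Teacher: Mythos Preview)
Your proposal is correct and follows essentially the same line as the paper. The paper's proof is a single sentence: it re-runs the argument of Theorem \ref{bpf-k3-criterion} and observes that, in the setup there (where $M-N$ is arranged to be effective, so $\deg(N_C)\le g-1$), the corollary's hypothesis applied to $N_C$ forces $N_C\in W^1_k(C)$. You instead verify the hypotheses of Theorem \ref{bpf-k3-criterion} and invoke it as a black box, but the substantive step---the identity $\mathrm{Cliff}(M_C)=(M\cdot N)-2$ linking the intersection-theoretic condition to the Clifford-theoretic one---is the same in both. Your explicit check that $C$ has Clifford dimension one is a welcome addition, since the paper's terse proof does not address that hypothesis.
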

\begin{proof}
The same as the proof of Theorem \ref{bpf-k3-criterion}, with the assumptions forcing $N_C \in W^1_k(C)$. 
\end{proof}


\begin{thebibliography}{aaaaaa}
\bibitem[A]{accola} R. D. M. Accola, {\em{On Castelnuovo's inequality for algebraic curves. I}}, Transactions of the American Mathematical Society \textbf{251} (1979), 357-373.
%\bibitem[ACV]{twisted} D. Abramovich, A. Corti, and A. Vistoli, { \em{Twisted bundles and admissible covers}}, Communications in Algebra \textbf{31} (2003), 3547-3618.
\bibitem[Ap1]{aprodu-higher} M. Aprodu, {\em{On the vanishing of higher syzygies of curves}}, Mathematische Zeitschrift \textbf{241} (2002), 1-15.
\bibitem[Ap2]{aprodu-remarks} M. Aprodu, {\em{Remarks on syzygies of $d$-gonal curves}}, Math. Res. Lett \textbf{12} (2005), 387-400.
\bibitem[AF]{aprodu-farkas} M. Aprodu and G. Farkas, {\em{Green's Conjecture for smooth curves on arbitrary K3 surfaces}}, Comp. Math \textbf{147} (2011), 839-851.
\bibitem[AK]{ara-kol} C. Araujo and J.  Koll\'{a}r, {\em{Rational Curves on Varieties}}, Higher Dimensional Varieties and Rational Points. Springer (2003).
%\bibitem[AH]{arb-har} E. Arbarello and J. Harris, {\em{Canonical curves and quadrics of rank 4}}, Comp. Math. \textbf{43} (1981), 145-179.
\bibitem[AN]{aprodu-nagel} M. Aprodu and  J. Nagel, {\em{Koszul cohomology and algebraic geometry}}, University Lecture Series \textbf{52}, American Mathematical Society, Providence, RI (2010).

\bibitem[AM]{andreotti-mayer} A. Andreotti and A. L. Mayer, {\em{On period relations for abelian integrals on algebraic curves}}, Annali della Scuola Normale Superiore di Pisa \textbf{21} (1967),189-238.
%\bibitem[AS]{aprodu-sernesi} M. Aprodu and  E. Sernesi, {\em{Secant spaces and syzygies of special line bundles on curves}}, Algebra and Number Theory \textbf{9} (2015), 585-600.
%\bibitem[AS]{aprodu-sernesi-excess} M. Aprodu and  E. Sernesi, {\em{Excess dimension for secant loci in symmetric products of curves}}, Collect. Math. (2016). doi:10.1007/s13348-016-0166-2
%\bibitem[AV]{AV} M. Aprodu and C. Voisin, {\em{Green-Lazarsfeld's conjecture for generic curves of large gonality}}, C.R. Math. Acad. Sci. Paris \textbf{336} (2003), 335-339.
\bibitem[AS]{arbarello-sernesi} E. Arbarello and E. Sernesi, {\em{Petri's approach to the study of the ideal associated to a special divisor}}, Invent. Math. \textbf{49} (1978), 99-119.
\bibitem[AC1]{AC1} E. Arbarello, M. Cornalba, {\em{Su una congettura di Petri}}, Commentarii Math.\ Helvetici \textbf{56}, (1981),1--37.
\bibitem[AC2]{arbarello-cornalba} E. Arbarello, M. Cornalba, {\em{Footnotes to a paper of Beniamino Segre}}, Mathematische Annalen \textbf{256} (1981), 341-362.
\bibitem[ACGH]{ACGH1} E. Arbarello, M. Cornalba, P.A. Griffiths and J. Harris, {\em{Geometry of algebraic curves, Volume I}}, Grundlehren der Mathematischen Wissenschaften \textbf{267}, Springer, Heidelberg (1985).
\bibitem[ACG]{ACG2} E. Arbarello, M. Cornalba, P.A. Griffiths, {\em{Geometry of algebraic curves, Volume II}}, Grundlehren der Mathematischen Wissenschaften \textbf{268}, Springer, Heidelberg (2011).
%\bibitem[vB2]{vB2} H-C. v. Bothmer, {\em{Geometric Syzygies of Canonical Curves of even Genus lying on a K3-Surface}}, arXiv:0108078.

%\bibitem[AK]{alper-kresch} J. Alper and A. Kresch, {\em{Equivariant versal deformations of semistable curves}}, to appear in Michigan Math.\ J.
%\bibitem[A]{Aprodu} M. Aprodu, {\em{Green-Lazarsfeld gonality conjecture for a generic curve of odd genus}}, International Mathematics Research Notices \textbf{63} (2004), 3409-3416.
%\bibitem[AF]{clay} M. Aprodu and G. Farkas, {\em{Koszul cohomology and applications to moduli}}, Clay Mathematics Proceedings\ \textbf{14}, American Mathematical Society, Providence, RI (2011).

%\bibitem[AV]{AV} M. Aprodu and C. Voisin, {\em{Green-Lazarsfeld's conjecture for generic curves of large gonality}}, C.R. Math. Acad. Sci. Paris \textbf{336} (2003), 335-339.
\bibitem[Ba]{babbage} D. W. Babbage, {\em{A note on the quadrics through a canonical curve}}, J. London Math. Soc. \textbf{1} (1939), 310-315.
%\bibitem[Be]{beauville-schottky} A. Beauville, {\em{Prym varieties and the Schottky problem}}, Invent. Math \textbf{41} (1977), 149--196.
\bibitem[vB1]{vB1} H-C. v. Bothmer, {\em{Scrollar syzygies of general canonical curves with genus $\leq 8$}},  Trans. American Math.\ Soc.\ \textbf{359} (2007), 465-488.

%\bibitem[B]{beauville-stable} A. Beauville, {\em{Some stable vector bundles with reducible theta divisor}}, Manuscripta Math.\ \textbf{110} (2003), 343-349.
%\bibitem[B]{bopp} C. Bopp, {\em{Syzygies of $5$-gonal canonical curves}}, Documenta Mathematica \textbf{20} (2015), 1055-1069.
\bibitem[BHT]{bogomolov-hassett-tschinkel} F. Bogomolov, B. Hassett and Y. Tschinkel, {\em{Constructing Rational Curves on K3 Surfaces}}, Duke Math.\ J.\ \textbf{157} (2011), 535-550.
\bibitem[BS]{bopp-schreyer} C. Bopp and F.-O.Schreyer. {\em{A version of Green's conjecture in positive characteristic}}, Experimental Mathematics \textbf{30}, (2021), 475-480.

\bibitem[C]{coppens} M. Coppens, {\em{The existence of k-gonal curves possessing exactly two linear systems $g^1_k$}}, Math. Annalen \textbf{307} (1997), 291-297.
%\bibitem[C2]{copp2} M. Coppens, {\em{On G.\ Martens' characterization of smooth plane curves}}, Bulletin of the London Mathematical Society \textbf{20} (1988), 217-220.
%\bibitem[CE]{casnati-ekedahl} G. Casnati and T. Ekedahl, {\em{Covers of algebraic varieties I. A general structure theorem,
%covers of degree 3, 4 and Enriques surfaces}}, J. Algebraic Geometry \textbf{5} (1996), 439-460.
%\bibitem[CEFS]{CEFS} A. Chiodo, D. Eisenbud, G. Farkas and F.-O. Schreyer, {\em{Syzygies of torsion bundles and the geometry of the level $\ell$ modular variety over $\overline{\mathcal{M}}_g$}}, Invent.\ Math. \textbf{194} (2013), 73-118.
\bibitem[CP]{ciliberto-pareschi} C. Ciliberto and G. Pareschi, {\em{Pencils of minimal degree on curves on a K3 surface}}, J. Reine Ang. Math. \textbf{460} (1995), 15--36.

%\bibitem[CFVV]{CFVV} E. Colombo, G. Farkas, A. Verra and C. Voisin, {\em{Syzygies of Prym and paracanonical curves of genus 8}}, arXiv:1612.01026.

%\bibitem[C]{coppens} M. Coppens, {\em{One dimensional linear systems of type II on smooth curves}}, Ph.D. Thesis, Utrecht (1983).
%\bibitem[D]{deligne} P. Deligne, {\em{Le Lemme de Gabber}}, Ast\'erisque \textbf{127} (1985), 131-150.
%\bibitem[D]{debarre} O. Debarre, {\em{Sur le probleme de Torelli pour les vari\'et\'es de Prym}} American J.\ Math.\ \textbf{111} (1989), 111--134.
%\bibitem[DP]{deopurkar-patel} A. Deopurkar and A. Patel, {\em{The Picard rank conjecture for the Hurwitz spaces of degree up to five}}, Algebra and Number Theory \textbf{9} (2015), 459-492.
\bibitem[DM]{donagi-morrison} R. Donagi, D. Morrison, {\em{Linear systems on K3 sections}}, Journal of Differential Geometry \textbf{29} (1989), 49-64.
%\bibitem[Ei]{eisenbud-syzygies} D. Eisenbud, {\em{The geometry of syzygies}}, Graduate Texts in Mathematics \textbf{229}, Springer-Verlag, New York, 2005.
\bibitem[E]{ehbauer} S. Ehbauer, {\em{Syzygies of points in projective space and applications}}, Zero-dimensional schemes. Proceedings of the international conference held in Ravelo, Italy. 1992.
%\bibitem[EH]{eisenbud-harris-limit} D. Eisenbud and J. Harris, {\em{Limit linear series: basic theory}}, Inventiones Math.\ \textbf{85} (1986), 337-371.
%\bibitem[EL]{ein-lazarsfeld} L. Ein and R. Lazarsfeld, {\em{The gonality conjecture on syzygies of algebraic curves of
 %             large degree}}, Publ. Math. Inst. Hautes \'Etudes Sci. \textbf{122} (2015), 301-313.
 %\bibitem[EG]{ed-gr} D. Edidin and W. Graham, {\em{Equivariant intersection theory}}, Inventiones Math. \textbf{131} (1998), 595-634.
%\bibitem[F1]{farkas-syzygies} G. Farkas, {\em{Syzygies of curves and the effective cone of $\mm _g$}}, Duke Math.\ J.\ \textbf{135} (2006), 53-98.
%\bibitem[F2]{farkas-koszul} G. Farkas, {\em{Koszul divisors on moduli spaces of curves}}, American J.\ Math.\ \textbf{31} (2009), 819-867.

\bibitem[FK1]{generic-secant} G. Farkas and M. Kemeny, {\em{The generic Green--Lazarsfeld Secant Conjecture}}, Inventiones Math. \textbf{203} (2016), 265-301.
%\bibitem[FK2]{high-level} G. Farkas and M. Kemeny, {\em{The Prym-Green Conjecture for torsion line bundles of high order}}, Duke Math. J.  \textbf{166} (2017) , 1103-1124.
\bibitem[FK2]{lin-syz}  G. Farkas and M. Kemeny, {\em{Linear syzygies for curves of prescribed gonality}}, Advances in Math. \textbf{356} (2019), 106810.

%\bibitem[FK4]{res-odd} G. Farkas and M. Kemeny, {\em{The Resolution of Paracanonical Curves of Odd Genus}}, Geometry and Topology \textbf{22}(2018), 4235-4257.
%\bibitem[FL]{FaLu} G. Farkas and K. Ludwig, {\em{The Kodaira dimension of the moduli space of Prym varieties}}, J.\ Euro.\ Math. Society \textbf{12} (2010), 755-795.
%\bibitem[FuLa]{fulton-laz-connectedness} W. Fulton and R. Lazarsfeld, {\em{On the connectedness of degeneracy loci and special divisors}}, Acta Math.\ \textbf{146} (1981), 271-283.

%\bibitem[FMP]{farkas-mustata-popa} G. Farkas, M. Musta\c{t}\u{a} and M. Popa, {\em{Divisors on $\cM_{g, g+1}$ and the Minimal Resolution Conjecture for points on
%canonical curves}}, Annales Sci. de L'\'Ecole  Normale Sup\'erieure \textbf{36} (2003), 553-581.
%\bibitem[FP]{faber-pandharipande} C. Faber and R. Pandharipande, {\em{Relative maps and tautological classes}}, Journal of the European Mathematical Society \textbf{7} (2005), 13-49.
 
 \bibitem[FaP]{faber-pand} C. Faber and R. Pandharipande, {\em{Relative maps and tautological classes.}} J.\ Euro.\ Math.\ Soc.\ \textbf{7} (2005),13-49.
\bibitem[FarP]{far-pand} G. Farkas and R. Pandharipande, {\em{The moduli space of twisted canonical divisors}}, J.\ Inst.\ Math.\ Jussieu \textbf{17} (2018), 615-672.

\bibitem[FR]{farkas-rimanyi} G. Farkas and R. Rim\'anyi, {\em{Quadric rank loci on moduli of curves and K3 surfaces}}, Annales Scientifiques de L'E.N.S.\ \textbf{53} (2020), 945-992.

\bibitem[Fu]{fulton} W. Fulton, {\em{Intersection theory}}, Vol. 2, Springer, 2013.
%\bibitem[FT]{farkas-tarasca} G. Farkas and N. Tarasca, {\em{Du Val curves and the pointed Brill-Noether theorem}}, arXiv:1606.02725.
\bibitem[G1]{green-koszul} M. Green, {\em{Koszul cohomology and the cohomology of projective varieties}}, Journal of Differential Geometry \textbf{19} (1984), 125-171.
 \bibitem[G2]{green-quadrics} M. Green, {\em{Quadrics of rank four in the ideal of a canonical curve}}, Invent.\ Math \textbf{75} (1984), 85-104.
% \bibitem[G3]{green-NL} M. Green, {\em{A new proof of the explicit Noether--Lefschetz theorem.}} J.\ Diff.\ Geo \textbf{27} (1988), 155-159.
%\bibitem[GL1]{green-lazarsfeld-projective} M. Green and R. Lazarsfeld, {\em{On the projective normality of complete linear series on an algebraic curve}}, Inventiones Math. \textbf{83} (1986), 73-90.

%\bibitem[Ha]{hartshorne} R. Hartshorne, {\em{Algebraic Geometry}}, Graduate Texts in Mathematics \textbf{52}, Springer-Verlag, New York, 1977.
 \bibitem[GHS]{GHS} T. Graber, J. Harris and J. Starr, {\em{Families of Rationally Connected Varieties}}, J.\ American Math.\ Soc.\ \textbf{16} (2003), 57-67.
 \bibitem[GL1]{green-lazarsfeld-special} M. Green and R. Lazarsfeld, {\em{Special divisors on curves on a K3 surface}}, Inventiones Math. \textbf{89} (1987), 357-370.
 \bibitem[He]{herbert-thesis} R. Herbert, {\em{Multiple points of immersed manifolds}},  Memoirs American Math.\ Soc. \textbf{34}, 1981.
\bibitem[Hu]{huybrechts-k3} D. Huybrechts, {\em{Lectures on K3 surfaces.}}, Cambridge University Press \textbf{158}, 2016.
\bibitem[HT]{harris-tu} J. Harris and L. Tu, {\em{Chern numbers of kernel and cokernel bundles.}} Inventiones Math.\ \textbf{75} (1984), 467-475.

%\bibitem[HM1]{ha-mo} J. Harris and I. Morrison, {\em{Moduli of Curves}}, Graduate Texts in Mathematics \textbf{187}, Springer-Verlag, New York, 1998.
\bibitem[HM]{ha-mu} J. Harris and D. Mumford, {\em{On the Kodaira dimension of the moduli space of curves}}, Inventiones Math \textbf{67} (1982), 23-86.
\bibitem[HR]{hirsch} A. Hirschowitz and S. Ramanan, {\em{New evidence for Green's Conjecture on syzygies of canonical curves}}, Annales Scientifiques de l'\'Ecole Normale Sup\'erieure \textbf{31} (1998), 145-152.
\bibitem[J]{jongmans} F. Jongmans, {\em{Le probl\'eme des s\'eries speciales d'une courbe alg\'ebrique}}, Bull.\ Acad.\ R.\ Sci.\ Belgique \textbf{35} (1949), 1027-1041.
\bibitem[K1]{kemeny-thesis} M. Kemeny, {\em{Stable Maps and Singular Curves on K3 surfaces}}, Thesis, Universit\"at Bonn (2015).

\bibitem[K2]{kemeny-singular} M. Kemeny, {\em{The Moduli of Singular Curves on K3 Surfaces}}, J. Math. Pures. Appl. \textbf{104} (2015), 882-920.
\bibitem[K3]{projecting} M. Kemeny, {\em{Projecting Syzygies of Curves}}, Algebraic Geometry \textbf{7} (2020), 561-580.
\bibitem[K4]{kemeny-voisin} M. Kemeny, {\em{Universal Secant Bundles and Syzygies of Canonical Curves}}, Inventiones Math.\ \textbf{223} (2021), 995-1026.
\bibitem[K5]{kemeny-rank} M. Kemeny, {\em{The Rank of Syzygies of Canonical Curves}}, arXiv:2104.10624.
\bibitem[Ke]{Keem} C. Keem, {\em{On the variety of special linear systems on an algebraic curve}} Math.\ Annalen \textbf{288} (1990), 309--322.

\bibitem[Kem]{kempf} G. Kempf, {\em{On the geometry of a theorem of Riemann}}, Annals of Mathematics (1973),178-185.

\bibitem[Kl]{kleiman} S. Kleiman, {\em{Multiple-point formulae I: Iteration}}, Acta Mathematica \textbf{147} (1981), 13-49.

%\bibitem[KKZ]{KKZ} A. Kokotov, D. Korotkin and P. Zograf, {\em{Isomonodromic tau function on the space of admissible covers}}, Advances in Mathematics \textbf{227} (2011), 586-600.
%\bibitem[KM]{keel-mckernan} S. Keel and J. M$^c$Kernan, {\em{Contractible extremal rays on $\overline{\mathcal{M}}_{0,n}$}}, arXiv:alg-geom/9607009.
\bibitem[La]{lazarsfeld-BNP} R. Lazarsfeld, {\em{Brill-Noether-Petri without degenerations}}, J. Differential Geo.\ \textbf{23} (1986), 299-307.

\bibitem[Li]{li-relative} J. Li, {\em{Stable morphisms to singular schemes and relative stable morphisms.}} J.\ Diff.\ Geo.\ \textbf{57} (2001), 509--578.
\bibitem[LT]{li-tian} J. Li and G. Tian, {\em{Virtual moduli cycles and Gromov-Witten invariants}} J.\ American Math.\ Soc.\ \textbf{11} (1998), 119-174.


\bibitem[G-Mar]{martens} G. Martens, {\em{On curves on K 3 surfaces}}, Algebraic Curves and Projective Geometry, Springer, Berlin (1989), 174--182. 
\bibitem[H-Mar]{h-martens} H. Martens, {\em{On the varieties of special divisors on a curve}} J. Reine Ang. Math. \textbf{227} (1967), 111--120.
\bibitem[May]{mayer} A. Mayer, {\em{Families of K3 surfaces}} Nagoya Math J. \textbf{48} (1972), 1-17.
\bibitem[Mo]{morrison-large} D. Morrison, {\em{On K3 surfaces with large Picard number}} Invent. math. \textbf{75} (1984), 105-121.

\bibitem[Mu]{mumford-prym} D. Mumford, {\em{Prym varieties I.}} Contributions to analysis. \textbf{1974}, 325-350.
\bibitem[N]{noether} M. Noether {\em{Ueber die invariante Darstellung algebraischer Functionen}}, Math.\ Annalen \textbf{17} (1880), 263-284.
\bibitem[P]{patel-thesis} A. Patel, {\em{The Geometry of Hurwitz space}}, PhD thesis, Harvard University (2013).
\bibitem[R]{rimanyi} R. Rim\'anyi, {\em{Multiple-point formulas?a new point of view}}, Pacific J. Math \textbf{202} (2002), 475--490.


%\bibitem[K2]{extremal-gonality} M. Kemeny, {\em{The extremal gonality conjecture for curves of arbitrary gonality}}, arXiv:1512.00212.
%\bibitem[Kn]{knutsen} A. Knutsen, {\em{On $k$-th order embeddings of K3 surfaces and Enriques surfaces}}, Manuscripta Math. \textbf{104} (2001), 211-237.
%\bibitem[M]{h-martens} H. Martens, {\em{On the varieties of special divisors on a curve}}, Journal f\"ur die reine und angewandte Mathematik \textbf{227} (1967), 111-120.
%\bibitem[R]{rathmann} J. Rathmann, {\em{An effective bound for the gonality conjecture}}, arXiv:1604.06072.
%\bibitem[Ry]{rydh} D. Rydh {\em{The canonical embedding of an unramified morphism in an étale morphism}}, Math.\ Zeit.\ \textbf{268} (2011), 707-723.
\bibitem[S]{sagraloff} M. Sagraloff, {\em{Special linear series and syzygies of canonical curves of genus $9$}}, Thesis, Saarland University, 2005.
%\bibitem[S2]{sernesi-def} E. Sernesi, {\em{Deformations of algebraic schemes}}, Springer, 2007.
%\bibitem[SP]{stacks} The Stacks Project Authors, {\em{Stacks Project}}, \url{http://stacks.math.columbia.edu}, 2017.
\bibitem[Sch1]{schreyer1} F.-O. Schreyer, {\em{Syzygies of canonical curves and special linear series}}, Math. Ann. \textbf{275} (1986), 105-137.
%\bibitem[Sch2]{sch} F.-O. Schreyer, {\em{Green's conjecture for the general $p$-gonal curve of large genus}}, In: Algebraic curves and projective geometry, Springer Lecture Notes \textbf{1389} (1988), 254-260.
\bibitem[Sch2]{schreyer-topics} F.-O. Schreyer, {\em{Some topics in computational algebraic geometry}}, In: Advances in algebra and geometry (Hyderabad, 2001), Hindustan Book Agency (2003), 263-278.
\bibitem[SSW]{SSW} J. Schicho, F.-O. Schreyer and M. Weimann, {\em{Computational aspects of gonal maps and radical parametrization of curves}}, Appl.\ Algebra Engrg.\ Comm.\ Comput.\ \textbf{24} (2013), 313-341.
%\bibitem[Te]{montserrat} M. Teixidor i Bigas {\em{Green's conjecture for the generic $r$-gonal curve of genus $g\geq 3r-7$}} Duke Math.\ J.\ \textbf{111} (2002), 195-222.
%\bibitem[Ta]{tannenbaum} A. Tannenbaum, {\em{Families of curves with nodes on K3 surfaces}}, Math.\ Ann.\ \textbf{260} (1982), 239-253.
%\bibitem[Te]{Te} M. Teixidor, {\em{Syzygies using vector bundles}}, Transactions of the American Mathematical Society \textbf{359} (2007), 897-908.
\bibitem[Ty]{Ty} I. Tyomkin, {\em{On Severi varieties on Hirzebruch surfaces}},  International Mathematical  Research Notices \textbf{23} (2007).
%\bibitem[vdGK]{vgeer-kouvidakis} G. van der Geer and A. Kouvidakis, { \em{The Hodge Bundle on Hurwitz Spaces}} Pure and Applied Mathematics Quarterly \textbf{7}, 1927-1308, (2011).
%\bibitem[V1]{voisin-NL} C. Voisin, {\em{Une pr\'ecision concernant le th\'eoreme de Noether}}, Math. Annalen \textbf{280} (1988), 605-611.
\bibitem[V2]{V1} C. Voisin, {\em{Green's generic syzygy conjecture for curves of even genus lying on a $K3$ surface}}, Journal of European Math. Society \textbf{4} (2002), 363-404.
\bibitem[V3]{V2} C. Voisin, {\em{Green's canonical syzygy conjecture for generic curves of odd genus}}, Compositio Mathematica \textbf{141} (2005), 1163-1190.

\end{thebibliography}
\end{document}